\def\today{\number\day\space\ifcase\month\or   January\or February\or
   March\or April\or May\or June\or   July\or August\or September\or
   October\or November\or December\fi\   \number\year}
\theoremstyle{definition}
\newtheorem{lma}{Lemma}[section]
\newaliascnt{thmCt}{lma}
\newtheorem{thm}[thmCt]{Theorem}
\newaliascnt{corCt}{lma}
\newtheorem{cor}[corCt]{Corollary}
\newaliascnt{propCt}{lma}
\newtheorem{prop}[propCt]{Proposition}
\newtheorem*{thm*}{Theorem}
\newtheorem*{cor*}{Corollary}
\newtheorem*{prop*}{Proposition}
\newaliascnt{pgrCt}{lma}
\newaliascnt{dfCt}{lma}
\newtheorem{df}[dfCt]{Definition}
\newaliascnt{remCt}{lma}
\newtheorem{rem}[remCt]{Remark}
\newaliascnt{remsCt}{lma}
\newaliascnt{egCt}{lma}
\newtheorem{eg}[egCt]{Example}
\newaliascnt{egsCt}{lma}
\newaliascnt{qstCt}{lma}
\newaliascnt{pbmCt}{lma}
\newaliascnt{notaCt}{lma}
\newaliascnt{cnjCt}{lma}
\newcommand{\beq}{\begin{equation}}
\newcommand{\eeq}{\end{equation}}
\newcommand{\beqa}{\begin{eqnarray*}}
\newcommand{\eeqa}{\end{eqnarray*}}
\newcommand{\bal}{\begin{align*}}
\newcommand{\eal}{\end{align*}}
\newcommand{\bi}{\begin{itemize}}
\newcommand{\ei}{\end{itemize}}
\newcommand{\be}{\begin{enumerate}}
\newcommand{\ee}{\end{enumerate}}
\newcommand{\ep}{\varepsilon}
\newcommand{\Z}{{\mathbb{Z}}}
\newcommand{\R}{{\mathbb{R}}}
\newcommand{\C}{{\mathbb{C}}}
\newcommand{\N}{{\mathbb{N}}}
\newcommand{\Hi}{{\mathcal{H}}}
\newcommand{\K}{{\mathcal{K}}}
\newcommand{\B}{{\mathcal{B}}}
\newcommand{\U}{{\mathcal{U}}}
\newcommand{\T}{{\mathbb{T}}}
\newcommand{\id}{{\mathrm{id}}}
\newcommand{\supp}{{\mathrm{supp}}}
\newcommand{\Aut}{{\mathrm{Aut}}}
\newcommand{\Ad}{{\mathrm{Ad}}}
\newcommand{\Cu}{{\mathrm{Cu}}}
\newcommand{\CCu}{{\mathbf{Cu}}}
\newcommand{\ca}{$C^*$-algebra}
\newcommand{\uca}{unital $C^*$-algebra}
\newcommand{\I}{\infty}
\title[]{The equivariant Cuntz semigroup}
\author{Eusebio Gardella and Luis Santiago}
\date{\today}
\address{Westf\"alische Wilhelms-Universit\"at M\"unster, Fachbereich Mathematik, ¨
Einsteinstrasse 62, 48149 M\"unster, Germany }
\email[]{gardella@uni-muenster.de}
\address{Institute of Mathematics, University of Aberdeen, Fraser Noble Building, Aberdeen
AB24 3UE, UK}
\email[]{lmoreno@abdn.ac.uk}
\urladdr{http://homepages.abdn.ac.uk/lmoreno/pages/index.html}
\subjclass[2010]{Primary 46L55; Secondary 46L35, 46L80}
\keywords{$C^*$-algebras, group actions, Cuntz semigroup, crossed product, cocycle equivalence.}
\begin{document}

\begin{abstract}
We introduce an equivariant version of the Cuntz semigroup, that takes an action of a compact group
into account. The equivariant Cuntz semigroup is naturally a semimodule over the representation semiring
of the given group. Moreover, this semimodule satisfies a number of additional structural properties.
We show that the equivariant Cuntz semigroup, as a functor, is continuous and stable. Moreover, cocycle
conjugate actions have isomorphic associated equivariant Cuntz semigroups.
One of our main results is an analog of Julg's theorem: the equivariant Cuntz semigroup is canonically
isomorphic to the Cuntz semigroup of the crossed product. We compute the induced semimodule structure on the
crossed product, which in the abelian case is given by the dual action. As an application of our results,
we show that freeness of a compact Lie group action on a compact Hausdorff space can be characterized in
terms of a canonically defined map into the equivariant Cuntz semigroup, extending results of Atiyah and
Segal for equivariant $K$-theory. Finally, we use the equivariant Cuntz semigroup to classify locally
representable actions on direct limits of one-dimensional NCCW-complexes, generalizing work of
Handelman and Rossmann.
\end{abstract}

\maketitle

\tableofcontents

\section{Introduction}

Equivariant $K$-theory for compact groups acting on topological spaces was introduced by Atiyah (the paper
\cite{Seg_EqKThy}, by Segal, contains a basic treatment of the theory). One of the first applications of
this theory was a striking characterization of freeness of a compact Lie group action (\cite{AtiSeg_completion};
see also Theorem~1.1.1 in~\cite{Phi_Book}). Equivariant $K$-theory was later defined and studied for actions of
compact groups on noncommutative \ca s. A fundamental result in this area is Julg's identification
(\cite{Jul_EqKthy}) of the equivariant $K$-theory of a given action, with the ordinary $K$-theory of its
associated crossed product. In a different direction, each of the different statements in Atiyah-Segal's
characterization of freeness, interpreted in the context of \ca s, can be taken as possible definitions of
``noncommutative freeness". This is the approach taken by Phillips in \cite{Phi_Book}. Equivariant $K$-theory
has also been used as an invariant for compact group actions (\cite{HanRos_CptAF}, \cite{Gar_Kir1}),
and its definition has been extended to actions of more general objects, such as quantum groups
(\cite{ZhaZha_EqVBquantum}). Equivariant $K$-theory also plays a role in the so called Baum-Connes
conjecture with coefficients (where one considers the equivariant $K$-theory of a \ca\ other than $\C$.)

On the other hand, the Cuntz semigroup $\Cu(A)$ of a \ca\ $A$, first considered by Cuntz in the 70's (\cite{Cun_DimFun}),
has been intensively studied in the last decade since Toms successfully used it (\cite{Tom_classifpbm}) to
distinguish two non-isomorphic \ca s with identical Elliott invariant (as well as identical real and stable
ranks). Coward, Elliott and Ivanescu (\cite{CowEllIva}) suggested that the Cuntz semigroup could be used
as an invariant for \ca s (in many cases, finer than $K_0$), and this semigroup has since then been used to
obtain positive classification results of not necessarily simple \ca s. The most general result in this direction
is due to Robert (\cite{Rob_classif}): he showed that the Cuntz semigroup is a complete invariant for (not
necessarily simple) unital direct limits of 1-dimensional noncommutative CW-complexes with trivial $K_1$. (This class
contains all AI-algebras.) We refer the reader to \cite{BroPerTom} and \cite{AntPerThi} for thorough developments
of the theory of the Cuntz semigroup.

In this paper, we study an equivariant version of the Cuntz semigroup for compact group actions on \ca s.
For an action $\alpha\colon G\to \Aut(A)$ of a compact group $G$ on a \ca\ $A$, we denote its equivariant
Cuntz semigroup by $\Cu^G(A,\alpha)$.
This is a partially ordered semigroup, with a natural semimodule structure over the representation
semiring $\Cu(G)$ of $G$. We explore some basic properties of the functor $(A,\alpha)\mapsto \Cu^G(A,\alpha)$,
such as continuity, stability, passage to full hereditary subalgebras, cocycle equivalence invariance, etc.
One of the main results of this work
(\autoref{Thm:JulgCuG}) is an analog of Julg's theorem for the Cuntz semigroup: $\Cu^G(A,\alpha)$ is naturally
isomorphic to $\Cu(A\rtimes_\alpha G)$. The induced $\Cu(G)$-semimodule structure on $\Cu(A\rtimes_\alpha G)$
is computed in \autoref{thm:JulgWithCuG} (see \autoref{prop: semimodule structure when G is abelian} for a
simpler description when $G$ is abelian). We prove an analog
of Atiyah-Segal's characterization of freeness using the Cuntz semigroup; see \autoref{thm:freeness}.
Applications to classification of actions are given in the last section; see
\autoref{thm: equiv morphisms of approx rep}.

There are a number of reasons to be interested in an equivariant version of the Cuntz semigroup. Its use for
classification of \ca s suggests that this should also be a useful invariant for group actions. As a semigroup,
$\Cu^G(A,\alpha)$ is just the Cuntz semigroup of the crossed product, but the additional $\Cu(G)$-semimodule
structure makes this a finer invariant. The fact that cocycle equivalent actions have isomorphic equivariant
Cuntz semigroups is a stronger statement than the fact that their crossed products have isomorphic Cuntz semigroups.
In a different direction, our study is a first step towards
a bivariant, equivariant version of the Cuntz semigroup, which is currently being developed by Tornetta; see \cite{Tor_CuCu}.
(A bivariant version of the Cuntz semigroup has been introduced in \cite{BTZ}.)

We have organized this paper as follows. In Section~2, and after reviewing the definition of the Cuntz
semigroup and the Cuntz category $\CCu$, we introduce the equivariant Cuntz semigroup using
positive invariant elements in suitable stabilizations of the algebra (\autoref{df:CuGAa}).
The main result of this section, \autoref{cor:CuGFunctorCCu}, asserts that the equivariant Cuntz semigroup
is a functor from the category of $G$-\ca s (that is, \ca s with an action of $G$), to the category $\CCu$.

In Section~3, we introduce the representation semiring $\Cu(G)$ of $G$ (\autoref{def: rep semiring}), which
corresponds to the equivariant Cuntz semigroup of $G$ acting on $\C$ (\autoref{thm: mu s_mu}), and
define a canonical $\Cu(G)$-action on $\Cu^G(A,\alpha)$; see \autoref{df:SemimodStructCuG}. With this
semimodule structure, the equivariant Cuntz semigroup becomes a functor from $G$-\ca s to a distinguished
category of $\Cu(G)$-se\-mi\-mo\-dules (see \autoref{df:CCuG} and \autoref{thm:EqCtzSmgpCCuG}). We finish this
section by showing that the functor $\Cu^G$ is stable (\autoref{prop:CuGStable}) and preserves countable
inductive limits (\autoref{prop:CuGContinuous}).

Section~4 is devoted to giving two pictures of the equivariant Cuntz semigroup using equivariant Hilbert modules.
In one of these pictures, we identify $\Cu^G(A,\alpha)$ with the ordinary Cuntz semigroup of $\K(\Hi_A)^G$, where
$\Hi_A$ is the universal equivariant Hilbert module for $(A,\alpha)$ introduced by Kasparov in \cite{Kas_HilbMods}
(see \autoref{df:H_A}). In the second picture, we identify $\Cu^G(A,\alpha)$ with the
$\Cu(G)$-semimodule obtained by taking a
suitable equivalence relation (\autoref{def: Hilbert subequivalence}) on the class of equivariant Hilbert
modules; see \autoref{cor:HilbModIsomObject}.

Section~5 contains some of our main results. In \autoref{Thm:JulgCuG}, we construct a natural $\Cu$-isomorphism
between $\Cu^G(A,\alpha)$ and $\Cu(A\rtimes_\alpha G)$. (This is a Cuntz semigroup analog of Julg's theorem
for $K$-theory; see \cite{Jul_EqKthy}.) The induced $\Cu(G)$-semimodule structure on $\Cu(A\rtimes_\alpha G)$
is computed in \autoref{thm:JulgWithCuG}, with an easier description available when $G$ is abelian; see
\autoref{prop: semimodule structure when G is abelian}. As an application, we show that
invariant full hereditary subalgebras have canonically isomorphic equivariant Cuntz semigroups
(\autoref{prop:HeredSubalg}).

Section~6 contains several computations of the equivariant Cuntz semigroups of a number of dynamical systems.
Most of our computations use either the Rokhlin property (which are handled with results from
\cite{GarSan_RokConstrI}) or are pullbacks of dynamical systems (which are handled with \autoref{thm: Lie}).

In Section~7, we apply the theory developed in the previous sections to prove a characterization
of freeness of a compact Lie group action on a compact Hausdorff space, in terms of a certain canonical
map into the equivariant Cuntz semigroup; see \autoref{thm:freeness}. This characterization resembles
(and depends on) Atiyah-Segal's characterization of freeness using equivariant $K$-theory
(\cite{AtiSeg_completion}).

Finally, in Section~8, we use
the equivariant Cuntz semigroup to classify certain direct limit actions on a class of stably finite
\ca s containing all AI-algebras; see \autoref{thm: equiv morphisms of approx rep}. Our results extend the
work of Handelman and Rossmann (\cite{HanRos_CptAF}) on locally representable actions on AF-algebras.

\vspace{0.3cm}

Throughout the paper, topological groups are always assumed to be Hausdorff.
If $G$ is a locally compact group, we will denote by $\widehat G$ the set of
unitary equivalence classes of irreducible representations of $G$. When $G$ is abelian, it is well-known
that every irreducible representation is one dimensional, and hence $\widehat{G}$ has a natural group
structure. When $G$ is compact (but not necessarily abelian), then every irreducible representation is
automatically finite dimensional. Finally, if $G$ is compact and abelian, then its dual group $\widehat{G}$
is discrete. A unitary representation $\mu\colon G\to \U(\Hi_\mu)$ of $G$ on a Hilbert space $\Hi_\mu$ will
usually be abbreviated to $(\Hi_\mu,\mu)$. We say that $(\Hi_\mu,\mu)$ is separable, or finite dimensional,
if $\Hi_\mu$ is. The unitary equivalence class of $(\Hi_\mu,\mu)$ is denoted by $[\mu]$.

For a locally compact group $G$, we denote by $\Cu(G)$ the class of unitary equivalence classes of
unitary representations of $G$ on separable Hilbert spaces. It is easy to see, fixing a separable Hilbert
space and restricting to representations on it, that $\Cu(G)$ is in fact a set. This set has important
additional structure that will not be discussed until it is needed in Section~3.
The set $\Cu(G)$ will play mostly a notational role in the first few sections.

We sometimes make a slight abuse of notation and do not distinguish between elements in $\widehat{G}$
(or $\Cu(G)$) and irreducible (separable) unitary representations of $G$.

If $A$ is a \ca, we denote by $\id_A\colon A\to A$ its identity map. For a unitary $u$ in its multiplier
algebra $M(A)$, we write $\Ad(u)$ for the automorphism of $A$ given by $\Ad(u)(a)=uau^*$ for $a\in A$.
Writing $\U(M(A))$ for the unitary group of $M(A)$, observe that the map $\Ad\colon \U(M(A))\to \Aut(A)$
is a group homomorphism.
For Hilbert spaces $\Hi_1,\Hi_2$, we write $\B(\Hi_1,\Hi_2)$ for the Banach space of bounded, linear
operators from $\Hi_1$ to $\Hi_2$.
We write $\N$ for $\{1,2,\ldots\}$, $\Z_{\geq 0}$ for $\{0,1,2,\ldots\}$, and
$\overline{\Z_{\geq 0}}$ for $\{0,1,2,\ldots,\infty \}$.

For a locally compact group $G$ and a \ca\ $A$, an \emph{action} of $G$ on $A$ will always mean
a group homomorphism $\alpha\colon G\to\Aut(A)$ satisfying the following continuity condition:
for $a\in A$, the map $G\to A$, given by $g\mapsto\alpha_g(a)$ for $g\in G$, is continuous. (This
is usually referred to as ``strong continuity".) We write $A^G$, or $A^\alpha$ if we need to stress
the action $\alpha$, for the $C^*$-subalgebra
\[A^G=\{a\in A\colon \alpha_g(a)=a \ \mbox{ for all } g\in G\}.\]

\indent \textbf{Acknowledgements.} The authors are grateful to Chris Phillips for valuable conversations on
equivariant $K$-theory.
The first named author would like to thank Tron Omland for helpful email correspondence
regarding coactions, and also Gabriele Tornetta for numerous discussions on the topic of this work, as
well as for pointing out that our previous version of \autoref{thm:ConjActsCu} had an unnecessary assumption.
Finally, we thank the anonymous referee for his comments and suggestions, which greatly improved the present
paper, and particularly made it more accessible to the reader.

Most of this work was done while the first named author was visiting the
Westf\"alische Wilhelms-Universit\"at M\"unster in
June-September of 2013, and while both authors were participating in the Thematic Program on Abstract Harmonic
Analysis, Banach and Operator Algebras, at the
Fields Institute for Research in Mathematical Sciences at the University of Toronto, in January-June 2014.
The authors wish to thank both Mathematics departments for their hospitality and for providing a stimulating research environment.

\section{The Equivariant Cuntz semigroup}

In this section, for a continuous action $\alpha\colon G\to\Aut(A)$ of a compact group $G$ on a
\ca\ $A$, we define its equivariant Cuntz semigroup $\Cu^G(A,\alpha)$ (\autoref{df:CuGAa}),
and explore some basic properties. The main result of this section, \autoref{cor:CuGFunctorCCu},
asserts that the equivariant Cuntz semigroup is a functor from the category of $G$-$C^*$-algebras
to the Cuntz category $\CCu$.

\subsection{The Cuntz semigroup and the category $\CCu$.}
This subsection is devoted to reviewing the construction of the ordinary (that is, non
equivariant) Cuntz semigroup, as well as the definition of the Cuntz category $\CCu$.

\vspace{0.3cm}

Let $A$ be a $C^*$-algebra and let $a, b\in A$ be positive elements. We say that $a$ is
\emph{Cuntz subequivalent} to $b$, written $a\precsim b$, if there exists a sequence
$(d_n)_{n\in\N}$ in $A$, such that $\lim\limits_{n\to\infty}\|d_nbd_n^*- a\|=0$.
We say that $a$ is \emph{Cuntz equivalent} to $b$, written $a\sim b$, if $a\precsim b$
and $b\precsim a$.

It is clear that $\precsim$ is a preorder on the set of positive elements of $A$, and
thus $\sim$ is and equivalence relation. We denote by $[a]$ the Cuntz equivalence class
of the element $a\in A_+$.

\begin{df}
The Cuntz semigroup of $A$, denoted by $\Cu(A)$, is defined as the set of equivalence classes
of positive elements of $A\otimes \K$. Addition in $\Cu(A)$ is given by
\[[a]+[b]=\left[\begin{pmatrix}
a & 0\\
0 & b
\end{pmatrix}\right],\]
where the positive element inside the brackets in the right hand side is identified with its
image in $A\otimes \K$ under any $\ast$-isomorphism of $M_2(A\otimes \K)$ with $A\otimes \K$ induced
by an $\ast$-isomorphism of $M_2(\K)$ and $\K$. The set $\Cu(A)$ becomes a partially ordered semigroup when
equipped with the partial order $[a]\le [b]$ if $a\precsim b$.\end{df}

It is easy to see that any $\ast$-homomorphism $\phi\colon A\to B$ induces an
order preserving map $\Cu(\phi)\colon \Cu(A)\to \Cu(B)$ defined by
$\Cu(\phi)([a])=[(\phi\otimes \mathrm{id}_\K)(a)]$ for $a\in (A\otimes\K)_+$.

It was shown in \cite[Theorem 1]{CowEllIva} that $\Cu$ is a functor from the category of
$C^*$-algebras to a subcategory of the category of ordered abelian semigroups. We now
proceed to define this category, which we denote by $\CCu$.

Let $S$ be an ordered semigroup and let $s,t\in S$. We say that $s$ is \emph{compactly
contained} in $t$, and denote this by $s\ll t$, if whenever $(t_n)_{n\in\N}$ is an increasing
sequence in $S$ such that $t\le \sup\limits_{n\in\N} t_n$, there exists $k\in \N$ such that $s\le t_k$.
A sequence $(s_n)_{n\in\N}$ in $S$ is said to be \emph{rapidly increasing}, if $s_n\ll s_{n+1}$ for all $n\in \N$.

\begin{df}\label{def: CCu}
An ordered abelian semigroup $S$ is an object in the category $\CCu$ if it has a zero element and it satisfies the following properties:
\begin{itemize}
\item[(O1)] Every increasing sequence in $S$ has a supremum;
\item[(O2)] For every $s\in S$, there exists a rapidly increasing sequence $(s_n)_{n\in\N}$ in $S$ such that $s=\sup\limits_{n\in\N} s_n$.
\item[(O3)] If $(s_n)_{n\in\N}$ and $(t_n)_{n\in\N}$ are increasing sequences in $S$, then
$$\sup\limits_{n\in\N} s_n +\sup\limits_{n\in\N} t_n=\sup\limits_{n\in\N} (s_n+t_n);$$
\item[(O4)] If $s_1,s_2,t_1,t_2\in S$ satisfy $s_1\ll t_1$ and $s_2\ll t_2$, then $s_1+s_2\ll t_1+t_2$.
\end{itemize}

Let $S$ and $T$ be semigroups in the category $\CCu$. An order preserving semigroup map $\varphi\colon S\to T$ is a morphism in the category $\CCu$ if it preserves the zero element and it satisfies the following properties:
\begin{itemize}
\item[(M1)] If $(s_n)_{n\in\N}$ is an increasing sequence in $S$, then
$$\varphi\left(\sup\limits_{n\in\N} s_n\right)=\sup\limits_{n\in\N} \varphi(s_n);$$
\item[(M2)] If $s, t\in S$ satisfy $s\ll t$, then $\varphi(s)\ll \varphi(t)$.
\end{itemize}
\end{df}

The following observation will be used repeatedly.

\begin{rem}
Let $M$ be a partially ordered semigroup with zero element, and let $S$ be a semigroup in $\CCu$.
Suppose that there exists a semigroup morphism $\varphi\colon M\to S$ (or $\varphi\colon S\to M$)
preserving the zero element and such that:
\be
\item $\varphi$ preserves the order, that is, $x\leq y$ in $M$ implies $\varphi(x)\leq \varphi(y)$ in $S$;
\item $\varphi$ is an order embedding, that is, $\varphi(x)\leq \varphi(y)$ in $S$ implies $x\leq y$ in $M$
(this implies that $\varphi$ is injective); and
\item $\varphi$ is surjective.\ee
Then $M$ belongs to $\CCu$, and $\varphi$ is a $\Cu$-isomorphism. In particular, $\varphi$ automatically
preserves suprema of increasing sequences and the compact containment relation.
\end{rem}

Let $A$ be a $C^*$-algebra and let $a\in A$. Then it can be checked that
$[(a-\varepsilon)_+]\ll [a]$ for all $\varepsilon>0$, and that
$[a]=\sup\limits_{\varepsilon>0} [(a-\varepsilon)_+]$. In particular, it follows that
$\Cu(A)$ satisfies Axiom (O2).

\subsection{The equivariant Cuntz semigroup}
For the rest of this section, we fix a compact group $G$, a \ca\ $A$, and an action
$\alpha\colon G\to\Aut(A)$.

Let $(\Hi_\mu,\mu)$ and $(\Hi_\nu,\nu)$ be separable unitary representations of $G$. We endow
the Banach space $\B(\Hi_\mu,\Hi_\nu)$ with the $G$-action given by
\[g\cdot T=\nu_g\circ T\circ \mu_{g^{-1}},\]
for $g\in G$ and $T\in \B(\Hi_\mu,\Hi_\nu)$. It is clear that $\K(\Hi_\mu,\Hi_\nu)$ is an invariant,
closed subspace, which we will endow with the restricted $G$-action. With these actions, a $G$-invariant
linear map is precisely a map $\Hi_\mu\to \Hi_\nu$ that is equivariant with respect to $\mu$ and $\nu$.

Let $\Hi_1$ and $\Hi_2$ be Hilbert spaces, and let $A$ be a \ca. We will denote by
$\K(\Hi_1,\Hi_2)\otimes A$ the Banach subspace of the \ca\ $\K(\Hi_1\oplus \Hi_2)\otimes A$
of operators of the form
\[\left(
    \begin{array}{cc}
      0 & 0 \\
      \ast & 0 \\
    \end{array}
  \right).\]

With this convention, it is easy to check that for $x\in \K(\Hi_1,\Hi_2)\otimes A$ and
$y\in \K(\Hi_2,\Hi_3)\otimes A$, then the product (composition) $xy$ belongs to
$\K(\Hi_1,\Hi_3)\otimes A$. (We point out that what we denote by $\K(\Hi_1,\Hi_2)\otimes A$
can be canonically identified with $\K(\Hi_1\otimes A,\Hi_2\otimes A)$, where $\Hi_1\otimes A$
and $\Hi_2\otimes A$ denote the exterior tensor products of the Hilbert modules. See, for
example, \cite{Lan_Book}.)

\begin{rem}
Let $G$ be a locally compact group.
If $\mu\colon G\to\U(\Hi_\mu)$ and $\nu\colon G\to \U(\Hi_\nu)$ are unitary representations, then
$\delta=\mu\oplus \nu$ is a unitary representation on $\Hi=\Hi_\mu\oplus\Hi_\nu$, and conjugation
by $\delta$ defines an action $\Ad(\delta)\colon G\to\Aut(\K(\Hi))$.
If moreover $\alpha\colon G\to\Aut(A)$ is an action, there is a natural action $\gamma$ of $G$ on the \ca\
$\K(\Hi)\otimes A$ given by $\gamma_g=\Ad(\delta_g)\otimes\alpha_g$ for all $g\in G$. It is easy to check
that $\K(\Hi_\mu,\Hi_\nu)\otimes A$ is invariant with respect to $\gamma$. We will therefore write
$(\K(\Hi_\mu,\Hi_\nu)\otimes A)^G$ for those elements in $\K(\Hi_\mu,\Hi_\nu)\otimes A$ that are fixed under
$\gamma$.
\end{rem}

\begin{df}\label{df:GCtzeq}
Let $(\Hi_\mu,\mu)$ and $(\Hi_\nu,\nu)$ be separable unitary representations of $G$,
and let $a\in (\K(\Hi_\mu)\otimes A)^G$ and $b\in  (\K(\Hi_\nu)\otimes A)^G$ be positive elements.
We say that $a$ is $G$-{\it Cuntz subequivalent} to $b$, and denote this by $a\precsim_G b$,
if there exists a sequence $(d_n)_{n\in\N}$ in $(\K(\Hi_\mu, \Hi_\nu)\otimes A)^G$ such that
$\lim\limits_{n\to\I}\|d_nbd_n^*- a\|=0$.
We say that $a$ is $G$-{\it Cuntz equivalent} to $b$, and denote this by $a\sim_G b$, if
$a\precsim_G b$ and $b\precsim_G a$. The $G$-Cuntz equivalence class of a positive element
$a\in (\K(\Hi_\mu)\otimes A)^G$ will be denoted by $[a]_G$.
\end{df}

We claim that the relation $\precsim_G$ is transitive.
To see this, let $(\Hi_\mu,\mu)$, $(\Hi_\nu,\nu)$ and $(\Hi_\lambda,\lambda)$ be separable unitary representations
of $G$, and let $a\in (\K(\Hi_\mu)\otimes A)^G$, $b\in (\K(\Hi_\nu)\otimes A)^G$, and
$c\in (\K(\Hi_\lambda)\otimes A)^G$ satisfy $a\precsim_G b$ and $b\precsim_G c$. Fix $\ep>0$, and find
$x\in(\K(\Hi_\mu, \Hi_\nu)\otimes A)^G$ such that
$\|a-xbx^*\|<\frac{\varepsilon}{2}$.
Also, since $b\precsim_G c$ there exists $y\in (\K(\Hi_\nu, \Hi_\lambda)\otimes A)^G$ such that
$\|b-ycy^*\|<\frac{\varepsilon}{2\|x\|}$.
The element $z=xy$ belongs to $(\K(\Hi_\mu, \Hi_\lambda)\otimes A)^G$, and is easily seen to satisfy
$\|a-zcz^*\|<\varepsilon$. Since $\varepsilon>0$ is arbitrary, this implies that $a\precsim_G c$.
In particular, it follows that $\sim_G$ is an equivalence relation.

The following lemma is a simple corollary of \cite[Lemma 2.4]{KirRor_absOI} and the definition of $G$-Cuntz subequivalence.

\begin{prop}\label{lem: G Cuntz relation}
Let $(\Hi_\mu,\mu)$ and $(\Hi_\nu,\nu)$ be separable unitary representations of $G$,
and let $a\in (\K(\Hi_\mu)\otimes A)^G$ and $b\in  (\K(\Hi_\nu)\otimes A)^G$ be positive elements.
The following are equivalent:
\be
\item  $a\precsim_G b$.
\item  For every $\varepsilon>0$, there exists $d\in (\K(\Hi_\mu,\Hi_\nu)\otimes A)^G$ such that
$$(a-\varepsilon)_+=dbd^*.$$
\item  For every $\varepsilon>0$, there exist $\delta>0$ and $d\in (\K(\Hi_\mu,\Hi_\nu)\otimes A)^G$, such that
$$(a-\varepsilon)_+=d(b-\delta)_+d^*.$$
\ee
\end{prop}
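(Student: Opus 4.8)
The plan is to reduce the statement to the corresponding non-equivariant fact about Cuntz subequivalence inside a single $C^*$-algebra, and then to quote \cite[Lemma 2.4]{KirRor_absOI} directly. Let $B=\K(\Hi_\mu\oplus\Hi_\nu)\otimes A$, equipped with the $G$-action $g\mapsto\Ad((\mu\oplus\nu)_g)\otimes\alpha_g$ (this is the action considered in the Remark preceding \autoref{df:GCtzeq}). Then $B^G$ is a $C^*$-algebra, and the $G$-invariant projections $p_\mu,p_\nu$ onto the two summands of $\Hi_\mu\oplus\Hi_\nu$ decompose $B^G$ into four pieces: $p_\mu B^G p_\mu=(\K(\Hi_\mu)\otimes A)^G$, $p_\nu B^G p_\nu=(\K(\Hi_\nu)\otimes A)^G$, and the two off-diagonal pieces $p_\mu B^G p_\nu$ and $p_\nu B^G p_\mu$, one of which is $(\K(\Hi_\mu,\Hi_\nu)\otimes A)^G$. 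Under these identifications, $a$ and $b$ become orthogonal positive elements of $B^G$ lying in the two diagonal corners, and the sequences $(d_n)$ permitted in \autoref{df:GCtzeq} are exactly the sequences taking values in the relevant off-diagonal piece.

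The key step is the elementary observation that $a\precsim_G b$ holds if and only if $a\precsim b$ holds in the $C^*$-algebra $B^G$ (ordinary Cuntz subequivalence computed there). One implication is immediate, since an element of an off-diagonal piece of $B^G$ is in particular an element of $B^G$. For the converse, if $e_n\in B^G$ satisfy $\|e_nbe_n^*-a\|\to 0$, one writes each $e_n$ as a sum of its four blocks and multiplies out: because $b$ is supported in one diagonal corner and $a$ in the other, the only contribution of $e_nbe_n^*$ to the corner containing $a$ comes from the relevant off-diagonal block $d_n$ of $e_n$, so that already $\|d_nbd_n^*-a\|\to 0$, with $d_n\in(\K(\Hi_\mu,\Hi_\nu)\otimes A)^G$. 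The same bookkeeping applies verbatim to an exact equation $(a-\varepsilon)_+=dbd^*$, or $(a-\varepsilon)_+=d(b-\delta)_+d^*$, with $d\in B^G$: replacing $d$ by its off-diagonal block changes neither side.

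Granting this, the three conditions in the proposition are exactly the translations, through the correspondence above, of the three equivalent conditions produced by \cite[Lemma 2.4]{KirRor_absOI} applied to the positive elements $a,b$ of $B^G$: (i) $a\precsim b$ in $B^G$; (ii) for every $\varepsilon>0$ there is $d\in B^G$ with $(a-\varepsilon)_+=dbd^*$; and (iii) for every $\varepsilon>0$ there are $\delta>0$ and $d\in B^G$ with $(a-\varepsilon)_+=d(b-\delta)_+d^*$. Indeed (i) corresponds to (1) by the previous paragraph, while a witness $d$ for (ii) or (iii) may be replaced by its off-diagonal block without affecting the equation, producing a witness in $(\K(\Hi_\mu,\Hi_\nu)\otimes A)^G$ as demanded in (2) or (3); conversely any such equivariant off-diagonal witness is in particular an element of $B^G$. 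I do not anticipate a genuine obstacle here — the result is labelled a simple corollary for good reason — and the only points deserving attention are keeping the block-matrix computations of the second paragraph straight, checking that $(\K(\Hi_\mu,\Hi_\nu)\otimes A)^G$ really is the off-diagonal corner $p_\mu B^G p_\nu$ (so that the functional calculus and multiplications invoked in \cite[Lemma 2.4]{KirRor_absOI} take place inside $B^G$), and matching the two conventions for off-diagonal pieces so that the block one extracts composes correctly with $b$.
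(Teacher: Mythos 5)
Your argument is correct and is exactly the route the paper intends: the paper gives no proof beyond attributing the result to \cite[Lemma 2.4]{KirRor_absOI} and the definition of $\precsim_G$, and your reduction --- realizing $a$ and $b$ as orthogonal corner elements of $(\K(\Hi_\mu\oplus\Hi_\nu)\otimes A)^G$, noting that compression by the invariant multiplier projections $p_\mu,p_\nu$ turns any witness in the fixed-point algebra into an off-diagonal equivariant witness without changing either side, and then quoting the Kirchberg--R{\o}rdam lemma in that fixed-point algebra --- is the natural way to make that one-line attribution precise. The only point to watch is the one you already flag, namely the paper's (slightly inconsistent) orientation convention for $\K(\Hi_\mu,\Hi_\nu)\otimes A$, which affects only notation and not the substance.
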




\begin{df}\label{df:CuGAa}
Let $G$ be a compact group, let $A$ be a \ca, and let $\alpha\colon G\to \Aut(A)$ be a continuous action.
Define the {\it equivariant Cuntz semigroup} $\Cu^G(A,\alpha)$, of the dynamical system $(G,A,\alpha)$,
to be the set of $G$-Cuntz equivalence classes of positive elements in all of the algebras of the form
$(\K(\Hi_\mu)\otimes A)^G$, where $(\Hi_\mu,\mu)$ is a separable unitary representation of $G$.

We define addition on $\Cu^G(A,\alpha)$ as follows. Let $(\Hi_\mu,\mu)$ and $(\Hi_\nu,\nu)$ be separable
unitary representations of $G$, and let $a\in (\K(\Hi_\mu)\otimes A)^G$ and $b\in  (\K(\Hi_\nu)\otimes A)^G$
be positive elements. Denote by $a\oplus b$ the positive element
\[a\oplus b= \begin{pmatrix}
a & 0\\
0 & b
\end{pmatrix}\]
in $(\K(\Hi_\mu\oplus \Hi_\nu)\otimes A)^G$, and
set $[a]_G+[b]_G=[a\oplus b]_G$. (One must check that the definition is independent of the representatives,
but this is routine.)

Finally, we endow $\Cu^G(A,\alpha)$ with the partial order given by $[a]_G\le [b]_G$ if $a\precsim_G b$.
(One has to again check that the order is well defined; we omit the proof.)
\end{df}

It is clear that if $\beta\colon G\to\Aut(B)$ is another continuous action of $G$ on a \ca\ $B$, and if
$\psi\colon A\to B$ is an equivariant $\ast$-homomorphism, then $\psi$ induces an ordered semigroup homomorphism
$\Cu^G(\psi)\colon \Cu^G(A,\alpha)\to \Cu^G(B,\beta)$, given by
\[\Cu^G(\psi)([a]_G)=[(\id_{\K(\Hi_\mu)}\otimes\psi)(a)]_G\]
for $a\in (\K(\Hi_\mu)\otimes A)^G$.

\vspace{0.3cm}

The rest of this section is devoted to proving that the equivariant Cuntz semigroup is a functor from
the category of $G$-$C^*$-algebras to the category $\CCu$ (\autoref{def: CCu}).
This will be accomplished in \autoref{cor:CuGFunctorCCu}.

We point out that in Section~3, we will show that the equivariant
Cuntz semigroup has additional structure, and that $\Cu^G(A,\alpha)$ belongs to a certain category of
semimodules; see \autoref{df:CCuG} and \autoref{thm:EqCtzSmgpCCuG}.


\begin{lma}
\label{lem:EquivHs}
Let $(\Hi_{\mu}, \mu)$ and $(\Hi_\nu, \nu)$ be separable unitary representations of $G$,
and let $a\in (\K(\Hi_\mu)\otimes A)^G$ be a positive element.
Suppose that there exists $V\in (\mathcal{B}(\Hi_{\mu},\Hi_\nu)\otimes A)^G$ satisfying
$V^*V=\id_{\Hi_\mu}\otimes \id_A$.
Then $VaV^*$ is a positive element in $(\K(\Hi_\nu)\otimes A)^G$, and $a\sim_G VaV^*$.

Moreover, if $W\in (\mathcal{B}(\Hi_{\mu},\Hi_\nu)\otimes A)^G$
is another element satisfying $W^*W=\id_{\Hi_\mu}\otimes \id_A$, then $WaW^*\sim_G VaV^*$.
\end{lma}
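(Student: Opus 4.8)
The plan is to verify the three assertions in the first sentence of the lemma in order---that $VaV^*$ is positive, that it lies in $(\K(\Hi_\nu)\otimes A)^G$, and that $a\sim_G VaV^*$---and then to deduce the ``Moreover'' clause formally from transitivity of $\sim_G$.

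The first two are routine. Positivity of $VaV^*$ is immediate: with $T=a^{1/2}V^*$ one has $VaV^*=T^*T$, a positive element of $\K(\Hi_\nu)\otimes A$. For membership, note that $aV^*$ is the product of the compact element $a$ with the bounded element $V^*$, hence is compact, and then $V\cdot(aV^*)$ is again compact, so $VaV^*\in\K(\Hi_\nu)\otimes A$; here I use the standard fact that a bounded operator times a compact one is compact, together with the composition conventions set up in this subsection. Equivariance is automatic, since the $G$-actions on $\B(\Hi_\mu,\Hi_\nu)\otimes A$, on $\K(\Hi_\mu)\otimes A$ and on $\K(\Hi_\nu)\otimes A$ are all restrictions of the single, multiplicative action $\gamma_g=\Ad((\mu\oplus\nu)_g)\otimes\alpha_g$ on $\K(\Hi_\mu\oplus\Hi_\nu)\otimes A$, so $\gamma_g(VaV^*)=\gamma_g(V)\gamma_g(a)\gamma_g(V)^*=VaV^*$ because each factor is fixed. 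Thus $VaV^*\in(\K(\Hi_\nu)\otimes A)^G_+$.

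The substance is the equivalence $a\sim_G VaV^*$, which I would establish one subequivalence at a time using \autoref{lem: G Cuntz relation}(2). Fix $\varepsilon>0$ and let $g_\varepsilon$ be the continuous function on $[0,\infty)$ with $g_\varepsilon(0)=0$ and $g_\varepsilon(t)^2t=(t-\varepsilon)_+$ for all $t$; then $g_\varepsilon(a)\in(\K(\Hi_\mu)\otimes A)^G$ (functional calculus of an invariant positive element by a function vanishing at $0$ stays in the compacts and stays invariant) and $(a-\varepsilon)_+=g_\varepsilon(a)\,a\,g_\varepsilon(a)$. Since $V^*V=\id_{\Hi_\mu}\otimes\id_A$, the map $x\mapsto VxV^*$ is an equivariant $\ast$-homomorphism, so it commutes with continuous functions vanishing at $0$; hence $(VaV^*-\varepsilon)_+=V(a-\varepsilon)_+V^*=(Vg_\varepsilon(a))\,a\,(Vg_\varepsilon(a))^*$, and since $Vg_\varepsilon(a)$ is an equivariant compact operator (the bounded equivariant $V$ composed with the compact equivariant $g_\varepsilon(a)$), \autoref{lem: G Cuntz relation} gives $VaV^*\precsim_G a$. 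For the reverse direction, $V^*V=\id$ also gives $a=V^*(VaV^*)V$, whence $(a-\varepsilon)_+=g_\varepsilon(a)V^*\,(VaV^*)\,Vg_\varepsilon(a)=(g_\varepsilon(a)V^*)(VaV^*)(g_\varepsilon(a)V^*)^*$ with $g_\varepsilon(a)V^*$ again an equivariant compact operator, so $a\precsim_G VaV^*$ by the same proposition. Therefore $a\sim_G VaV^*$. (One could instead avoid the $g_\varepsilon$'s by using an equivariant approximate unit $(e_n)$ for $\K(\Hi_\mu)\otimes A$ and the witnesses $Ve_n^{1/2}$, resp.\ $e_n^{1/2}V^*$.)

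Finally, for the ``Moreover'' statement, apply what has just been proved with $W$ in place of $V$: this shows both that $WaW^*\in(\K(\Hi_\nu)\otimes A)^G_+$ and that $WaW^*\sim_G a$; combining with $a\sim_G VaV^*$ and using that $\sim_G$ is an equivalence relation (transitivity of $\precsim_G$ was verified above) yields $WaW^*\sim_G VaV^*$. I do not expect a real obstacle here: the only points requiring a little care are keeping track of which of $\Hi_\mu$, $\Hi_\nu$ plays which role when invoking \autoref{lem: G Cuntz relation} in the two subequivalences, and checking that the auxiliary elements $Vg_\varepsilon(a)$ and $g_\varepsilon(a)V^*$ really are \emph{compact} (which holds because $g_\varepsilon(0)=0$, so $g_\varepsilon(a)$ is compact, and composing with the bounded $V$ or $V^*$ preserves both compactness and $G$-invariance).
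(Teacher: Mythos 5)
Your proof is correct and follows essentially the same route as the paper: both directions of $a\sim_G VaV^*$ are obtained by conjugating by equivariant compact elements of the form (functional calculus of $a$)$\cdot V^*$ or $V\cdot$(functional calculus of $a$), and the ``Moreover'' clause follows by transitivity. The only cosmetic difference is that you produce exact witnesses for $(a-\varepsilon)_+$ via \autoref{lem: G Cuntz relation}, whereas the paper verifies the definition of $\precsim_G$ directly with the approximating sequence $d_n=a^{1/n}V^*$ (resp.\ $Va^{1/n}$) --- precisely the approximate-unit variant you mention in your parenthetical remark.
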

\begin{proof} It is clear that $VaV^*$ is a $G$-invariant element in
$\K(\Hi_\nu)\otimes A$. Likewise, for $n\in\N$, we have
$a^{\frac{1}{n}}V^*\in (\K(\Hi_\nu)\otimes A)^G$. Now,
\[\lim_{n\to\I}\left\|\left(a^{1/n}V^*\right)(VaV^*)\left(a^{1/n}V^*\right)^*-a\right\|=
\lim_{n\to\I}\left\|a^{1/n}aa^{1/n}-a\right\|=0,\]
so $a\precsim_G VaV^*$. Similarly, $Va^{\frac{1}{n}}\in (\K(\Hi_\mu)\otimes A)^G$, and one shows that
\[\lim_{n\to\I}\left\|\left(Va^{1/n}\right)a\left(Va^{1/n}\right)^*-VaV^*\right\|=0.\]
We conclude that $a\sim_G VaV^*$, as desired.

The last part of the statement is immediate. \end{proof}

Let $(\Hi_\mu, \mu)$ and $(\Hi_\nu,\nu)$ be separable unitary representations of $G$, such that
$(\Hi_\mu, \mu)$ is unitarily equivalent to a subrepresentation of $(\Hi_\nu,\nu)$. Then there exists
$W_{\mu, \nu}\in \B(\Hi_\mu, \Hi_\nu)^G$ satisfying $W_{\mu, \nu}^*W_{\mu, \nu}=\id_{\Hi_\mu}$.
Set
\[V^A_{\mu, \nu}=W_{\mu, \nu}\otimes \id_A\in \B(\Hi_\mu, \Hi_\nu)\otimes A.\]
It is clear that $V^A_{\mu,\nu}$ is $G$-invariant and that $(V^A_{\mu, \nu})^*V^A_{\mu, \nu}=\id_{\Hi_\mu}\otimes \id_A$.

Set
\[\iota^A_{\mu,\nu}=\Ad(V^A_{\mu,\nu})\colon (\K(\Hi_\mu)\otimes A)^G\to (\K(\Hi_\nu)\otimes A)^G.\]
Then $\iota^A_{\mu,\nu}$ is a $\ast$-homomorphism, since $V^A_{\mu,\nu}$ is an isometry. Let
\[j^A_{\mu,\nu}= \Cu(\Ad(V^A_{\mu,\nu}))\colon \Cu((\K(\Hi_\mu)\otimes A)^G)\to \Cu((\K(\Hi_\nu)\otimes A)^G)\]
be the $\CCu$-morphism given by $j^A_{\mu,\nu}=\Cu(\iota^A_{\mu,\nu})$. Whenever $A$ and $\alpha$ are
clear from the context, we will write $V_{\mu,\nu}$ for $V^A_{\mu,\nu}$, and similarly for $\iota_{\mu,\nu}$
and $j_{\mu,\nu}$.

\begin{lma}\label{lem:jmunu}
Adopt the notation from the discussion above.
\be\item The map $j_{\mu,\nu}$ is independent
of the choice of $V_{\mu,\nu}$.
\item If $(\Hi_\nu,\nu)$ is unitarily equivalent to a subrepresentation of the separable representation
$(\Hi_\lambda,\lambda)$, then $j_{\nu,\lambda}\circ j_{\mu, \nu}=j_{\mu,\lambda}$.\ee
\end{lma}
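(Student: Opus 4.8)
The plan is to reduce both parts to the single fact, established just before the statement, that $j_{\mu,\nu}=\Cu(\iota_{\mu,\nu})$ with $\iota_{\mu,\nu}=\Ad(V_{\mu,\nu})$, together with functoriality of $\Cu$. Functoriality is immediate from the defining formula $\Cu(\varphi)([a])=[(\varphi\otimes\id_{\K})(a)]$: one has $(\varphi\circ\psi)\otimes\id_{\K}=(\varphi\otimes\id_{\K})\circ(\psi\otimes\id_{\K})$, hence $\Cu(\varphi\circ\psi)=\Cu(\varphi)\circ\Cu(\psi)$. I will use this freely.

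For part (1), let $V_{\mu,\nu}$ and $\widetilde V$ be two isometries arising as in the discussion (each of the form $W\otimes\id_A$ for an isometric intertwiner $W\colon\Hi_\mu\to\Hi_\nu$), and let $j,\widetilde j$ denote the corresponding $\CCu$-morphisms. To show $j=\widetilde j$ I would take an arbitrary positive element $a$ of the stabilization $(\K(\Hi_\mu)\otimes A)^G\otimes\K$ and apply the ``Moreover'' clause of \autoref{lem:EquivHs} to obtain $\widetilde V a\widetilde V^{\,*}\sim_G V_{\mu,\nu}a V_{\mu,\nu}^*$; the proof of that lemma applies verbatim after replacing $A$ by $A\otimes\K$ (with $G$ acting trivially on $\K$), which covers the stabilized setting. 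The crucial observation is that both $\widetilde V a\widetilde V^{\,*}$ and $V_{\mu,\nu}a V_{\mu,\nu}^*$ lie over the \emph{same} representation $\nu$, so any witnessing sequence for $\precsim_G$ between them already lies in $(\K(\Hi_\nu)\otimes A)^G\otimes\K$; hence $\sim_G$ between them coincides with ordinary Cuntz equivalence in $(\K(\Hi_\nu)\otimes A)^G$, giving $\widetilde j([a])=j([a])$. As $[a]$ ranges over all of $\Cu((\K(\Hi_\mu)\otimes A)^G)$, this yields $\widetilde j=j$.

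For part (2), the computation is short. As $\ast$-homomorphisms $(\K(\Hi_\mu)\otimes A)^G\to(\K(\Hi_\lambda)\otimes A)^G$ we have $\iota_{\nu,\lambda}\circ\iota_{\mu,\nu}=\Ad(V_{\nu,\lambda})\circ\Ad(V_{\mu,\nu})=\Ad(V_{\nu,\lambda}V_{\mu,\nu})$, and $V_{\nu,\lambda}V_{\mu,\nu}=(W_{\nu,\lambda}W_{\mu,\nu})\otimes\id_A$. I would then check that $W_{\nu,\lambda}W_{\mu,\nu}$ is an isometric intertwiner $\Hi_\mu\to\Hi_\lambda$: it is $G$-equivariant as a composite of $G$-equivariant maps, and $(W_{\nu,\lambda}W_{\mu,\nu})^*(W_{\nu,\lambda}W_{\mu,\nu})=W_{\mu,\nu}^*W_{\mu,\nu}=\id_{\Hi_\mu}$, using $W_{\nu,\lambda}^*W_{\nu,\lambda}=\id_{\Hi_\nu}$. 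So $V_{\nu,\lambda}V_{\mu,\nu}$ is an admissible choice of $V_{\mu,\lambda}$, whence $\iota_{\nu,\lambda}\circ\iota_{\mu,\nu}$ is the $\iota_{\mu,\lambda}$ attached to that choice; applying $\Cu$ and functoriality gives $j_{\nu,\lambda}\circ j_{\mu,\nu}=\Cu(\iota_{\mu,\lambda})$ for this particular choice, and part (1) identifies the right-hand side with $j_{\mu,\lambda}$ independently of which $W_{\mu,\lambda}$ was used to define the latter.

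I do not anticipate a real obstacle here; the work is entirely bookkeeping. The only subtle point is not conflating the relation $\precsim_G$, which in general allows ``off-diagonal'' intertwiners between modules over different representations, with Cuntz subequivalence inside a fixed-point algebra — these agree precisely when the two compared elements sit over the same representation, which is exactly the situation in part (1) — and remembering that $j_{\mu,\nu}$ is a map on the stabilized semigroup, so part (1) must be verified for positive elements of $(\K(\Hi_\mu)\otimes A)^G\otimes\K$ rather than merely of $(\K(\Hi_\mu)\otimes A)^G$. Part (2), once phrased in terms of $\Ad$ of a product of isometries, is essentially a one-line verification.
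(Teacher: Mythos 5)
Your proof is correct and follows essentially the same route as the paper, which simply declares part (1) an immediate consequence of \autoref{lem:EquivHs} and part (2) straightforward. The details you supply --- that $\sim_G$ between two elements sitting over the same representation $\nu$ reduces to ordinary Cuntz equivalence in $(\K(\Hi_\nu)\otimes A)^G$ because the witnessing sequence is forced to lie there, and that the verification must be carried out in the stabilization --- are exactly the points the paper leaves implicit.
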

\begin{proof}
The first part is an immediate consequence of \autoref{lem:EquivHs}.
The second one is straightforward.
\end{proof}

Next, we show that $\Cu^G(A,\alpha)$ is an object in $\CCu$.

\begin{prop} \label{prop:CuGAainCCu}
Let $\alpha\colon G\to \Aut(A)$ be an action of a compact group $G$ on a \ca\ $A$.
Then $\Cu^G(A,\alpha)$ is a semigroup in $\CCu$.\end{prop}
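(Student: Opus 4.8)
The plan is to verify the axioms (O1)--(O4) of \autoref{def: CCu} for $\Cu^G(A,\alpha)$, reducing each to the corresponding fact about ordinary Cuntz semigroups of the fixed-point algebras $(\K(\Hi_\mu)\otimes A)^G$ via the maps $j_{\mu,\nu}$ from \autoref{lem:jmunu}. The key structural observation is that $\Cu^G(A,\alpha)$ is, up to order isomorphism, the inductive limit (as an ordered semigroup, along the directed system of finite-dimensional representations ordered by containment, with connecting maps $j_{\mu,\nu}$) of the semigroups $\Cu((\K(\Hi_\mu)\otimes A)^G)$. Indeed, every separable representation of a compact group is a countable direct sum of irreducibles, so every element of $\Cu^G(A,\alpha)$ is represented by a positive element of some $(\K(\Hi_\mu)\otimes A)^G$ with $\Hi_\mu$ a (countable, but by a standard cutoff argument effectively finite-multiplicity) sum of irreducibles; and $G$-Cuntz subequivalence of $a\in(\K(\Hi_\mu)\otimes A)^G$ and $b\in(\K(\Hi_\nu)\otimes A)^G$ is detected inside the fixed-point algebra of any common upper bound $\Hi_\lambda\supseteq \Hi_\mu,\Hi_\nu$, i.e. $[a]_G\le[b]_G$ if and only if $j_{\mu,\lambda}[a]\le j_{\nu,\lambda}[b]$ in $\Cu((\K(\Hi_\lambda)\otimes A)^G)$. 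This last equivalence is where I would use \autoref{lem:EquivHs} and \autoref{lem: G Cuntz relation} to move representatives into a common algebra.

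With that picture in place, the axioms follow from general nonsense about inductive limits in $\CCu$. First one notes each $\Cu((\K(\Hi_\mu)\otimes A)^G)$ is an object of $\CCu$ by \cite[Theorem 1]{CowEllIva}, since $(\K(\Hi_\mu)\otimes A)^G$ is a $C^*$-algebra. For (O1): given an increasing sequence $([a_n]_G)_n$ in $\Cu^G(A,\alpha)$, since the indexing over representations is countably directed and each $a_n$ lives in some $(\K(\Hi_{\mu_n})\otimes A)^G$, I would pass to the single representation $\Hi_\mu=\bigoplus_n \Hi_{\mu_n}$ (separable), replace each $[a_n]_G$ by its image under $j_{\mu_n,\mu}$, obtaining an increasing sequence in the single $\CCu$-semigroup $\Cu((\K(\Hi_\mu)\otimes A)^G)$, which has a supremum there; one then checks this is a supremum in $\Cu^G(A,\alpha)$ using that each $j_{\nu,\mu}$ is order-preserving and an order-embedding in the relevant sense. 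Axiom (O2) is handled exactly as in the non-equivariant case: for a positive $a\in(\K(\Hi_\mu)\otimes A)^G$ one has $[(a-\varepsilon)_+]_G\ll[a]_G$ and $[a]_G=\sup_{\varepsilon>0}[(a-\varepsilon)_+]_G$, where the compact containment statement is verified using (O1) together with the usual functional-calculus estimate $(a-\varepsilon)_+ \precsim_G b$ whenever $\|a-b\|$ is small (which follows from \autoref{lem: G Cuntz relation}). Axiom (O3) is immediate from the same statement for each $\Cu((\K(\Hi_\mu)\otimes A)^G)$ after moving two increasing sequences into a common representation as in (O1). For (O4), I would show $s\ll t$ in $\Cu^G(A,\alpha)$ is equivalent to $j_{\mu,\lambda}(s)\ll j_{\nu,\lambda}(t)$ in $\Cu((\K(\Hi_\lambda)\otimes A)^G)$ for a common upper bound $\lambda$ (one direction uses that $j_{\mu,\lambda}$ preserves $\ll$ since it is a $\CCu$-morphism, the other that $j_{\mu,\lambda}$ preserves and reflects suprema of the relevant increasing sequences), and then (O4) descends from (O4) in that single $C^*$-algebra's Cuntz semigroup, using additivity of $\oplus$ under the $j$'s.

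The main obstacle I expect is the bookkeeping needed to make the ``common upper bound'' reductions precise, i.e. verifying that the connecting maps $j_{\mu,\lambda}$ not only preserve order but reflect it and behave well under suprema and compact containment, so that relations in $\Cu^G(A,\alpha)$ are faithfully computed in a single fixed-point algebra. Concretely, the crux is the claim: for positive $a\in(\K(\Hi_\mu)\otimes A)^G$, $b\in(\K(\Hi_\nu)\otimes A)^G$, and any $\Hi_\lambda$ containing copies of $\Hi_\mu$ and $\Hi_\nu$, one has $a\precsim_G b$ if and only if $\iota_{\mu,\lambda}(a)\precsim \iota_{\nu,\lambda}(b)$ inside $(\K(\Hi_\lambda)\otimes A)^G$; the forward direction is a direct conjugation computation, and the reverse uses \autoref{lem:EquivHs} to reabsorb the isometries $V_{\mu,\lambda}, V_{\nu,\lambda}$. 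Once this equivalence and its $\ll$-analog are nailed down, the four axioms are formal consequences as sketched, and one invokes the \autoref{rem} following \autoref{def: CCu} (with $M=\Cu^G(A,\alpha)$, $S$ the inductive-limit $\CCu$-semigroup, $\varphi$ the canonical identification) to conclude cleanly that $\Cu^G(A,\alpha)$ is an object of $\CCu$.
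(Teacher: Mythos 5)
Your proposal is correct and follows essentially the same route as the paper: each of the axioms (O1)--(O4) is verified by pushing all the relevant representatives into a single separable representation via the connecting maps $j_{\mu,\nu}$ and then invoking the corresponding axiom in the ordinary Cuntz semigroup $\Cu((\K(\Hi_\mu)\otimes A)^G)$, which is exactly the structure of the paper's four claims (the paper only formalizes the inductive-limit identification you use as an organizing principle in the theorem that follows, but the proof content is the same). One small inaccuracy: your opening framing of the directed system as indexed by \emph{finite-dimensional} representations is wrong --- it must range over all separable representations, since for instance the class of a strictly positive element of $\K(L^2(G))^G$ is not represented in any finite-dimensional piece --- but this does not affect your argument, because in the actual verification of (O1), (O3) and (O4) you correctly pass to the separable (generally infinite-dimensional) direct sum $\bigoplus_{n}\Hi_{\mu_n}$.
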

\begin{proof}
We will check axioms (O1) through (O4) from \autoref{def: CCu}. We divide the proof into four claims.

\emph{Claim 1: $\Cu^G(A,\alpha)$ satisfies (O1)}.
Let $(s_n)_{n\in\N}$ be an increasing sequence in $\Cu^G(A,\alpha)$. Choose separable representations
$(\Hi_{\mu_n},\mu_n)$ of $G$ and positive elements $a_n\in (\K(\Hi_{\mu_n})\otimes A)^G$ with $[a_n]_G=s_n$
for $n\in\N$.

Set $(\Hi_\mu,\mu)=\bigoplus\limits_{n\in\N}(\Hi_{\mu_n},\mu_n)$, which is a separable representation.
Since $\mu_n$ is a subrepresentation of $\mu$, there exists a natural map
\[j_{\mu_n,\mu}\colon \Cu((\K(\Hi_{\mu_n})\otimes A)^G)\to \Cu((\K(\Hi_{\mu})\otimes A)^G)\]
in the category $\CCu$. Since $\Cu((\K(\Hi_{\mu})\otimes A)^G)$ is an object in $\CCu$,
the supremum $s=\sup\limits_{n\in\N} j_{\mu_n,\mu}(s_n)$ exists in $\Cu((\K(\Hi_{\mu})\otimes A)^G)$.
Choose a positive element $a\in (\K(\Hi_{\mu})\otimes A)^G$ with $[a]=s$. We claim that $[a]_G$
is the supremum of $(s_n)_{n\in\N}$ in $\Cu^G(A,\alpha)$.

Let $t\in \Cu^G(A,\alpha)$ satisfy $s_n\leq t$ for all $n\in\N$. Choose a separable representation
$(\Hi_\nu,\nu)$ and a positive element $b\in(\K(\Hi_{\nu})\otimes A)^G$ representing $t$.
Then
\[j_{\mu_n,\mu\oplus\nu}(s_n)\leq j_{\mu_n,\mu\oplus\nu}(t)\]
for all $n\in\N$. It is clear that $a\precsim b$ in $(\K(\Hi_{\mu})\otimes A)^G$, so
$[a]_G\leq [b]_G$ in $\Cu^G(A,\alpha)$, as desired.

\emph{Claim 2: $\Cu^G(A,\alpha)$ satisfies (O2).}
Given $s\in \Cu^G(A,\alpha)$ choose $\mu\in \Cu(G)$ and a positive element
$a\in (\K(\Hi_\mu)\otimes A)^G$ with $[a]_G=s$. Then $([(a-\frac{1}{n})_+])_{n\in\N}$ is a rapidly
increasing sequence in $\Cu((\K(\Hi_\mu)\otimes A)^G)$, and its supremum is $[a]$. Using the
description of suprema from the above paragraph, it follows that the same is true for the elements
$[(a-\frac{1}{n})_+]_G$ in $\Cu^G(A,\alpha)$, as desired.

\emph{Claim 3: $\Cu^G(A,\alpha)$ satisfies (O3).}
Let $(s_n)_{n\in\N}$ and $(t_n)_{n\in\N}$ be increasing sequences in $\Cu^(G,\alpha)$. For $n\in\N$, choose
separable representations $(\Hi_{\mu_n},\mu_n)$ and $(\Hi_{\nu_n},\nu_n)$ of $G$, and positive elements
$a_n\in (\K(\Hi_{\mu_n})\otimes A)^G$ and $b_n\in (\K(\Hi_{\nu_n})\otimes A)^G$ satisfying $[a_n]_G=s_n$
and $[b_n]_G=t_n$. Set
\[(\Hi_\mu,\mu)=\bigoplus\limits_{n\in\N}(\Hi_{\mu_n},\mu_n) \ \mbox{ and } \
(\Hi_\nu,\nu)=\bigoplus\limits_{n\in\N}(\Hi_{\nu_n},\nu_n),\]
which are separable representations of $G$. Similarly to what was one in Claim 1, find positive elements
$a\in (\K(\Hi_{\mu})\otimes A)^G$ and $b\in (\K(\Hi_{\nu})\otimes A)^G$ satisfyin
\[ [a]=\sup_{n\in\N} j_{\mu_n,\mu}([a_n])\in \Cu((\K(\Hi_\mu)\otimes A)^G) \ \mbox{ and } \
[b]=\sup_{n\in\N} j_{\nu_n,\nu}([b_n])\in \Cu((\K(\Hi_\nu)\otimes A)^G).\]
Then $[a]_G=\sup\limits_{n\in\N} s_n$ and $[b]_G=\sup\limits_{n\in\N} t_n$, by the proof of Claim 1. Using
that $\Cu((\K(\Hi_{\mu\oplus\nu})\otimes A)^G)$ is an object in $\CCu$ at the second step, we get
\begin{align*}
j_{\mu,\mu\oplus\nu}([a])+j_{\nu,\mu\oplus\nu}([b])&= \sup_{n\in\N} j_{\mu_n,\mu\oplus\nu}([a_n])+\sup_{n\in\N} j_{\nu_n,\mu\oplus\nu}([b_n])\\
&\sup_{n\in\N} \left[j_{\mu_n,\mu\oplus\nu}([a_n])+ j_{\nu_n,\mu\oplus\nu}([b_n])\right].
\end{align*}
It is then clear that $[a]_G+[b]_G$ is the supremum of $(s_n+t_n)_{n\in\N}$ in $\Cu^G(A,\alpha)$, and the claim is proved.

\emph{Claim 4: $\Cu^G(A,\alpha)$ satisfies (O4).}
Let $s_1,s_2,t_1,t_2\in \Cu^G(A,\alpha)$ satisfy $s_j\ll t_j$ for $j=1,2$. In order to check that $s_1+s_2\ll t_1+t_2$,
let $(r_n)_{n\in\N}$ be an increasing sequence in $\Cu^G(A,\alpha)$ satisfying $t_1+t_2\leq \sup\limits_{n\in\N} r_n$. Find
a large enough separable representation $(\Hi_\mu,\mu)$ of $G$, and positive elements
\[a_1,a_2,b_1,b_2,c_n\in (\K(\Hi_\mu)\otimes A)^G,\]
satisfying $[a_j]_G=s_j, [b_j]_G=t_j$ and $[c_n]_G=r_n$ for $j=1,2$ and for $n\in\N$.
Then $[a_j]\ll [b_j]$, for $j=1,2$, and $[b_1]+[b_2]\leq \sup\limits_{n\in\N}[c_n]$ in $\Cu((\K(\Hi_\mu)\otimes A)^G)$.
Since $\Cu((\K(\Hi_\mu)\otimes A)^G)$ is an object in $\CCu$, there exists $m\in\N$ such that
$[a_1]+[a_2]\leq [c_m]$. It follows that
\[s_1+s_2=[a_1]_G+[a_2]_G\leq [c_m]_G=r_m,\]
as desired. This finishes the proof of the claim and the proposition.
\end{proof}

\begin{rem} It follows from the above proof that the supremum of an increasing sequence of elements
in $\Cu^G(A,\alpha)$ can be computed in a single space of the form $(\K(\Hi_\mu)\otimes A)^G$.
Likewise, compact containment can also be verified in a single separable representation.\end{rem}

Let us consider the action on $\K(\ell^2(\N)\otimes \Hi_\mu\otimes A)$ induced by tensor product of the trivial action of $G$ on $\ell^2(\N)$ and the given action of $G$ on $\Hi_\mu\otimes A$. Then $\K(\ell^2(\N)\otimes \Hi_\mu\otimes A)^G$ is naturally isomorphic to $\K(\ell^2(\N))\otimes ((\K(\Hi_\mu)\otimes A)^G)$. Thus, the Cuntz semigroup of $(\K(\Hi_\mu)\otimes A)^G$ can be naturally identified with the set of (ordinary) Cuntz equivalences classes of positive elements in $(\K(\ell^2(\N)\otimes \Hi_\mu)\otimes A)^G$.

Fix $[\mu]\in \Cu(G)$. Then the inclusion
\[(\K(\ell^2(\N)\otimes \Hi_\mu)\otimes A)^G\hookrightarrow \bigsqcup_{[\nu]\in\Cu(G)} (\K(\Hi_\nu)\otimes A)^G\]
induces a semigroup homomorphism
\begin{align}\label{eq: i}
i_\mu\colon \Cu((\K(\Hi_\mu)\otimes A)^G)\to \Cu^G(A,\alpha),
\end{align}
which is given by $i_{\mu}([a])=[a]_G$ for a positive element $a\in (\K(\ell^2(\N)\otimes \Hi_\mu)\otimes A)^G$.
By \autoref{lem:EquivHs}, the map $i_\mu$ satisfies
\[i_\mu\circ j_{\nu, \mu}=i_{\nu}\]
whenever $\nu$ is equivalent to a subrepresentation of $\mu$.
It is also clear that $i_\mu$ preserves suprema of increasing sequences,
the compact containment relation, and that it is an order embedding.

Define a preorder $\leq$ on $\Cu(G)$ by setting $[\mu]\leq [\nu]$ if $\mu$ is equivalent to a subrepresentation
of $\nu$. It is clear that $(\Cu(G),\leq)$ is a directed set.
For use in the next theorem, we recall that by Corollary~3.1.11 in~\cite{AntPerThi}, the category
$\CCu$ is closed under direct limits indexed over an arbitrary directed set.

\begin{thm}\label{thm: indlim}
The direct limit of the direct system
\[\left((\Cu((\K(\Hi_\mu)\otimes A)^G))_{[\mu]\in \Cu(G)}, (j_{\mu, \nu})_{[\mu], [\nu]\in \Cu(G),[\mu]\leq[\nu]}\right),\]
in the category $\CCu$, is naturally isomorphic to the pair
\[(\Cu^G(A,\alpha), (i_{\mu})_{\mu\in \Cu(G)}).\]
\end{thm}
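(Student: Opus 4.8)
The plan is to verify directly the universal property of the direct limit in $\CCu$. Write $S_\mu = \Cu((\K(\Hi_\mu)\otimes A)^G)$ for $[\mu]\in\Cu(G)$, so that the direct system is $((S_\mu), (j_{\mu,\nu}))$ over the directed set $(\Cu(G),\leq)$. By Corollary~3.1.11 in~\cite{AntPerThi}, this direct limit exists in $\CCu$; call it $(S,(k_\mu)_{[\mu]\in\Cu(G)})$ with connecting maps $k_\mu\colon S_\mu\to S$. We have already observed that the maps $i_\mu\colon S_\mu\to \Cu^G(A,\alpha)$ are $\CCu$-morphisms (they preserve suprema of increasing sequences, compact containment, and are order embeddings) and that $i_\mu\circ j_{\nu,\mu}=i_\nu$ whenever $[\nu]\leq[\mu]$. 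Hence the family $(i_\mu)$ is compatible with the direct system, and the universal property of $S$ yields a unique $\CCu$-morphism $\Phi\colon S\to \Cu^G(A,\alpha)$ with $\Phi\circ k_\mu = i_\mu$ for all $[\mu]$. The goal is then to show $\Phi$ is a $\Cu$-isomorphism; by the Remark following \autoref{def: CCu}, it suffices to show $\Phi$ is an order embedding and is surjective.

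Surjectivity of $\Phi$ is essentially a definitional unwinding: every element of $\Cu^G(A,\alpha)$ is of the form $[a]_G$ for a positive $a\in(\K(\Hi_\mu)\otimes A)^G$ for some separable representation $\mu$, and, using the identification of $\Cu((\K(\Hi_\mu)\otimes A)^G)$ with classes of positive elements in $(\K(\ell^2(\N)\otimes\Hi_\mu)\otimes A)^G$, we have $[a]_G = i_\mu([a]) = \Phi(k_\mu([a]))$. For the order embedding property, suppose $x,y\in S$ with $\Phi(x)\leq\Phi(y)$. Using the standard description of elements of a $\CCu$-direct limit (every element of $S$ is a supremum of an increasing sequence of images $k_{\mu_n}(z_n)$, and compact containment $k_\mu(z)\ll k_\nu(w)$ can be detected at a finite stage), and the fact that $(\Cu(G),\leq)$ is directed so any two elements can be compared inside a single large representation $\mu$, I reduce to the following situation: $z,w$ are positive elements of a single algebra $(\K(\Hi_\mu)\otimes A)^G$ with $i_\mu([z])\leq i_\mu([w])$ in $\Cu^G(A,\alpha)$, i.e.\ $z\precsim_G w$ as elements sitting in $(\K(\Hi_\mu)\otimes A)^G$; and I must conclude $[z]\leq[w]$ in $S_\mu$, i.e.\ $z\precsim w$ in $(\K(\Hi_\mu)\otimes A)^G$ in the non-equivariant sense. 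But here $\precsim_G$ and the ordinary $\precsim$ coincide because both are witnessed by sequences $(d_n)$ in the \emph{same} algebra $(\K(\Hi_\mu,\Hi_\mu)\otimes A)^G = (\K(\Hi_\mu)\otimes A)^G$; this is exactly the content that $i_\mu$ is an order embedding, already noted in the text. Passing this back through the limit gives $x\leq y$.

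The main obstacle is the bookkeeping in the order-embedding step: one must be careful that when comparing $x$ and $y$ coming a priori from different indices $\mu_1,\mu_2$, the passage to a common index $\mu$ containing both (e.g.\ $\mu = \mu_1\oplus\mu_2$, or the directedness of $(\Cu(G),\leq)$) is compatible with the maps $j$ and $i$, which is guaranteed by \autoref{lem:jmunu} and \autoref{lem:EquivHs}; and that the "detect compact containment at a finite stage" property of $\CCu$-direct limits is applied correctly. A clean way to organize this is: (i) show $\Phi$ is surjective as above; (ii) show $\Phi$ preserves and reflects the order on elements of the form $k_\mu([a])$ directly, using that $i_\mu$ is an order embedding and $i_\mu\circ j_{\nu,\mu}=i_\nu$; (iii) invoke (O2) and (M1) to extend from such elements to arbitrary elements of $S$, since every element is a supremum of a rapidly increasing sequence of them and $\Phi$, being a $\CCu$-morphism, commutes with such suprema. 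Once $\Phi$ is an order-embedding surjection, the Remark after \autoref{def: CCu} upgrades it for free to a $\Cu$-isomorphism, and naturality in $A$ is immediate from the naturality of all the maps $i_\mu$ and $j_{\mu,\nu}$ in equivariant $\ast$-homomorphisms.
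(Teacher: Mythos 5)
Your proposal is correct, but it is organized in the opposite direction from the paper's argument. The paper never touches the abstract colimit object: it verifies the universal property of $(\Cu^G(A,\alpha),(i_\mu))$ directly, taking an arbitrary compatible family $(S,(\gamma_\mu))$, defining $\gamma$ on $\bigcup_\mu i_\mu(\Cu((\K(\Hi_\mu)\otimes A)^G))=\Cu^G(A,\alpha)$ by $\gamma(i_\mu(s))=\gamma_\mu(s)$, and then checking by hand that $\gamma$ is well defined, additive, order preserving, and satisfies (M1) and (M2) -- every one of these checks reduces, via directedness of $(\Cu(G),\leq)$ and the identity $i_{\mu\oplus\nu}\circ j_{\mu,\mu\oplus\nu}=i_\mu$, to the fact that $i_{\mu\oplus\nu}$ is an order embedding preserving suprema and $\ll$. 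You instead invoke the existence of the colimit $S$ (Corollary~3.1.11 of \cite{AntPerThi}), produce the canonical comparison morphism $\Phi\colon S\to\Cu^G(A,\alpha)$, and show it is a surjective order embedding, letting the Remark after \autoref{def: CCu} upgrade this to a $\Cu$-isomorphism. Both routes rest on exactly the same two inputs (the $i_\mu$ are compatible order embeddings whose images exhaust $\Cu^G(A,\alpha)$, and relations in a $\CCu$-colimit are detected at finite stages), so neither has a gap; your version is shorter because the Remark absorbs the (M1)/(M2) verifications, while the paper's version has the advantage of exhibiting the universal property of $\Cu^G(A,\alpha)$ explicitly, which is what is actually quoted later (for instance in the proof of \autoref{prop:EquivMorphInduceCCu} and \autoref{prop:CuGStable}). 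The only place where you should be more careful is the order-embedding step: the clean statement is that for $x\leq y$ in $S$ with $\Phi(x)\leq\Phi(y)$ one fixes $x'\ll x$ lying in some image $k_{\mu}(z)$, uses $i_\mu(z)=\Phi(k_\mu(z))\ll\Phi(y)=\sup_m i_{\nu_m}(w_m)$ to find $m$ with $i_\mu(z)\leq i_{\nu_m}(w_m)$, and only then passes to $\lambda=\mu\oplus\nu_m$ and applies the order-embedding property of $i_\lambda$; your sketch gestures at this but compresses the quantifiers.
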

For ease of notation, we will denote elements of $\Cu(G)$ and representatives with the same symbols.
\begin{proof}
We will show that $(\Cu^G(A,\alpha), (i_{\mu})_{\mu\in \Cu(G)})$ satisfies the
universal property of the direct limit in $\CCu$.
Let $(S,(\gamma_{\mu})_{\mu\in\Cu(G)})$ be a pair consisting of a semigroup $S$ in the category $\CCu$
and $\CCu$-morphisms
\[\gamma_{\mu}\colon \Cu((\K(\Hi_\mu)\otimes A)^G)\to S,\]
for $\mu\in\Cu(G)$, satisfying $\gamma_\nu\circ j_{\mu, \nu}=\gamma_{\mu}$ for all $\nu,\mu\in\Cu(G)$ with
$\mu\leq \nu$.
Define a map
\[\gamma\colon \Cu^G(A,\alpha)= \bigcup_{\mu\in\Cu(G)}i_{\mu}(\Cu((\K(\Hi_\mu)\otimes A)^G))\to S\]
by $$\gamma (i_\mu(s))=\gamma_{\mu}(s)$$ for $s\in \Cu((\K(\Hi_\mu)\otimes A)^G)$.

The proof will be finished once we prove that $\gamma$ is a well-defined morphism in $\CCu$. We
divide the proof into a number of claims.

\emph{Claim: $\gamma$ is a well defined order preserving map.} For this,
it is enough to show the following. Given $\mu,\nu\in\Cu(G)$ and given $s\in \Cu((\K(\Hi_\mu)\otimes A)^G)$
and $t\in \Cu((\K(\Hi_\nu)\otimes A)^G)$, if $i_{\mu}(s)\le i_\nu (t)$, then
\[\gamma(i_{\mu}(s))\le \gamma(i_\nu (t)).\]

Let $\mu,\nu, s$ and $t$ be as above. Then
\[i_{\mu\oplus \nu}(j_{\mu, \mu\oplus\nu}(s ))=i_\mu(s)\le i_\nu(s)= i_{\mu\oplus \nu}(j_{\nu, \mu\oplus\nu}(t )).\]
Since $i_{\mu\oplus \nu}$ is an order embedding, we deduce that
$j_{\mu, \mu\oplus\nu}(s )\le j_{\nu, \mu\oplus\nu}(t )$. Hence,
\[\gamma(i_\mu(s ))=\gamma_\mu(s )=\gamma_{\mu\oplus\nu}(j_{\mu,\mu\oplus\nu}(s ))\le \gamma_{\mu\oplus\nu}(j_{\nu,\mu\oplus\nu}(t ))=\gamma_\nu(t )=\gamma(i_\nu(t )).\]
The claim is proved.

\emph{Claim: $\gamma$ is a semigroup homomorphism.}
Given $\mu,\nu\in\Cu(G)$, given $s\in \Cu((\K(\Hi_\mu)\otimes A)^G)$,
and given $t\in \Cu((\K(\Hi_\nu)\otimes A)^G)$, we have
\begin{align*}
\gamma(i_{\mu}(s )+i_\nu(t ))&=\gamma_{\mu\oplus\nu}(j_{\mu, \mu\oplus\nu}(s )+j_{\nu, \mu\oplus\nu}(t ))\\
&=\gamma_{\mu\oplus\nu}(j_{\mu, \mu\oplus\nu}(s ))+\gamma_{\mu\oplus\nu}(j_{\nu, \mu\oplus\nu}(t ))\\
&=\gamma(i_{\mu}(s ))+\gamma(i_\nu(t )),
\end{align*}
so the claim follows.

\emph{Claim: $\gamma$ preserves suprema of increasing sequences (condition (M1) in
\autoref{def: CCu}).}
Let $(x_n)_{n\in\N}$ be an increasing sequence in $\Cu^G(A,\alpha)$, and let $x\in \Cu^G(A,\alpha)$ be its supremum.
For each $n\in\N$, choose $[\mu_n]\in\Cu(G)$ and an element
$s_n\in \Cu((\K(\Hi_{\mu_n})\otimes A)^G)$ such that $i_{\mu_n}(s_n)=x_n$. Likewise, choose $[\mu]\in\Cu(G)$ and an element $s\in \Cu((\K(\Hi_\mu)\otimes A)^G)$ such that $i_{\mu}(s)=x$.

Set $\nu=\mu\oplus \bigoplus\limits_{n=1}^\infty \mu_n$. Then
\[i_{\nu}(j_{\mu_n, \nu}(s_n ))=i_{\mu_n}(s_n )\le i_{\mu_{n+1}}(s_{n+1} )=i_{\nu}(j_{\mu_{n+1}, \nu}(s_{n+1} ))\]
for all $n\in\N$. It follows that $j_{\mu_n, \nu}(s_n )\le j_{\mu_{n+1}, \nu}(s_{n+1} )$ for all $n\in \N$, since $i_{\nu}$ is an order embedding. In other words, $(j_{\mu_n, \nu}(s_n ))_{n\in\N}$ is an increasing sequence in $\Cu((\K(\Hi_{\nu})\otimes A)^G)$. Since suprema of increasing sequences exist in $\Cu((\K(\Hi_{\nu})\otimes A)^G)$ and $i_\nu$ and $\gamma_\nu$ are maps in $\CCu$ we get
\begin{align*}
\gamma(x)=&\gamma(i_\mu(s ))
=\gamma\left(\sup\limits_{n\in\N} i_{\mu_n}(s_n )\right)
=\gamma\left(\sup\limits_{n\in\N} i_\nu(j_{\mu_n, \nu}(s_n ))\right)\\
&=\gamma(i_\nu\left(\sup\limits_{n\in\N} j_{\mu_n, \nu}(s_n )\right))
=\gamma_\nu(\sup\limits_{n\in\N} j_{\mu_n, \nu}(s_n ))
=\sup\limits_{n\in\N} \gamma_\nu( j_{\mu_n, \nu}(s_n ))\\
&=\sup\limits_{n\in\N} \gamma(i_{\mu_n}(s_n ))
=\sup_{n\in\N}\gamma(x_n).
\end{align*}
Hence $\gamma(x)$ is the supremum of $(\gamma(x_n))_{n\in\N}$, proving the claim.

\emph{Claim: $\gamma$ preserves the compact containment relation (condition (M2) in
\autoref{def: CCu}).}
Given $\mu,\nu\in\Cu(G)$, and given $s\in \Cu((\K(\Hi_\mu)\otimes A)^G)$
and $t\in \Cu((\K(\Hi_\nu)\otimes A)^G)$, suppose that $i_{\mu}(s )\ll i_{\nu}(t )$. Then
\[i_{\mu\oplus\nu}(j_{\mu, \mu\oplus\nu}(s ))\ll i_{\nu}(j_{\nu, \mu\oplus\nu}(t )).\]
Since $i_{\mu\oplus\nu}$ is a morphism in the category $\CCu$ and it is an order embedding, we deduce that
$j_{\mu, \mu\oplus\nu}(s )\ll j_{\nu, \mu\oplus\nu}(t )$. Hence,
\[\gamma(i_{\mu}(s ))=\gamma_{\mu\oplus\nu}(j_{\mu, \mu\oplus\nu}(s ))\ll \gamma_{\nu\oplus\nu}(j_{\nu, \mu\oplus\nu}(t ))=\gamma(i_{\nu}(t )).\]

We conclude that $\gamma$ is a morphism in $\CCu$, so the proof is complete.\end{proof}




We can now show that semigroup homomorphisms between the equivariant Cuntz semigroups
induced by equivariant $\ast$-homomorphisms are morphisms in $\CCu$.

\begin{prop}\label{prop:EquivMorphInduceCCu}
Let $\beta\colon G\to\Aut(B)$ be an action of $G$ on a \ca\ $B$, and let $\phi\colon A\to B$ be
an equivariant $\ast$-homomorphism.
Then the induced map $\Cu^G(\phi)\colon \Cu^G(A,\alpha)\to \Cu^G(B,\beta)$ is a morphism in the category $\CCu$.
\end{prop}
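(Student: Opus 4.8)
The plan is to reduce the statement to the description of $\Cu^G(A,\alpha)$ as a direct limit in $\CCu$ obtained in \autoref{thm: indlim}, and then to check the morphism axioms (M1) and (M2) against that description. Concretely, for each separable representation $(\Hi_\mu,\mu)$ of $G$, the equivariant $\ast$-homomorphism $\phi\colon A\to B$ induces a $\ast$-homomorphism
\[\id_{\K(\Hi_\mu)}\otimes\phi\colon (\K(\Hi_\mu)\otimes A)^G\to(\K(\Hi_\mu)\otimes B)^G,\]
which is equivariant for the relevant actions and hence, after stabilizing by the trivial $G$-action on $\ell^2(\N)$, induces a $\CCu$-morphism
\[\psi_\mu\colon \Cu((\K(\Hi_\mu)\otimes A)^G)\to \Cu((\K(\Hi_\mu)\otimes B)^G)\]
by functoriality of the ordinary Cuntz semigroup (which lands in $\CCu$ by \cite[Theorem~1]{CowEllIva}). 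First I would check the compatibility $\psi_\nu\circ j^A_{\mu,\nu}=j^B_{\mu,\nu}\circ\psi_\mu$ whenever $[\mu]\leq[\nu]$: this is immediate from the fact that $j^A_{\mu,\nu}=\Cu(\Ad(W_{\mu,\nu}\otimes\id_A))$ and $j^B_{\mu,\nu}=\Cu(\Ad(W_{\mu,\nu}\otimes\id_B))$ use the \emph{same} isometry $W_{\mu,\nu}\in\B(\Hi_\mu,\Hi_\nu)^G$, so that $(\id_{\K(\Hi_\nu)}\otimes\phi)\circ\Ad(W_{\mu,\nu}\otimes\id_A)=\Ad(W_{\mu,\nu}\otimes\id_B)\circ(\id_{\K(\Hi_\mu)}\otimes\phi)$, and then applying the functor $\Cu$.

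Next I would verify that the family $(\psi_\mu)_{[\mu]\in\Cu(G)}$ is compatible with the maps $i^A_\mu,i^B_\mu$ into the equivariant Cuntz semigroups, in the sense that $i^B_\mu\circ\psi_\mu=\Cu^G(\phi)\circ i^A_\mu$; this is just the chase of definitions, since $i^A_\mu([a])=[a]_G$ and $\Cu^G(\phi)([a]_G)=[(\id_{\K(\Hi_\mu)}\otimes\phi)(a)]_G$. By the universal property established in \autoref{thm: indlim}, the maps $i^B_\mu\circ\psi_\mu\colon\Cu((\K(\Hi_\mu)\otimes A)^G)\to\Cu^G(B,\beta)$ — which are $\CCu$-morphisms compatible with the $j^A_{\mu,\nu}$ by the previous paragraph — factor uniquely through a $\CCu$-morphism $\Cu^G(A,\alpha)\to\Cu^G(B,\beta)$. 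Since this factorization agrees with $\Cu^G(\phi)$ on the union $\bigcup_\mu i^A_\mu(\Cu((\K(\Hi_\mu)\otimes A)^G))=\Cu^G(A,\alpha)$, it \emph{equals} $\Cu^G(\phi)$, and therefore $\Cu^G(\phi)$ is a morphism in $\CCu$.

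If one prefers to avoid invoking the universal property, the axioms can be checked directly using the remark following \autoref{prop:CuGAainCCu}: suprema of increasing sequences and instances of compact containment in $\Cu^G(A,\alpha)$ can be realized inside a single $(\K(\Hi_\mu)\otimes A)^G$. For (M1), given an increasing sequence $(s_n)$ with supremum $s$, pull everything back via $i_\mu$ into one $\Cu((\K(\Hi_\mu)\otimes A)^G)$, use that $i_\mu$ is an order embedding preserving suprema to see the preimages form an increasing sequence with the expected supremum, apply $\psi_\mu$ (a $\CCu$-morphism, hence (M1) there), and push forward via $i^B_\mu$. For (M2), proceed identically with $\ll$ in place of $\sup$. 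I expect the only mildly delicate point to be the bookkeeping that the stabilization by $\ell^2(\N)$ used to define $i_\mu$ is compatible on the $A$- and $B$-sides (so that $\psi_\mu$ is genuinely the restriction of $\Cu^G(\phi)$), but this is routine since $\id_{\ell^2(\N)}\otimes\id_{\K(\Hi_\mu)}\otimes\phi$ is visibly equivariant; no real obstacle arises.
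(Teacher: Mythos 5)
Your proposal is correct and follows essentially the same route as the paper: define the level-$\mu$ maps $\Cu(\id_{\K(\Hi_\mu)}\otimes\phi)$, check their compatibility with the connecting maps $j_{\mu,\nu}$, and invoke the universal property of the direct limit from \autoref{thm: indlim} to identify the resulting $\CCu$-morphism with $\Cu^G(\phi)$. The additional direct verification of (M1) and (M2) you sketch is not needed but is consistent with the remark following \autoref{prop:CuGAainCCu}.
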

\begin{proof}
For $[\mu]\in\Cu(G)$, set
\[\phi_\mu=\id_{\K(\Hi_\mu)}\otimes\phi\colon(\K(\Hi_\mu)\otimes A, \Ad(\mu)\otimes\alpha)\to(\K(\Hi_\mu)\otimes B, \Ad(\mu)\otimes\beta).\]
Then $\phi_\mu$ is equivariant. Its induced map
\begin{align*}
\Cu(\phi_\mu)\colon \Cu((\K(\Hi_\mu)\otimes A)^G)\to \Cu((\K(\Hi_\mu)\otimes B)^G),
\end{align*}
between the Cuntz semigroups of the corresponding fixed point algebras, is a morphism in $\CCu$.

For $[\mu]\leq [\nu]$, we have
\[j^B_{\mu,\nu}\circ \Cu(\phi_\mu)=\Cu(\phi_\nu)\circ j^A_{\mu, \nu}.\]
Consequently, the maps
\[i^B_\mu\circ \Cu(\phi_{\mu})\colon \Cu((\K(\Hi_\mu)\otimes A)^G)\to \Cu^G(B,\beta)\]
satisfy
\[i^B_\mu\circ \Cu(\phi_{\mu})=(i^B_\nu\circ \Cu(\phi_{\nu}))\circ j^A_{\mu, \nu}\]
for $[\mu]\leq [\nu]$. The universal property of the direct limit provides a $\CCu$-morphism
\[\kappa\colon \Cu^G(A,\alpha)\to \Cu^G(B,\beta)\]
satisfying $\kappa\circ i^A_\mu=i^B_\mu\circ \Cu(\phi_{\mu})$ for all $[\mu]\in\Cu(G)$.
For $s\in \Cu((\K(\Hi_\mu)\otimes A)^G)$, we have
\[\kappa(i^A_\mu(s))=i^B_\mu(\Cu(\phi_{\mu})(s))=i^B_\mu(\Cu(\id_{\K(\Hi_\mu)}\otimes \phi)([a]))=\Cu^G(\phi)([a]).\]

We conclude that $\kappa=\Cu^G(\phi)$, and hence $\Cu^G(\phi)$ is a morphism in $\CCu$.
\end{proof}

Since $\Cu^G$ obviously preserves composition of maps, we get the following.

\begin{cor}\label{cor:CuGFunctorCCu}
The equivariant Cuntz semigroup $\Cu^G$
is a functor from the category of $G$-$C^*$-algebras to the category $\CCu$.
\end{cor}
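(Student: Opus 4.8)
The plan is to assemble the functoriality statement from the two ingredients already proved in this section. First I would note that the assignment $(A,\alpha)\mapsto \Cu^G(A,\alpha)$ is well-defined on objects: given a $G$-\ca\ $(A,\alpha)$, \autoref{prop:CuGAainCCu} shows that $\Cu^G(A,\alpha)$ is a semigroup in $\CCu$, so it is a legitimate object of the target category. On morphisms, an equivariant $\ast$-homomorphism $\phi\colon A\to B$ (i.e.\ a morphism of $G$-\ca s) induces the ordered semigroup homomorphism $\Cu^G(\phi)$ described immediately after \autoref{df:CuGAa}, and \autoref{prop:EquivMorphInduceCCu} upgrades this to the assertion that $\Cu^G(\phi)$ is a morphism in $\CCu$.

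Next I would verify the two functoriality axioms. For preservation of identities: if $\phi=\id_A$, then for each $[\mu]\in\Cu(G)$ the map $\id_{\K(\Hi_\mu)}\otimes \id_A$ is the identity on $(\K(\Hi_\mu)\otimes A)^G$, and hence acts as the identity on each $G$-Cuntz class $[a]_G$; thus $\Cu^G(\id_A)=\id_{\Cu^G(A,\alpha)}$. For preservation of composition: given equivariant $\ast$-homomorphisms $\phi\colon A\to B$ and $\psi\colon B\to C$ (between $G$-\ca s with actions $\alpha,\beta,\gamma$ respectively), one has $\id_{\K(\Hi_\mu)}\otimes(\psi\circ\phi)=(\id_{\K(\Hi_\mu)}\otimes\psi)\circ(\id_{\K(\Hi_\mu)}\otimes\phi)$ for every $\mu$, and applying this identity to a positive element $a\in(\K(\Hi_\mu)\otimes A)^G$ and taking $G$-Cuntz classes gives $\Cu^G(\psi\circ\phi)([a]_G)=\Cu^G(\psi)(\Cu^G(\phi)([a]_G))$, i.e.\ $\Cu^G(\psi\circ\phi)=\Cu^G(\psi)\circ\Cu^G(\phi)$. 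This is the content of the phrase ``$\Cu^G$ obviously preserves composition of maps'' in the lead-in to the corollary.

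There is really no main obstacle here, since all the substantive work was done in \autoref{prop:CuGAainCCu} and \autoref{prop:EquivMorphInduceCCu}; the corollary is a bookkeeping step. The only mild point to be careful about is that the domain category must be specified precisely: objects are pairs $(A,\alpha)$ with $A$ a \ca\ and $\alpha\colon G\to\Aut(A)$ a continuous action, and morphisms $(A,\alpha)\to(B,\beta)$ are $\ast$-homomorphisms $\phi\colon A\to B$ intertwining the actions, $\phi\circ\alpha_g=\beta_g\circ\phi$ for all $g\in G$; composition and identities in this category are those of \ca s, so the compatibility checks above suffice. Thus the proof consists of: (i) cite \autoref{prop:CuGAainCCu} for objects; (ii) cite \autoref{prop:EquivMorphInduceCCu} for morphisms landing in $\CCu$; (iii) check preservation of $\id$ and of composition by the tensor-identity computations above; and conclude that $\Cu^G$ is a functor.
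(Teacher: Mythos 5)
Your proposal is correct and follows exactly the paper's route: the corollary is assembled from \autoref{prop:CuGAainCCu} (objects land in $\CCu$) and \autoref{prop:EquivMorphInduceCCu} (induced maps are $\CCu$-morphisms), together with the observation that $\Cu^G$ preserves identities and composition, which the paper dispatches with the remark that this is obvious. Your explicit verification of the identity and composition axioms is just a spelled-out version of that same bookkeeping.
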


\section{The semiring \texorpdfstring{$\Cu(G)$}{Cu(G)} and the category \texorpdfstring{$\CCu^G$}{CCuG}}

\subsection{The semiring $\Cu(G)$}
Let $G$ be a compact group. Denote by $V(G)$ the semigroup of equivalence classes
of finite dimensional representations of $G$, the operation being given by direct sum. Recall
that the \emph{representation ring} $R(G)$ of $G$ is the Grothendieck
group of $V(G)$. The product structure on $R(G)$ is induced by the tensor
product of representations. The construction of $R(G)$ resembles that of $K$-theory,
while the object we define below is its Cuntz semigroup analog.

Recall that a \emph{semiring} is a set $R$ with two binary operations $+$ and $\cdot$ on $R$,
which satisfy all axioms of a unital ring except for the axiom demanding the existence of
additive inverses.

\begin{df}\label{def: rep semiring}
The \emph{representation semiring} of $G$, denoted by $\Cu(G)$, is the set of all equivalence
classes of unitary representations of $G$ on separable Hilbert spaces.
Addition in $\Cu(G)$ is given by the direct sum of representations, while product in $\Cu(G)$ is
given by the tensor product.
We endow $\Cu(G)$ with the order: $[\mu]\le [\nu]$ if $\mu$ is unitarily equivalent to a
subrepresentation of $\nu$.
\end{df}

Since the tensor product of representations is associative, it is clear that $\Cu(G)$
is indeed a semiring.

\begin{lma}\label{lem:RestrCompactGEquiv}
Let $A$ be a $C^*$-algebra, let $G$ be a compact group, and let $\alpha\colon G\to\Aut(A)$ be an action.
Let $(\Hi_\mu,\mu)$ be a separable unitary representation of $G$,
and let $a\in \K(\Hi_\mu)^G$ be a positive element.
Set $\Hi=\overline{a(\Hi_\mu)}$, and let $a'$ be the restriction of $a$ to $\Hi$. Then $a'$ is a
$G$-invariant strictly positive element in $\K(\Hi)$, and there exists a sequence $(d_n)_{n\in\N}$ in
$\K(\Hi_\mu, \Hi)^G$ such that
\[\lim_{n\to\I}\|d_n^*a'd_n- a\|=0 \mbox{ and } \lim_{n\to\I}\|d_n a d_n^*- a'\|=0.\]
\end{lma}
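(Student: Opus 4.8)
The plan is to realize $\Hi$ as a $\mu$-invariant orthogonal summand of $\Hi_\mu$, and then to approximate the (in general non-compact) orthogonal projection onto it by elements of the continuous functional calculus of $a'$. The algebra $A$ and the action $\alpha$ are not needed: everything happens at the level of the representation $\mu$.

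First I would set up the decomposition. Since $a\ge 0$ is $G$-invariant it commutes with every $\mu_g$, so $\ker a$ is a $\mu$-invariant closed subspace; as $a=a^*$ we have $\Hi=\ov{a(\Hi_\mu)}=(\ker a)^\perp$, which is therefore also $\mu$-invariant, and $\Hi_\mu=\ker a\oplus\Hi$ is a $\mu$-invariant orthogonal decomposition. With respect to it $a=0\oplus a'$. From this it is immediate that $a'$ is a positive compact operator on $\Hi$ that is $G$-invariant (because $a$ is and $\Hi$ is $\mu$-invariant) and has trivial kernel, hence dense range; so $a'$ is a strictly positive element of $\K(\Hi)$. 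This proves the first assertion.

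Next, let $P\in\B(\Hi_\mu,\Hi)$ be the orthogonal projection onto $\Hi$, viewed as a surjection onto $\Hi$, so that $P^*\in\B(\Hi,\Hi_\mu)$ is the inclusion, $PP^*=\id_\Hi$, and, because $\mu$ is unitary and both summands are invariant, $P$ intertwines $\mu$ and $\mu|_\Hi$; thus $P\in\B(\Hi_\mu,\Hi)^G$. A block computation relative to $\ker a\oplus\Hi$ gives $P^*a'P=a$ and $PaP^*=a'$. I would then pick continuous functions $f_n\colon[0,\I)\to[0,1]$ with $f_n(0)=0$ and $\sup_{0\le t\le\|a\|}|f_n(t)^2t-t|\to 0$, for instance $f_n(t)=\min\{1,nt\}$, for which $\|f_n(a')^2a'-a'\|\le 1/n$. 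Since $f_n(0)=0$ and $a'$ is compact, $f_n(a')\in\K(\Hi)^G$, and hence $d_n:=f_n(a')P$ lies in $\K(\Hi_\mu,\Hi)^G$ (it is compact because $d_nd_n^*=f_n(a')^2\in\K(\Hi)$, and equivariance is inherited from its two factors). Using $P^*a'P=a$, $PaP^*=a'$, $\|P\|\le 1$, and that elements of the functional calculus of $a'$ commute, one obtains
\[\|d_n^*a'd_n-a\|=\bigl\|P^*(f_n(a')^2a'-a')P\bigr\|\le\|f_n(a')^2a'-a'\|\to 0 \qquad\text{and}\qquad d_na d_n^*=f_n(a')^2a'\to a',\]
which is exactly what the lemma asserts.

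I do not expect a genuine obstacle. The two points that deserve a moment's care are the $G$-equivariance of $P$ and the identification of the reduction of $a$ to $\Hi$ with the strictly positive operator $a'$; both are immediate from unitarity of $\mu$ and invariance of the summands, and the rest is the standard functional-calculus approximation. Note that a single sequence $(d_n)$ yields both one-sided approximations precisely because $P^*a'P=a$ and $PaP^*=a'$ hold at once. (If one reads $\K(\Hi_\mu)$ as $\K(\Hi_\mu)\otimes A$, the same argument applies verbatim with $\Hi$ replaced by the corresponding Hilbert $A$-submodule of $\Hi_\mu\otimes A$.)
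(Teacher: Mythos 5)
Your proof is correct and takes essentially the same route as the paper: both realize the approximants $d_n$ as compact functional-calculus elements of $a$ corestricted to $\Hi$ (the paper takes $d_n=a^{1/n}$ with codomain restricted to $\Hi$, which in your notation is $(a')^{1/n}P$, while you take $f_n(a')P$ with $f_n(t)=\min\{1,nt\}$), and both verify strict positivity of $a'$ from density of its range. The only cosmetic difference is that you make the $\mu$-invariant orthogonal decomposition $\Hi_\mu=\ker a\oplus\Hi$ and the projection $P$ explicit, whereas the paper leaves them implicit.
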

\begin{proof}
Denote by $B_{\Hi_\mu}$ and $B_\Hi$ the unit balls $\Hi_\mu$ and $\Hi$, respectively.
Since $a'(B_\Hi)\subseteq a(B_{\Hi_\mu})$, it is clear that $a'$ is compact.

Since $\lim\limits_{n\to\I} a^\frac{1}{n}(\xi)= \xi$
for all $\xi\in \Hi$, we conclude that $a'$ is strictly positive.
For $n\in\N$, let $d_n\colon \Hi_\mu \to \Hi$ be the operator defined by restricting the
codomain of $a^\frac{1}{n}$ to $\Hi$. Then $d_n\in \K(\Hi_\mu, \Hi)^G$, and $d_n^*\colon \Hi\to \Hi_\mu$ is given by
$d_n^*(\xi)=a^\frac{1}{n}(\xi)$ for all $\xi\in \Hi$. It is now clear that
\[\lim_{n\to\I}\|d_n^*a'd_n- a\|=0 \mbox{ and } \lim_{n\to\I}\|d_n a d_n^*- a'\|=0,\]
so the proof is complete.
\end{proof}

The following observation will be used throughout without particular reference. Recall
that a positive element $x$ in a \ca\ $A$ is said to be \emph{strictly positive} if
$\tau(x)>0$ for every positive linear map $\tau\colon A\to\C$.

\begin{rem}
Let $G$ be a compact group, let $A$ be a \ca, and let $\alpha\colon G\to\Aut(A)$ be an action.
Denote by $\mu$ the normalized Haar measure on $G$.
If $x\in A$ is a strictly positive element, then $y=\int\limits_G \alpha_g(x)\ d\mu(g)$
is strictly positive in $A$. Indeed, let $\tau\colon A\to \C$ be a positive linear map. For $g\in G$,
the map $\tau\circ\alpha_g\colon A\to \C$ is also linear and positive, and so $\tau(\alpha_g(x))>0$.
Since $g\mapsto \tau(\alpha_g(x))$ is continuous, we deduce that
\[\tau(y)=\tau\left(\int\limits_G \alpha_g(x)\ d\mu(g)\right)=\int\limits_G \tau(\alpha_g(x))\ d\mu(g)>0,\]
so $y$ is strictly positive, as desired.
\end{rem}

Let $(\Hi_\mu,\mu)$ be a separable unitary representation of $G$.
Since $\Hi_\mu$ is separable, $\K(\Hi_\mu)$ has a strictly positive element $\widetilde{s}_\mu$. Moreover,
by integrating $g\cdot \widetilde{s}_\mu$ over $G$, we get an invariant strictly positive element $s_\mu$
of $\K(\Hi_\mu)$.

\begin{thm}\label{thm: mu s_mu}
Adopt the notation from the comments above.
Then the map $s\colon \Cu(G)\to \Cu^G(\C)$ given by
$s([\mu])=[s_\mu]$, for $[\mu]\in \Cu(G)$, is well defined.
Moreover, it is an isomorphism of ordered semigroups.
\end{thm}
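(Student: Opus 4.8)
The goal is to show that $s\colon \Cu(G)\to \Cu^G(\C)$, $s([\mu])=[s_\mu]$, is a well-defined isomorphism of ordered semigroups. My plan is to verify the three hypotheses of the Remark following \autoref{def: CCu} (order preservation, order embedding, surjectivity), since $\Cu^G(\C)$ is already known to be a $\CCu$-object by \autoref{prop:CuGAainCCu}; this will then automatically upgrade $s$ to a $\Cu$-isomorphism and, a fortiori, an isomorphism of ordered semigroups. Throughout I will identify $(\K(\Hi_\mu)\otimes\C)^G$ with $\K(\Hi_\mu)^G$.

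\emph{Well-definedness and the semigroup property.} First I would check that $[s_\mu]_G$ does not depend on the chosen invariant strictly positive element $s_\mu$: if $s_\mu$ and $s'_\mu$ are both invariant strictly positive elements of $\K(\Hi_\mu)$, then each is a limit of elements of the form $d(\cdot)d^*$ since strict positivity means $\overline{s_\mu\K(\Hi_\mu)}=\K(\Hi_\mu)$; more directly, $s_\mu\in\overline{\K(\Hi_\mu)s'_\mu\K(\Hi_\mu)}$ and one can arrange the cutdowns to be $G$-invariant by averaging, giving $s_\mu\sim_G s'_\mu$. (Alternatively, apply \autoref{lem:RestrCompactGEquiv}, noting $\overline{s_\mu(\Hi_\mu)}=\Hi_\mu$.) Also, if $\mu\sim\nu$ via a unitary $W\in\B(\Hi_\mu,\Hi_\nu)^G$, then $Ws_\mu W^*$ is an invariant strictly positive element of $\K(\Hi_\nu)$, so $s_\mu\sim_G Ws_\mu W^*\sim_G s_\nu$ by \autoref{lem:EquivHs}; hence $s$ is well defined on equivalence classes. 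For additivity: $s_\mu\oplus s_\nu$ is an invariant strictly positive element of $\K(\Hi_\mu\oplus\Hi_\nu)$, so $s([\mu]+[\nu])=s([\mu\oplus\nu])=[s_\mu\oplus s_\nu]_G=[s_\mu]_G+[s_\nu]_G$.

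\emph{Order preservation and order embedding.} If $[\mu]\le[\nu]$, then $\mu$ is equivalent to a subrepresentation of $\nu$, so there is $V=V_{\mu,\nu}\in\B(\Hi_\mu,\Hi_\nu)^G$ with $V^*V=\id$; then $Vs_\mu V^*\in\K(\Hi_\nu)^G$ and $Vs_\mu V^*\precsim_G s_\nu$ (it lies in the hereditary subalgebra of any strictly positive element), while $s_\mu\sim_G Vs_\mu V^*$ by \autoref{lem:EquivHs}, giving $[s_\mu]_G\le[s_\nu]_G$. Conversely, suppose $[s_\mu]_G\le[s_\nu]_G$, i.e.\ $s_\mu\precsim_G s_\nu$. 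I want to conclude $\mu$ embeds in $\nu$. Here I would use that $s_\mu$ is strictly positive in $\K(\Hi_\mu)^G$, so $a\precsim_G s_\mu$ for every positive $a\in\K(\Hi_\mu)^G$; combined with $s_\mu\precsim_G s_\nu$ this gives $a\precsim_G s_\nu$ for all such $a$. Fixing a minimal invariant projection $p\in\K(\Hi_\mu)^G$ corresponding to one copy of an irreducible $\pi\le\mu$, the relation $p\precsim_G s_\nu$ together with \autoref{lem: G Cuntz relation} produces $d\in\K(\Hi_\mu,\Hi_\nu)^G$ with $p=p(p-\varepsilon)_+^{-1}d\,s_\nu\,d^*(p-\varepsilon)_+^{-1}p=ww^*$ (for small $\varepsilon$, using functional calculus), so $p$ is Murray--von Neumann subequivalent to a projection in $\K(\Hi_\nu)^G$ via an invariant partial isometry; decomposing both algebras by isotypical components this forces the multiplicity of $\pi$ in $\mu$ to be at most that in $\nu$. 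Running this over all irreducibles $\pi$ yields $[\mu]\le[\nu]$.

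\emph{Surjectivity.} Given $t\in\Cu^G(\C)$, pick a separable representation $(\Hi_\mu,\mu)$ and a positive $a\in\K(\Hi_\mu)^G$ with $[a]_G=t$. Put $\Hi=\overline{a(\Hi_\mu)}$; by \autoref{lem:RestrCompactGEquiv}, the restriction $a'$ is an invariant strictly positive element of $\K(\Hi)$, and $a\sim_G a'$. Since $a'$ is strictly positive in $\K(\Hi)^G$ and $s_\nu$ (for $\nu=\mu|_{\Hi}$) is another such element, the independence-of-representative argument above gives $a'\sim_G s_\nu$, hence $t=[a]_G=[s_\nu]_G=s([\nu])$. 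Thus $s$ is surjective, and the Remark after \autoref{def: CCu} finishes the proof.

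\emph{Main obstacle.} The only nonroutine point is the order-embedding direction: translating the analytic relation $s_\mu\precsim_G s_\nu$ into the combinatorial statement that each isotypical multiplicity of $\mu$ is dominated by that of $\nu$. This requires knowing that $G$-Cuntz subequivalence of (invariant strictly positive elements of) compact operator algebras refines to a statement about invariant projections and Murray--von Neumann subequivalence within the fixed-point algebras, for which the decomposition $\K(\Hi_\mu)^G\cong\prod_{\pi}\K(\text{mult. space})$ and \autoref{lem: G Cuntz relation} are the key tools.
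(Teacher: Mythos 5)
Your overall architecture (well-definedness, order preservation, order embedding, surjectivity, then the unlabeled Remark following \autoref{def: CCu}) matches the paper, and your well-definedness, order-preservation and surjectivity steps are essentially the paper's own arguments (\autoref{lem:EquivHs} plus strict positivity for one direction, \autoref{lem:RestrCompactGEquiv} for surjectivity). The problem is in the order-embedding step, which you correctly single out as the crux. From $s_\mu\precsim_G s_\nu$ you test against a \emph{single} minimal invariant projection $p$ corresponding to \emph{one} copy of an irreducible $\pi\le\mu$. What you can legitimately extract from $p\precsim_G s_\nu$ and \autoref{lem: G Cuntz relation} is that $p$ is Murray--von Neumann subequivalent, via an invariant partial isometry, to a projection in $\K(\Hi_\nu)^G$, i.e.\ that $\pi$ occurs in $\nu$ at least once. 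This does \emph{not} force $m_\pi(\mu)\le m_\pi(\nu)$: the sentence ``decomposing both algebras by isotypical components this forces the multiplicity of $\pi$ in $\mu$ to be at most that in $\nu$'' is a non sequitur as written. (For $\mu=\pi\oplus\pi$ and $\nu=\pi$, every single copy of $\pi$ in $\mu$ embeds equivariantly into $\Hi_\nu$, yet $[\mu]\not\le[\nu]$; your test simply does not see the multiplicity.) To repair the argument along your lines you must, for each irreducible $\pi$ and each finite $k\le m_\pi(\mu)$, test against the invariant finite-rank projection onto $k$ copies of $\pi$: the resulting invariant partial isometry embeds $\pi^{\oplus k}$ equivariantly into $\Hi_\nu$, giving $k\le m_\pi(\nu)$, and letting $k$ run (this also handles $m_\pi(\mu)=\infty$, where the full isotypical projection is not even compact) yields $m_\pi(\mu)\le m_\pi(\nu)$ for all $\pi$, whence $[\mu]\le[\nu]$.

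For comparison, the paper avoids all isotypical bookkeeping: it applies Proposition~2.5 of \cite{CiuEllSan_TypeI} (with the cutdowns taken invariant) to produce $x\in\K(\Hi_\mu,\Hi_\nu)^G$ with $x^*x=s_\mu$ and $xx^*\in\K(\Hi_\nu)^G$, and then observes that, since $s_\mu=x^*x$ has dense range, the partial isometry $v$ in the polar decomposition $x=v(x^*x)^{1/2}$ is an invariant isometry with $v^*v=\id_{\Hi_\mu}$; an invariant isometry $\Hi_\mu\to\Hi_\nu$ is exactly the statement $[\mu]\le[\nu]$. This one-shot argument treats all multiplicities (finite and infinite) simultaneously and is the step your proposal needs to either import or replace by the corrected multiplicity count above.
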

\begin{proof}
We begin by showing that $s$ is well defined. Let $(\Hi_\mu, \mu)$ and $(\Hi_\nu, \nu)$ be separable
unitary representations of $G$, with $[\mu]\le [\nu]$. Then there exists
$V\in \mathcal{B}(\Hi_\mu, \Hi_\nu)^G$ such that $V^*V=\id_{\Hi_\mu}$. By \autoref{lem:EquivHs},
we have $s_\mu\sim_G Vs_\mu V^*$. Since $s_\nu$ is strictly positive,
we also have $Vs_{\mu}V^*\precsim_G s_\nu$. Thus, $s_\mu\precsim_G s_\nu$. It follows that $s$ is well
defined and order preserving.

We now show that $s$ is an order embedding. Let $s_\mu\in \K(\Hi_\mu)^G$ and $s_\nu\in \K(\Hi_\nu)^G$
be strictly positive elements such that $s_\mu\precsim_G s_\nu$. By \cite[Proposition 2.5]{CiuEllSan_TypeI}
applied to $\K(\Hi_\mu\oplus\Hi_\nu)$ (with the convention from before \autoref{df:GCtzeq}),
there exists $x\in \K(\Hi_\mu, \Hi_\nu)$ such that $s_\mu=x^*x$ and $xx^*\in \K(\Hi_\nu)^G$. Also, by simply inspecting the proof of that proposition, one sees that $x$ can be taken in $\K(\Hi_\mu, \Hi_\nu)^G$.
Let $x=v(x^*x)^\frac{1}{2}$ be the polar decomposition of $x$. Then $v$ belongs to
$\mathcal{B}(\Hi_\mu, \Hi_\nu)^G$, and $v^*v=\id_{\Hi_\mu}$. This implies that $[\mu]\le [\nu]$. In particular,
$s$ is injective.

To finish the proof, we show that $s$ is surjective.
Let $(\Hi_\mu, \mu)$ be a separable unitary representation of $G$, and let $a$ be a strictly positive
element in $\K(\Hi_\mu)^G$.
Set $\Hi_\nu=\overline{a(\Hi_\mu)}$, let $\nu$ be the restriction of $\mu$ to $\Hi_\nu$, and
let $a'\colon \Hi_\nu\to \Hi_\nu$ be the restriction of $a$.
By \autoref{lem:RestrCompactGEquiv}, $a'$ is a positive element in $\K(\Hi_\nu)^G$,
and $a'\sim_G a$. It follows that $s([\nu])=[a]$, and the proof is complete.
\end{proof}

In particular, the above theorem shows that $\Cu(G)$ is a $\Cu$-semiring in the sense
of Definition~7.1.1 in~\cite{AntPerThi}.

\begin{cor}\label{cor: sup ll}
The semigroup $\Cu(G)$ is an object in $\CCu$. In addition,
\be
\item If $([\mu_n])_{n\in \N}$ is an increasing sequence in $\Cu(G)$,
then $[\mu]$ is the supremum of $([\mu_n])_{n\in\N}$
if and only if $[s_\mu]=\sup\limits_{n\in\N}[s_{\mu_n}]$;
\item $[\mu]\ll [\nu]$ if and only if $[s_\mu]\ll [s_\nu]$.
\ee
\end{cor}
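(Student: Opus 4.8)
The plan is to transport everything through the order isomorphism $s\colon \Cu(G)\to \Cu^G(\C)$ established in \autoref{thm: mu s_mu}. First I would observe that $\Cu^G(\C)$ is an object of $\CCu$: this is a special case of \autoref{prop:CuGAainCCu} applied to the (trivial) action of $G$ on $\C$. Since $s$ is an isomorphism of ordered semigroups preserving the zero element, and is in particular an order embedding and surjective, the \autoref{rem} following \autoref{def: CCu} (with $M=\Cu(G)$, $S=\Cu^G(\C)$, and $\varphi=s$) immediately yields that $\Cu(G)$ belongs to $\CCu$, that $s$ is a $\Cu$-isomorphism, and that $s$ automatically preserves suprema of increasing sequences and the compact containment relation.

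For part (1), suppose $([\mu_n])_{n\in\N}$ is an increasing sequence in $\Cu(G)$. Since $s$ is order preserving, $([s_{\mu_n}])_{n\in\N}$ is increasing in $\Cu^G(\C)$. If $[\mu]=\sup_{n\in\N}[\mu_n]$, then because $s$ preserves suprema of increasing sequences we get $[s_\mu]=s([\mu])=\sup_{n\in\N} s([\mu_n])=\sup_{n\in\N}[s_{\mu_n}]$. Conversely, if $[s_\mu]=\sup_{n\in\N}[s_{\mu_n}]$, apply the inverse $\Cu$-isomorphism $s^{-1}$, which also preserves suprema of increasing sequences, to obtain $[\mu]=s^{-1}([s_\mu])=\sup_{n\in\N} s^{-1}([s_{\mu_n}])=\sup_{n\in\N}[\mu_n]$. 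Part (2) is even more direct: since both $s$ and $s^{-1}$ are $\CCu$-morphisms, they preserve and reflect the compact containment relation, so $[\mu]\ll[\nu]$ in $\Cu(G)$ holds if and only if $s([\mu])=[s_\mu]\ll[s_\nu]=s([\nu])$ in $\Cu^G(\C)$.

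The only point requiring any care is making sure the abstract \autoref{rem} applies, i.e.\ that $s$ is genuinely an order embedding (not merely order preserving) and surjective; but both facts are precisely what \autoref{thm: mu s_mu} asserts, so there is no real obstacle. In short, the corollary is a formal consequence of \autoref{prop:CuGAainCCu}, \autoref{thm: mu s_mu}, and the \autoref{rem} on order-embedding semigroup maps, and the proof amounts to citing these and unwinding the definition of a $\CCu$-morphism.
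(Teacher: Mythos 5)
Your proof is correct and is exactly the argument the paper intends: the corollary is stated without proof precisely because it follows formally from \autoref{thm: mu s_mu} combined with the remark after \autoref{def: CCu}, using that $\Cu^G(\C)$ lies in $\CCu$ by \autoref{prop:CuGAainCCu}. Your unwinding of parts (1) and (2) via the fact that an order-embedding surjection and its inverse both preserve suprema and compact containment is the right (and only) content here.
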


Recall that when $G$ is compact, every unitary representation of $G$ is equivalent to a direct sum of
finite dimensional representations.

\begin{cor}
Let $(\Hi_\mu, \mu)$ be a separable unitary representation of $G$.
Let $(\Hi_{\nu_k},\nu_k)_{k\in\N}$ be a family of non-zero finite dimensional representations of $G$ such that
\[(\Hi_\mu,\mu)\cong \bigoplus\limits_{k\in \N}(\Hi_{\nu_k},\nu_k).\]
For $n\in\N$, set $\mu_n=\bigoplus\limits_{k=1}^n\nu_k$. Then $[\mu_n]\ll [\mu]$ for all $n\in\N$,
and
\[[\mu]=\sup\limits_{n\in\N}[\mu_n].\]
\end{cor}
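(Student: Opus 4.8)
The plan is to transfer the statement from $\Cu(G)$ to $\Cu^G(\C)$ using the isomorphism $s\colon \Cu(G)\to \Cu^G(\C)$ of \autoref{thm: mu s_mu}, together with the two criteria in \autoref{cor: sup ll}. Concretely, by part (2) of \autoref{cor: sup ll}, to prove $[\mu_n]\ll[\mu]$ for all $n$ it suffices to prove $[s_{\mu_n}]\ll [s_\mu]$ in $\Cu^G(\C)$; by part (1), to prove $[\mu]=\sup_n[\mu_n]$ it suffices to prove $[s_\mu]=\sup_n[s_{\mu_n}]$. So the whole problem reduces to a statement about a strictly positive invariant compact operator on $\Hi_\mu$ and its restrictions to the finite-dimensional invariant blocks.

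For the explicit picture, I would realize $\Hi_\mu=\bigoplus_{k\in\N}\Hi_{\nu_k}$ and choose the invariant strictly positive element $s_\mu\in\K(\Hi_\mu)^G$ of the form $s_\mu=\bigoplus_{k\in\N}\lambda_k s_{\nu_k}$, where $s_{\nu_k}\in\K(\Hi_{\nu_k})^G$ is strictly positive (each $\Hi_{\nu_k}$ is finite dimensional, so $s_{\nu_k}$ can be taken to be, say, the identity on $\Hi_{\nu_k}$, which is automatically invariant and strictly positive), and $(\lambda_k)_{k\in\N}$ is a strictly positive sequence with $\lambda_k\to 0$, so that $s_\mu$ is indeed compact; this is a legitimate choice of $s_\mu$ since \autoref{thm: mu s_mu} does not depend on the choice of invariant strictly positive element. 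Then $s_{\mu_n}$ may be taken to be $\bigoplus_{k=1}^n \lambda_k s_{\nu_k}$ sitting inside $\Hi_{\mu_n}\subseteq\Hi_\mu$, identified via $j_{\mu_n,\mu}$ with its image in $\K(\Hi_\mu)^G$, which is just the cutdown of $s_\mu$ by the finite-rank invariant projection onto $\bigoplus_{k=1}^n\Hi_{\nu_k}$. With these identifications, $s_\mu=\sup_n s_{\mu_n}$ as a norm-increasing (indeed increasing) sequence in $\K(\Hi_\mu)^G$, hence $[s_\mu]=\sup_n[s_{\mu_n}]$ in $\Cu((\K(\Hi_\mu)\otimes\C)^G)$ (and thus, via $i_\mu$, in $\Cu^G(\C)$, using the \autoref{rem} after \autoref{prop:CuGAainCCu} that suprema can be computed in a single representation). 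For compact containment, note that since $\lambda_{n+1}>0$, the element $s_{\mu_{n+1}}$ dominates (a scalar multiple of) the finite-rank invariant projection $p_n$ onto $\bigoplus_{k=1}^n\Hi_{\nu_k}$, so $[s_{\mu_n}]\le [p_n]\ll [p_n]$ — more precisely, $[s_{\mu_n}]\le [(s_{\mu_{n+1}}-\varepsilon)_+]\ll[s_{\mu_{n+1}}]\le[s_\mu]$ for suitable small $\varepsilon>0$, using that $s_{\mu_{n+1}}$ has the finite-rank support projection $p_{n+1}\ge p_n$ and $[(s_{\mu_{n+1}}-\varepsilon)_+]=[p_{n+1}]$ for $\varepsilon$ below the smallest eigenvalue of $s_{\mu_{n+1}}$. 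Hence $[s_{\mu_n}]\ll[s_\mu]$.

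The main obstacle, such as it is, is bookkeeping rather than substance: one must make sure the identification of $s_{\mu_n}$ (an element of $\K(\Hi_{\mu_n})^G$) with a subelement of the chosen $s_\mu$ is exactly the one induced by $j_{\mu_n,\mu}$, and that the sequence $(s_{\mu_n})_n$ is genuinely increasing (not merely Cuntz-increasing) in $\K(\Hi_\mu)^G$ so that its supremum is literally $s_\mu$ — this is why it is convenient to take $s_\mu$ block-diagonal with respect to the given decomposition and $s_{\mu_n}$ the corresponding finite truncation. Once that is set up, both conclusions follow by combining the previous paragraph with \autoref{cor: sup ll}: $[\mu_n]\ll[\mu]$ from (2) and $[\mu]=\sup_n[\mu_n]$ from (1). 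I would also remark that the finite-dimensionality of each $\Hi_{\nu_k}$ is used only to guarantee that the truncations $s_{\mu_n}$ have finite-rank support projections, which is what makes $[(s_{\mu_{n+1}}-\varepsilon)_+]$ a compact element dominating $[s_{\mu_n}]$; this is where the hypothesis ``$G$ compact'' (hence every representation a direct sum of finite-dimensional ones, as recalled just before the corollary) enters.
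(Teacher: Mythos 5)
Your proof is correct and follows essentially the same route as the paper: the paper likewise reduces everything to \autoref{cor: sup ll} by noting that the units $p_n$ of $\K(\Hi_{\mu_n})$ are invariant, strictly positive, finite-rank projections (hence compact elements with $[p_n]\ll[s_\mu]$) whose classes have supremum $[s_\mu]$. Your explicit block-diagonal choice of $s_\mu$ and the $\varepsilon$-cutdown argument are just a more laborious way of saying the same thing.
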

\begin{proof}
Let $s_\mu$ be a strictly positive element of $\K(\Hi_\mu)^G$. For each $n\in \N$, let $p_n$ be
the unit of $\K(\Hi_{\mu_n})$. Then $[s_\mu]=\sup\limits_{n\in\N} [p_n]$ since
$\Hi_\mu\cong \bigoplus\limits_{k\in \N}\Hi_{\nu_k}$. Also, $[p_n]\ll [s_\mu]$ for all $n\in\N$,
because $p_n$ is a projection. The result then follows from \autoref{cor: sup ll}.
\end{proof}

\subsection{The $\Cu(G)$-semimodule structure on $\Cu^G(A,\alpha)$}

Throughout the rest of this section, we fix a compact group $G$, a \ca\ $A$, and a
continuous action $\alpha\colon G\to\Aut(A)$.

Recall that a (left) semimodule over a semiring $R$, or an $R$-semimodule, is a
commutative monoid $S$ together with a function $\cdot\colon R\times S \to S$ satisfying all the axioms
of a module over a ring, except for the axiom demanding the existence of additive inverses.

In this subsection, we show that $\Cu^G(A,\alpha)$ has a natural $\Cu(G)$-semimodule structure, which
moreover satisfies a number of additional regularity properties. It follows that the equivariant Cuntz
semigroups belong to a distinguished class of partially ordered semirings over $\Cu(G)$. We begin by defining
this category, and then show that $\Cu^G(A,\alpha)$ belongs to it; see \autoref{thm:EqCtzSmgpCCuG}.

\begin{df} \label{df:CCuG}
Denote by $\CCu^G$ the category defined as follows. The objects in $\CCu^G$ are partially ordered
$\Cu(G)$-semimodules $(S,+,\cdot)$ such that:
\begin{itemize}
\item[(O1)] $S$ is an object in $\CCu$;
\item[(O2)] if $x,y\in S$ and $r,s\in \Cu(G)$ satisfy $x\leq y$ and $r\leq s$, then $r\cdot x\leq s\cdot y$;
\item[(O3)] if $x,y\in S$ and $r,s\in \Cu(G)$ satisfy $x\ll y$ and $r\ll s$, then $r\cdot x\ll s\cdot y$;
\item[(O4)] if $(x_n)_{n\in \N}$ is an increasing sequence in $S$, and $(r_n)_{n\in \N}$ is an increasing
sequence in $\Cu(G)$, then
\[\sup_{n\in\N} (r_n\cdot x_n)=\left(\sup_{n\in\N} r_n\right)\cdot\left(\sup_{n\in\N} x_n\right).\]
\end{itemize}
The morphisms in $\CCu^G$ between two $\Cu(G)$-semimodules $S$ and $T$
are all $\Cu(G)$-semimodule homomorphisms $\varphi\colon S\to T$ in the category $\CCu$.
\end{df}

In particular, a $\Cu(G)$-semimodule is a $\Cu$-semimodule in the sense of Definition~7.1.3
of~\cite{AntPerThi}.

\begin{lma}\label{lem:AxO4}
Axiom (O4) in \autoref{df:CCuG} is equivalent to the following.
If $(x_n)_{n\in \N}$ is an increasing sequence in $S$, and $(r_n)_{n\in \N}$ is an increasing
sequence in $\Cu(G)$, then

\[\sup_{n\in\N} (r_n\cdot x)=\left(\sup_{n\in\N} r_n\right)\cdot x \ \mbox{ and } \
\sup_{n\in\N} (r\cdot x_n)=r\cdot\left(\sup_{n\in\N} x_n\right)\]
for all $r\in \Cu(G)$ and for all $x\in S$.
\end{lma}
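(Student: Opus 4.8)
The plan is to prove the two implications separately, each being a routine ``specialize versus bootstrap'' argument.

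First I would show that Axiom (O4) implies the two displayed identities. For the first identity, fix $r\in\Cu(G)$ and an increasing sequence $(r_n)_{n\in\N}$ in $\Cu(G)$ with $r=\sup_{n}r_n$, and let $x\in S$. Apply (O4) to the sequences $(r_n)_{n\in\N}$ and to the constant sequence $x_n=x$; since $\sup_n x_n=x$, we get $\sup_n(r_n\cdot x)=(\sup_n r_n)\cdot x=r\cdot x$. Symmetrically, for the second identity, apply (O4) with the constant sequence $r_n=r$ (whose supremum is $r$) and the given increasing sequence $(x_n)_{n\in\N}$, obtaining $\sup_n(r\cdot x_n)=r\cdot(\sup_n x_n)$. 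Here I should note that these constant sequences are legitimate: a constant sequence is (weakly) increasing, and its supremum is its common value, so (O4) applies.

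For the converse, assume the two displayed identities hold and let $(x_n)_{n\in\N}$ and $(r_n)_{n\in\N}$ be increasing sequences in $S$ and $\Cu(G)$ respectively. Write $x=\sup_n x_n$ and $r=\sup_n r_n$. The sequence $(r_n\cdot x_n)_{n\in\N}$ is increasing by Axiom (O2) (both factors increase), so its supremum exists by (O1). The key point is a standard ``diagonal'' estimate for suprema in $\CCu$: using the first identity, $\sup_m(r_m\cdot x)=r\cdot x$, and using the second identity, $r_m\cdot x=\sup_n(r_m\cdot x_n)$ for each fixed $m$. Hence $r\cdot x=\sup_m\sup_n(r_m\cdot x_n)=\sup_{m,n}(r_m\cdot x_n)$, the double supremum over the directed set $\N\times\N$. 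Since for $n\leq k$ and $m\leq k$ we have $r_m\cdot x_n\leq r_k\cdot x_k$ (again by (O2)), and conversely $r_k\cdot x_k\leq r_k\cdot x_k$ sits among the $r_m\cdot x_n$, the diagonal sequence $(r_k\cdot x_k)_k$ is cofinal in $\{r_m\cdot x_n : m,n\in\N\}$, so $\sup_k(r_k\cdot x_k)=\sup_{m,n}(r_m\cdot x_n)=r\cdot x=(\sup_n r_n)\cdot(\sup_n x_n)$, which is exactly (O4).

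I expect the only real subtlety—and thus the ``hard'' step, though it is still elementary—to be the justification that a double supremum over $\N\times\N$ collapses to the supremum along the diagonal, which relies on the monotonicity supplied by Axiom (O2) together with the existence of suprema of increasing sequences from (O1). Everything else is bookkeeping: verifying that constant sequences are admissible in (O4), and that the intermediate sequences appearing (such as $(r_m\cdot x_n)_n$ for fixed $m$, or $(r_m\cdot x)_m$) are increasing so that their suprema are defined. No properties of $\Cu(G)$ beyond it being a partially ordered semiring acting compatibly (i.e.\ (O1)–(O3)) are needed.
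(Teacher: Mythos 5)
Your proof is correct and follows essentially the same route as the paper: the forward implication by specializing (O4) to constant sequences, and the converse by squeezing $\sup_k(r_k\cdot x_k)$ between the iterated suprema using (O1), (O2) and the two displayed identities (the paper writes your diagonal/cofinality step as two explicit inequalities, but the argument is the same).
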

\begin{proof} That Axiom (O4) implies the condition in the statement is immediate. Conversely,
suppose that $S$ satisfies Axioms (O1), (O2) and (O3), and the condition in the statement.
Let $(x_n)_{n\in \N}$ be an increasing sequence in $S$, and let $(r_n)_{n\in \N}$ be an increasing
sequence in $\Cu(G)$. For $m\in\N$, we have
$r_m\cdot x_m\leq \sup\limits_{n\in\N} r_n\cdot\sup\limits_{n\in\N} x_n$ by
Axiom (O2), so
\[\sup_{m\in\N} (r_m\cdot x_m)\leq \left(\sup_{n\in\N} r_n\right)\cdot\left(\sup_{n\in\N} x_n\right).\]

For the opposite inequality, given $m\in\N$ we have
\[\sup_{n\in\N} (r_n\cdot x_n)\geq \sup_{n\in\N} (r_n\cdot x_m)=\left(\sup_{n\in\N}r_n\right) x_m.\]
By taking $\sup\limits_{m\in\N}$, we conclude that Axiom (O4) is also satisfied.
\end{proof}

We will need to know the following:

\begin{thm}\label{thm:CCuGClosedIndLim}
The category $\CCu^G$ is closed under countable direct limits.
\end{thm}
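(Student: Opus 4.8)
The strategy is to reduce to the fact that the category $\CCu$ is closed under countable direct limits (Corollary~3.1.11 in~\cite{AntPerThi}) and that $\Cu(G)$, being an object of $\CCu$ by \autoref{cor: sup ll}, behaves well with respect to such limits, and then to transport the $\Cu(G)$-semimodule operations through the limit. Concretely, let $((S_n)_{n\in\N},(\phi_{m,n})_{m\le n})$ be a countable direct system in $\CCu^G$, so each $S_n$ is a $\Cu(G)$-semimodule which is an object of $\CCu$ satisfying (O2)--(O4), and each $\phi_{m,n}\colon S_m\to S_n$ is a $\Cu(G)$-semimodule morphism which is also a $\CCu$-morphism. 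Forgetting the module structure, form the direct limit $S=\varinjlim S_n$ in $\CCu$, together with the structure maps $\psi_n\colon S_n\to S$; this exists and is an object of $\CCu$ by the cited result. I would then define a $\Cu(G)$-action on $S$: given $r\in\Cu(G)$ and $x\in S$, pick a rapidly increasing sequence in $\Cu(G)$ with supremum $r$ (using (O2) of \autoref{def: CCu}) and, after writing $x$ as a supremum of an increasing sequence of elements coming from the $S_n$ via the $\psi_n$, set $r\cdot x$ to be the appropriate supremum of the elements $\psi_n(r_k\cdot x_k)$ computed in the various $S_n$. One checks this is independent of all choices using the compatibility of the $\phi_{m,n}$ with the $\Cu(G)$-action and the fact that the $\psi_n$ are $\CCu$-morphisms; the key point is that $\phi_{m,n}(s\cdot y)=s\cdot\phi_{m,n}(y)$ so the module structures are genuinely coherent along the system.

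Having defined the action, I would verify the semimodule axioms (distributivity, associativity of the $\Cu(G)$-multiplication, unitality) on $S$. Each such identity holds in every $S_n$, and since every element of $S$ is a supremum of images of elements of the $S_n$, and all relevant maps preserve suprema of increasing sequences, the identities pass to $S$ — this is a routine ``diagram chase with suprema''. Next I would check that $S$ with this action satisfies (O1)--(O4) of \autoref{df:CCuG}. Axiom (O1) is automatic since $S\in\CCu$. For (O2), if $x\le y$ in $S$ and $r\le s$ in $\Cu(G)$, then by the construction (choosing compatible increasing sequences and using that in a direct limit $x\le y$ can be witnessed at finite stages up to compact containment) the inequality $r\cdot x\le s\cdot y$ follows from (O2) in the $S_n$. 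Axiom (O3) is the analogous statement with $\ll$, using that $\CCu$-morphisms preserve compact containment and that in $\varinjlim$ the relation $u\ll v$ with $v=\sup\psi_n(v_n)$ forces $u\le\psi_N(v_N)$ for some $N$. For (O4), I would invoke \autoref{lem:AxO4}: it suffices to check the two ``one-variable'' continuity statements $\sup_n(r_n\cdot x)=(\sup_n r_n)\cdot x$ and $\sup_n(r\cdot x_n)=r\cdot(\sup_n x_n)$, and by construction $r\cdot x$ is literally defined as such a supremum, so these reduce to the corresponding facts in the $S_n$ together with the continuity of the $\psi_n$.

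Finally I would verify the universal property in $\CCu^G$: given a $\Cu(G)$-semimodule $T\in\CCu^G$ and $\CCu^G$-morphisms $\lambda_n\colon S_n\to T$ compatible with the $\phi_{m,n}$, the universal property of $S$ in $\CCu$ yields a unique $\CCu$-morphism $\lambda\colon S\to T$ with $\lambda\circ\psi_n=\lambda_n$, and one checks $\lambda$ is $\Cu(G)$-linear: for $r\in\Cu(G)$ and $x\in S$, both $\lambda(r\cdot x)$ and $r\cdot\lambda(x)$ are suprema of $\lambda(\psi_n(r_k\cdot x_k))=\lambda_n(r_k\cdot x_k)=r_k\cdot\lambda_n(x_k)$, which agree because each $\lambda_n$ is a $\Cu(G)$-morphism; continuity of $\lambda$ lets one pass to the supremum. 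This gives $\lambda\in\CCu^G$, completing the proof.

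The step I expect to be the main obstacle is the \emph{well-definedness} of the $\Cu(G)$-action on $S$: one must show that the supremum defining $r\cdot x$ does not depend on the choice of the rapidly increasing sequence approximating $r$, nor on the choice of increasing sequence $(\psi_{n}(x_n))$ approximating $x$, nor on which index $n$ one lifts a given finite-stage element to. This requires carefully combining the $\CCu$ axioms (O2)--(O4) (to interchange and cofinally compare the two approximating sequences) with the module-compatibility of the connecting maps, and it is where all the hypotheses on $\CCu^G$-objects and on $\Cu(G)\in\CCu$ actually get used; everything after that is a formal verification that passes identities through suprema.
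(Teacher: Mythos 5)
Your plan is correct and follows essentially the same route as the paper: form the limit in $\CCu$, define $r\cdot x$ as $\sup_n\psi_n(r\cdot x_n)$ for a suitable rapidly increasing lift $(x_n)$ of $x$, establish well-definedness together with axiom (O2) by comparing lifts at finite stages (the paper invokes Proposition~2.2 of \cite{GarSan_RokConstrI} for exactly this finite-stage comparison), and then verify (O3) and (O4) via \autoref{lem:AxO4}. Your additional approximation of $r$ by a rapidly increasing sequence is harmless but unnecessary (the paper multiplies by the fixed $r$ directly, using (O4) in each $S_n$), and your explicit verification of the universal property is a point the paper leaves implicit.
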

\begin{proof}
Let $(S_n,\varphi_n)_{n\in\N}$ be a direct system in the category $\CCu^G$, with $\CCu^G$-mor\-phisms
$\varphi_n\colon S_n \to S_{n+1}$. For $m\geq n$, we write $\varphi_{m,n}\colon S_n\to S_{m+1}$ for
the composition $\varphi_{m,n}=\varphi_m\circ\cdots\circ\varphi_n$.
By Theorem~2 in~\cite{CowEllIva}, the limit of this
direct system exists in the category $\CCu$, and we denote it by $(S,(\psi_n)_{n\in\N})$,
where $\psi_n\colon S_n\to S$ is a $\CCu$-morphism satisfying
$\psi_{n+1}\circ \varphi_n=\psi_{n}$ for all $n\in\N$. We will use the description
of the direct limit given in the proof of Theorem~2 in~\cite{CowEllIva}, in the form
given in Proposition~2.2 in~\cite{GarSan_RokConstrI}.

We define a $\Cu(G)$-semimodule structure on $S$ as follows. Let $x\in S$, and choose
elements $x_n\in S_n$, for $n\in\N$, such that $\varphi_n(x_n)\ll x_{n+1}$
and $\sup\limits_{n\in\N}\psi_n(x_n)=x$. Given $r\in\Cu(G)$, set
$r\cdot x=\sup\limits_{n\in\N}\psi_n(r\cdot x_n)$.

\emph{Claim: the $\Cu(G)$-semimodule structure is well-defined and satisfies Axiom (O2).}
It is clearly enough to check Axiom (O2). Let $x,y\in S$ with $x\leq y$, and let $r,s\in\Cu(G)$
with $r\leq s$. Choose
elements $x_n, y_n\in S_n$, for $n\in\N$, satisfying $\varphi_n(x_n)\ll x_{n+1}$
and $\sup\limits_{n\in\N}\psi_n(x_n)=x$, as well as $\varphi_n(y_n)\ll y_{n+1}$
and $\sup\limits_{n\in\N}\psi_n(y_n)=y$.

Given $n\in\N$, we have $\psi_n(x_{n})\ll x\leq y=\sup_{m\in\N}\psi_m(y_m)$, so there exists $m_0\in\N$
such that $\psi_n(x_{n})\leq \psi_m(y_m)$ for all $m\geq m_0$. Without loss of generality, we may assume
$m_0\geq n$. Since $\varphi_{n}(x_{n})\ll x_{n+1}$,
part~(ii) of Proposition~2.2 in~\cite{GarSan_RokConstrI} implies that there exists $n_0\in\N$ with $n_0\geq m$
such that $\varphi_{n_0,n}(x_n)\leq \varphi_{n_0,m}(y_m)$.
It follows that
\[\varphi_{n_0,n}(r\cdot x_n)\leq \varphi_{n_0,m}(s\cdot y_m)\]
for all $k\geq n_0$. Composing with $\psi_{n_0}$, we deduce that $\psi_n(r\cdot x_n)\leq \psi_m(s \cdot y_m)$
for all $m\geq m_0$. Taking first the supremum over $m$, and then the supremum over $n$, we conclude that
\[\sup_{n\in\N}\psi_n(r\cdot x_n)\leq \sup_{m\in\N}\psi_m(s\cdot y_m).\]
The claim is proved.

\emph{Claim: $S$ satisfies Axiom (O3).}

Let $x,y\in S$ with $x\ll y$, and let $r,s\in\Cu(G)$
with $r\ll s$. Then there exist $n \in\N$, and $x',y' \in S_n$ such that
$x'\ll y'$ and $x\ll \psi_n(x') \ll \psi_n(y') \ll y$. Then $r\cdot x' \ll s\cdot y'$,
and hence
\[r\cdot x\leq \psi_n(r\cdot x') \ll \psi_n(s\cdot y') \leq s\cdot y, \]
as desired.

\emph{Claim: $S$ satisfies Axiom (O4).}
It suffices to check the conditions in the statement of \autoref{lem:AxO4}. Let $(r_n)_{n\in\N}$
be an increasing sequence in $\Cu(G)$, and let $x\in S$. Choose
elements $x_m\in S_m$, for $m\in\N$, such that $\varphi_m(x_m)\ll x_{m+1}$
and $\sup\limits_{m\in\N}\psi_m(x_m)=x$. Then
\[\sup_{n\in\N}\left(r_n\cdot x\right)=\sup_{n\in\N}\sup_{m\in\N}\psi_m(r_n\cdot x_m)
=\sup_{n\in\N}r_n\cdot \left(\sup_{m\in\N}\psi_m(x_m)\right)= \left(\sup_{n\in\N}r_n\right)\cdot x,\]
as desired. The other property in \autoref{lem:AxO4} is checked identically. This concludes the proof.
\end{proof}

We now define a $\Cu(G)$-semimodule structure on $\Cu^G(A,\alpha)$.

\begin{df}\label{df:SemimodStructCuG}
Let $(\Hi_\mu,\mu)$ and $(\Hi_\nu,\nu)$ be separable unitary representations of $G$, and let
$a\in (\K(\Hi_\mu)\otimes A)^G$ be a positive element.
In this definition, and to stress the role played by $\mu$, we write
$[(\Hi_\mu,\mu,a)]_G$ for the $G$-Cuntz equivalence class of $a$.
Use separability of $\Hi_\nu$ to choose a $G$-invariant strictly positive element $s_\nu\in\K(\Hi_\nu)^G$.
We set
\[[\nu]\cdot [(\Hi_\mu,\mu,a)]_G=\left[\left(\Hi_\nu\otimes \Hi_\mu,\nu\otimes \mu,s_\nu \otimes a\right)\right]_G.\]
\end{df}

The following is one of the main results in this section. For use in its proof,
we recall that any tensor product of \ca s respects Cuntz subequivalence.

\begin{thm}\label{thm:EqCtzSmgpCCuG}
The $\Cu(G)$-semimodule structure from \autoref{df:SemimodStructCuG} is well defined.
Moreover, with this structure, the semigroup $\Cu^G(A,\alpha)$ becomes an object in $\CCu^G$, and
the equivariant Cuntz semigroup is a functor from the category of $G$-$C^*$-algebras to the
category $\CCu^G$.
\end{thm}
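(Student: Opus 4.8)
The plan is to proceed in three stages: well-definedness of the scalar action, verification of the semimodule axioms, verification of axioms (O1)--(O4) of \autoref{df:CCuG}, and finally functoriality. For well-definedness, I would first check that the formula does not depend on the choice of strictly positive element $s_\nu\in\K(\Hi_\nu)^G$: if $s_\nu'$ is another, then $s_\nu\sim s_\nu'$ in $\K(\Hi_\nu)^G$, hence $s_\nu\otimes a\sim_G s_\nu'\otimes a$ because tensoring with a fixed \ca\ (here the approximating sequence $d_n$ can be taken of the form $e_n\otimes\id$) respects Cuntz subequivalence; one checks the witnesses $d_n$ land in the appropriate fixed-point space $(\K(\Hi_\nu\otimes\Hi_\mu,\Hi_\nu'\otimes\Hi_\mu)\otimes A)^G$. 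Next, if $a\sim_G a'$ with witnesses $d_n\in(\K(\Hi_\mu,\Hi_{\mu'})\otimes A)^G$, then $\id_{\Hi_\nu}\otimes d_n$ witnesses $s_\nu\otimes a\sim_G s_\nu\otimes a'$. Finally, if $[\mu]=[\mu']$ via a $G$-unitary, the induced unitary on the tensor product shows independence of the representative of $[\mu]$; and $[\nu]=[\nu']$ is handled by \autoref{lem:EquivHs} together with the fact that $s_\nu\otimes a\sim_G Vs_\nu V^*\otimes a$ for a $G$-isometry $V$ with $V^*V=\id$.

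For the semimodule axioms, the key identities are $([\nu_1]+[\nu_2])\cdot[a]_G=[\nu_1]\cdot[a]_G+[\nu_2]\cdot[a]_G$, $[\nu]\cdot([a]_G+[b]_G)=[\nu]\cdot[a]_G+[\nu]\cdot[b]_G$, associativity $([\nu_1][\nu_2])\cdot[a]_G=[\nu_1]\cdot([\nu_2]\cdot[a]_G)$, and that the trivial one-dimensional representation acts as the identity. Each of these follows by choosing compatible strictly positive elements: for the first, note $s_{\nu_1\oplus\nu_2}$ can be taken to be $s_{\nu_1}\oplus s_{\nu_2}$, and $(s_{\nu_1}\oplus s_{\nu_2})\otimes a\sim_G (s_{\nu_1}\otimes a)\oplus(s_{\nu_2}\otimes a)$ after a $G$-equivariant reshuffling of the Hilbert spaces; for associativity, $s_{\nu_1}\otimes s_{\nu_2}$ is a strictly positive invariant element of $\K(\Hi_{\nu_1}\otimes\Hi_{\nu_2})^G$, and one invokes (a strengthening of) \autoref{lem:RestrCompactGEquiv}, or rather the observation that $s_{\nu_1}\otimes(s_{\nu_2}\otimes a)\sim_G(s_{\nu_1}\otimes s_{\nu_2})\otimes a$ since both have the same closed range. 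For the unit axiom, if $\mathbf 1$ is the trivial representation on $\C$ then $s_{\mathbf 1}$ is a positive scalar, and $s_{\mathbf 1}\otimes a\sim_G a$. These are all routine once the bookkeeping of fixed-point subalgebras is set up.

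For axioms (O1)--(O4) of \autoref{df:CCuG}: (O1) is \autoref{prop:CuGAainCCu}. For (O2), given $[a]_G\le[b]_G$ and $[\nu]\le[\nu']$, pick a $G$-isometry $V\colon\Hi_\nu\to\Hi_{\nu'}$; then $Vs_\nu V^*\precsim_G s_{\nu'}$ and $s_\nu\otimes a\precsim_G s_{\nu'}\otimes b$ by combining the witness for $a\precsim_G b$ with $V$. For (O3) and (O4), the cleanest route is to use \autoref{thm: indlim}: the scalar multiplication by a fixed $[\nu]$ is, on each approximating piece $\Cu((\K(\Hi_\mu)\otimes A)^G)$, induced by the $\ast$-homomorphism $a\mapsto s_\nu\otimes a$ into $\Cu((\K(\Hi_\nu\otimes\Hi_\mu)\otimes A)^G)$ (up to the natural identifications $i_\mu$, $i_{\nu\otimes\mu}$), hence a $\CCu$-morphism, so it preserves suprema and $\ll$; this immediately gives the second half of the condition in \autoref{lem:AxO4}. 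For the first half (multiplication by an increasing sequence $(r_n)$ applied to a fixed $x$), one uses \autoref{cor: sup ll}, which translates suprema and compact containment in $\Cu(G)$ into the corresponding statements for the strictly positive elements $s_{\mu_n}$, together with (O3) of \autoref{prop:CuGAainCCu}; then \autoref{lem:AxO4} assembles (O4). Functoriality is then immediate from \autoref{cor:CuGFunctorCCu} once one checks that $\Cu^G(\phi)$ is $\Cu(G)$-linear, which is clear from the formula $\Cu^G(\phi)([a]_G)=[(\id\otimes\phi)(a)]_G$ since $\id\otimes\phi$ commutes with $s_\nu\otimes(-)$.

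The main obstacle I expect is the careful treatment of the identifications between $\K(\Hi_\nu\otimes\Hi_\mu)^G$-type algebras and the iterated constructions, making sure all the intertwining unitaries and isometries are genuinely $G$-equivariant and land in the stated fixed-point spaces; associativity of scalar multiplication, in particular, requires identifying $\K(\Hi_{\nu_1})\otimes(\K(\Hi_{\nu_2})\otimes A)$ with $\K(\Hi_{\nu_1}\otimes\Hi_{\nu_2})\otimes A$ equivariantly and checking that the relevant strictly positive elements match up to Cuntz equivalence. None of this is conceptually difficult, but it is where the proof has to be written with care rather than waved through.
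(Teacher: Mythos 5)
Your outline matches the paper's proof for well-definedness, axiom (O2), and the semimodule identities: the paper likewise deduces everything from $s_\mu\precsim_G s_\nu$ (via \autoref{thm: mu s_mu}) together with the fact that tensor products respect Cuntz subequivalence, and it dismisses distributivity and associativity as immediate. The one place where your argument has a genuine flaw is the treatment of (O3) and (O4). You assert that multiplication by a fixed $[\nu]$ is ``induced by the $\ast$-homomorphism $a\mapsto s_\nu\otimes a$'' and lean on this to conclude that it is a $\CCu$-morphism, hence preserves suprema and $\ll$. But $a\mapsto s_\nu\otimes a$ is multiplicative only when $s_\nu$ is a projection, i.e.\ only when $\Hi_\nu$ is finite dimensional; for infinite-dimensional $\nu$ the element $s_\nu$ is a strictly positive compact operator that is not a projection, and the map is merely completely positive of order zero. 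Since preservation of suprema and of $\ll$ by the scalar action is precisely the content of (O3) and (O4), this is the step that actually needs proving, and your justification does not stand as written. It is repairable: either invoke the fact that c.p.c.\ order-zero maps induce $\Cu$-morphisms, or first treat finite-dimensional $\nu$ (where the map is an honest homomorphism) and then pass to suprema over finite-dimensional subrepresentations via \autoref{cor: sup ll} --- though the latter route must be organized with care, because compatibility of the action with suprema in the scalar variable is itself part of what (O4) asserts.

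The paper sidesteps this entirely by arguing directly with functional calculus: after normalizing $s_\nu$ and $b$ to be contractions, it uses $[s_\mu]_G\le[(s_\nu-\varepsilon)_+]_G$ and $[a]_G\le[(b-\varepsilon)_+]_G$ together with the estimate $[(s_\nu-\varepsilon)_+\otimes(b-\varepsilon)_+]_G\le[(s_\nu\otimes b-\varepsilon^2)_+]_G\ll[s_\nu\otimes b]_G$ to get (O3), and the analogous $\varepsilon$-cutdown argument combined with part (1) of \autoref{cor: sup ll} to get (O4) directly in its joint form, without passing through \autoref{lem:AxO4}. If you rewrite your (O3)/(O4) step along these lines, or supply the order-zero fact with a proper reference, the rest of your plan goes through.
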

\begin{proof}
We will prove that the $\Cu(G)$-semimodule structure is well defined together with
condition O2 in \autoref{df:CCuG}. So let $[\mu], [\nu]\in \Cu(G)$ and $[a]_G, [b]_G\in \Cu^G(A,\alpha)$
satisfy $[\mu]\le [\nu]$ and $[a]_G\le [b]_G$. By \autoref{thm: mu s_mu}, we have
$s_\mu\precsim_G s_\nu$. Since we also have $a\precsim_G b$, we get
$s_\mu\otimes a\precsim_G s_\nu\otimes b$.
Hence,
\[[\mu]\cdot[a]_G=[s_\mu\otimes a]_G\le [s_\nu\otimes b]_G= [\nu]\cdot[b]_G,\]
as desired.

It is immediate that
\[[\mu]\cdot ([a]_G+[b]_G)=[\mu]\cdot[a]_G+[\mu]\cdot[b]_G\]
and
\[[\mu]\cdot([\nu]\cdot[a]_G)=([\mu]\cdot[\nu])\cdot[a]_G,\]
for all $[\mu], [\nu]\in \Cu(G)$ and for all $[a]_G, [b]_G\in \Cu^G(A,\alpha)$.
We conclude that $\Cu^G(A,\alpha)$ is a $\Cu(G)$-semimodule, and that it satisfies condition O2
in \autoref{df:CCuG}.

We now proceed to show that $\Cu^G(A,\alpha)$ is an object in $\CCu^G$. We already showed in
\autoref{thm: indlim} that it is an object in $\CCu$, so condition O1 in \autoref{df:CCuG} is
satisfied.

We check condition O3.
Suppose that $[\mu], [\nu]\in \Cu(G)$ and $[a]_G, [b]_G\in \Cu^G(A,\alpha)$ satisfy
$[\mu]\ll [\nu]$ and $[a]_G\ll [b]_G$. By \autoref{thm: mu s_mu}, we get
$[s_\mu]_G\ll [s_\nu]_G$ in $\Cu^G(\C)$. Without loss of generality, we may assume that $s_\nu$ and
$b$ are contractions.
Recall that
\[[s_\nu]_G=\sup\limits_{\varepsilon>0} [(s_\nu-\varepsilon)_+]_G \mbox{ and }
[b]_G=\sup\limits_{\varepsilon>0} [(b-\varepsilon)_+]_G.\]
Using the definition of the compact containment relation, find $\varepsilon>0$ such that
\[[s_\mu]_G\le [(s_\nu-\varepsilon)_+]_G \mbox{ and } [a]_G\le [(b-\varepsilon)_+]_G.\]

Use $\|s_\nu\|\leq 1$ and $\|b\|\leq 1$ at the third step to get
\begin{align*}
[\mu]\cdot[a]_G&=[s_\mu\otimes a]_G\\
&\le [(s_\nu-\varepsilon)_+\otimes(b-\varepsilon)_+]_G\\
&\le [(s_\nu\otimes b-\varepsilon^2)_+]_G\\
&\ll [s_\nu\otimes b]_G=[\nu]\cdot [b]_G,\end{align*}
so condition O3 is satisfied.

We now check condition O4. Let $([\mu_n])_{n\in\N}$ and $([a_n]_G)_{n\in \N}$ be increasing sequences
in $\Cu(G)$ and $\Cu^G(A,\alpha)$, respectively, and set
$[\mu]=\sup\limits_{n\in\N}[\mu_n]$ and $[a]_G=\sup\limits_{n\in\N}[a_n]_G$.
Without loss of generality, we may assume that $a$ is a contraction.
For $n\in\N$, denote by $s_{\mu_n}$ an invariant strictly positive element in $\K(\Hi_{\mu_n})$,
and denote by $s_\mu$ an invariant strictly positive element in $\K(\Hi_\mu)$.
By part~(1) of \autoref{cor: sup ll},
we have $[s_\mu]_G=\sup\limits_{n\in\N}[s_{\mu_n}]_G$ in $\Cu^G(\C)$.
As before, we may assume that $s_\mu$ is a contraction. Then the sequence
$([s_{\mu_n}\otimes a_n]_G)_{n\in \N}$ in $\Cu^G(A,\alpha)$ is increasing. Set
\[[c]_G=\sup\limits_{n\in\N} [s_{\mu_n}\otimes a_n]_G.\]

We claim that $[c]_G=[s_\mu\otimes a]_G$.
It is clear that $[c]_G\le [s_\mu\otimes a]_G$. To check the opposite inequality, let $\varepsilon>0$.
Then there exists $n\in \N$ such that
\[[(s_\mu-\varepsilon)_+]\le [s_{\mu_n}]_G \mbox{ and } [(a-\varepsilon)_+]_G\le [a_n]_G.\]
It follows that
\[[(s_\mu\otimes a-\varepsilon)_+]_G\le [(s_\mu-\varepsilon)_+\otimes (a-\varepsilon)_+]_G\le [s_{\mu_n}\otimes a_n]_G.\]
Hence, $[(s_\mu\otimes a-\varepsilon)_+]\le [c]$. Since
$[s_\mu\otimes a]_G=\sup\limits_{\varepsilon>0}[(s_{\mu}\otimes a-\varepsilon)_+]_G$, we deduce that
$[s_\mu\otimes a]_G\le [c]_G$, as desired. We have checked condition O4.

Since $\Cu^G(A,\alpha)$ is an object in $\CCu$ by \autoref{thm: indlim},
we conclude that $\Cu^G(A,\alpha)$ is an object in $\CCu^G$.

It remains to argue that $\Cu^G$ is a functor into $\CCu^G$.
Let $\beta\colon G\to\Aut(B)$ be a continuous action of $G$ on a \ca\ $B$, and let $\phi\colon A\to B$
be an equivariant $\ast$-homomorphism. By \autoref{cor:CuGFunctorCCu}, $\Cu^G(\phi)$ is a morphism in $\CCu$,
so we only need to check that it is a morphism of $\Cu(G)$-semimodules. This is immediate, so the proof
is complete.
\end{proof}

We mention here that naturality of the isomorphism in \autoref{thm: indlim} implies that said isomorphism
becomes a $\CCu^G$-isomorphism when
\[\varinjlim \left((\Cu((\K(\Hi_\mu)\otimes A)^G))_{\mu\in \Cu(G)}, (j_{\mu, \nu})_{\mu, \nu\in \Cu(G),\mu\leq\nu}\right)\]
is endowed with the following $\Cu(G)$-action. For separable representations $(\Hi_\mu,\mu)$ and $(\Hi_\nu,\nu)$
of $G$, and for $x\in \Cu((\K(\Hi_\mu)\otimes A)^G)$, we set
$[\nu]\cdot x = j_{\nu\otimes \mu,\mu}(x)$.

\subsection{Functorial properties of \texorpdfstring{$\Cu^G$}{CuG}}



In this subsection, we prove two functoriality properties of the equivariant Cuntz semigroup.
\autoref{prop:CuGStable} asserts that this functor is stable when the compact operators are given
the trivial $G$-action. We will generalize this result in \autoref{cor:StabUnirep}, where we
replace the trivial action on $\K$ with an arbitrary inner action. (Stability fails if the
action on $\K$ is not inner; see \autoref{eg:NonStableNotInner}.) Then, in \autoref{prop:CuGContinuous}, we show
that the equivariant Cuntz semigroup preserves equivariant inductive limits of sequences. Both
propositions in this subsection will be needed in Section~5.

\begin{prop} \label{prop:CuGStable}
Let $q$ be any rank one projection on $\ell^2(\N)$, and denote by
\[\iota_q\colon A\to A\otimes \K(\ell^2(\N))\]
the inclusion obtained by identifying $A$ with $A\otimes q\K(\ell^2(\N))q$. In other words,
$\iota_q(a)=a\otimes q$ for $a\in A$.

Give $\ell^2(\N)$ the trivial $G$-representation. Then $\iota_q$ induces a natural $\CCu^G$-iso\-mor\-phism
\[\Cu^G(\iota_q)\colon \Cu^G(A,\alpha)\to  \Cu^G(A\otimes \K(\ell^2(\N)),\alpha\otimes \id_{\K(\ell^2(\N))}).\]
\end{prop}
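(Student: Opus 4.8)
The plan is to present $\Cu^G(\iota_q)$, via the direct-limit description of \autoref{thm: indlim}, as the colimit of the levelwise stability isomorphisms for the ordinary Cuntz semigroup, and then to upgrade ``$\CCu$-isomorphism'' to ``$\CCu^G$-isomorphism'' by checking compatibility with the $\Cu(G)$-action. First I would fix a separable unitary representation $(\Hi_\mu,\mu)$ of $G$ and set $B_\mu=(\K(\Hi_\mu)\otimes A)^G$. Using the identification recalled just before \autoref{thm: indlim} — applied with the trivial $G$-action on the stabilizing copy of $\K(\ell^2(\N))$ — one obtains a natural $\ast$-isomorphism $(\K(\Hi_\mu)\otimes A\otimes\K(\ell^2(\N)))^G\cong B_\mu\otimes\K(\ell^2(\N))$ carrying $\id_{\K(\Hi_\mu)}\otimes\iota_q$ to the corner embedding $B_\mu\to B_\mu\otimes\K(\ell^2(\N))$, $z\mapsto z\otimes q$. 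Since the ordinary Cuntz semigroup is stable, this shows that $\Cu(\id_{\K(\Hi_\mu)}\otimes\iota_q)$ is a $\CCu$-isomorphism from $\Cu(B_\mu)$ onto $\Cu((\K(\Hi_\mu)\otimes A\otimes\K(\ell^2(\N)))^G)$, independent of the choice of rank one projection $q$ since any two such are Murray--von Neumann equivalent in $\K(\ell^2(\N))$.

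Next, for $[\mu]\le[\nu]$ I would check that these isomorphisms intertwine the connecting maps $j^A_{\mu,\nu}$ and their analogues for $A\otimes\K(\ell^2(\N))$; this holds because $\iota_q$ commutes with conjugation by $V^A_{\mu,\nu}$, as the two act on disjoint tensor legs. Hence the maps $\Cu(\id_{\K(\Hi_\mu)}\otimes\iota_q)$ form an isomorphism of the two direct systems of \autoref{thm: indlim} (for $A$ and for $A\otimes\K(\ell^2(\N))$), and therefore induce a $\CCu$-isomorphism between their limits $\Cu^G(A,\alpha)$ and $\Cu^G(A\otimes\K(\ell^2(\N)),\alpha\otimes\id_{\K(\ell^2(\N))})$. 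That this colimit map is exactly $\Cu^G(\iota_q)$ is seen by comparing the two on the structure maps $i_\mu$: straight from the definitions, $\Cu^G(\iota_q)\circ i^A_\mu=i^{A\otimes\K(\ell^2(\N))}_\mu\circ\Cu(\id_{\K(\Hi_\mu)}\otimes\iota_q)$ for every $\mu$.

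Finally, $\Cu^G(\iota_q)$ is a $\CCu$-morphism by \autoref{cor:CuGFunctorCCu} and bijective by the previous step, hence a $\CCu$-isomorphism; and it is a $\Cu(G)$-semimodule homomorphism by a one-line computation using \autoref{df:SemimodStructCuG} and associativity of the tensor product (with $\ell^2(\N)$ carrying the trivial representation, so $a\otimes q$ is invariant):
\[\Cu^G(\iota_q)\bigl([\nu]\cdot[a]_G\bigr)=[(s_\nu\otimes a)\otimes q]_G=[s_\nu\otimes(a\otimes q)]_G=[\nu]\cdot\Cu^G(\iota_q)([a]_G).\]
Since the morphisms of $\CCu^G$ are precisely the $\CCu$-morphisms that are $\Cu(G)$-semimodule homomorphisms (\autoref{df:CCuG}), and the inverse of $\Cu^G(\iota_q)$ is again of this form, $\Cu^G(\iota_q)$ is a $\CCu^G$-isomorphism. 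Naturality in $(A,\alpha)$ is then automatic: for an equivariant $\ast$-homomorphism $\psi\colon A\to B$ one has $\iota_q\circ\psi=(\psi\otimes\id_{\K(\ell^2(\N))})\circ\iota_q$, and applying the functor $\Cu^G$ yields the naturality square.

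The only genuinely non-formal input is stability of the ordinary Cuntz semigroup, used at the level of each $\mu$; everything else is bookkeeping. The point requiring the most care is ensuring that the levelwise isomorphisms are compatible with the maps $j_{\mu,\nu}$ and $i_\mu$ that organize $\Cu^G$ as a direct limit, so that they glue to $\Cu^G(\iota_q)$ itself rather than merely to some $\CCu^G$-isomorphism between the two equivariant Cuntz semigroups.
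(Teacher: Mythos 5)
Your argument is correct and follows essentially the same route as the paper: both proofs reduce to the levelwise stability isomorphisms $\Cu((\K(\Hi_\mu)\otimes A)^G)\to\Cu((\K(\Hi_\mu)\otimes A)^G\otimes\K(\ell^2(\N)))$ coming from stability of the ordinary Cuntz semigroup, check compatibility with the connecting maps $j_{\mu,\nu}$, and pass to the direct limit of \autoref{thm: indlim}, identifying the resulting map with $\Cu^G(\iota_q)$ via the structure maps $i_\mu$. The only cosmetic difference is that the paper dispatches the $\Cu(G)$-semimodule compatibility by citing \autoref{thm:EqCtzSmgpCCuG} (functoriality of $\Cu^G$ into $\CCu^G$) rather than verifying it by hand as you do.
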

\begin{proof}
We abbreviate $\K(\ell^2(\N))$ to $\K$.
By \autoref{thm:EqCtzSmgpCCuG}, $\Cu^G(\iota_q)$ is a morphism in $\CCu^G$. It thus suffices to check that it is
an isomorphism in $\CCu$.

Let $(\Hi_\mu,\mu)$ be a separable unitary representation of $G$. Denote by
\[\kappa^q_\mu\colon \Cu((\K(\Hi_\mu)\otimes A)^G)\to \Cu((\K(\Hi_\mu)\otimes A)^G\otimes \K)\]
the $\CCu$-morphism induced by the inclusion as the corner associated to $q$. Then $\kappa^q_\mu$ is an
isomorphism (see Appendix~6 in~\cite{CowEllIva}). With the notation from \autoref{thm:EqCtzSmgpCCuG}, it is clear
that
\[j_{\mu,\nu}^{A\otimes\K}\circ \kappa^q_\nu = \kappa^q_\mu \circ j_{\mu,\nu}^{A}\]
for all $\nu\in\Cu(G)$ with $\mu\leq \nu$.

By the universal property of the direct limit in $\CCu$, applied to the object $\Cu^G(A\otimes\K, \alpha\otimes\id_\K)$
and the maps $i^{A\otimes\K}_\mu\circ \kappa^q_\mu$, for $\mu\in \Cu(G)$, it follows that there exists a
$\CCu$-morphism
\[\kappa^q\colon  \Cu^G(A,\alpha)\to  \Cu^G(A\otimes \K,\alpha\otimes \id_{\K})\]
satisfying $\kappa^q\circ i^A_\mu=i_\mu^{A\otimes\K}\circ \kappa^q_\mu$. Since $\kappa^q$ is induced
by $\iota_q$, we must have $\kappa^q=\Cu^G(\iota_q)$. Finally, since $\kappa^q_\mu$ is an isomorphism
for all $\mu$, the same holds for $\kappa^q$, so the proof is complete.
\end{proof}

\begin{prop} \label{prop:CuGContinuous}
Let $(A_n,\iota_n)_{n\in\N}$ be a direct system of \ca s with connecting maps $\iota_n\colon A_n\to A_{n+1}$.
For $n\in\N$, let $\alpha^{(n)}\colon G\to \Aut(A_n)$ be a continuous action, and suppose
that $\alpha^{(n+1)}\circ\iota_n=\iota_{n}\circ\alpha^{(n)}$ for all $n\in\N$. Set $A=\varinjlim (A_n,\iota_n)$,
and $\alpha= \varinjlim \alpha^{(n)}$. Then there exists a natural $\CCu^G$-isomorphism
\[\varinjlim \Cu^G(A_n, \alpha^{(n)})\cong \Cu^G(A,\alpha).\]
\end{prop}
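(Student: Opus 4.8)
The strategy is to reduce the statement to the fact, already established in \autoref{thm: indlim}, that $\Cu^G(A,\alpha)$ is the direct limit over $\Cu(G)$ of the ordinary Cuntz semigroups $\Cu((\K(\Hi_\mu)\otimes A)^G)$, together with the classical continuity of the ordinary Cuntz semigroup functor $\Cu$ (Theorem~2 in~\cite{CowEllIva}), and then to interchange two direct limits. First I would fix a separable representation $(\Hi_\mu,\mu)$ of $G$. Since the fixed-point functor $B\mapsto B^G$ on $G$-\ca s commutes with direct limits of sequences (the fixed-point algebra is the range of the conditional expectation $E(b)=\int_G \gamma_g(b)\, d\mu(g)$, which is compatible with the connecting maps and continuous, so $\varinjlim ((\K(\Hi_\mu)\otimes A_n)^G,\ \id\otimes\iota_n)\cong (\K(\Hi_\mu)\otimes A)^G$), the continuity of $\Cu$ gives a natural $\CCu$-isomorphism
\[
\varinjlim_{n} \Cu\big((\K(\Hi_\mu)\otimes A_n)^{G}\big) \;\cong\; \Cu\big((\K(\Hi_\mu)\otimes A)^{G}\big).
\]

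Next I would assemble these isomorphisms into the statement. On one hand, $\Cu^G(A,\alpha)$ is by \autoref{thm: indlim} the direct limit, over the directed set $(\Cu(G),\leq)$, of the system $\big(\Cu((\K(\Hi_\mu)\otimes A)^G),\ j_{\mu,\nu}^A\big)$. Replacing each term using the isomorphism above, and noting that the maps $j_{\mu,\nu}^A$ are natural in $A$ (so they are the colimit over $n$ of the maps $j_{\mu,\nu}^{A_n}$), one can view $\Cu^G(A,\alpha)$ as an iterated colimit, first over $n\in\N$ and then over $[\mu]\in\Cu(G)$, of the bi-indexed system $\Cu((\K(\Hi_\mu)\otimes A_n)^G)$ with the maps $j_{\mu,\nu}^{A_n}$ and $\Cu(\id\otimes\iota_n)$. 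On the other hand, $\varinjlim_n \Cu^G(A_n,\alpha^{(n)})$ is, again by \autoref{thm: indlim}, the colimit over $n$ of the colimits over $[\mu]$ of the same bi-indexed system. Since $\CCu^G$ is closed under countable direct limits (\autoref{thm:CCuGClosedIndLim}) and $\CCu$ (hence the underlying colimits) is closed under direct limits over arbitrary directed sets (Corollary~3.1.11 in~\cite{AntPerThi}), the two iterated colimits agree by the standard interchange-of-colimits argument, yielding a $\CCu$-isomorphism $\varinjlim_n \Cu^G(A_n,\alpha^{(n)})\cong \Cu^G(A,\alpha)$. Naturality is inherited from naturality of all the isomorphisms involved and of the maps $i_\mu$, $j_{\mu,\nu}$.

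Finally I would upgrade this to a $\CCu^G$-isomorphism: since both sides carry $\Cu(G)$-semimodule structures (on the left via \autoref{thm:CCuGClosedIndLim}, on the right via \autoref{thm:EqCtzSmgpCCuG}), and these structures are, on representatives of the form $i_\mu(s)$, given by the same formula $[\nu]\cdot i_\mu(s) = i_{\nu\otimes\mu}(j_{\nu\otimes\mu,\mu}(s))$ (using the remark following \autoref{thm:EqCtzSmgpCCuG}), the $\CCu$-isomorphism constructed above automatically intertwines the $\Cu(G)$-actions; hence it is a morphism, and so an isomorphism, in $\CCu^G$.

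I expect the main obstacle to be purely bookkeeping: setting up the bi-indexed direct system carefully enough that the interchange of the two colimits is rigorous, in particular checking that the families of maps $(j_{\mu,\nu}^{A_n})_n$ and $(\Cu(\id_{\K(\Hi_\mu)}\otimes\iota_n))_\mu$ commute on the nose and are compatible with the universal maps $i_\mu^{A_n}$ and $\psi_n$. Everything else — the commutation of $(-)^G$ with sequential limits, and the verification that the resulting $\CCu$-isomorphism respects the $\Cu(G)$-action — is routine given the results already proved in the paper.
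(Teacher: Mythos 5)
Your proposal is correct and follows essentially the same route as the paper: construct the comparison map from the universal property, then verify it is a $\CCu$-isomorphism by fixing $(\Hi_\mu,\mu)$, using that the fixed-point functor commutes with sequential limits and that $\Cu$ is continuous (Theorem~2 of \cite{CowEllIva}), and finally interchanging the colimit over $n$ with the colimit over $\Cu(G)$ from \autoref{thm: indlim}. You are in fact slightly more explicit than the paper about the conditional-expectation argument for $(\K(\Hi_\mu)\otimes A)^G\cong\varinjlim(\K(\Hi_\mu)\otimes A_n)^G$ and about checking the $\Cu(G)$-semimodule compatibility, both of which the paper leaves implicit.
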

\begin{proof}
By functoriality of $\Cu^G$ (see \autoref{cor:CuGFunctorCCu}), the equivariant inductive
system $(A_n,\alpha^{(n)},\iota_n)_{n\in\N}$ induces the inductive system
\[\left(\Cu^G(A_n,\alpha^{(n)}),\Cu^G(\iota_n)\right)_{n\in\N}\]
in $\CCu^G$. By \autoref{thm:CCuGClosedIndLim}, its inductive limit exists in $\CCu^G$, and we will
denote it by $\varinjlim \Cu^G(A_n,\alpha^{(n)})$.

For $n\in\N$, denote by $\iota_{\I,n}\colon A_n\to A$ the equivariant map into the direct limit.
Then $\Cu^G(\iota_{\I,n})$ is a morphism in $\CCu^G$, and the universal property of inductive limits,
there exists a $\CCu^G$-morphism $\varphi\colon \varinjlim \Cu^G(A_n,\alpha^{(n)})\to \Cu^G(A,\alpha)$.

We claim that $\varphi$ is an isomorphism. For this, it is enough to check that it is an isomorphism
in $\CCu$. Fix a separable representation $(\Hi_\mu,\mu)$ of $G$, and for $n\in\N$, denote by
\[\psi_{n,\mu}\colon \Cu((\K(\Hi_\mu)\otimes A_n)^G)\to \Cu((\K(\Hi_\mu)\otimes A_{n+1})^G)\]
the $\CCu$-morphism induced by $\iota_{n}$. For $\nu\in\Cu(G)$ with $\mu\leq \nu$, we have
\[j_{\mu,\nu}\circ\psi_{n,\nu}=\psi_{n,\nu}\circ j_{\mu,\nu}.\]
Denote by
\[\psi_n\colon \varinjlim_{\mu\in\Cu(G)}\Cu((\K(\Hi_\mu)\otimes A_n)^G)\to \varinjlim_{\mu\in\Cu(G)} \Cu((\K(\Hi_\mu)\otimes A_{n+1})^G)\]
the resulting $\Cu$-morphism, and regard it with a $\Cu$-morphism $\varphi_n\colon \Cu^G(A_n,\alpha^{(n)})\to \Cu^G(A_{n+1},\alpha^{(n+1)})$.
It is clear that $\varphi_n=\Cu^G(\iota_{n})$.

Using Theorem~2 in~\cite{CowEllIva}, the direct limit of $\Cu((\K(\Hi_\mu)\otimes A_n)^G)$ is (naturally) isomorphic, in the category $\CCu$,
to $\Cu((\K(\Hi_\mu)\otimes A)^G)$. This isomorphism can be identified with the map $\varphi$, and this shows that $\varphi$ is an
isomorphism in $\CCu$. This finishes the proof.
\end{proof}

We point out that in the previous proposition, we may allow direct limits over arbitrary directed sets,
using Corollary~3.1.11 in~\cite{AntPerThi} instead of Theorem~2 in~\cite{CowEllIva}.

\section{A Hilbert module picture of \texorpdfstring{$\Cu^G(A,\alpha)$}{CuGAa}}

In analogy with the non-equivariant case, the equivariant Cuntz semigroup can be constructed in terms
of equivariant Hilbert modules. The goal of this section is to present this construction and identify
it with $\Cu^G(A,\alpha)$ in a canonical way.

The description of $\Cu^G(A,\alpha)$ provided in this section will be needed in Section~5, where we will
prove that $\Cu^G(A,\alpha)$ can be naturally identified with $\Cu(A\rtimes_\alpha G)$ (\autoref{Thm:JulgCuG}).

\subsection{Equivariant Hilbert C*-modules}\label{subsec: equivariant}

Throughout this section, we fix a \ca\ $A$, a compact group $G$, and an action $\alpha\colon G\to \Aut(A)$.
All modules will be right modules and a Hilbert $A$-module will mean a Hilbert C*-module over $A$.
The reader is referred to \cite{Lan_Book} for the basics of Hilbert C*-modules.

Given Hilbert $A$-modules $E$ and $F$, we let $\mathcal{L}(E, F)$ and $\K(E, F)$
denote the spaces of adjointable operators and compact operators from $E$ to $F$,
respectively. We write $\U(E,F)$ for the set of unitaries between $E$ and $F$.
When $E=F$, we write $\mathcal{L}(E)$, $\K(E)$, and $\U(E)$ for $\mathcal{L}(E, E)$, $\K(E, E)$, and $\U(E,E)$,
respectively.

\begin{df} A \emph{Hilbert $(G,A,\alpha)$-module} is a pair $(E,\rho)$ consisting of
\begin{enumerate}
\item a Hilbert $A$-module $E$, and
\item a strongly continuous group homomorphism $\rho\colon G\to \U(E)$, satisfying
\begin{enumerate}
\item $\rho_g(x\cdot a)=\rho_g(x)\cdot\alpha_g(a)$ for all $g\in G$, all $x\in E$ and all $a\in A$, and
\item $\langle \rho_g(x),\rho_g(y)\rangle_E=\alpha_g\left(\langle x,y\rangle_E\right)$ for all $g\in G$ and all $x,y\in E$.
\end{enumerate}
\end{enumerate}
(The continuity condition for $\rho$ means that for $x\in E$, the map $G\to E$ given by
$g\mapsto \rho_g(x)$ is continuous.)\end{df}

\begin{df}
If $(E,\rho)$ is a Hilbert $(G,A,\alpha)$-module and $F$ is a Hilbert submodule of $E$ satisfying
$\rho_g(F)\subseteq F$ for all $g\in G$, we will write $\rho|_F$ for the (co-restricted)
group homomorphism $\rho|_F\colon G\to \U(F)$ given by $(\rho|_F)_g(z)=\rho_g(z)$ for all $g\in G$ and all $z\in F$.
The pair $(F,\rho|_F)$ will be called a \emph{Hilbert $(G,A,\alpha)$-submodule} of $(E, \rho)$.

We say that $E$ is \emph{countably generated} if there exists a countable subset
$\{\xi_n\}_{n\in \N}\subseteq E$ such that
\[\left\{\sum_{n=1}^k\xi_n a_n\colon a_n\in A, k\in \N\right\}\]
is dense in $E$.
\end{df}

We will sometimes call Hilbert $(G,A,\alpha)$-modules $G$-Hilbert $(A,\alpha)$-modules, or just $G$-Hilbert
$A$-modules if the action $\alpha$ is understood.

\begin{eg} It is easy to check that if $\beta\colon G\to\Aut(B)$ is an action of $G$ on a \ca\ $B$, then
the pair $(B,\beta)$ is a $G$-Hilbert $B$-module.\end{eg}

Given $G$-Hilbert $A$-modules $(E,\rho)$ and $(F,\eta)$, we let $\mathcal{L}(E,F)^G$ and $\K(E, F)^G$ denote the subsets of $\mathcal{L}(E,F)$ and $\K(E, F)$, respectively, consisting of the equivariant operators. That is,
\begin{align*}
&\mathcal{L}(E,F)^G=\{T\in \mathcal{L}(E,F)\colon T\circ\rho_g=\eta_g\circ T \mbox{ for all } g\in G\},\\
&\K(E,F)^G=\{T\in \K(E,F)\colon T\circ\rho_g=\eta_g\circ T \mbox{ for all } g\in G\}.
\end{align*}
(Note that $\mathcal{L}(E,F)^G$ is the set of fixed points of $\mathcal{L}(E,F)$, where for an adjointable
operator $T\colon E\to F$ and $g\in G$, we set $g\cdot T=\eta_g\circ T \circ \rho_{g^{-1}}$.)

As before, $\mathcal{L}(E)^G$ and $\K(E)^G$ denote $\mathcal{L}(E,E)^G$ and $\K(E,E)^G$, respectively.

\begin{df}\label{def: Hilbert subequivalence}
Let $(E,\rho)$ and $(F,\eta)$ be $G$-Hilbert A-modules. We say that $(E, \rho)$ is \emph{isomorphic} to $(F,\eta)$,
in symbols $(E,\rho)\cong (F,\eta)$, if there exists a unitary in $\mathcal{L}(E,F)^G$.
We say that $(E,\rho)$ is \emph{subequivalent} to $(F,\eta)$, in symbols $(E,\rho)\preceq (F,\eta)$,
if $(E,\rho)$ is isomorphic to a direct summand of $(F, \eta)$.
(That is, if there exists $V\in \mathcal{L}(E, F)^G$ such that $V^*V=\id_E$.)
\end{df}

Let $I$ be a set and let $(E_j, \rho_j)_{j\in I}$ be a family of Hilbert $A$-modules. Then the Hilbert direct sum $\left(\bigoplus\limits_{j\in I} E_j, \bigoplus\limits_{j\in I}\rho_j\right)$ is the completion of the corresponding
algebraic direct sum with respect to the norm defined by the scalar product
\[\left\langle\bigoplus\limits_{j\in I}\xi_j, \bigoplus\limits_{j\in I}\zeta_j\right\rangle=
\sum_{j\in I}\langle \xi_j, \zeta_j\rangle.\]

Let $\Hi$ be a Hilbert space. By convention, the scalar product on $\Hi$ is linear in the second argument and conjugate linear
in the first one. Let $\Hi\otimes A$ denote the exterior tensor product of $\Hi$ and $A$, where $A$ is considered as a right $A$-module over itself (\cite[Chapter 4]{Lan_Book}). That is, $\Hi\otimes A$ is the completion of the algebraic tensor product $\Hi\otimes_{\mathrm{alg}} A$ in the norm given by the $A$-valued product
\[\langle\xi_1\otimes a_1, \xi_2\otimes a_2 \rangle=\langle\xi_1, \xi_2\rangle a_1^*a_2\]
for $\xi_1,\xi_2\in \Hi$ and $a_1,a_2\in A$.

\begin{df}\label{df:H_A}
For each element $[\pi]\in \widehat{G}$, choose a representative $\pi\colon G\to \U(\Hi_\pi)$.
Denote by $\Hi_\C$ the Hilbert space direct sum
\[\Hi_\C=\bigoplus\limits_{[\pi]\in \widehat G}\bigoplus\limits_{n=1}^\infty \Hi_\pi,\]
and let $\pi_\C\colon G\to \U(\Hi_\C)$ be the unitary representation given by
\[\pi_\C=\bigoplus\limits_{[\pi]\in \widehat G}\bigoplus\limits_{n=1}^\infty \pi.\]
The unitary representation $(\Hi_\C,\pi_\C)$ is easily seen not to depend on the choices
of representatives $\pi\colon G\to \U(\Hi_\pi)$ up to unitary equivalence.

We define the \emph{universal $G$-Hilbert $(A,\alpha)$-module}
$(\Hi_A,\pi_A)$ to be $\Hi_A=\Hi_\C\otimes A$ and $\pi_A=\pi_\C\otimes\alpha$.
\end{df}

\begin{rem}
It is a classical result of Kasparov that when $G$ is second countable, then every countably generated
$G$-Hilbert $A$-module is isomorphic to a direct summand of $(\Hi_A, \pi_A)$; see \cite[Theorem 2]{Kas_HilbMods}.
\end{rem}

\begin{rem}
It is easy to check, using the Peter-Weyl theorem,
that $(\Hi_\C,\pi_\C)$ is (unitarily equivalent) to the representation
$(L^2(G)\otimes\ell^2(\N),\lambda\otimes \id_{\ell^2(\N)})$.
\end{rem}

An equivalent presentation of $\Hi_\C$ (and therefore of $\Hi_A$), when $G$ is second countable, is
\[\Hi_\C=\bigoplus\limits_{[\mu]\in \Cu(G)} \Hi_\mu,\]
with $\pi_\C=\bigoplus\limits_{[\mu]\in \Cu(G)} \mu$.

\begin{lma}\label{lem: submodules of H_A}
Suppose that $G$ is second countable, and let $(E, (\pi_A)|_E)$ be a countably generated $G$-Hilbert
$A$-submodule of $(\Hi_A, \pi_A)$. Then there exists a separable subrepresentation $(\Hi_\mu, \mu)$ of $(\Hi_\C, \pi_\C)$ such that
$E\subseteq \Hi_\mu\otimes A$ and $(\pi_A)|_E=(\mu\otimes \alpha)|_E$.
\end{lma}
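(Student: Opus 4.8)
The plan is to produce, from a countable generating set for $E$, a countable family of irreducible subrepresentations of $(\Hi_\C,\pi_\C)$ whose (multiplicity-bounded) direct sum already contains $E$. First I would fix a countable generating set $\{\xi_n\}_{n\in\N}$ for $E$, so that finite $A$-linear combinations of the $\xi_n$ are dense in $E$. Writing $\Hi_\C=\bigoplus_{[\pi]\in\widehat G}\bigoplus_{k=1}^\infty\Hi_\pi$, each $\xi_n\in\Hi_\C\otimes A$ can be approximated arbitrarily well (in the Hilbert module norm) by elements supported, in the first tensor factor, on a \emph{finite} subsum $\bigoplus_{[\pi]\in F_n}\bigoplus_{k=1}^{N_n}\Hi_\pi$ of $\Hi_\C$; this uses only that $\Hi_\C\otimes A$ is the completion of the algebraic tensor product, together with the description of the norm via the $A$-valued inner product. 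Taking a sequence of such approximants converging to $\xi_n$, and then the union over $n$ of the finite index sets that appear, produces a countable set $S\subseteq\widehat G\times\N$ of ``(irreducible type, multiplicity index)'' pairs. Let $(\Hi_\mu,\mu)$ be the corresponding sub-representation $\bigoplus_{([\pi],k)\in S}\Hi_\pi$ of $(\Hi_\C,\pi_\C)$; it is separable because $S$ is countable and each $\Hi_\pi$ is finite-dimensional ($G$ being compact).

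Next I would check $E\subseteq\Hi_\mu\otimes A$. By construction each $\xi_n$ is a norm-limit of elements of $\Hi_\mu\otimes_{\mathrm{alg}}A\subseteq\Hi_\mu\otimes A$, and $\Hi_\mu\otimes A$ is a closed submodule of $\Hi_\C\otimes A$ (the orthogonal complement of $\Hi_\mu$ in $\Hi_\C$ tensored with $A$ is a complementary submodule), so each $\xi_n\in\Hi_\mu\otimes A$. Since $\Hi_\mu\otimes A$ is closed under the module operations and norm-closed, the closed span of the $\xi_n a$ lies in it, i.e. $E\subseteq\Hi_\mu\otimes A$. Finally, the claim $(\pi_A)|_E=(\mu\otimes\alpha)|_E$ is immediate from the definition $\pi_A=\pi_\C\otimes\alpha$ and the fact that $\mu$ is the restriction of $\pi_\C$ to the invariant subspace $\Hi_\mu$: for $x\in E\subseteq\Hi_\mu\otimes A$ one has $(\pi_A)_g(x)=(\pi_\C\otimes\alpha)_g(x)=(\mu\otimes\alpha)_g(x)$, using that $\Hi_\mu$ is $\pi_\C$-invariant so that $\pi_\C|_{\Hi_\mu}=\mu$. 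One should also note that $(\Hi_\mu,\mu)$ being a direct summand of $(\Hi_\C,\pi_\C)$ indexed by a subset of the defining index set, it is genuinely a subrepresentation in the sense used elsewhere in the paper.

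The only real subtlety is the very first step: making precise that each $\xi_n$, being an element of the \emph{completed} tensor product, can be approximated by elements living on a finite block of $\Hi_\C$. This is where one must be slightly careful, since a priori an element of $\Hi_\C\otimes A$ need not be a finite sum of elementary tensors; but density of $\Hi_\C\otimes_{\mathrm{alg}}A$ in $\Hi_\C\otimes A$, combined with the fact that every element of the algebraic tensor product is supported on a finite-dimensional (hence finite-block, up to enlarging) subspace of $\Hi_\C$, resolves it. Everything after that is routine bookkeeping with countable unions and closures, so I do not expect any genuine obstacle beyond organizing this approximation argument cleanly.
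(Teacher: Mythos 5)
Your proof is correct and follows essentially the same route as the paper's: decompose $\Hi_\C$ (hence $\Hi_A$) as a direct sum indexed over a large set, observe that each member of a countable generating set of $E$ is supported on a countable sub-block, take the countable union of these index sets to get a separable subrepresentation $(\Hi_\mu,\mu)$, and note that the containment $E\subseteq\Hi_\mu\otimes A$ and the identity $(\pi_A)|_E=(\mu\otimes\alpha)|_E$ are then immediate. The only cosmetic imprecision is your parenthetical claim that an element of $\Hi_\C\otimes_{\mathrm{alg}}A$ is supported on a finite block ``up to enlarging'' --- a vector in $\Hi_\C$ spanning a finite-dimensional subspace may still have infinitely many nonzero coordinates, so one should first approximate it by finitely supported vectors; but since only countability of the support is needed for the argument, this does not affect the proof.
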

\begin{proof}
Since $\Hi_A=\bigoplus\limits_{[\nu]\in\Cu(G)}\left(\Hi_\nu\otimes A\right)$ for any $\xi \in \Hi_A$, there exists
a countable set $X_\xi\subseteq \Cu(G)$ such that $\xi$ belongs to
$\bigoplus\limits_{[\nu]\in X_\xi}\left(\Hi_\nu\otimes A\right)$.

Now let $\{\xi_n\}_{n\in\N}$ be a countable generating subset of $E$.
Then $X=\bigcup\limits_{n\in\N}X_{\xi_n}$ is a countable subset of $\Cu(G)$. Set
\[(\Hi_\mu, \mu)=\bigoplus\limits_{[\nu]\in X}\left(\Hi_{\nu},\nu\right).\]
Then $\Hi_\mu$ is separable. It is immediate that $E\subseteq \Hi_\mu\otimes A$ and
$(\pi_A)|_E=(\mu\otimes \alpha)|_E$, so the proof is complete.
\end{proof}

Let $(E, \rho)$ be a $G$-Hilbert $A$-module.
Then the action $G$ on $E$ induces an action of $G$ on the $C^*$-algebra $\K(E)$ by conjugation.
The fixed point algebra of this action will be denoted by $\K(E)^G$.
When $(E, \rho)$ is the $G$-Hilbert $A$-module $(\Hi_\mu\otimes A, \mu\otimes \alpha)$,
for some separable unitary representation $(\Hi_\mu, \mu)$ of $G$, then the induced action on
$\K(\Hi_\mu\otimes A)$ will be denoted by $\Ad(\mu\otimes \alpha)$.


Let $E$ be a Hilbert $A$-module, and let $\xi,\zeta\in E$. We denote by $\Theta_{\xi,\zeta}\colon E\to E$
the $A$-rank one operator given by $\Theta_{\xi,\zeta}(\eta)=\xi \cdot \langle\zeta,\eta\rangle$ for $\eta\in E$.

\begin{lma}\label{lem: range}
Let $a\in \K(\Hi_A)^G$, and set $E=\overline{\mathrm{span}\{a(\Hi_A)\cup a^*(\Hi_A)\}}$, endowed with the
restricted $G$-representation $\rho$.
Then $(E,\rho)$ is a countably generated $G$-Hilbert $A$-module and $a|_E\in \K(E)^G$.
\end{lma}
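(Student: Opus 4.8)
The plan is to verify three things: that $E$ is a closed Hilbert $A$-submodule of $\Hi_A$, that it is countably generated, that it is $\rho$-invariant, and finally that $a$ restricts to an element of $\K(E)^G$. The submodule property is immediate: $E$ is by construction a closed linear subspace, and it is invariant under the right $A$-action since $a(\xi)\cdot b = a(\xi\cdot b)$ (as $a$ is $A$-linear) and similarly for $a^*$, so $a(\Hi_A)\cdot A \subseteq a(\Hi_A)$ and likewise for $a^*$; hence $\overline{\mathrm{span}\{a(\Hi_A)\cup a^*(\Hi_A)\}}$ is a Hilbert submodule. For $G$-invariance, since $a\in\K(\Hi_A)^G$ we have $\rho_g\circ a = a\circ \rho_g$ and, taking adjoints, $\rho_g\circ a^* = a^*\circ\rho_g$; therefore $\rho_g(a(\xi)) = a(\rho_g(\xi))\in a(\Hi_A)$ and $\rho_g(a^*(\xi))=a^*(\rho_g(\xi))\in a^*(\Hi_A)$, so $\rho_g(E)\subseteq E$ for every $g\in G$, and $(E,\rho|_E)$ is a $G$-Hilbert $(A,\alpha)$-submodule.

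Next I would argue that $E$ is countably generated. The key input is that $a$ is a \emph{compact} operator on $\Hi_A$: by definition $\K(\Hi_A)$ is the norm closure of the span of the rank-one operators $\Theta_{\xi,\zeta}$, so $a = \lim_n a_n$ where each $a_n$ is a finite sum $\sum_j \Theta_{\xi_j^{(n)},\zeta_j^{(n)}}$. Collect the countably many vectors $\{\xi_j^{(n)}\}$ and $\{\zeta_j^{(n)}\}$ appearing in all the $a_n$. Then the range of each $a_n$ lies in the submodule generated by the finitely many $\xi_j^{(n)}$, and since $a_n\to a$ in norm, one checks that $a(\Hi_A)$ (and symmetrically $a^*(\Hi_A)$, using $a^* = \lim_n \sum_j \Theta_{\zeta_j^{(n)},\xi_j^{(n)}}$) is contained in the closed submodule generated by this countable set; hence $E$ is countably generated.

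Finally, for $a|_E\in\K(E)^G$: the operator $a$ maps $E$ into $E$ by construction (indeed $a(E)\subseteq a(\Hi_A)\subseteq E$), and it is adjointable on $E$ with adjoint $a^*|_E$, since $a^*$ also maps $E$ into $E$ and the adjoint identity $\langle a\xi,\eta\rangle = \langle\xi,a^*\eta\rangle$ is inherited. To see that $a|_E$ is compact on $E$, use the approximation $a = \lim_n a_n$ again: each $\Theta_{\xi,\zeta}$ with $\xi,\zeta$ in the generating set can be replaced, after a small perturbation, by $\Theta_{\xi',\zeta'}$ with $\xi',\zeta'\in E$ (since the generating vectors may be taken to lie in $E$ — they are in the range of $a$ or $a^*$ up to arbitrarily small error), so $a_n|_E$ is a finite-rank operator on $E$ and $a_n|_E\to a|_E$, giving $a|_E\in\K(E)$. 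Equivariance is inherited from $a\in\K(\Hi_A)^G$: for $g\in G$, $\rho_g\circ(a|_E) = (\rho_g\circ a)|_E = (a\circ\rho_g)|_E = (a|_E)\circ\rho_g$, using that both $a$ and $\rho_g$ preserve $E$. The main obstacle I anticipate is the bookkeeping in the countable-generation step — specifically, confirming that the generating vectors of $E$ can be chosen inside $E$ itself (rather than merely having their images under $a, a^*$ land in $E$) so that the finite-rank approximations of $a|_E$ genuinely live on $E$; this is a routine but slightly fussy approximation argument using that $a^{1/n}\xi\to\xi$ for $\xi\in E$, analogous to the reasoning in \autoref{lem:RestrCompactGEquiv}.
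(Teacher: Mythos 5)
Your outline matches the paper's strategy (approximate $a$ by sums of rank-one operators, push their legs into $E$, conclude both compactness of $a|_E$ and countable generation), and the easy parts --- $A$-linearity, $G$-invariance of $E$, adjointability and equivariance of $a|_E$ --- are fine. But the two steps you defer are exactly where the content lies, and as written they have a genuine gap. For countable generation: showing that $a(\Hi_A)$ and $a^*(\Hi_A)$ lie in the closed submodule generated by the legs $\xi_j^{(n)},\zeta_j^{(n)}$ only places $E$ \emph{inside} a countably generated module. Unlike for Hilbert spaces, a closed submodule of a countably generated Hilbert $C^*$-module need not be countably generated (e.g.\ $C_0(U)\subseteq C(X)$ for $U$ open but not $\sigma$-compact), and the lemma does not assume $A$ separable, so you must exhibit a countable generating set lying in $E$ itself. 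For the perturbation step: the claim that the legs ``are in the range of $a$ or $a^*$ up to arbitrarily small error'' is false --- in an exact decomposition $a=\Theta_{\xi,\zeta}$ over $A=C[0,1]$ with $\xi=1$, $\zeta=t$, one has $E=C_0((0,1])$ and $\mathrm{dist}(\xi,E)=1$. Moreover ``$a^{1/n}\xi\to\xi$'' does not parse, since $a$ is not assumed positive.

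Both issues are closed by the single device the paper uses: since $a\in\overline{(aa^*)\K(\Hi_A)}\cap\overline{\K(\Hi_A)(a^*a)}$, choose $n$ with $\|a-(aa^*)^{1/n}a(a^*a)^{1/n}\|<\varepsilon/2$ (and $\|aa^*\|^{1/n},\|a^*a\|^{1/n}<2$), and replace $\sum_j\Theta_{\xi_j,\zeta_j}$ by $\sum_j\Theta_{(aa^*)^{1/n}(\xi_j),(a^*a)^{1/n}(\zeta_j)}$; the triangle inequality shows this is still within $\varepsilon$ of $a$. Now the left legs lie in $\overline{aa^*(\Hi_A)}\subseteq\overline{a(\Hi_A)}$ and the right legs in $\overline{a^*a(\Hi_A)}\subseteq\overline{a^*(\Hi_A)}$, so all legs are in $E$: these operators restrict to finite-rank operators on $E$, giving $a|_E\in\K(E)$, and their legs form a countable subset of $E$ whose generated submodule contains $a(\Hi_A)$ and (taking adjoints) $a^*(\Hi_A)$, hence equals $E$. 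Note that the compression must be applied to $a$ as an operator, via $\|(aa^*)^{1/n}a(a^*a)^{1/n}-a\|\to0$, rather than to the individual vectors $\xi_j,\zeta_j$ as your sketch of \autoref{lem:RestrCompactGEquiv}-style reasoning suggests; in particular $(aa^*)^{1/n}$ and $(a^*a)^{1/n}$ play distinct roles on the two factors.
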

\begin{proof}
It is clear that $E$ is invariant under $(\pi_A)_g$ for all $g\in G$; thus $(E,\rho)$
is a $G$-Hilbert $A$-module. Let $\varepsilon>0$. Since $a\in \K(\Hi_A)^G$, there exist $k\in\N$, and
$\xi_1,\ldots,\xi_k,\zeta_1,\ldots,\zeta_k\in \Hi_A$ satisfying
$$\left\|a-\sum_{j=1}^k\Theta_{\xi_j, \zeta_j}\right\|<\frac{\varepsilon}{8}.$$
Use that $a\in \overline{(aa^*)\K(\Hi_A)}$ and $a\in \overline{\K(\Hi_A)(a^*a)}$ to choose $n\in \N$ with \[\left\|aa^*\right\|^\frac{1}{n}<2 \mbox{ and } \left\|a^*a\right\|^\frac{1}{n}<2\]
such that, in addition,
\[\left\|a-(aa^*)^{1/n}a(a^*a)^{1/n}\right\|<\frac{\varepsilon}{2}.\]
It follows that
\begin{align*}
\left\|a-\sum_{j=1}^k\Theta_{(aa^*)^{1/n}(\xi_j), (a^*a)^{1/n}(\zeta_j)}\right\|&=
\left\|a-\sum_{j=1}^k(aa^*)^{1/n}\Theta_{\xi_j, \zeta_j}(a^*a)^{1/n}\right\|\\
&\le \left\|a-(aa^*)^{1/n}a(a^*a)^{1/n}\right\|\\
& \ \ \ \ +\left\|aa^*\right\|^{1/n}
\left\|a-\sum_{j=1}^k\Theta_{\xi_j, \zeta_j}\right\|\left\|a^*a\right\|^{1/n}\\
&<\varepsilon.
\end{align*}

For $j=1,\ldots,k$, the map $\Theta_{(aa^*)^\frac{1}{n}(\xi_j), (a^*a)^\frac{1}{n}(\zeta_j)}$ leaves $E$
invariant, so its restriction to $E$ is a rank one operator in $\K(E)$.
It follows that $a|_E$ is the limit of a sequence finite rank operators on $E$, and hence it is compact.
In particular, $E$ is countably generated, because the range of each finite rank operator is finitely generated.
Finally, it is clear that $a$ is invariant, so $a\in \K(E)^G$.
\end{proof}

Let $(E,\rho)$ be a $G$-Hilbert $A$-module, and let $(F,\rho|_F)$ be a $G$-Hilber submodule of $E$. Then there
is a canonical inclusion equivariant $\iota\colon \K(F) \to \K(E)$, which is defined as follows. For $\xi,\eta\in F$, we
denote by $\Theta_{\xi,\eta}^F\in \K(F)$ the corresponding rank-one operator, and likewise for $\Theta_{\xi,\eta}^E\in \K(E)$.
Then $\iota$ is given by $\iota(\Theta^F_{\xi,\eta})=\Theta_{\xi,\eta}^E$. One must check that $\iota$ extends to an
injective homomorphism $\K(F)\to \K(E)$, and we omit the straightforward verification. That $\iota$ is equivariant is also
clear. 

In particular, in the above situation, $\iota$ restricts to a canonical embedding $\iota\colon \K(F)^G\to \K(E)^G$. This fact
will be used in the proof of the following lemma. 

\begin{lma}\label{lem: submodules}
Suppose that $G$ is second countable. 
Let $(E,\rho)$ be a countably generated $G$-Hilbert $A$-module, and let $(F, \rho|_F)$ be a countably
generated $G$-Hilbert submodule of $E$.
Then there exists $a\in \K(E)^G$ such that $a|_F$ is strictly positive and belongs to $\K(F)$,
and $F=\overline{a(F)}=\overline{a(E)}$.
\end{lma}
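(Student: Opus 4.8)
The plan is to produce the element $a$ on the submodule $F$ first, using countable generation of $F$, and then transport it to $\K(E)^G$ via the canonical embedding $\iota\colon\K(F)^G\to\K(E)^G$ discussed just before the lemma. Since $F$ is countably generated as a Hilbert $A$-module and carries a $G$-action, $\K(F)$ is a $\sigma$-unital \ca\ with a $G$-action, so it has a strictly positive element; averaging it over $G$ with respect to normalized Haar measure (using the remark on strict positivity of $\int_G \alpha_g(x)\,d\mu(g)$, applied now to the conjugation action on $\K(F)$) yields a $G$-invariant strictly positive element $a_0\in\K(F)^G$. Set $a=\iota(a_0)\in\K(E)^G$. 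By construction $a|_F=a_0$ is strictly positive in $\K(F)$ and $G$-invariant, so the first assertion is immediate.

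\textbf{Key steps.} It remains to identify the closures of the ranges. First I would note that $\overline{a(F)}=F$: since $a_0$ is strictly positive in $\K(F)$, standard Hilbert-module theory (e.g.\ \cite{Lan_Book}) gives $\overline{a_0(F)}=F$, because $\overline{b(F)}$ for $b\in\K(F)_+$ equals $F$ exactly when $b$ is strictly positive (equivalently, $\lim_n b^{1/n}\xi=\xi$ for all $\xi\in F$, which is precisely the argument used in the proof of \autoref{lem:RestrCompactGEquiv} and the remark preceding it). Second, I would show $\overline{a(E)}=\overline{a(F)}$. One inclusion is clear since $F\subseteq E$. For the reverse, observe that as $a=\iota(a_0)$ and $\iota$ is induced by $\Theta^F_{\xi,\eta}\mapsto\Theta^E_{\xi,\eta}$ for $\xi,\eta\in F$, every element of the form $a\zeta$ with $\zeta\in E$ is a limit of elements $\sum_j \xi_j\langle\eta_j,\zeta\rangle$ with $\xi_j,\eta_j\in F$; each such finite sum lies in $F$, hence $a(E)\subseteq F$, and in fact $a(E)\subseteq \overline{a_0(F)}$ after an approximation: writing $a=a_0^{1/2}\cdot a_0^{1/2}$ inside $\K(F)$ and noting $a_0^{1/2}\in\K(F)^G$ too, we get $a\zeta = a_0^{1/2}(a_0^{1/2}\zeta)$ with $a_0^{1/2}\zeta\in F$, so $a\zeta\in a_0^{1/2}(F)\subseteq\overline{a_0(F)}$ since $\overline{a_0^{1/2}(F)}=\overline{a_0(F)}=F$. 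Thus $\overline{a(E)}\subseteq\overline{a_0(F)}=F=\overline{a(F)}$, giving equality.

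\textbf{Main obstacle.} The only genuinely delicate point is the careful handling of the embedding $\iota\colon\K(F)\to\K(E)$ and the verification that "$a$ acts on $E$ the same way $a_0$ acts on $F$", i.e.\ that $a(E)$ is contained in $F$ and that applying $a$ to vectors of $E$ is computed via the inner product of $E$ restricted to $F$; concretely one needs that for $\xi,\eta\in F$ and $\zeta\in E$ one has $\Theta^E_{\xi,\eta}(\zeta)=\xi\langle\eta,\zeta\rangle_E\in F$, which is clear from the definition but must be combined with a norm estimate to pass to the limit for general $a=\iota(a_0)$. Everything else — existence of an invariant strictly positive element via Haar averaging, and the characterization of $\overline{b(F)}=F$ for strictly positive $b$ — is routine and already appears in the surrounding text. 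I expect the write-up to be short once the embedding $\iota$ is invoked.
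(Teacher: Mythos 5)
Your proof is correct and follows essentially the same route as the paper's: take a strictly positive element of $\K(F)$ (using that $F$ is countably generated, so $\K(F)$ is $\sigma$-unital), average it over $G$ to make it invariant, push it into $\K(E)^G$ via the canonical embedding $\iota$, and verify the range identities from strict positivity. Your verification that $a(E)\subseteq F$ via the rank-one operators $\Theta^E_{\xi,\eta}$ with $\xi,\eta\in F$ is the right way to make the last step precise.
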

\begin{proof}
Use \cite[Proposition 6.7]{Lan_Book} to choose a strictly positive element $c\in \K(F)$.
Using the normalized Haar measure on $G$, it is easy to check that the element $b=\int_G (g\cdot c) \ dg$ 
is strictly positive and $G$-invariant.
Moreover, one has $c(F)=b(F)$.

Using strict positivity of $b$, choose a sequence $(b_n)_{n\in \N}$ in $\K(F)$ such that
\[\lim\limits_{n\to\I}\|bb_n-b^{\frac 1 n}\|=0.\]
By \cite[Equation 1.5]{Lan_Book}, we have $\lim\limits_{n\to\I}bb_n\xi=\xi$
for all $\xi \in F$. It follows that $\overline{b(F)}=F$.

Denote by $\iota\colon \K(F)^G\to \K(E)^G$ the canonical inclusion described in the comments before this lemma.
Set $a=\iota(c)$. Then $a$ is strictly positive and invariant, and $a(E)=a(F)$ is dense in $F$. This finishes
the proof. 
\end{proof}

\subsection{The Hilbert module picture of $\Cu^G(A,\alpha)$}

We now define the relevant equivalence and subequivalence relations of $G$-Hilbert modules that will give rise to a different description of the equivariant Cuntz semigroup. 

\begin{df}\label{df:CptContHilbMod}
Let $(E,\rho)$ be a $G$-Hilbert A-module, and let $(F,\eta)$ be a $G$-Hilbert $A$-submodule. We say that
$(F,\eta)$ is \emph{$G$-compactly contained} in $(E,\rho)$,
if there exists $b\in \K(E)$ whose restriction to $F$ is $\id_F$.
\end{df}

We claim that the operator $b$ in the definition above can be taken to be contractive and to belong to the fixed point
algebra $\K(E)^G$. Contractivity is easy to arrange: simply divide by $\|b\|$. To choose $b$ in the fixed point algebra, 
first note that if $\xi\in F$, then
\[(g\cdot b)(\xi)=\rho_g(b(\rho_{g^{-1}}(\xi)))=\xi.\]
With $dg$ denoting the normalized Haar measure on $G$, it follows that $b'=\int\limits_G (g\cdot b)\ dg$
is invariant and its restriction to $F$ is the identity.

\begin{df}\label{equivalences for Hilbert modules}
Let $(E,\rho)$ and $(F,\eta)$ be $G$-Hilbert A-modules. We say that $(E,\rho)$ is \emph{$G$-Cuntz subequivalent}
to $(F,\eta)$, and denote this by $(E,\rho)\precsim_G (F,\eta)$, if every compactly contained $G$-Hilbert
submodule of $(E,\rho)$ is unitarily equivalent to a $G$-Hilbert submodule of $(F,\eta)$.

We say that $(E,\rho)$ is \emph{$G$-Cuntz equivalent} to $(F,\eta)$, and denote this by $(E,\rho)\sim_G (F,\eta)$,
if $(E,\rho)\precsim_G (F,\eta)$ and $(F, \mu)\precsim_G (E, \nu)$. The $G$-Cuntz equivalence class of the
$G$-Hilbert $A$-module $(E,\rho)$ is denoted by $[(E,\rho)]$.

We denote by $\Cu^G_\Hi(A,\alpha)$ the set of Cuntz equivalence classes of $G$-Hilbert $A$-modules.
\end{df}

It is easy to check that the direct sum of $G$-Hilbert $A$-modules induces a well defined operation on
$\Cu^G_\Hi(A,\alpha)$. Endow $\Cu^G_\Hi(A,\alpha)$ with the partial order given by $[(E,\rho)]\le [(F,\eta)]$
if $(E,\rho)\precsim_G (F,\eta)$. With this structure, it is clear that $\Cu^G_\Hi(A,\alpha)$ is a partially
ordered abelian semigroup.



We now define a $\Cu(G)$-semimodule structure on $\Cu^G_\Hi(A,\alpha)$. For $[(E,\rho)]\in \Cu^G_\Hi(A,\alpha)$ and $[\mu]\in \Cu(G)$, we set
$$[\mu]\cdot [(E,\rho)]=[(\Hi_\mu\otimes E, \mu\otimes \rho)].$$

Similarly, $\Cu(\K(\Hi_A)^G)$ has
a natural $\Cu(G)$-semimodule structure (see \autoref{df:H_A} for the definition of $\Hi_A$).
Let $a\in \K(\Hi_A)^G$ be a positive element. For a separable
unitary representation $(\Hi_\mu,\mu)$ of $G$, let $s_\mu\in \K(\Hi_\mu)^G$ be a strictly
positive element. Identify $\Hi_\mu\otimes\Hi_A$ with a submodule of $\Hi_A$ using the product
in $\Cu(G)$, and set
\[[\mu]\cdot [a]=[s_\mu\otimes a].\]

Let $(\Hi_\mu,\mu)$ be a separable unitary representation of $G$.
Then $(\Hi_\mu,\mu)$ is unitarily equivalent to a subrepresentation of $(\Hi_\C, \pi_\C)$, and
hence there exists an operator
\[V_{\mu}=V_{\mu, \pi_A}\in \mathcal{L}(\Hi_\mu\otimes A, \Hi_A)^G\]
satisfying $V_{\mu}^*V_{\mu}=\id_{\Hi_\mu}$.

Define a map $\chi\colon \Cu^G(A,\alpha)\to \Cu(\K(\Hi_A)^G)$ as follows. Given a separable unitary
representation $(\Hi_\mu,\mu)$ of $G$, and given a positive element $a\in (\K(\Hi_\mu)\otimes A)^G$,
set
\[\chi([a]_G)=[V_{\mu} aV_{\mu}^*].\]


\begin{prop}\label{prop: chi}
The map $\chi\colon \Cu^G(A,\alpha)\to \Cu(\K(\Hi_A)^G)$, described above, is well defined.
Moreover, it is an isomorphism in $\CCu^G$.
\end{prop}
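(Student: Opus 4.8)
The plan is to verify that $\chi$ is well defined and then apply the remark following \autoref{def: CCu}, which reduces a $\CCu$-isomorphism to the purely order-theoretic statement that $\chi$ is an order-preserving, order-embedding, surjective semigroup map (the $\Cu(G)$-semimodule part being a separate, routine check).

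First I would address well-definedness. Given a separable representation $(\Hi_\mu,\mu)$ and a positive $a\in(\K(\Hi_\mu)\otimes A)^G$, the operator $V_\mu\in\mathcal{L}(\Hi_\mu\otimes A,\Hi_A)^G$ satisfies $V_\mu^*V_\mu=\id_{\Hi_\mu}\otimes\id_A$, so $V_\mu a V_\mu^*\in\K(\Hi_A)^G$ is positive; here one uses \autoref{lem: range} (or the analogue for $\Hi_A$) to see the product lands in the compacts. Independence of the choice of $V_\mu$: if $W_\mu$ is another such isometry, then $V_\mu aV_\mu^*\sim_G W_\mu aW_\mu^*$ inside $\K(\Hi_A)^G$ by the last sentence of \autoref{lem:EquivHs}. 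Independence of the representative of $[a]_G$: if $a\sim_G b$ with $b\in(\K(\Hi_\nu)\otimes A)^G$, embed both $\Hi_\mu$ and $\Hi_\nu$ into a common $(\Hi_\lambda,\lambda)\leq(\Hi_\C,\pi_\C)$; using \autoref{lem:jmunu} and \autoref{lem:EquivHs} one sees $V_\mu aV_\mu^*\sim_G V_\lambda(V^A_{\mu,\lambda}a(V^A_{\mu,\lambda})^*)V_\lambda^*$ and similarly for $b$, while $V^A_{\mu,\lambda}a(V^A_{\mu,\lambda})^*\precsim_G V^A_{\nu,\lambda}b(V^A_{\nu,\lambda})^*$ follows by transporting the $d_n$'s witnessing $a\precsim_G b$ through the isometries. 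That $\chi$ preserves addition is clear from the block-diagonal description of $+$ together with the fact that $V_\mu\oplus V_\nu$ serves as the isometry for $\mu\oplus\nu$.

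Next I would check $\chi$ is an order embedding. If $\chi([a]_G)\leq\chi([b]_G)$, i.e.\ $V_\mu aV_\mu^*\precsim_G V_\nu bV_\nu^*$ in $\K(\Hi_A)^G$, pick $\varepsilon>0$ and, by \autoref{lem: G Cuntz relation}, an equivariant $d$ with $(V_\mu aV_\mu^*-\varepsilon)_+=d(V_\nu bV_\nu^*)d^*$. Since $V_\mu^*(V_\mu aV_\mu^*-\varepsilon)_+V_\mu=(a-\varepsilon)_+$ (functional calculus commutes with compression by an isometry), conjugating by $V_\mu^*$ on the left and $V_\nu^*$ on the right and absorbing $V_\nu V_\nu^*$ appropriately yields $(a-\varepsilon)_+=(V_\mu^*dV_\nu)\,b\,(V_\mu^*dV_\nu)^*$ with $V_\mu^*dV_\nu\in(\K(\Hi_\nu,\Hi_\mu)\otimes A)^G$; letting $\varepsilon\to0$ gives $a\precsim_G b$. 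The forward implication (order preservation) is the easier half of the same computation. Surjectivity is where \autoref{lem: range} and \autoref{lem: submodules of H_A} do the real work: given positive $a\in\K(\Hi_A)^G$, the module $E=\overline{\mathrm{span}\{a(\Hi_A)\cup a^*(\Hi_A)\}}$ is a countably generated $G$-Hilbert $A$-submodule of $\Hi_A$ with $a|_E\in\K(E)^G$, and by \autoref{lem: submodules of H_A} there is a separable $(\Hi_\mu,\mu)\leq(\Hi_\C,\pi_\C)$ with $E\subseteq\Hi_\mu\otimes A$; then $a|_E$, viewed in $(\K(\Hi_\mu)\otimes A)^G$ via the canonical embedding $\K(E)^G\hookrightarrow\K(\Hi_\mu\otimes A)^G$, satisfies $\chi([a|_E]_G)=[a]$ since $V_\mu$ restricted to $E$ implements the inclusion and $a\sim_G a|_E$ inside $\K(\Hi_A)^G$.

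Finally, for the $\Cu(G)$-semimodule compatibility: both sides are defined via tensoring with a strictly positive $s_\nu\in\K(\Hi_\nu)^G$, and the canonical identification of $\Hi_\nu\otimes\Hi_A$ with a submodule of $\Hi_A$ intertwines $V_\nu\otimes V_\mu$ with $V_{\nu\otimes\mu}$ up to an equivariant unitary, so $\chi([\nu]\cdot[a]_G)=\chi([s_\nu\otimes a]_G)=[s_\nu\otimes V_\mu aV_\mu^*]=[\nu]\cdot\chi([a]_G)$, again invoking \autoref{lem:EquivHs} to dispose of the choice of isometry. The main obstacle I anticipate is the surjectivity step — specifically, bookkeeping the identifications of $\K(E)^G$ inside $\K(\Hi_\mu\otimes A)^G$ and then inside $\K(\Hi_A)^G$, and checking these are compatible with $V_\mu$ — rather than the order-embedding computation, which is a direct transcription of the standard $\Cu$ argument into the equivariant setting using \autoref{lem: G Cuntz relation}.
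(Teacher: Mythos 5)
Your proof is correct and follows the same overall architecture as the paper's: reduce, via the remark after \autoref{def: CCu}, to showing that $\chi$ is a well-defined, order-preserving, order-embedding, surjective semigroup map, handle well-definedness by transporting the witnessing sequence $(d_n)$ through the isometries $V_\mu, V_\nu$, get surjectivity from \autoref{lem: range} and \autoref{lem: submodules of H_A}, and dispose of the $\Cu(G)$-action as a routine check. The one step where you genuinely diverge is the order-embedding. The paper takes the sequence $(d_n')$ in $\K(\Hi_A)^G$ witnessing $V_\mu aV_\mu^*\precsim V_\nu bV_\nu^*$, uses \autoref{lem: range} and \autoref{lem: submodules of H_A} to find a single separable subrepresentation $\Hi\subseteq\Hi_\C$ (containing $\Hi_\mu'$, $\Hi_\nu'$ and the ranges of all $d_n', (d_n')^*$) to which everything restricts, and then invokes \autoref{lem:EquivHs} to conclude $a\precsim_G b$. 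You instead apply \autoref{lem: G Cuntz relation} to get an exact equation $(V_\mu aV_\mu^*-\varepsilon)_+=d\,(V_\nu bV_\nu^*)\,d^*$ with $d$ invariant, and compress by the isometries, using $(VaV^*-\varepsilon)_+=V(a-\varepsilon)_+V^*$ for $V^*V=1$ to land on $(a-\varepsilon)_+=(V_\mu^*dV_\nu)\,b\,(V_\mu^*dV_\nu)^*$. This is valid (note $V_\mu^*dV_\nu$ is compact and invariant, and \autoref{lem: G Cuntz relation} applies inside the fixed-point algebra of $\K(\Hi_\C\oplus\Hi_\C)\otimes A$), and it is arguably more direct: it avoids the separable-reduction bookkeeping entirely. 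What the paper's route buys is uniformity — the same two lemmas that drive surjectivity also drive the order-embedding — and it does not need the exact-equation form of Cuntz subequivalence. Your version of surjectivity passes through the embedding $\K(E)^G\hookrightarrow\K(\Hi_\mu\otimes A)^G$ where the paper simply restricts $a$ to $\Hi_\mu\otimes A$ (which works because $a=a^*$ maps $\Hi_A$ into $E\subseteq\Hi_\mu\otimes A$, so $a$ is already the compression of itself); either way the level of detail matches the paper's. You also explicitly record additivity of $\chi$ via $V_\mu\oplus V_\nu$, which the paper leaves implicit.
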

\begin{proof}
We divide the proof into a number of claims.

\emph{Claim: $\chi$ is well defined, and it preserves the order.}
Let $(\Hi_\mu,\mu)$ and $(\Hi_\nu,\nu)$ be separable unitary representations of $G$, and let
$a\in (\K(\Hi_\mu)\otimes A)^G$ and $b\in (\K(\Hi_\nu)\otimes A)^G$ satisfy $a\precsim_G b$.
Then there exists a sequence $(d_n)_{n\in \N}$ in $(\K(\Hi_\nu,\Hi_\mu)\otimes A)^G$, such that
$\lim\limits_{n\to\I}\|d_nbd_n^*- a\|=0$. It follows that $V_\mu d_nV_\nu^*$ belongs to $\K(\Hi_A)^G$,
and
\[(V_\mu d_nV_\nu^*)(V_\nu bV_\nu^*)(V_\mu d_nV_\nu^*)^*=V_\mu d_nbd_nV_\mu^*\to V_\mu  aV_\mu^*,\]
in the norm of $(\K(\Hi_\mu)\otimes A)^G$, as $n\to\I$.
This shows that $ V_\mu aV_\mu^*\precsim V_\nu bV_\nu^*$, and the claim is proved.

\emph{Claim: $\chi$ is an order embedding.}
Let $(\Hi_\mu,\mu)$ and $(\Hi_\nu,\nu)$ be separable unitary representations of $G$, and
let $a\in (\K(\Hi_\mu)\otimes A)^G$ and $b\in (\K(\Hi_\nu)\otimes A)^G$ satisfy
$\chi([a]_G)\le \chi([b]_G)$. Set $a'=V_\mu aV_\mu^*$ and $b'=V_\nu bV_\nu^*$.
Then there exists a sequence $(d_n')_{n\in \N}$ in $\K(\Hi_A)^G$ such that
$\lim\limits_{n\to\I}\|d_n'b'(d_n')^*- a'\|=0$.

For each $n\in\N$, set $E_n=\overline{\mbox{span}}\left(d_n'(\Hi_A)\cup (d_n')^*(\Hi_A)\right)$. Then $E_n$ is a
countably generated $G$-Hilbert $A$-module by \autoref{lem: range}. Use \autoref{lem: submodules of H_A}
to choose a separable subrepresentation $(\Hi_n, (\pi_\C)_{\Hi_n})$ of $(\Hi_\C, \pi_\C)$,
satisfying $E\subseteq \Hi_n\otimes A\subseteq \Hi_A$ as $G$-Hilbert $A$-modules.
Let $W_{\mu, \pi_A}\in \mathcal{B}(\Hi_\mu)$ and $W_{\nu, \pi_A}\in \mathcal{B}(\Hi_\nu)$ be the
partial isometries implementing the isomorphisms of $\Hi_\mu$ and $\Hi_\nu$ with Hilbert
subspaces $\Hi_\mu'$ and $\Hi_\nu'$ of $\Hi_\C$, respectively. Set
\[\Hi=\overline{\mbox{span}}\left(\Hi_\mu'\cup \Hi_\nu'\cup \bigcup_{n\in\N}\Hi_n\right)\subseteq \Hi_\C.\]
Then $\Hi$ is separable and the operators $a'$, $b'$, $d_n'$, and $(d_n')^*$, for $n\in\N$, map
$\Hi\otimes A$ to itself. Moreover, the restrictions $a''$, $b''$, $d_n''$ of $a'$, $b'$, $d_n'$, for
$n\in\N$, to $\Hi\otimes A$, belong to $\K(\Hi\otimes A)^G$. Moreover, we have
\[\lim_{n\to\I}\|d_n''b''(d_n'')^*- a''\|=0,\]
and thus $a''\precsim b''$ in $\K(\Hi \otimes A)^G$. Consequently, $a''\precsim_G b''$.
\autoref{lem:EquivHs} implies that $a\sim_G a''$ and $b\sim_G b''$. We
conclude that $a\precsim_G b$, as desired.

\emph{Claim: $\chi$ is surjective.} Let $a\in \K(\Hi_A)^G$. By \autoref{lem: submodules of H_A}, there exists
a subrepresentation $(\Hi_\mu, \mu)$ of $(\Hi_\C, \pi_\C)$ such that
$\overline{a(\Hi_A)}\subseteq \Hi_\mu\otimes A$ as $G$-Hilbert $A$-modules. Let $a'$ be the restriction of $a$ to $\Hi_\mu\otimes A$. It is then clear that $\chi([a']_G)=[a]$, so the claim is proved.

It follows that $\chi$ is a $\CCu$-isomorphism.

\emph{Claim: $\chi$ is a $\Cu(G)$-semimodule morphism (and hence a $\CCu^G$-isomorphism).}
It is enough to check that $\chi$ preserves the $\Cu(G)$-action. This is immediate from the definitions.
\end{proof}





For $a\in \K(\Hi_A)^G$, we denote by $\Hi_{A,a}$ the $G$-Hilbert $A$-module $\overline{a(\Hi_A)}$,
and we let $\pi_{A,a}$ be the compression of $\pi_A$ to $\overline{a(\Hi_A)}$.

\begin{thm}\label{thm:tau}
Suppose that $G$ is second countable.
Then the map
\[\tau\colon \Cu(\K(\Hi_A)^G)\to \Cu^G_\Hi(A,\alpha),\]
defined by $\tau([a])=[(\Hi_{A,a}, \pi_{A,a})]$
for a positive element $a\in \K\otimes (\K(\Hi_A)^G)$, is a well defined natural isomorphism in $\CCu^G$.
\end{thm}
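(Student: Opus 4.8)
The plan is to show that $\tau$ is a well-defined bijection which preserves and reflects the order, and is a $\Cu(G)$-semimodule map; by the remark after \autoref{def: CCu}, this automatically upgrades to a $\CCu^G$-isomorphism, so suprema of increasing sequences and the compact containment relation are taken care of for free. Throughout, as in the setup before \autoref{thm:tau}, I identify $\Cu(\K(\Hi_A)^G)$ with $G$-Cuntz equivalence classes of positive elements in $\K(\ell^2(\N))\otimes(\K(\Hi_A)^G)\cong \K(\ell^2(\N)\otimes\Hi_A)^G$, and I will write $\Hi_A$ in place of $\ell^2(\N)\otimes\Hi_A$ since the latter is again (unitarily equivalent to) the universal $G$-Hilbert $(A,\alpha)$-module — indeed $\ell^2(\N)\otimes\Hi_\C\cong\Hi_\C$ as $G$-representations. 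So it suffices to work with positive $a\in\K(\Hi_A)^G$ directly.

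First I would check that $\tau$ is well defined: if $a,b\in\K(\Hi_A)^G$ are positive with $a\precsim_G b$, I must show $(\Hi_{A,a},\pi_{A,a})\precsim_G(\Hi_{A,b},\pi_{A,b})$ in the sense of \autoref{equivalences for Hilbert modules}. Let $(F,\eta)$ be a $G$-compactly contained submodule of $\Hi_{A,a}$, with contractive invariant $b_0\in\K(\Hi_{A,a})^G$ restricting to $\id_F$ (using the reduction after \autoref{df:CptContHilbMod}). Viewing $b_0$ inside $\K(\Hi_A)^G$ via the canonical embedding described before \autoref{lem: submodules}, and using that $a$ is strictly positive on $\Hi_{A,a}$ so that $b_0\in\overline{a\K(\Hi_A)^G a}$, I can produce for small $\ep>0$ an element of the form $x(a-\ep)_+x^*$ that is close to $b_0$; combining with $a\precsim_G b$ and \autoref{lem: G Cuntz relation}, one finds an equivariant $d$ with $b_0\approx d b d^*$, and then a standard polar-decomposition/functional-calculus argument (as in the non-equivariant theory of Cuntz equivalence versus Hilbert module inclusion) yields an isometry in $\mathcal{L}(F,\Hi_{A,b})^G$, i.e. $F$ embeds equivariantly as a submodule of $\Hi_{A,b}$. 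This is essentially the equivariant analog of the standard fact that $a\precsim b$ iff every module compactly contained in $\overline{aH}$ embeds in $\overline{bH}$; the main technical point is keeping everything inside the fixed-point algebra, which the averaging tricks already used repeatedly in the paper handle.

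Next, $\tau$ is surjective by \autoref{lem: submodules}: given any countably generated $G$-Hilbert $A$-module $(E,\rho)$, Kasparov's theorem (the remark after \autoref{df:H_A}, valid since $G$ is second countable) realizes $(E,\rho)$ as a $G$-Hilbert submodule of $(\Hi_A,\pi_A)$, and \autoref{lem: submodules} (applied with $F=E$ inside $\Hi_A$) provides a strictly positive invariant $a\in\K(\Hi_A)^G$ with $\overline{a(\Hi_A)}=E$, so $\tau([a])=[(E,\rho)]$. Every class in $\Cu^G_\Hi(A,\alpha)$ is represented by a countably generated module (one can always pass to $\overline{a(E)}$ for a strictly positive $a\in\K(E)$, as in the non-equivariant case), so $\tau$ is onto. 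For injectivity I would show $\tau$ reflects order: if $(\Hi_{A,a},\pi_{A,a})\precsim_G(\Hi_{A,b},\pi_{A,b})$, then for each $\ep>0$ the module $\overline{(a-\ep)_+(\Hi_A)}$ is $G$-compactly contained in $\Hi_{A,a}$ (since $(a-\ep)_+ = f(a)a$ for a suitable $f\in C_0(0,\|a\|]$, giving the compact operator restricting to the identity there), hence embeds equivariantly into $\Hi_{A,b}$; this embedding composed with $b\in\K(\Hi_{A,b})^G$ and \autoref{lem: G Cuntz relation}(2)–(3) gives $(a-\ep)_+\precsim_G b$, and letting $\ep\to0$ yields $a\precsim_G b$, i.e. $[a]\le[b]$. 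Together with well-definedness this shows $\tau$ is an order embedding, in particular injective.

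Finally, compatibility with the $\Cu(G)$-semimodule structures is essentially a matter of unwinding definitions: $[\mu]\cdot[a]=[s_\mu\otimes a]$ in $\Cu(\K(\Hi_A)^G)$ (identifying $\Hi_\mu\otimes\Hi_A$ with a submodule of $\Hi_A$), and $\overline{(s_\mu\otimes a)(\Hi_\mu\otimes\Hi_A)}=\overline{s_\mu(\Hi_\mu)}\otimes\overline{a(\Hi_A)}$, which since $s_\mu$ is strictly positive equals $\Hi_\mu\otimes\Hi_{A,a}$; thus $\tau([\mu]\cdot[a])=[(\Hi_\mu\otimes\Hi_{A,a},\mu\otimes\pi_{A,a})]=[\mu]\cdot\tau([a])$, which is the defining formula for the action on $\Cu^G_\Hi(A,\alpha)$. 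Naturality in $(A,\alpha)$ is likewise routine: an equivariant $\phi\colon A\to B$ induces $\K(\Hi_A)^G\to\K(\Hi_B)^G$ and a compatible map on Hilbert modules via $E\mapsto E\otimes_\phi B$, and one checks the square commutes on representatives. Invoking the remark after \autoref{def: CCu} with the bijective order-embedding $\tau$ promotes it to a $\CCu$-isomorphism, and combined with $\Cu(G)$-linearity this gives the desired $\CCu^G$-isomorphism. I expect the well-definedness step (the first one above) to be the main obstacle, since it is where one must carefully transfer Cuntz subequivalence of operators into equivariant inclusions of the associated range modules while never leaving the fixed-point subalgebras; the remaining steps are either direct applications of the lemmas in this section or formal.
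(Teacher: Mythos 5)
Your proposal is correct and follows essentially the same route as the paper: well-definedness by embedding compactly contained submodules via \autoref{lem: G Cuntz relation} and the dense-range criterion of \cite[Proposition 3.8]{Lan_Book}, the order-embedding via the $(a-\varepsilon)_+$ compact-containment trick, surjectivity via Kasparov's absorption theorem together with \autoref{lem: submodules}, and the semimodule compatibility by the same direct computation. The only divergence is in the well-definedness step, where the paper obtains an \emph{exact} factorization $c=fbf^*$ by arranging $(d+c-1)_+=c$ and applying \autoref{lem: G Cuntz relation} with $\varepsilon=1$, whereas you work with an approximate identity $b_0\approx dbd^*$ and then must cut down by functional calculus; both versions go through, but the paper's exact trick avoids the extra bookkeeping your approximation requires.
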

\begin{proof}
We divide the proof into a number of claims.

\emph{Claim: $\tau$ is well defined and it preserves the partial order.} To show this, it suffices to prove
that if $a,b\in\K(\Hi_A)^G$ are positive elements with $a\precsim_G b$, then
$$(\Hi_{A,a}, \pi_{A,a})\precsim_G (\Hi_{A,b}, \pi_{A,b}).$$
(See \autoref{def: Hilbert subequivalence}.)

Let $(E, (\pi_A)|_E)$ be a countably generated $G$-Hilbert $A$-module which is compactly contained
in $(\Hi_{A,a}, \pi_{A,a})$. By \autoref{lem: submodules}, there exists
\[c\in \K\left(\Hi_{A,a}\right)^G\cong \left(\overline{a(\K(\Hi_{\C})\otimes A)a}\right)^{G}\]
such that $c|_E$ is strictly positive and $\overline{c(\Hi_{A,a})}=E$. Then $\K(E)\cong \overline{c(\K(\Hi_{\C})\otimes A)c}$.
Use the definition of compact containment of $G$-Hilbert
$A$-modules (\autoref{df:CptContHilbMod}) to choose
$d\in \left(\overline{a(\K(\Hi_{\C})\otimes A)a}\right)^{G}$ with $\|d\|=1$ and $dc=c$. In particular, we
have $(d+c-1)_+=c$ and $d+c\precsim_G a$. Apply \autoref{lem: G Cuntz relation} with $\varepsilon=1$ to the elements
$d+c\precsim_G b$ to find $f\in \K\left(\Hi_{A,b}, \Hi_{A,a}\right)^G$ such that $(d+c-1)_+=fbf^*$.
Set $x=b^\frac{1}{2}f^*$, which is an element in $\K\left(\Hi_{A,a}, \Hi_{A,b}\right)^G$.
Since $(d+c-1)_+=c$, we have
\[x^*x=c \mbox{ and } xx^*\in \left(\overline{b(\K(\Hi_{\C})\otimes A)b}\right)^G.\]

Set $F=\overline{x(E)}$, and let $y\colon E\to F$ be the operator obtained from $x$ by restricting
its domain to $E$ and its codomain to $F$. Since $x$ is invariant, we have
$y\in \mathcal{L}(E, F)^G$. It is clear that $y$ has dense range. Moreover,
$y^*=(x^*)|_F\in\mathcal{L}(F, E)^G$, and hence
\[\overline{y^*(F)}=\overline{y^*(\overline{x(E)})}=\overline{y^*(x(E))}=\overline{x^*x(E)}=\overline{c(E)}=E.\]

It follows that both $y$ and $y^*$ have dense range. By \cite[Proposition 3.8]{Lan_Book}, it
follows that $E$ and $F$ are unitarily equivalent. Moreover, it can be seen from the proof of that
proposition that the unitary can be chosen in $\mathcal{L}_G(E, F)$. This shows that $(E,(\pi_A)|_E)$
is $G$-equivalent to a submodule of $(\Hi_{A,b},\pi_{A,b})$, as desired. This proves the claim.

\emph{Claim: $\tau$ is an order embedding.} Let $a,b\in \K(\Hi_A)^G$ satisfy
$$\left(\Hi_{A,a}, \pi_{A, a}\right)\precsim_G \left(\Hi_{A,b}, \pi_{A,b}\right).$$
Let $\varepsilon>0$ and let $f\in \mathrm C_0(0,\|a\|]$ be a function that is linear on
$[0,\varepsilon]$ and constant equal to $1$ on $[\varepsilon, \|a\|]$. Then $f(a)$ belongs to
$\left(\overline{a \K(\Hi_A) a}\right)^G$ and satisfies $f(a)(a-\varepsilon)_+=(a-\varepsilon)_+$. It follows that
$\left(\Hi_{A,(a-\varepsilon)_+}, \pi_{A,(a-\varepsilon)_+}\right)$ is compactly contained in
$\left(\Hi_{A,a}, \pi_{A, a}\right)$, so there exists an equivariant unitary
\[U\colon\left(\Hi_{A,(a-\varepsilon)_+}, \pi_{A,(a-\varepsilon)_+}\right)\to \left(\Hi_{A,b}, \pi_{A,b}\right).\]

Set $x=(a-\varepsilon)_+U^*$, which is an element in $\K(\Hi_{A, (a-\varepsilon)_+}, \overline{b(\Hi_A)})$.
Then
$$(a-\varepsilon)_+=xx^* \mbox{ and } x^*x=U(a-\varepsilon)_+U^*\in \K(\overline{b\Hi_A})^G.$$

It follows that $\lim\limits_{n\to\I} \left\|b^\frac{1}{n}x^*xb^{\frac 1 n}- x^*x\right\|=0$.
Therefore $x^*x\precsim_G b$. Since we also have $(a-\varepsilon)_+=xx^*\sim_G x^*x$, it follows that
$(a-\varepsilon)_+\precsim_G b$. Since $\varepsilon>0$ is arbitrary, we conclude that
\[[a]=\sup\limits_{\varepsilon>0} [(a-\varepsilon)_+]\le [b],\]
and the claim is proved.

\emph{Claim: $\tau$ is surjective.} Let $(E, (\pi_A)|_E)$ be a countably generated $G$-Hilbert $A$-module.
Since $G$ is assumed to be second countable, $(E, (\pi_A)|_E)$ is isomorphic to a $G$-Hilbert submodule
of $(\Hi_A, \pi_A)$, by \cite[Theorem 2]{Kas_HilbMods}.
Use \autoref{lem: submodules} to find $a\in \K(\Hi_A)^G$ such that
$(E, (\pi_A)|_E)\cong (\Hi_{A,a}, \pi_{A,a})$.
It follows that
\[\tau([a])=[(\Hi_{A,a}, \pi_{A,a})]=[(E, (\pi_A)|_E)],\]
and the claim follows.

We deduce that $\tau$ is an isomorphism in $\CCu$.

\emph{Claim: $\tau$ is a $\Cu(G)$-semimodule morphism (and hence an isomorphism in $\CCu^G$).}
We only check that $\tau$ preserves the $\Cu(G)$-action.
Let $(\Hi_\mu,\mu)$ be a separable unitary representation of $G$, and let $s_\mu$ be a strictly
positive element in $\K(\Hi_{\mu})^G$. For $a\in \K(\Hi_A)^G$, we have
$$\tau([\mu]\cdot [a])=\tau([s_\mu\otimes a])=[\overline{(s_\mu\otimes a)(\Hi_\mu\otimes \Hi_A)}]=[\Hi_\mu\otimes \Hi_{A,a}]=[\mu]\cdot [\Hi_{A,a}].$$
This concludes the proof of the claim and of the theorem.
\end{proof}

The following is the main result of this section.

\begin{cor} \label{cor:HilbModIsomObject}
Suppose that $G$ is second countable. Then there exists a natural $\CCu^G$-isomorphism
\[\delta\colon \Cu^G(A,\alpha)\to \Cu^G_\Hi(A,\alpha).\]
\end{cor}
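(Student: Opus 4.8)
The plan is to obtain $\delta$ simply as the composite of the two isomorphisms constructed in this section. By \autoref{prop: chi}, the map $\chi\colon \Cu^G(A,\alpha)\to \Cu(\K(\Hi_A)^G)$ is an isomorphism in $\CCu^G$, and by \autoref{thm:tau} — this is the step where second countability of $G$ is used — the map $\tau\colon \Cu(\K(\Hi_A)^G)\to \Cu^G_\Hi(A,\alpha)$ is a natural $\CCu^G$-isomorphism. I would therefore set $\delta=\tau\circ\chi$, which is immediately a $\CCu^G$-isomorphism. Unwinding the definitions, $\delta$ sends the class $[a]_G$ of a positive element $a\in (\K(\Hi_\mu)\otimes A)^G$ to the $G$-Cuntz equivalence class of the $G$-Hilbert $A$-module $\overline{a(\Hi_\mu\otimes A)}$, since $V_\mu$ restricts to a $G$-equivariant unitary from $\overline{a(\Hi_\mu\otimes A)}$ onto $\Hi_{A,V_\mu aV_\mu^*}=\overline{V_\mu aV_\mu^*(\Hi_A)}$; this description is convenient for bookkeeping but is not strictly needed.

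The only point that requires genuine attention is naturality of $\delta$ in the pair $(A,\alpha)$. Since $\tau$ is already natural by \autoref{thm:tau}, it suffices to check that $\chi$ is natural: given an equivariant $\ast$-homomorphism $\phi\colon A\to B$ (with $B$ carrying an action $\beta$), and writing $\phi_\C\colon \K(\Hi_A)^G\to \K(\Hi_B)^G$ for the $\ast$-homomorphism induced by $\id_{\K(\Hi_\C)}\otimes\phi$ (recall $\Hi_A=\Hi_\C\otimes A$), one must verify $\chi_B\circ \Cu^G(\phi)=\Cu(\phi_\C)\circ \chi_A$. To do this I would exploit the freedom in choosing the isometry $V_\mu=V_{\mu,\pi_A}$: by \autoref{lem:EquivHs} the class $[V_\mu aV_\mu^*]$ does not depend on this choice, so I may take $V_\mu=W_\mu\otimes\id_A$ for a fixed $G$-equivariant isometry $W_\mu\colon\Hi_\mu\to\Hi_\C$ implementing $[\mu]\le[\pi_\C]$, and the very same $W_\mu$ then furnishes $V_\mu^B=W_\mu\otimes\id_B$. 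With these coherent choices, for a positive element $a\in(\K(\Hi_\mu)\otimes A)^G$ one has
\[
(\id_{\K(\Hi_\C)}\otimes\phi)\big(V_\mu^A\, a\, (V_\mu^A)^*\big)=V_\mu^B\,(\id_{\K(\Hi_\mu)}\otimes\phi)(a)\,(V_\mu^B)^*,
\]
which is exactly the commutativity of the naturality square evaluated on $[a]_G$. Hence $\chi$ is natural, and therefore so is $\delta=\tau\circ\chi$.

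I do not expect a real obstacle here: the corollary reduces entirely to the two structural results \autoref{prop: chi} and \autoref{thm:tau}, plus the short naturality check for $\chi$ sketched above. The only mild subtlety is selecting the isometries $V_\mu$ coherently with respect to $\phi$, and this is precisely what the independence-of-choice statement in \autoref{lem:EquivHs} (equivalently \autoref{lem:jmunu}) supplies.
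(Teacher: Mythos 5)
Your proof is correct and is essentially identical to the paper's: the corollary is proved there in one line by setting $\delta=\tau\circ\chi$ and invoking \autoref{prop: chi} and \autoref{thm:tau}. Your additional remarks on choosing the isometries $V_\mu$ coherently to verify naturality of $\chi$ are a reasonable elaboration of a point the paper leaves implicit, but they do not change the approach.
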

\begin{proof}
By \autoref{thm:tau} and \autoref{prop: chi}, the map $\delta=\tau\circ\chi$ is the desired
$\CCu^G$-isomorphism.
\end{proof}

\section{Julg's theorem and the \texorpdfstring{$\Cu(G)$}{Cu(G)}-semimodule structure on
\texorpdfstring{$\Cu(A\rtimes_\alpha G)$}{Cu(AtimesG)}}

The goal of this section is to prove that if $\alpha\colon G\to\Aut(A)$ is a continuous
action of a compact group $G$ on a \ca\ $A$, then there exists a natural $\CCu$-isomorphism between
its equivariant Cuntz semigroup $\Cu^G(A,\alpha)$ and $\Cu(A\rtimes_\alpha G)$; see
\autoref{Thm:JulgCuG}. This isomorphism allows us to endow $\Cu(A\rtimes_\alpha G)$ with a canonical
$\Cu(G)$-semimodule structure, and we compute it explicitly in \autoref{thm:JulgWithCuG}.

When $G$ is abelian, this
semimodule structure is particularly easy to describe: it is given by the dual action of
$\alpha$; see \autoref{prop: semimodule structure when G is abelian}. We will prove these
results using the equivariant Hilbert module picture of $\Cu^G(A,\alpha)$ studied in the
previous section.

\subsection{Julg's Theorem}

For the rest of this section, we fix a compact group $G$, a \ca\ $A$, and a continuous action
$\alpha\colon G\to\Aut(A)$. The goal of this section is to prove the Cuntz semigroup analog of Julg's
theorem; see \autoref{Thm:JulgCuG}. Most of the work has already been done in the previous section,
and the only missing ingredients are \autoref{rem: theta}, which is essentially
the Peter-Weyl theorem, and \autoref{prop:FixPtCoaction}, which is noncommutative
duality.

\vspace{0.3cm}

Let $L^2(G)$ denote the Hilbert space of square integrable functions on $G$ with respect to its
normalized Haar measure,
and let $\lambda\colon G\to \U(L^2(G))$ denote the left regular representation.

\begin{rem}\label{rem: theta}
By the Peter-Weyl Theorem (\cite[Theorem 5.12]{Fol_Book}), the $G$-Hilbert module $(\Hi_\C, \pi_\C)$ is
unitarily equivalent (see \autoref{def: Hilbert subequivalence}) to
\[(\ell^2(\N)\otimes L^2(G), \id_{\ell^2(\N)}\otimes \lambda).\]
Therefore there exists an equivariant $\ast$-isomorphism
\[(\K(\Hi_A),\pi_A)\to (\K(\ell^2(\N)\otimes L^2(G)\otimes A),\Ad(\id_{\ell^2(\N)}\otimes \lambda\otimes\alpha)).\]
It follows that there exists a natural $\ast$-isomorphism
\[\theta\colon \K(\Hi_A)^G\to \K(\ell^2(\N)\otimes L^2(G)\otimes A)^G.\]
\end{rem}

The following result is standard, and it is a consequence of Landstad's duality. See, for example,
Theorem~2.7 in~\cite{KalOmlQui_ThreeVersions}, and specifically Example~2.9
in~\cite{KalOmlQui_ThreeVersions}. (The result can also be derived from Katayama's duality;
see Theorem~8 in~\cite{Kat_TakesakisDuality}.)

\begin{prop}\label{prop:FixPtCoaction}
Let $G$ be a locally compact group, let $B$ be a \ca, and let $\delta\colon B\to M(B\otimes C^*(G))$ be
a normal coaction. Denote by $B\rtimes_\delta G$ the corresponding cocrossed product, and by
$\widehat{\delta}\colon G\to\Aut(B\rtimes_\delta G)$ the dual action. Then there exists a canonical
$\ast$-isomorphism
\[\psi\colon (B\rtimes_\delta G)^{\widehat{\delta}}\to B,\]
which is moreover $(\delta^{j_G},\delta)$-equivariant (see Definition~2.8
in~\cite{KalOmlQui_ThreeVersions}).\end{prop}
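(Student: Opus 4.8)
The statement is an instance of \emph{Landstad duality} for coactions, and the plan is to deduce it from the standard references, giving only a sketch. Recall that the cocrossed product $B\rtimes_\delta G$ is generated by the images of two canonical nondegenerate $\ast$-homomorphisms $j_B\colon B\to M(B\rtimes_\delta G)$ and $j_G\colon C_0(G)\to M(B\rtimes_\delta G)$, and that the dual action $\widehat\delta$ is characterized by $\widehat\delta_s\circ j_B=j_B$ and $\widehat\delta_s\circ j_G=j_G\circ\mathrm{rt}_s$, where $\mathrm{rt}$ denotes the right translation action of $G$ on $C_0(G)$. In particular $j_B(B)\subseteq (B\rtimes_\delta G)^{\widehat\delta}$, and $j_G$ is $(\mathrm{rt},\widehat\delta)$-equivariant, so the triple $(B\rtimes_\delta G,\widehat\delta,j_G)$ satisfies the hypotheses of Landstad's duality theorem for coactions.

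The first step is to apply that theorem in the form of \cite[Theorem~2.7 and Example~2.9]{KalOmlQui_ThreeVersions}: it asserts that $j_B$ restricts to a $\ast$-isomorphism of $B$ onto the (generalized) fixed point algebra of the system $(B\rtimes_\delta G,\widehat\delta,j_G)$; one then lets $\psi$ be the inverse of this map. When $G$ is compact---which is the only case needed in this paper---the homomorphism $j_G$ is unital, so this generalized fixed point algebra is literally $(B\rtimes_\delta G)^{\widehat\delta}$; for general locally compact $G$ one reads $(B\rtimes_\delta G)^{\widehat\delta}$ as the generalized fixed point algebra. Alternatively, the same conclusion can be extracted from Katayama's duality theorem \cite[Theorem~8]{Kat_TakesakisDuality}, which supplies an equivariant isomorphism $(B\rtimes_\delta G)\rtimes_{\widehat\delta}G\cong B\otimes\K(L^2(G))$, combined with the standard recovery of the coefficient algebra of a crossed product as a generalized fixed point algebra sitting inside it.

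The second step is the equivariance. The system $(B\rtimes_\delta G,\widehat\delta,j_G)$ carries a canonical coaction $\delta^{j_G}$ on its fixed point algebra, built from the pair $(\widehat\delta,j_G)$ as in \cite[Definition~2.8]{KalOmlQui_ThreeVersions}, and the content of Landstad duality is precisely that $j_B$ intertwines $\delta$ on $B$ with $\delta^{j_G}$; hence $\psi$ is $(\delta^{j_G},\delta)$-equivariant. Naturality of $\psi$ in $(B,\delta)$ then follows from functoriality of the cocrossed product and of the maps $j_B$ and $j_G$: an equivariant $\ast$-homomorphism of coactions induces a square relating the two instances of $\psi$, and this square commutes because both composites agree with the relevant copy of $j_B$ after co-restriction.

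The genuine difficulty here is not mathematical but one of bookkeeping: one must carefully align the conventions of \cite{KalOmlQui_ThreeVersions} (and \cite{Kat_TakesakisDuality}) for normal coactions, for the cocrossed product and its dual action, and---most delicately---for the construction of $\delta^{j_G}$ and the exact meaning of $(\delta^{j_G},\delta)$-equivariance. Once these conventions are fixed, each step above is either a direct citation or a routine verification.
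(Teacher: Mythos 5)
Your proposal is correct and takes essentially the same route as the paper, which likewise gives no independent argument but simply invokes Landstad duality via Theorem~2.7 and Example~2.9 of \cite{KalOmlQui_ThreeVersions} (with Katayama's duality, Theorem~8 of \cite{Kat_TakesakisDuality}, as an alternative). Your additional remark that for non-compact $G$ one must read the fixed point algebra in the generalized sense is a fair point of bookkeeping, harmless here since the paper only applies the proposition for compact $G$.
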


The following result is an analog of Julg's Theorem (\cite{Jul_EqKthy}; see also \cite[Theorem 2.6.1]{Phi_Book})
for the equivariant Cuntz semigroup. It asserts that the equivariant Cuntz semigroup is naturally isomorphic, in
the category $\CCu$, to the Cuntz semigroup of the crossed product. The isomorphism is obtained as a composition
of the isomorphisms from \autoref{prop: chi}, \autoref{rem: theta}, Appendix~6 in~\cite{CowEllIva} (this is
stability of the functor $\Cu$), and \autoref{prop:FixPtCoaction}; see the commutative diagram at the end
of the proof of \autoref{Thm:JulgCuG}. In some applications of this theorem, particularly in \autoref{thm:JulgWithCuG},
the explicit construction of the isomorphism will be relevant.

\begin{thm}\label{Thm:JulgCuG}
Let $G$ be a compact group, let $A$ be a \ca, and let $\alpha\colon G\to\Aut(A)$ be a continuous action.
Then there exists a natural $\CCu$-isomorphism
\[\sigma\colon \Cu^G(A,\alpha)\to\Cu(A\rtimes_\alpha G).\]
\end{thm}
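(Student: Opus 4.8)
The plan is to assemble $\sigma$ as a composition of isomorphisms already available in the excerpt. Concretely, I would go through the chain
\[
\Cu^G(A,\alpha)\xrightarrow{\ \chi\ }\Cu\bigl(\K(\Hi_A)^G\bigr)\xrightarrow{\ \Cu(\theta)\ }\Cu\bigl(\K(\ell^2(\N)\otimes L^2(G)\otimes A)^G\bigr)\xrightarrow{\ \cong\ }\Cu\bigl(\K(L^2(G)\otimes A)^G\bigr)\xrightarrow{\ \Cu(\psi)\ }\Cu(A\rtimes_\alpha G),
\]
where $\chi$ is the $\CCu^G$-isomorphism of \autoref{prop: chi}, $\theta$ is the $\ast$-isomorphism of \autoref{rem: theta}, the middle arrow is stability of $\Cu$ (Appendix~6 in~\cite{CowEllIva}) applied to the identification $\K(\ell^2(\N)\otimes L^2(G)\otimes A)^G\cong\K(\ell^2(\N))\otimes\K(L^2(G)\otimes A)^G$ from the trivial-action splitting used earlier, and $\psi$ comes from \autoref{prop:FixPtCoaction}. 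To invoke \autoref{prop:FixPtCoaction} I would take $B=A\rtimes_\alpha G$ and $\delta$ the dual coaction $\widehat\alpha$ of $G$ on $A\rtimes_\alpha G$; by Landstad–Katayama duality the cocrossed product $(A\rtimes_\alpha G)\rtimes_{\widehat\alpha}G$ is $\ast$-isomorphic to $A\otimes\K(L^2(G))$ carrying the dual action $\widehat{\widehat\alpha}$, whose fixed-point algebra the proposition identifies canonically with $A\rtimes_\alpha G$; unwinding, this says exactly $\K(L^2(G)\otimes A)^{\Ad(\lambda\otimes\alpha)}\cong A\rtimes_\alpha G$, which is the classical Julg/Green isomorphism. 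Since each map in the chain is a $\CCu$-isomorphism (the $\ast$-isomorphisms induce $\CCu$-isomorphisms on $\Cu$ because $\Cu$ is a functor, and stability is a known $\CCu$-isomorphism), the composite $\sigma$ is a $\CCu$-isomorphism.

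The only genuinely substantive point, beyond bookkeeping, is the identification $\K(L^2(G)\otimes A)^G\cong A\rtimes_\alpha G$ and checking it is the \emph{natural} one — that is, that it commutes with equivariant $\ast$-homomorphisms $A\to B$. I would handle this by citing \autoref{prop:FixPtCoaction} in the form stated (its hypotheses are met, since $\widehat\alpha$ is a normal coaction for $G$ compact, hence amenable), and noting that the naturality of $\psi$ there, combined with the evident naturality of $\chi$ (\autoref{prop: chi}), of $\theta$ (\autoref{rem: theta}), and of the stability isomorphism, yields naturality of $\sigma$. Thus the proof amounts to: (i) write down the four arrows; (ii) observe each is a $\CCu$-isomorphism; (iii) observe each is natural in $(A,\alpha)$; (iv) define $\sigma$ as their composite and record the resulting commutative diagram.

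I expect the main obstacle to be purely notational: keeping straight which stabilizations and which representations $L^2(G)$ versus $\ell^2(\N)\otimes L^2(G)$ appear where, and matching the action $\Ad(\lambda\otimes\alpha)$ on $\K(L^2(G)\otimes A)$ with the dual-action picture in \autoref{prop:FixPtCoaction}. There is also a minor subtlety that \autoref{prop:FixPtCoaction} is phrased for a general cocrossed product with a normal coaction, so I would spend a sentence specializing it to $\delta=\widehat\alpha$ and explaining that $(A\rtimes_\alpha G)\rtimes_{\widehat\alpha}G\cong A\otimes\K(L^2(G))$ with the double-dual action corresponding to $\id_A\otimes\Ad\lambda$, so that its fixed-point algebra is $\bigl(A\otimes\K(L^2(G))\bigr)^{\id\otimes\Ad\lambda}=\K(L^2(G)\otimes A)^G$. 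Once that translation is in place, the isomorphism $\sigma$ and its naturality follow formally, and I would close by displaying the commutative diagram
\[
\xymatrix{
\Cu^G(A,\alpha)\ar[r]^-{\chi}\ar[d]_-{\Cu^G(\phi)} & \Cu\bigl(\K(\Hi_A)^G\bigr)\ar[r]^-{\sim}\ar[d] & \Cu(A\rtimes_\alpha G)\ar[d]^-{\Cu(\phi\rtimes G)}\\
\Cu^G(B,\beta)\ar[r]^-{\chi} & \Cu\bigl(\K(\Hi_B)^G\bigr)\ar[r]^-{\sim} & \Cu(B\rtimes_\beta G)
}
\]
expressing naturality, where $\phi\colon A\to B$ is any equivariant $\ast$-homomorphism.
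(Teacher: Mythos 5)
Your proposal is correct and follows essentially the same route as the paper's proof: the same four-arrow composition $\Cu(\psi)\circ\kappa\circ\Cu(\theta)\circ\chi$, with \autoref{prop:FixPtCoaction} specialized to $B=A\rtimes_\alpha G$ and $\delta=\widehat{\alpha}$, and naturality obtained from the naturality of each factor. The only cosmetic difference is your justification that $\widehat{\alpha}$ is normal (the paper simply cites that coactions of compact groups are automatically normal), which does not affect the argument.
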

\begin{proof}
Endow $\K(L^2(G))$ with the action of conjugation by the left regular representation of $G$, endow
$\K(\ell^2(\N))$ with the trivial $G$-action, and endow $\K(L^2(G))\otimes A$ and
$\K(\ell^2(\N))\otimes \K(L^2(G))\otimes A$ with the corresponding tensor product actions.
Then there exists a natural identification
\[(\K(\ell^2(\N))\otimes \K(L^2(G))\otimes A)^G= \K(\ell^2(\N))\otimes (\K(L^2(G))\otimes A)^G.\]
Since $\Cu$ is a stable functor (see Appendix~6 in~\cite{CowEllIva}), there exists a natural $\CCu$-isomorphism
\[\kappa\colon \Cu(\K(\ell^2(\N)\otimes L^2(G)\otimes A)^G)\to \Cu(\K(L^2(G)\otimes A)^G).\]

By \autoref{rem: theta}, there exists a natural $\ast$-isomorphism
\[\theta\colon \K(\Hi_A)^G\to \K(\ell^2(\N)\otimes L^2(G)\otimes A)^G.\]

Denote by $\psi\colon (\K(L^2(G))\otimes A)^G\to A\rtimes_\alpha G$ the natural $\ast$-isomorphism
obtained from \autoref{prop:FixPtCoaction} for $B=A\rtimes_\alpha G$ and $\delta=\widehat{\alpha}$.
(Recall that coactions of compact groups are automatically normal; see, for example,
the end of the proof of Lemma~4.8 in \cite{Gar_CptRok}.)
With $\chi\colon \Cu^G(A,\alpha)\to \Cu(\K(\Hi_A)^G)$ denoting the natural $\CCu$-isomorphism
given by \autoref{prop: chi}, define $\sigma$ to be the following composition:
\beqa\xymatrix{
\Cu^G(A,\alpha)\ar@{-->}[dr]_-{\sigma}\ar[r]^-\chi &\Cu(\K(\Hi_A)^G)\ar[r]^-{\Cu(\theta)} & \Cu((\K(\ell^2(\N))\otimes\K(L^2(G))\otimes A)^G)
\ar[d]^-\kappa \\
& \Cu(A\rtimes_\alpha G)& \Cu((\K(L^2(G))\otimes A)^G) \ar[l]^-{\Cu(\psi)}.}\eeqa

It is clear that $\sigma$ is a natural isomorphism in the category $\CCu$.\end{proof}

\subsection{Semimodule structure on the crossed product}

\autoref{Thm:JulgCuG} provides an isomorphism $\Cu^G(A,\alpha)\cong \Cu(A\rtimes_\alpha G)$ as $\CCu$-semigroups.
We can give $\Cu(A\rtimes_\alpha G)$ the unique $\Cu(G)$-semimodule structure that makes this
isomorphism into a $\CCu^G$-isomorphism. To make this result useful, we must describe this semimodule
structure internally. This takes some work, and we will need a series of intermediate results. This
subsection is based, to some extent, on \cite{Phi_Book}. We assume that $G$ is a second countable group.

The main technical difficulties are the absence of short exact sequences in the context
of semigroups, and the fact that in the construction of the equivariant Cuntz
semigroup, representations of the group $G$ on infinite dimensional Hilbert spaces are allowed.
Compactness of $G$ is crucial in overcoming the latter.

We need a standard definition. For a \ca\ $A$, we denote by $M(A)$ its multiplier algebra.

\begin{df}\label{df:ExtEq}
Let $\alpha,\beta\colon G\to \Aut(A)$ be continuous actions of a locally compact group $G$ on a \ca\ $A$.
We say that $\alpha$ and $\beta$ are \emph{cocycle equivalent}, if there exists a function
$\omega\colon G\to \U(M(A))$ satisfying:
\be
\item $\omega_{gh}=\omega_g\beta_g(\omega_h)$ for all $g,h\in G$;
\item $\alpha_g=\Ad(\omega_g)\circ\beta_g$ for all $g\in G$;
\item For $a\in A$, the map $G\to A$ given by $g\mapsto \omega_g a$ is continuous.\ee
\end{df}

Condition (1) in the definition above ensures that if $\alpha$ is defined by (2), then $\alpha_g\circ\alpha_h=\alpha_{gh}$
for all $g,h\in G$.

\begin{rem} It is well known that cocycle equivalent actions have isomorphic associated
crossed products. Nevertheless, it does not follow from this that cocycle
equivalent actions have isomorphic equivariant Cuntz semigroups, because we do not know
how to compute the $\Cu(G)$-semimodule structure of the crossed products. (That this is
indeed the case is a consequence of \autoref{thm:JulgWithCuG}.)
In order to prove the result, however, we do need to know that some specific cocycle
conjugate actions yield isomorphic equivariant Cuntz semigroups;
see \autoref{prop: ext equivalence invariance}.
\end{rem}

For the rest of the subsection, we fix a continuous action $\alpha\colon G\to\Aut(A)$ of a compact
group $G$ on a \ca\ $A$.

\begin{prop}\label{prop: ext equivalence invariance}
Let $\beta$ be an action of $G$ on $A$ which is cocycle equivalent to $\alpha$. Suppose that
$A$ has an increasing countable approximate identity consisting of projections which are invariant for both
$\alpha$ and $\beta$. Then there exists a natural $\CCu^G$-isomorphism
$\ep\colon \Cu^G(A,\alpha)\to \Cu^G(A,\beta)$.

When $A$ is unital, the map $\ep$ can be described as follows. Choose a cocycle $\omega\colon G\to \U(A)$
such that $\alpha_g=\Ad(\omega_g)\circ\beta_g$ for all $g\in G$. For a countably generated
$G$-Hilbert $(A,\alpha)$-module $(E,\rho)$, the map $\ep$ is given by $\ep([(E,\rho)])=([E,\rho^\omega)]$,
where $\rho^\omega\colon G\to\U(E)$ is given by $\rho^\omega_g(x)=\rho_g(x)\omega_g$ for all $g\in G$ and
all $x$ in $E$.
\end{prop}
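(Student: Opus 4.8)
The plan is to build the isomorphism $\ep$ in two stages, first treating the unital case concretely via $G$-Hilbert modules and then reducing the general case to the unital one via the invariant approximate identity of projections. In the unital case, given a cocycle $\omega\colon G\to\U(A)$ with $\alpha_g=\Ad(\omega_g)\circ\beta_g$, I will verify that for any countably generated $G$-Hilbert $(A,\alpha)$-module $(E,\rho)$, the map $\rho^\omega_g(x)=\rho_g(x)\omega_g$ defines a $G$-Hilbert $(A,\beta)$-module structure on the same underlying $A$-module $E$. The cocycle identity $\omega_{gh}=\omega_g\beta_g(\omega_h)$ is exactly what is needed for $\rho^\omega$ to be a group homomorphism, and compatibility with the $\beta$-module structure and the inner product follows from the two relations defining a cocycle equivalence together with the fact that $\rho$ is $\alpha$-compatible; strong continuity of $g\mapsto\rho^\omega_g(x)$ uses condition (3) in \autoref{df:ExtEq}. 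One then checks that $(E,\rho)\mapsto(E,\rho^\omega)$ sends equivariant adjointable (resp.\ compact) operators to equivariant adjointable (resp.\ compact) operators — indeed $\mathcal{L}(E)^{G,\rho}=\mathcal{L}(E)^{G,\rho^\omega}$ as sets, since $T\rho_g=\rho_gT$ iff $T\rho^\omega_g=\rho^\omega_gT$ (the right multiplication by $\omega_g$ commutes with left multiplication by adjointable operators). Hence the assignment preserves the compact-containment relation of \autoref{df:CptContHilbMod} and the subequivalence relation of \autoref{equivalences for Hilbert modules}, so it descends to a well-defined bijection $\ep\colon\Cu^G_\Hi(A,\alpha)\to\Cu^G_\Hi(A,\beta)$ with inverse induced by the cocycle $g\mapsto\omega_g^*$ for $\beta$; via \autoref{cor:HilbModIsomObject} this gives the desired map on $\Cu^G(A,\alpha)$. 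That it is additive (direct sums go to direct sums) and $\Cu(G)$-linear (tensoring with $\Hi_\mu$ commutes with the twist, since $G$ acts trivially on the $\Hi_\mu$ factor relative to the cocycle) is then routine, so $\ep$ is a $\CCu^G$-isomorphism. Naturality follows because an equivariant $\ast$-homomorphism intertwining the relevant cocycles induces a map of twisted Hilbert modules.

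For the general (nonunital) case, let $(p_n)_{n\in\N}$ be the increasing approximate identity of projections invariant for both $\alpha$ and $\beta$. Each corner $p_nAp_n$ is unital, invariant under both actions, and the restrictions $\alpha|_{p_nAp_n}$ and $\beta|_{p_nAp_n}$ are cocycle equivalent via a cocycle taking values in $\U(p_nAp_n)=\U(M(p_nAp_n))$ — here one must take a little care: the cocycle $\omega\colon G\to\U(M(A))$ need not restrict to $p_nAp_n$, but since $p_n$ is invariant one checks $p_n\omega_g p_n$ is a unitary in $p_nAp_n$ satisfying the corner cocycle identity relative to $\beta|_{p_nAp_n}$, and $\alpha_g=\Ad(p_n\omega_gp_n)\circ\beta_g$ on $p_nAp_n$. (I should double-check this computation; it uses $\omega_g\beta_g(p_n)=\omega_g p_n = p_n \omega_g$, which follows from $\alpha_g(p_n)=p_n$ and $\beta_g(p_n)=p_n$ combined with relation (2).) Applying the unital case gives natural $\CCu^G$-isomorphisms $\ep_n\colon\Cu^G(p_nAp_n,\alpha)\to\Cu^G(p_nAp_n,\beta)$, and these are compatible with the connecting maps induced by the inclusions $p_nAp_n\hookrightarrow p_{n+1}Ap_{n+1}$ (by naturality in the unital case). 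Since $A=\varinjlim p_nAp_n$ equivariantly for both actions, \autoref{prop:CuGContinuous} identifies $\Cu^G(A,\alpha)\cong\varinjlim\Cu^G(p_nAp_n,\alpha)$ and likewise for $\beta$, and passing to the limit of the $\ep_n$ (using that $\CCu^G$ is closed under countable direct limits, \autoref{thm:CCuGClosedIndLim}) yields the desired $\CCu^G$-isomorphism $\ep\colon\Cu^G(A,\alpha)\to\Cu^G(A,\beta)$. Naturality of $\ep$ with respect to equivariant $\ast$-homomorphisms compatible with the cocycles follows from naturality at each finite stage.

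The main obstacle I anticipate is the bookkeeping around restricting the cocycle to the invariant corners $p_nAp_n$ and checking that all the identifications — between $\Cu^G$ of the corner and the corresponding corner of the Hilbert module picture, between the twisted module construction and the inclusion maps — are genuinely natural and commute on the nose, so that the inductive limit of the $\ep_n$ is well defined. A secondary technical point is verifying strong continuity of $g\mapsto\rho^\omega_g(x)$ directly from condition (3) of \autoref{df:ExtEq}: one writes $\rho^\omega_g(x)-\rho^\omega_h(x)=(\rho_g(x)-\rho_h(x))\omega_g+\rho_h(x)(\omega_g-\omega_h)$ and controls each term, the first by continuity of $\rho$ and the second using that $\|\rho_h(x)\|$ is bounded and $g\mapsto\omega_g\,\langle x,x\rangle^{1/2}$ (or a suitable approximation) varies continuously. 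Once these technical verifications are in hand, the structural part of the argument — reduce to unital, apply the module twist, take a limit — is straightforward.
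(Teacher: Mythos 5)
Your proposal is correct and follows essentially the same route as the paper: in the unital case one twists the $G$-Hilbert module structure by the cocycle (and your observation that $\mathcal{L}(E)^G$ is literally unchanged under the twist is a clean way to see preservation of compact containment, which the paper only asserts), and in the nonunital case one restricts to the invariant corners $e_nAe_n$ with the compressed cocycle $e_n\omega_ge_n$ and passes to the inductive limit via the continuity of $\Cu^G$. The two technical points you flag — that $\omega_g$ commutes with the invariant projections, and the compatibility of the corner isomorphisms with the connecting maps — are exactly the verifications the paper carries out.
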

\begin{proof}
Suppose first that $A$ is unital. We must first argue why the map $\ep$ described in the statement is well-defined,
and why it is an isomorphism in $\CCu^G$. We fix a cocycle $\omega\colon G\to \U(A)$
such that $\alpha_g=\Ad(\omega_g)\circ\beta_g$ for all $g\in G$, and we fix a countably generated
$G$-Hilbert $(A,\alpha)$-module $(E,\rho)$. Let $\rho^\omega\colon G\to \U(E)$ be defined as in the statement.

We claim that $(E,\rho^\omega)$ is a countably generated $G$-Hilbert $(A,\beta)$-module.
Since $E$ was chosen to be countably generated to begin with, we shall only check that $\rho^\omega$
is compatible with $\beta$ and with the Hilbert module structure. Given $a\in A$, $x\in E$ and
$g\in G$, we have
$$\rho^\omega_g(xa)=\rho_g(xa)\omega_g=\rho_g(x)\alpha_g(a)\omega_g=\rho_g(x)\omega_g\beta_g(a)=\rho^\omega_g(x)\beta_g(a),$$
as desired. Moreover, for $g\in G$ and $x,y\in E$, we have
\begin{align*} \langle \rho^\omega_g(x),\rho^\omega_g(y)\rangle_E &=\langle \rho_g(x)\omega_g,\rho_g(y)\omega_g\rangle_E\\
                                                  &=\omega_g^*\langle \rho_g(x),\rho_g(y)\rangle_E\omega_g\\
                                                  &=(\Ad(\omega_g^*)\circ\alpha_g)(\langle x,y\rangle_E)\\
                                                  &=\beta_g(\langle x,y\rangle_E),\end{align*}
thus proving the claim.

The assignment $(E,\rho)\mapsto (E,\rho^\omega)$ is clearly surjective, and hence every $G$-Hilbert $(A,\beta)$-module has the form $(E,\rho^\omega)$ for some $G$-Hilbert $(A,\alpha)$-module $(E,\rho)$.

We claim that the assignment $(E,\rho)\mapsto (E,\rho^\omega)$ preserves $G$-Cuntz subequivalence.
Let $(E,\rho)$ and $(E',\rho')$ be countably generated $G$-Hilbert $(A,\alpha)$-modules, and suppose that $(E,\rho)\precsim_G (E',\rho')$. If $(F,\eta^\omega)$ is a $G$-Hilbert $(A,\beta)$-module that is compactly contained in $(E,\rho^\omega)$, then it is straightforward to check that $(F,\eta)$ is compactly contained in $(E,\rho)$. If $(F',\eta')$ is a $G$-Hilbert $(A,\alpha)$-module compactly contained in $(E',\rho')$ such that $(F',\eta')\cong (F,\eta)$, then one readily checks that $(F',(\eta')^\omega)\cong (F,\eta^\omega)$. This shows that $(E,\rho^\omega)\precsim_G (E',(\rho')^\omega)$, and proves
the claim.

Denote by $\varphi\colon \Cu^G(A,\alpha)\to \Cu^G(A,\beta)$ the map given by $[(E,\rho)]\mapsto [(E,\rho^\omega)]$, where $\rho^\omega$ is given by $\rho^\omega_g(x)=\rho_g(x)\omega_g$ for all $g\in G$ and all $x$ in $E$. We claim that $\varphi$ is a $\CCu^G$-morphism.

We already showed that $\varphi$ preserves the order. It is also easy to show that is preserves compact containment and suprema of increasing sequences. We show that it is a morphism of $\Cu(G)$-semimodules.
Given a separable unitary representation $(\Hi_\mu,\mu)$ of $G$, we must show that the diagram
\begin{align*} \xymatrix{\Cu^G(A,\alpha)\ar[r]^\varphi\ar[d]_-{[\mu]\cdot} & \Cu^G(A,\beta) \ar[d]^-{[\mu]\cdot }\\
\Cu^G(A,\alpha)\ar[r]_-{\varphi}& \Cu^G(A,\beta)}\end{align*}
commutes. Given a countably generated $G$-Hilbert $(A,\alpha)$-module $(E,\rho)$, we have
\[[(\Hi_\mu,\mu)]\cdot \varphi([(E,\rho)])=\left[(E\otimes \Hi_\mu,\rho^\omega\otimes\mu)\right].\]

On the other hand, the element $ \varphi\left([(\Hi_\mu,\mu)]\cdot[(E,\rho)]\right)$ is represented by the $G$-Hilbert $(A,\beta)$-module $(E\otimes \Hi_\mu,(\rho\otimes\mu)^\omega)$, so it is enough to check that both actions on
$\Hi_\mu\otimes E$ agree.
Let $g\in G$, let $x\in E$ and let $\xi\in \Hi_\mu$. Then
\begin{align*} \left((\rho\otimes\mu)^\omega \right)_g(x\otimes \xi)&=(\rho_g(x)\otimes \mu_g(\xi))\omega_g\\
&=(\rho_g(x)\omega_g)\otimes \mu_g(\xi)\\
&=\rho^\omega_g(x)\otimes\mu_g(\xi)\\
&=(\rho^\omega\otimes\mu)_g(x\otimes\xi),\end{align*}
thus showing that $\varphi$ is a $\CCu^G$-morphism. Since $\varphi$ is clearly bijective, it follows that it is an isomorphism. Naturality is also clear. This proves the unital case.

\vspace{0.3cm}

For the general case, let $(e_n)_{n\in \N}$ be an increasing approximate identity in $A$ consisting of projections
that are invariant for both $\alpha$ and $\beta$. For $n\in\N$, let $\alpha^{(n)}\colon G\to\Aut(e_nAe_n)$ be the action given by $\alpha^{(n)}_g(a)=\alpha_g(a)$ for all $g\in G$ and $a\in e_nAe_n$, and similarly for $\beta^{(n)}\colon G\to\Aut(e_nAe_n)$. We claim that $\alpha^{(n)}$ and $\beta^{(n)}$ are cocycle equivalent for all $n\in\N$.

Choose a cocycle $\omega\colon G\to\U(M(A))$ as in \autoref{df:ExtEq}.
For $g\in G$ and $n\in\N$, one checks that
$$\omega_ge_n \omega_g^*=(\Ad(\omega_g)\circ\alpha_g)(e_n)=\beta_g(e_n)=e_n.$$

Define $\omega^{(n)}\colon G\to \U(e_nAe_n)$ by $\omega^{(n)}_g=e_n\omega_ge_n$ for $g\in G$. One readily checks that $\Ad(\omega^{(n)}_g)\circ\beta^{(n)}_g=\alpha^{(n)}_g$ for all $n\in\N$ and all $g\in G$. The
cocycle condition is also easy to verify, so the claim is proved.

Note that there exist natural equivariant $\ast$-isomorphisms
$$(A,\alpha)=\varinjlim (e_nAe_n,\alpha^{(n)}) \mbox{ and } (A,\beta)=\varinjlim (e_nAe_n,\beta^{(n)}).$$
By \autoref{prop:CuGContinuous}, there exist natural $\Cu^G$-isomorphisms
\[\Cu^G(A,\alpha)\cong \varinjlim \Cu^G(e_nAe_n,\alpha^{(n)}) \mbox{ and } \Cu^G(A,\beta)\cong \varinjlim \Cu^G(e_nAe_n,\beta^{(n)}).\]
For $n\in\N$, denote by $\varphi^{(n)}\colon \Cu^G(e_nAe_n,\alpha^{(n)}) \to \Cu^G(e_nAe_n,\beta^{(n)})$
the natural $\CCu^G$-isomorphism provided by the unital case of this proposition.
Naturality implies that the following diagram in $\CCu^G$ is commutative, where the top row
is exact by Theorem~5 in~\cite{CiuRobSan_CuIdeals}:
\begin{align*}
\xymatrix{
\Cu^G(e_1Ae_1,\alpha^{(1)})\ar[r]\ar[d]_-{\varphi^{(1)}} & \Cu^G(e_2Ae_2,\alpha^{(2)})\ar[r]\ar[d]_-{\varphi^{(2)}} & \cdots\ar[r] & \Cu^G(A,\alpha)\ar@{-->}[d]^-{\varphi}\\
\Cu^G(e_1Ae_1,\beta^{(1)})\ar[r]& \Cu^G(e_2Ae_2,\beta^{(2)})\ar[r] & \cdots\ar[r] & \Cu^G(A,\beta)
.}
\end{align*}
The universal property of the inductive limit in $\CCu^G$ shows that there exists a natural $\CCu^G$-morphism
$\varphi\colon \Cu^G(A,\alpha)\to \Cu^G(A,\beta)$. This map is easily seen to be an isomorphism in $\CCu^G$, so
the proof is complete.
\end{proof}

The following result states that the equivariant Cuntz semigroup is a stable functor, as long as the algebra
of compact operators is given an inner action. The case of the trivial action was already prove in \autoref{prop:CuGStable}.
The result fails in general if the action on the compacts is not inner; see \autoref{eg:NonStableNotInner}.

\begin{cor} \label{cor:StabUnirep}
Let $A$ be a unital $C^*$-algebra, and let $(\Hi_\mu,\mu)$ be a separable unitary representation of $G$.
Let $q$ be any rank one projection on $\Hi_\mu$, and consider the map $\iota_q\colon A\to A\otimes\K(\Hi_\mu)$,
given by $\iota_q(a)=a\otimes q$ for $a\in A$. Let
\[\varepsilon\colon \Cu^G(A\otimes\K(\Hi_\mu),\alpha\otimes\Ad(\mu))\to \Cu^G(A\otimes\K(\Hi_\mu),\alpha\otimes\id_{\K(\Hi_\mu)})\]
be the natural $\CCu^G$-isomorphism given by \autoref{prop: ext equivalence invariance}.
Then
\[\ep^{-1}\circ\Cu^G(\iota_q)\colon \Cu^G(A,\alpha)\to \Cu^G(A\otimes\K(\Hi_\mu),\alpha\otimes\Ad(\mu))\]
is a natural $\CCu^G$-isomorphism. \end{cor}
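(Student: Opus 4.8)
The plan is to realize $\ep^{-1}\circ\Cu^G(\iota_q)$ as a composite of two maps, each of which has already been shown to be a $\CCu^G$-isomorphism. First I would note that $\iota_q$ is equivariant as a map $(A,\alpha)\to(A\otimes\K(\Hi_\mu),\alpha\otimes\id_{\K(\Hi_\mu)})$, since $(\alpha_g\otimes\id_{\K(\Hi_\mu)})(a\otimes q)=\alpha_g(a)\otimes q$ for all $g\in G$; hence $\Cu^G(\iota_q)$ is a well-defined $\CCu^G$-morphism by \autoref{thm:EqCtzSmgpCCuG}. I would then argue that it is a $\CCu^G$-isomorphism. If $\Hi_\mu$ is infinite dimensional this is \autoref{prop:CuGStable} applied after fixing a unitary $\Hi_\mu\cong\ell^2(\N)$ carrying the trivial representation to the trivial representation. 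To cover finite dimensional $\Hi_\mu$ as well, I would observe that \autoref{prop:CuGStable} applies with $A$ replaced by $A\otimes\K(\Hi_\mu)$ (the extra $\K(\ell^2(\N))$ still carrying the trivial action), so the corner inclusion $A\otimes\K(\Hi_\mu)\hookrightarrow A\otimes\K(\Hi_\mu)\otimes\K(\ell^2(\N))$ induces a $\CCu^G$-isomorphism; since the composite $A\hookrightarrow A\otimes\K(\Hi_\mu)\hookrightarrow A\otimes\K(\Hi_\mu)\otimes\K(\ell^2(\N))$ is, up to the equivariant identification $\K(\Hi_\mu)\otimes\K(\ell^2(\N))\cong\K(\ell^2(\N))$ of trivial $G$-actions, again a rank-one corner inclusion, \autoref{prop:CuGStable} shows this composite also induces a $\CCu^G$-isomorphism, and a two-out-of-three argument finishes the verification. (Alternatively, the proof of \autoref{prop:CuGStable} goes through verbatim with $\K(\ell^2(\N))$ replaced by $\K(\Hi)$ for any separable Hilbert space $\Hi$ with trivial $G$-action.)

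Next I would verify the hypotheses of \autoref{prop: ext equivalence invariance} for the pair $\beta=\alpha\otimes\id_{\K(\Hi_\mu)}$ and $\alpha\otimes\Ad(\mu)$ on $A\otimes\K(\Hi_\mu)$, which both produces $\ep$ and tells us it is a $\CCu^G$-isomorphism. For cocycle equivalence, set $\omega_g=1_A\otimes\mu_g\in\U(M(A\otimes\K(\Hi_\mu)))$; then $\Ad(\omega_g)\circ(\alpha_g\otimes\id_{\K(\Hi_\mu)})=\alpha_g\otimes\Ad(\mu_g)$, and $\omega_{gh}=\omega_g(\alpha_g\otimes\id_{\K(\Hi_\mu)})(\omega_h)$ because $\alpha_g$ fixes $1_A$; continuity of $g\mapsto\omega_g x$ for $x\in A\otimes\K(\Hi_\mu)$ follows from strong continuity of $\mu$ together with the fact that the $\K(\Hi_\mu)$-component of $x$ is compact. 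For the required approximate identity, when $\Hi_\mu$ is finite dimensional the algebra $A\otimes\K(\Hi_\mu)$ is unital and one takes $e_n=1$; in general, decompose $(\Hi_\mu,\mu)$ as a countable direct sum of finite dimensional subrepresentations, let $p_n$ be the projection onto the sum of the first $n$ of them, and set $e_n=1_A\otimes p_n$. Each $e_n$ is a projection commuting with every $\mu_g$, hence invariant for both $\alpha\otimes\Ad(\mu)$ and $\alpha\otimes\id_{\K(\Hi_\mu)}$, and $(e_n)_{n\in\N}$ is an increasing approximate identity for $A\otimes\K(\Hi_\mu)$.

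Finally, $\ep^{-1}$ is a $\CCu^G$-isomorphism as the inverse of one, so $\ep^{-1}\circ\Cu^G(\iota_q)$ is a composite of $\CCu^G$-isomorphisms between the asserted objects, hence itself a $\CCu^G$-isomorphism; naturality in $A$ is inherited from the evident naturality of $\Cu^G(\iota_q)$ and the naturality of $\ep$ asserted in \autoref{prop: ext equivalence invariance}. The only genuinely delicate point is the reduction in the first step: since \autoref{prop:CuGStable} is stated only for $\K(\ell^2(\N))$, one must argue with mild care that $\Cu^G(\iota_q)$ is an isomorphism for $\K(\Hi_\mu)$ with $\Hi_\mu$ possibly finite dimensional; everything else is a routine check against the relevant definitions.
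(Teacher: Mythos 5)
Your proof is correct and follows essentially the same route as the paper: one verifies the hypotheses of \autoref{prop: ext equivalence invariance} using the cocycle $g\mapsto 1_A\otimes\mu_g$ together with a Peter--Weyl approximate identity of invariant projections, and then invokes \autoref{prop:CuGStable} to see that $\Cu^G(\iota_q)$ is a natural $\CCu^G$-isomorphism. Your additional care about the case where $\Hi_\mu$ is finite dimensional (since \autoref{prop:CuGStable} is literally stated only for $\ell^2(\N)$ with the trivial action) addresses a point the paper's proof passes over in silence, and both of your proposed fixes are sound.
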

\begin{proof}
We must first argue why the assumptions of \autoref{prop: ext equivalence invariance} are met, so that the map $\ep$
really does exist. Firxt, since $\mu$ can be decomposed as a direct sum of finite dimensional representations by the Peter-Weyl Theorem,
it follows that $A\otimes \K(\Hi_\mu)$ has a countable approximate identity consisting of projections that are
invariant under both $\alpha\otimes\Ad(\mu)$ and $\alpha\otimes\id_{\K(\Hi_\mu)}$.
And second, the actions $\alpha\otimes\Ad(\mu)$ and $\alpha\otimes\id_{\K(\Hi_\mu)}$ are cocycle equivalent via
the cocycle $\omega\colon G\to \U(A\otimes\K(\Hi_\mu))$ given by $\omega_g=1_A\otimes \mu_g$ for all $g\in G$.
We conclude that \autoref{prop: ext equivalence invariance} applies and that $\ep$ is a natural $\CCu^G$-isomorphism.

By \autoref{prop:CuGStable}, the map
\[\Cu^G(\iota_q)\colon \Cu^G(A,\alpha)\to \Cu^G(A\otimes\K(\Hi_\mu),\alpha\otimes\id_{\K(\Hi_\mu)})\]
is natural $\CCu^G$-isomorphism.
It follows that $\varepsilon^{-1}\circ \Cu^G(\iota_q)$ is a natural isomorphism in $\CCu^G$.
\end{proof}

As mentioned before, the result above fails if the action on $\K(\Hi)$ is not inner, as the following
example shows. We write $\Z_2=\{-1,1\}$.

\begin{eg}\label{eg:NonStableNotInner}
Define an action of $G=\Z_2\times\Z_2$ on $A=M_2$ by letting $(-1,1)$ act by conjugation by $\left(
                                                                                              \begin{array}{cc}
                                                                                                1 & 0 \\
                                                                                                0 & -1 \\
                                                                                              \end{array}
                                                                                            \right),$
and by letting $(1,-1)$ act by conjugation by $\left(
                                                                                              \begin{array}{cc}
                                                                                                0 & 1 \\
                                                                                                1 & 0 \\
                                                                                              \end{array}
                                                                                            \right)$. Denote this action by $\alpha$.
(These unitaries commute up to a sign, so conjugation by them really does define an action of
$\Z_2\times\Z_2$.)
We claim that there is no isomorphism between $\Cu^G(M_2,\alpha)$ and $\Cu^G(M_2,\id_{M_2})$.
For this, it is enough to note that since there is an isomorphism
\[M_2\rtimes_{\id_{M_2}}(\Z_2\times\Z_2)\cong M_2\oplus M_2\oplus M_2\oplus M_2,\]
the semigroup $\Cu^G(M_2,\id_{M_2})$ is isomorphic to $\bigoplus\limits_{j=1}^4\overline{\Z_{\geq 0}}$.
On the other hand, one can compute $\Cu^G(M_2,\alpha)$ with elementary methods. For instance, one easily
checks that $M_2^{\Z_2\times\Z_2}$ is simply the scalar matrices, and a similar computation shows that
\[(M_2\otimes\K(\ell^2(\Z_2\times\Z_2)))^{\alpha\otimes\Ad(\lambda)}\cong M_4.\]
Thus, $\Cu^G(M_2,\alpha)=\overline{\Z_{\geq 0}}$, and the claim follows.

We conclude that the analog of \autoref{cor:StabUnirep}, where $\K(\Hi)$
does not have an inner action, fails in general.
\end{eg}

Denote by $V(G)\subseteq \Cu(G)$ the subsemigroup consisting
of equivalence classes of \emph{finite dimensional} unitary representations of $G$ (so that the
Grothendieck group of $V(G)$ is the represenation ring $R(G)$).
The following proposition describes the action of $V(G)$ on $\Cu(A\rtimes_\alpha G)$. Upon taking
the supremum of an increasing sequence of finite dimensional representations, this result also
leads to a method for computing the action of an arbitrary element in $\Cu(G)$. 
For use in its proof, we recall from the proof of \autoref{prop: ext equivalence invariance} that
for $A$ unital and cocycle equivalent actions $\alpha$ and $\alpha^\omega$ of $G$ on $A$,
the $\CCu^G$-isomorphism
\[\varepsilon\colon \Cu^G(A,\alpha)\to \Cu^G(A,\alpha^\omega)\]
was constructed using $G$-Hilbert modules, and $\ep([(E,\rho)])=[(E,\rho^\omega)]$, where the
unitary representation
$\rho^\omega\colon G\to \U(E)$ is given by $\rho^\omega_g=\rho_g\omega_g$
for $g\in G$.

\begin{prop}\label{prop:SemimodStrBad}
Suppose that $A$ is unital and let $(\Hi_\mu,\mu)$ be a separable unitary representation of $G$.
Let $p,q\in\K(\Hi_\mu)$ be a $\mu$-invariant projections, with $q$ of rank one.
Define equivariant $\ast$-homomorphisms
$$\varphi\colon (A,\alpha) \to (A\otimes\K(\Hi_\mu),\alpha\otimes\Ad(\mu)) \mbox{ and } \psi\colon (A,\alpha) \to (A\otimes\K(\Hi_\mu),\alpha\otimes\id_{\K(\Hi_\mu)})$$
by $\varphi(a)= a\otimes p$ and $\psi(a)=a\otimes q$ for all $a\in A$. Let
$$\varepsilon\colon \Cu^G(A\otimes\K(\Hi_\mu),\alpha\otimes\Ad(\mu))\to \Cu^G(A\otimes\K(\Hi_\mu),\alpha\otimes\id_{\K(\Hi_\mu)})$$
be the cocycle equivalence isomorphism given by \autoref{prop: ext equivalence invariance}, associated
to the cocycle equivalence $g\mapsto 1_A\otimes\mu_g$ between $\alpha\otimes\Ad(\mu)$ and $\alpha\otimes\id_{\K(\Hi_mu)}$. Then the following $\CCu^G$-diagram commutes:

\begin{align*}
\xymatrix{
\Cu^G(A,\alpha)\ar[rr]^-{[\mu|_{p\mathcal{H}}]\cdot }\ar[d]_-{\Cu^G(\varphi)} & & \Cu^G(A,\alpha)\ar[d]^-{\Cu^G(\psi)}\\
\Cu^G(A\otimes\K(\Hi_\mu),\alpha\otimes\Ad(\mu))\ar[rr]_-{\varepsilon} & &
\Cu^G(A\otimes\K(\Hi_\mu),\alpha\otimes\id_{\K(\Hi_\mu)}).
}
\end{align*}
\end{prop}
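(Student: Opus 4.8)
The plan is to prove that the square commutes by evaluating both composites on an arbitrary countably generated $G$-Hilbert module, using the Hilbert module picture of the equivariant Cuntz semigroup from \autoref{cor:HilbModIsomObject} (which applies since $G$ is second countable). Since $p$ and $q$ are projections in $\K(\Hi_\mu)$, they automatically have finite rank; so, replacing $(\Hi_\mu,\mu)$ by a finite-dimensional subrepresentation containing the ranges of $p$ and $q$ and invoking \autoref{prop:CuGContinuous}, I would first reduce to the case in which $\Hi_\mu$ is finite dimensional, say $d=\dim\Hi_\mu$. Then $A\otimes\K(\Hi_\mu)=M_d(A)$ is unital, and the explicit Hilbert module description of the cocycle-equivalence isomorphism from \autoref{prop: ext equivalence invariance} is available: it keeps the underlying Hilbert module fixed and replaces the unitary representation $\rho$ by $\rho^\omega$, where $\rho^\omega_g$ is $\rho_g$ followed by right multiplication by the cocycle $\omega_g=1_A\otimes\mu_g$. (One could instead check directly, via the inductive-limit construction in the proof of \autoref{prop: ext equivalence invariance}, that this formula persists in the non-unital setting, and skip the reduction.)

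Fix a countably generated $G$-Hilbert $(A,\alpha)$-module $(E,\rho)$, and trace it through $\Cu^G(\psi)$ after multiplication by $[\mu|_{p\Hi_\mu}]$. By the definition of the $\Cu(G)$-action on $\Cu^G_\Hi$, the element $[\mu|_{p\Hi_\mu}]\cdot[(E,\rho)]$ is represented by $(p\Hi_\mu\otimes E,\ (\mu|_{p\Hi_\mu})\otimes\rho)$. Applying $\Cu^G(\psi)$ means inducing along the rank-one corner embedding $a\mapsto a\otimes q$ into $(A\otimes\K(\Hi_\mu),\alpha\otimes\id)$ --- this is (a variant of) the stabilization map of \autoref{prop:CuGStable} --- which amounts to forming the internal tensor product with $q\K(\Hi_\mu)=\K(\Hi_\mu,q\Hi_\mu)\cong q\Hi_\mu\otimes\overline{\Hi_\mu}$. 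Since $q\Hi_\mu$ is one dimensional and $\K(\Hi_\mu)$ carries the trivial action, the resulting $G$-Hilbert $(A\otimes\K(\Hi_\mu),\alpha\otimes\id)$-module is $p\Hi_\mu\otimes E\otimes\overline{\Hi_\mu}$, with $A$ acting on $E$, with $\K(\Hi_\mu)$ acting on $\overline{\Hi_\mu}$ by $\overline{w}\cdot T=\overline{T^*w}$, and with $G$ acting by $(\mu|_{p\Hi_\mu})\otimes\rho\otimes\id_{\overline{\Hi_\mu}}$.

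Next I would trace $(E,\rho)$ through $\varepsilon\circ\Cu^G(\varphi)$. Inducing along $\varphi$, $a\mapsto a\otimes p$, into $(A\otimes\K(\Hi_\mu),\alpha\otimes\Ad\mu)$ amounts to forming the internal tensor product with $p\K(\Hi_\mu)=\K(\Hi_\mu,p\Hi_\mu)\cong p\Hi_\mu\otimes\overline{\Hi_\mu}$, giving the module $E\otimes p\Hi_\mu\otimes\overline{\Hi_\mu}$ over $A\otimes\K(\Hi_\mu)$ with the same module structure as before; but now, since the action on $\K(\Hi_\mu)$ is $\Ad\mu$ (so a rank-one operator $v\otimes\overline{w}$ goes to $\mu_g v\otimes\overline{\mu_g w}$), the $G$-action is $\rho\otimes(\mu|_{p\Hi_\mu})\otimes\overline{\mu}$, where $\overline{\mu}_g(\overline{w})=\overline{\mu_g w}$. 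Applying $\varepsilon$ then multiplies on the right by $\omega_g=1_A\otimes\mu_g$; since $\K(\Hi_\mu)$ acts on $\overline{\Hi_\mu}$ by the conjugate standard module, this sends the $\overline{\mu_g w}$ appearing in the $G$-action to $\overline{\mu_g^*\mu_g w}=\overline{w}$, so the action becomes $\rho\otimes(\mu|_{p\Hi_\mu})\otimes\id_{\overline{\Hi_\mu}}$ while the $A\otimes\K(\Hi_\mu)$-module structure is unchanged.

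Under the canonical $G$-equivariant unitary $E\otimes p\Hi_\mu\cong p\Hi_\mu\otimes E$ of Hilbert modules, the two $G$-Hilbert $(A\otimes\K(\Hi_\mu),\alpha\otimes\id)$-modules obtained above coincide, which gives commutativity of the square; naturality is inherited from naturality of each of the four maps. I expect the main obstacle to be the bookkeeping in the two previous paragraphs: correctly producing the conjugate representation $\overline{\mu}$ on the $\overline{\Hi_\mu}$-factor of $\Cu^G(\varphi)([(E,\rho)])$ (it appears precisely because the action on the compact operators is $\Ad\mu$ rather than trivial), verifying that this is exactly the factor that the cocycle twist built into $\varepsilon$ cancels, and checking that the $A\otimes\K(\Hi_\mu)$-module structures --- not merely the $G$-actions --- agree on the two sides.
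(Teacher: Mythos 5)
Your proposal is correct and follows essentially the same route as the paper: both evaluate the two composites on a $G$-Hilbert module $(E,\rho)$ using the Hilbert-module picture together with the explicit twist description of $\varepsilon$, and both reduce everything to the equivariant identification of $\left(p\Hi_\mu\otimes q\K(\Hi_\mu),\mu|_{p\Hi_\mu}\otimes\id\right)$ with $\left(p\K(\Hi_\mu),\widetilde\mu\right)$ --- which the paper carries out by writing down the map $\sigma(\xi\otimes b)(\eta)=\langle b^*(\xi^{(0)}),\eta\rangle\xi$ and checking bijectivity, module-linearity and equivariance by hand, while you package the same bookkeeping via $\K(\Hi,\Hi')\cong\Hi'\otimes\overline{\Hi}$. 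Your preliminary reduction to finite-dimensional $\Hi_\mu$ does not appear in the paper, but it is a reasonable (and arguably necessary) precaution, since the explicit Hilbert-module formula for $\varepsilon$ in \autoref{prop: ext equivalence invariance} is only stated for unital algebras and $A\otimes\K(\Hi_\mu)$ is nonunital when $\Hi_\mu$ is infinite dimensional.
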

\begin{proof}
Observe that $\Cu^G(\psi)$ is a natural isomorphism in $\CCu^G$ by \autoref{prop:CuGStable}.
We only need to check that
\[\Cu^G(\psi)^{-1}\circ \varepsilon\circ \Cu^G(\varphi)\colon \Cu^G(A,\alpha)\to \Cu^G(A,\alpha)\]
is multiplication by $[\mu|_{p\Hi_\mu}]$.

Denote by $(\Hi_A,\pi_A)$ the canonical countably generated $G$-Hilbert $(A,\alpha)$-module from \autoref{df:H_A}.
Let $(E,\rho)$ be a countably generated $G$-Hilbert $(A,\alpha)$-module. Use Kasparov's absorption theorem
(Theorem~2 in~\cite{Kas_HilbMods}) to choose a $\pi_A$-invariant projection $r$ in $\mathcal{L}(\Hi_A)^G$ such
that $(E,\rho)\cong (r\Hi_A,\pi_A|_{r\Hi_A})$. Then
\begin{align*}
\Cu^G(\varphi)([(E,\rho)])&=\left[\left((r\otimes p)\left(\K(\Hi_A)\otimes \K(\Hi_\mu)\right),
\pi_A|_{r\Hi_A}\otimes\mu|_{p\Hi_\mu} \right)\right]\\
&=[E\otimes p\K(\Hi_\mu),\rho\otimes \mu|_{p\Hi_\mu}].\end{align*}

Denote the unitary representation $\Ad(\mu)^\mu$ (see the comments before this proposition)
of $G$ on $p\K(\Hi_\mu)$ by $\widetilde{\mu}$. In other words,
$\widetilde{\mu}_g(x)=(\mu_g x\mu_g^*)\mu_g=\mu_g x$ for all $g\in G$ and all $x\in p\K(\Hi_\mu)$.
By the computation above, we have
\[(\varepsilon \circ\Cu^G(\varphi))([E,\rho])=[(E\otimes p\K(\Hi_\mu),\rho\otimes\widetilde{\mu})].\]
We must compare the class of $(E\otimes p\K(\Hi_\mu),\rho\otimes\widetilde{\mu})$ with the class of
$\Cu^G(\psi)([E\otimes p\K(\Hi_\mu), \rho\otimes \Ad(\mu)])$, and show that they agree.

One checks that $\Cu^G(\psi)\left([(p\Hi_\mu, \mu|_{p\Hi_\mu})]\cdot [(E,\rho)]\right)$ is represented by
$$\left(E\otimes p\Hi_\mu\otimes q\K(\Hi_\mu),\rho\otimes\Ad(\mu)\otimes\id_{\K(\Hi_\mu)}\right).$$
Evaluating the maps in the diagram in the statement at $(E,\rho)$, we get
\begin{align*}
\xymatrix{
(E,\rho)\ar@{|->}[rr]\ar@{|->}[dd] && \left(E\otimes p\Hi_\mu,\rho\otimes\mu\right) \ar@{|->}[d]\\
 && \left(E\otimes p\Hi_\mu\otimes q\K(\Hi_\mu),\rho\otimes\Ad(\mu)\otimes\id_\K\right)\\
\left(E\otimes p\K(\Hi_\mu),\rho\otimes\Ad(\mu)\right)\ar@{|->}[rr] & & \left(E\otimes p\K(\Hi_\mu),\rho\otimes\widetilde\mu\right).
}\end{align*}

It is therefore enough to check that
$$\left(p\Hi_\mu\otimes q\K(\Hi_\mu),\mu|_{p\Hi_\mu}\otimes \id_{q\K(\Hi_\mu)}\right)\cong \left(p\K(\Hi_\mu),\widetilde\mu\right)$$
as $G$-Hilbert $\K(\Hi_\mu)$-modules. Fix a unit vector $\xi^{(0)}\in \Hi_\mu$ in the range of $q$ and define
$$\sigma\colon p\Hi_\mu\otimes q\K(\Hi_\mu)\to p\K(\Hi_\mu)$$
by $\sigma(\xi\otimes b)(\eta)=\langle b^*(\xi^{(0)}),\eta\rangle \xi$ for all $\xi\in p\Hi_\mu$, for all
$b\in q\K(\Hi_\mu)$ and for all $\eta\in \Hi_\mu$, and extended linearly and continuously.
Note that
$$\left(p\circ(\sigma(\xi\otimes b))\right)(\eta)=\langle b^*(\xi^{(0)}),\eta\rangle p(\xi)=\langle b^*(\xi^{(0)}),\eta\rangle \xi,$$
so the range of $\sigma$ is really contained in $p\K(\Hi_\mu)$.

\emph{Claim: $\sigma$ is injective.} Since $p$ is a projection in $\K(\Hi_\mu)$, it has finite rank. For
$m=\dim(p\Hi_\mu)$, choose an orthonormal basis $\xi_1,\ldots,\xi_m$ of $p\Hi_\mu$.
Given $x\in p\Hi_\mu\otimes q\K(\Hi_\mu)$,
there exist $b_1,\ldots, b_m\in\K(\Hi_\mu)$ such that $x=\sum\limits_{j=1}^m \xi_j\otimes b_j$.
Assume that $\sigma(x)=0$. By orthogonality of the vectors $\xi_j$,
it follows that $\sigma(\xi_j\otimes b_j)=0$ for all $j=1,\ldots,m$. Thus
$\langle \eta,b_j^*(\xi^{(0)})\rangle =0$ for all $\eta\in \Hi_\mu$, and hence $b_j^*(\xi^{(0)})=0$.
This shows that $b_j^*$ vanishes on $\mbox{span}\{\xi^{(0)}\}=q\Hi_\mu$, and so $b_j=0$ for $j=1,\ldots,m$. We
conclude that $x=0$ and $\sigma$ is injective.

\emph{Claim: $\sigma$ is surjective.} Given $a$ in $\K(\Hi_\mu)$, the map $pa\colon \Hi_\mu\to p\Hi_\mu$
is map and has finite rank. For $j=1,\ldots,m$, denote by $p_j\colon \Hi_\mu\to \C\times \xi_j$ the orthogonal
projection. It follows from the Riesz Representation Theorem that there
exist unit vectors $\xi^{(0)}_j$ in the range of $p_j$ and linear maps $c_j\in q_j\K(\Hi_\mu)$, for
$j=1,\ldots,m$, such that
$$pa(\eta)=\sum_{j=1}^m \langle c^*_j(\xi^{(0)}_j),\eta\rangle \xi_j$$
for all $\eta\in \Hi_\mu$. (Recall our convention that inner products are linear on the second
variable.) Since any two rank one projections are unitarily equivalent, it follows that there exist linear maps $b_1,\ldots,b_m\in q\K(\Hi_\mu)$ such that
$$pa(\eta)=\sum_{j=1}^m \langle b^*_j(\xi^{(0)}),\eta\rangle \xi_j$$
and thus $pa=\sigma\left(\sum\limits_{j=1}^m\xi_j\otimes b_j\right)$, showing that $\sigma$ is surjective.

\emph{Claim: $\sigma$ is a $G$-Hilbert $\K(\Hi_\mu)$-homomorphism.} Let $\xi\in p\Hi_\mu$, $b\in q\K(\Hi_\mu)$,
$c\in \K(\Hi_\mu)$, and $\eta\in \Hi_\mu$. Then
\begin{align*} \sigma(\xi\otimes b)(c\eta)&= \langle c^*b^*(\xi^{(0)}),\eta\rangle \xi\\
                                          &= \langle (bc)^*(\xi^{(0)}),\eta\rangle \xi\\
                                          &= \sigma(\xi\otimes bc)(\eta).\end{align*}

Finally, for $g\in G$, $b\in q\K(\Hi_\mu)$, $c\in \K(\Hi_\mu)$, $\xi\in p\Hi_\mu$, and $\eta\in \Hi_\mu$,
one has
\begin{align*}
\sigma\left((\mu|_{p\Hi_\mu}\otimes\id_{q\K(\Hi_\mu)})_g(\xi\otimes b)\right)
&= \sigma\left(\mu_g\xi\otimes b\right)(\eta)\\
&= \langle b^*(\xi^{(0)}),\eta\rangle \mu_g\xi\\
&= \mu_g\langle b^*(\xi^{(0)}),\eta\rangle \xi\\
&= \widetilde\mu_g(\sigma(\xi\otimes b))(\eta),\end{align*}
which shows that $\sigma$ is equivariant. This finishes the proof.
\end{proof}

The above result leads to a method for computing the $\Cu(G)$-semimodule structure on $\Cu(A\rtimes_\alpha G)$.
This description makes essential use of the cocycle equivalence isomorphism $\varepsilon$, and similarly to what happens
with equivariant $K$-theory, it is inconvenient when trying to use it. To remedy this, we give an alternative
description of the $\Cu(G)$-action (\autoref{df: semimod structure}), which, even though it is not as
transparent as the one in \autoref{prop:SemimodStrBad},
has the advantage that all $\CCu$-maps involved are induced by $\ast$-homomorphisms.

Let $(\Hi_\mu,\mu)$ be a \emph{finite dimensional} unitary representation of $G$, and denote by
$\mu^+\colon G\to \mathcal{U}(\mathcal{H}_\mu\oplus\C)$  its direct sum with the trivial
representation on $\C$. Let $p_\mu,q_\mu \in \K(\Hi_\mu\oplus\C)$ be the projections onto
$\Hi_\mu$ and $\C$, respectively.
Define equivariant $\ast$-homomorphism $\iota_{p_\mu},\iota_{q_\mu}\colon A\to A\otimes\K(\Hi_\mu\oplus\C)$ by
$$\iota_{p_\mu}(a)=a\otimes p_{\mu} \mbox{ and } \iota_{q_\mu}(a)=a\otimes p_\C$$
for $a\in A$. Denote by $\widehat{\iota_{p_\mu}}$ and $\widehat{\iota_{q_\mu}}$ the corresponding
maps on the crossed products by $G$.
The $\Cu(\iota_{q_\mu})$ is invertible by \autoref{prop:CuGStable}, and it corresponds to multiplication
by the class of the trivial representation by \autoref{prop:SemimodStrBad}.
By considering these maps at the level of the Cuntz
semigroups, we have
\begin{align*}
\xymatrix@=1.5em{ \Cu(A\rtimes_\alpha G)\ar[rr]^-{\Cu(\widehat{\iota_{p_\mu}})}& & \Cu\left((A\otimes \K(\mathcal{H}_\mu\oplus\C))\rtimes_{\alpha\otimes\Ad(\mu^+)}G\right)\ar@<1ex>[rr]^-{\Cu(\widehat{\iota_{q_\mu}})^{-1}} & &\Cu(A\rtimes_\alpha G)\ar@<1ex>[ll]^-{\Cu(\widehat{\iota_{q_\mu}})}.
}
\end{align*}

\begin{df}\label{df: semimod structure}
Adopt the notation from the discussion above.
We define a $\Cu(G)$-semimodule structure on $\Cu(A\rtimes_\alpha G)$ as follows. For a finite dimensional
representation $(\Hi_\mu,\mu)$, and for $s \in \Cu(A\rtimes_\alpha G)$, we set
$$[\mu]\cdot s=\left(\Cu(\widehat{\iota_{q_\mu}})^{-1}\circ \Cu(\widehat{\iota_{p_\mu}})\right)(s).$$

For an arbitrary separable unitary representation $(\Hi_\nu,\nu)$, use compactness of $G$ to choose
irreducible representations $(\Hi_{\mu_n},\mu_n)$ of $G$ such that
$\nu\cong \bigoplus\limits_{n=1}^\infty \mu_n$. For $m\in\N$, set $\nu_m=\bigoplus\limits_{n=1}^m\mu_n$. For
$s \in \Cu(A\rtimes_\alpha G)$, we set
$$[\nu]\cdot s=\sup\limits_{m\in\N}\left([\nu_m]\cdot s\right).$$
\end{df}

The following lemma shows that the above $\Cu(G)$-semimodule structure is well-defined.

\begin{lma} Let $(\Hi_\nu,\nu)$ be a separable unitary representation of $G$, and find finite
dimensional unitary representations $(\Hi_{\mu_n},\mu_n)$ of $G$ as in \autoref{df: semimod structure}.
For $m\in\N$, set $\nu_m=\bigoplus\limits_{n=1}^m\mu_n$. Let $s\in\Cu(A\rtimes_\alpha G)$.
\begin{enumerate}
\item The sequence $\left([\nu_m]\cdot s\right)_{n\in\N}$ is increasing in $\Cu(A\rtimes_\alpha G)$.
\item The element $[\nu]\cdot s=\sup\limits_{m\in\N}\left([\nu_m]\cdot s\right)$ is independent of
the decomposition $\nu\cong \bigoplus\limits_{n\in\N}\mu_n$.
\end{enumerate}
\end{lma}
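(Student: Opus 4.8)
The plan is to transfer both assertions to the equivariant Cuntz semigroup $\Cu^G(A,\alpha)$ by means of the Julg isomorphism $\sigma\colon \Cu^G(A,\alpha)\to \Cu(A\rtimes_\alpha G)$ of \autoref{Thm:JulgCuG}, and then to argue entirely inside $\Cu^G(A,\alpha)$, which by \autoref{thm:EqCtzSmgpCCuG} is an object of $\CCu^G$ and thus carries a genuine $\Cu(G)$-semimodule structure satisfying the axioms of \autoref{df:CCuG}.

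The first step is to record the key identification: for a finite dimensional unitary representation $(\Hi_\mu,\mu)$, the operation $s\mapsto \bigl(\Cu(\widehat{\iota_{q_\mu}})^{-1}\circ \Cu(\widehat{\iota_{p_\mu}})\bigr)(s)$ on $\Cu(A\rtimes_\alpha G)$ from \autoref{df: semimod structure} is intertwined by $\sigma$ with multiplication by $[\mu]$ in the semimodule $\Cu^G(A,\alpha)$. This is precisely what \autoref{prop:SemimodStrBad}, together with the naturality of $\sigma$ and the discussion preceding \autoref{df: semimod structure}, delivers. Since $\sigma$ is a $\CCu$-isomorphism it preserves and reflects the order and suprema of increasing sequences, so, writing $x=\sigma^{-1}(s)\in\Cu^G(A,\alpha)$, it is enough to prove (1) and (2) with $[\nu_m]\cdot x$ and $[\nu_m']\cdot x$ referring to the genuine $\Cu(G)$-semimodule multiplication.

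For (1), I would observe that $\nu_m=\bigoplus_{n=1}^m\mu_n$ is a subrepresentation of $\nu_{m+1}=\nu_m\oplus\mu_{m+1}$, so $[\nu_m]\le[\nu_{m+1}]$ in $\Cu(G)$; then axiom (O2) of \autoref{df:CCuG} (applied with $[\nu_m]\le[\nu_{m+1}]$ and $x\le x$) gives $[\nu_m]\cdot x\le[\nu_{m+1}]\cdot x$, and applying $\sigma$ shows that $\bigl([\nu_m]\cdot s\bigr)_{m\in\N}$ is increasing. For (2), let $\nu\cong\bigoplus_{n\in\N}\mu_n\cong\bigoplus_{n\in\N}\mu_n'$ be two decompositions into nonzero irreducible representations, with partial sums $\nu_m$ and $\nu_m'$. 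By the corollary following \autoref{cor: sup ll}, $[\nu_m]\ll[\nu]$ for every $m$ and $[\nu]=\sup_{j\in\N}[\nu_j']$. Fixing $m$, the definition of compact containment yields $j_0\in\N$ with $[\nu_m]\le[\nu_{j_0}']$, whence, using (O2) again, $[\nu_m]\cdot x\le[\nu_{j_0}']\cdot x\le\sup_{j\in\N}\bigl([\nu_j']\cdot x\bigr)$; taking the supremum over $m$ gives $\sup_{m}([\nu_m]\cdot x)\le\sup_{j}([\nu_j']\cdot x)$, and the reverse inequality holds by symmetry. Transporting this equality back through $\sigma$ proves (2).

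The only genuinely delicate point is the reduction in the second paragraph, namely matching the operation of \autoref{df: semimod structure} — defined via the crossed-product homomorphisms $\widehat{\iota_{p_\mu}}$ and $\widehat{\iota_{q_\mu}}$ — with bona fide semimodule multiplication on $\Cu^G(A,\alpha)$; this is where \autoref{prop:SemimodStrBad} carries the weight. Once that identification is in place, parts (1) and (2) are purely formal consequences of the $\CCu^G$ axioms and of the fact (the corollary after \autoref{cor: sup ll}) that the partial sums of a decomposition of $\nu$ form an increasing sequence compactly below $[\nu]$ with supremum $[\nu]$.
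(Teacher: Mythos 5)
Your argument is correct in substance, but it takes a genuinely different route from the paper's. The paper never leaves $\Cu(A\rtimes_\alpha G)$: it observes that the equivariant homomorphisms $a\mapsto a\otimes p_\mu$ and $a\mapsto a\otimes p_{\mu'}$ into $A\otimes\K(\Hi_\mu\oplus\Hi_{\mu'}\oplus\C)$ are orthogonal, so the induced maps on crossed products are orthogonal and hence their Cuntz maps add; this gives the additivity $[\mu\oplus\mu']\cdot s=[\mu]\cdot s+[\mu']\cdot s$ directly from \autoref{df: semimod structure}, from which (1) is immediate and (2) follows by the same interleaving of partial sums that you carry out. Your route instead transports everything through the Julg isomorphism and \autoref{prop:SemimodStrBad}, i.e.\ it front-loads the identification of the \autoref{df: semimod structure} operation with the genuine $\Cu(G)$-action on $\Cu^G(A,\alpha)$ --- which is precisely the finite-dimensional half of \autoref{thm:JulgWithCuG}, proved only afterwards. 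This is not circular (that identification uses only \autoref{prop:SemimodStrBad} and naturality of $\sigma$, not the present lemma, which is needed only to make sense of $[\nu]\cdot s$ for infinite-dimensional $\nu$), and once it is in place your appeals to axiom (O2) of $\CCu^G$ and to the corollary after \autoref{cor: sup ll} are all valid. What you lose is economy: the orthogonality argument is two lines and uses nothing beyond the definition, whereas your reduction imports the heaviest machinery of the section. What you gain is the explicit additivity-free formulation via the semimodule axioms, which makes part (2) completely formal.

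One caveat you should address: \autoref{prop:SemimodStrBad} (and the explicit description of $\varepsilon$ in \autoref{prop: ext equivalence invariance} that it relies on) is stated for \emph{unital} $A$, while the lemma and \autoref{df: semimod structure} carry no unitality hypothesis. As written, your reduction therefore only covers the unital case; to handle general $A$ you would need the unitization argument that the paper defers to the end of the proof of \autoref{thm:JulgWithCuG}, or you should say explicitly how the intertwining for finite-dimensional $\mu$ passes to the non-unital case. The paper's orthogonality argument sidesteps this entirely.
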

\begin{proof}
We only prove (1), since (2) follows the same idea. It suffices to show that if $\mu$ and $\nu$ are
finite dimensional representations, then $[\mu\oplus\nu]\cdot s\leq [\mu]\cdot s$ for all
$s\in \Cu(A\rtimes_\alpha G)$. With the notation from the definition above, observe that the maps
\[\varphi_{\mu},\varphi_\nu\colon A\to A\otimes \K(\Hi_\mu\oplus \Hi_\nu\oplus\C)\]
are orthogonal. Since orthogonal equivariant homomorphisms induce homomorphisms between the respective
crossed products which are also orthogonal, we deduce that
\[\Cu(\widehat{\varphi}_{\mu\oplus\nu})=\Cu(\widehat{\varphi}_\mu)+\Cu(\widehat{\varphi}_\nu).\]
By evaluating the above identity at $s$, and using \autoref{df: semimod structure}, one deduces
that $[\mu\oplus\nu]\cdot s\leq [\mu]\cdot s$, as desired.
\end{proof}

\begin{lma} \label{lem:CompatTakingSup}
The $\Cu(G)$-semimodule structure on $\Cu(A\rtimes_\alpha G)$ described above is
compatible with taking suprema in $\Cu(G)$.\end{lma}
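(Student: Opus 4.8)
The plan is to derive compatibility with suprema from order-monotonicity of the $\Cu(G)$-action in the representation variable, together with the fact that \autoref{df: semimod structure} already defines the action of an infinite dimensional representation as a supremum. Concretely, we must show that if $([\mu^{(k)}])_{k\in\N}$ is an increasing sequence in $\Cu(G)$ with $[\mu]=\sup\limits_{k\in\N}[\mu^{(k)}]$, then $[\mu]\cdot s=\sup\limits_{k\in\N}\bigl([\mu^{(k)}]\cdot s\bigr)$ for every $s\in\Cu(A\rtimes_\alpha G)$, and I would establish the two inequalities separately.

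First I would record that the $\Cu(G)$-action is order preserving in its first variable, that is, $[\nu_1]\le[\nu_2]$ in $\Cu(G)$ implies $[\nu_1]\cdot s\le[\nu_2]\cdot s$ for all $s\in\Cu(A\rtimes_\alpha G)$. For \emph{finite dimensional} $\nu_1,\nu_2$ this follows from the orthogonality argument in the proof of the preceding lemma: writing $\nu_2\cong\nu_1\oplus\nu_1'$, the equivariant $\ast$-homomorphisms obtained by tensoring with the projections onto $\Hi_{\nu_1}$ and $\Hi_{\nu_1'}$ inside $A\otimes\K(\Hi_{\nu_1}\oplus\Hi_{\nu_1'}\oplus\C)$ are orthogonal, hence induce orthogonal maps on the crossed products, so that $\Cu(\widehat{\iota_{p_{\nu_2}}})=\Cu(\widehat{\iota_{p_{\nu_1}}})+\Cu(\widehat{\iota_{p_{\nu_1'}}})$; applying the inverse of the isomorphism $\Cu(\widehat{\iota_{q_{\nu_2}}})$ (invertible by \autoref{prop:CuGStable}) gives $[\nu_1]\cdot s\le[\nu_2]\cdot s$. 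For arbitrary $\nu_1,\nu_2$, decompose $\nu_i\cong\bigoplus_{n\in\N}\sigma^{(i)}_n$ into irreducibles with partial sums $(\nu_i)_m=\bigoplus_{n=1}^m\sigma^{(i)}_n$; since a finite direct sum of irreducibles is compactly contained in its full direct sum (Section~3), we have $[(\nu_1)_m]\ll[\nu_1]\le[\nu_2]=\sup\limits_l[(\nu_2)_l]$, so there is $l$ with $[(\nu_1)_m]\le[(\nu_2)_l]$; the finite dimensional case and \autoref{df: semimod structure} then give $[(\nu_1)_m]\cdot s\le[(\nu_2)_l]\cdot s\le[\nu_2]\cdot s$, and taking the supremum over $m$ yields $[\nu_1]\cdot s\le[\nu_2]\cdot s$.

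Now fix $([\mu^{(k)}])_{k\in\N}$ increasing with supremum $[\mu]$ and $s\in\Cu(A\rtimes_\alpha G)$. By monotonicity the sequence $([\mu^{(k)}]\cdot s)_{k\in\N}$ is increasing, hence has a supremum in $\Cu(A\rtimes_\alpha G)$, and $\sup\limits_k\bigl([\mu^{(k)}]\cdot s\bigr)\le[\mu]\cdot s$. For the reverse inequality, pick irreducibles with $\mu\cong\bigoplus_{n\in\N}\sigma_n$ and set $\mu_m=\bigoplus_{n=1}^m\sigma_n$, so that $[\mu]\cdot s=\sup\limits_m\bigl([\mu_m]\cdot s\bigr)$ by \autoref{df: semimod structure}. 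As $[\mu_m]$ is a finite sum of irreducibles, $[\mu_m]\ll[\mu]=\sup\limits_k[\mu^{(k)}]$, so there is $k$ with $[\mu_m]\le[\mu^{(k)}]$, whence $[\mu_m]\cdot s\le[\mu^{(k)}]\cdot s\le\sup\limits_k\bigl([\mu^{(k)}]\cdot s\bigr)$; taking the supremum over $m$ gives $[\mu]\cdot s\le\sup\limits_k\bigl([\mu^{(k)}]\cdot s\bigr)$, completing the proof.

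The step I expect to be the main obstacle is setting up monotonicity correctly: because the auxiliary algebras $\K(\Hi_\mu\oplus\C)$ containing the rank one projections $q_\mu$ depend on $\mu$, one cannot compare the maps $\Cu(\widehat{\iota_{q_\mu}})^{-1}$ for different $\mu$ directly, and the argument has to be routed through the orthogonal-summand identity for finite dimensional representations (plus compact containment of finite sums of irreducibles). Once monotonicity is available, the supremum statement is a routine manipulation.
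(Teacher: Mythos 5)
Your proof is correct, but it takes a genuinely different route from the one in the paper. The paper's argument never isolates monotonicity of the action in the $\Cu(G)$-variable: instead it replaces the $\mu_n$, without loss of generality, by \emph{nested} subrepresentations of a single $(\Hi_\mu,\mu)$ with $\Hi_\mu=\overline{\bigcup_n\Hi_{\mu_n}}$, so that every map $\iota_{p_{\mu_n}}$ lands in the one algebra $A\otimes\K(\Hi_\mu\oplus\C)$ and only one correction isomorphism $\Cu(\widehat{\iota_{q_\mu}})^{-1}$ ever appears; one then uses $a\otimes p_{\Hi_\mu}=\sup_n(a\otimes p_{\Hi_{\mu_n}})$ together with the fact that $\CCu$-morphisms preserve suprema of increasing sequences to pull the supremum through. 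This nesting trick sidesteps entirely the difficulty you flag about comparing $\Cu(\widehat{\iota_{q_\nu}})^{-1}$ for different $\nu$. Your route --- first establishing that $[\nu]\mapsto[\nu]\cdot s$ is order preserving via the orthogonal-summand identity $\Cu(\widehat{\iota_{p_{\nu_2}}})=\Cu(\widehat{\iota_{p_{\nu_1}}})+\Cu(\widehat{\iota_{p_{\nu_1'}}})$, and then exploiting that finite dimensional subrepresentations satisfy $[\mu_m]\ll[\mu]=\sup_k[\mu^{(k)}]$, hence $[\mu_m]\le[\mu^{(k)}]$ for some $k$ --- is also valid, and it buys you monotonicity of the $\Cu(G)$-action in its first variable as an explicit by-product (something that is needed anyway for axiom (O2) of $\CCu^G$ and that the paper only obtains implicitly along chains of partial sums). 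The price is the corner-compatibility bookkeeping you correctly identify at the end, which the paper's reduction to a single ambient Hilbert space makes unnecessary; since the paper already relies on the same implicit compatibility in the preceding lemma, this is an acceptable trade.
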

\begin{proof} Let $(\Hi_{\mu_n},\mu_n)_{n\in\N}$ be a sequence of separable unitary representations of $G$
such that $([\mu_n])_{n\in\N}$ is increasing in $\Cu(G)$. Set $[\mu]=\sup\limits_{n\in\N} [\mu_n]$.
Without loss of generality, we can assume that $\mu_n$ is a subrepresentation of $\mu_{n+1}$ for all $n\in\N$.
In particular, we may assume that $\Hi_\mu=\overline{\bigcup\limits_{n\in\N}\mathcal{H}_{\mu_n}}$ with $\mathcal{H}_{\mu_n}\subseteq \mathcal{H}_{\mu_{n+1}}$ for all $n\in\N$. It follows that
$\mu^+=\sup\limits_{n\in\N} (\mu_n^+)$ as representations of $G$ on $\mathcal{H}_\mu\oplus\C$.
Thus, for $a\in A$, we have
$$\iota_{p_\mu}(a)=a\otimes p_{\Hi_\mu}=\sup\limits_{n\in\N} (a\otimes p_{\Hi_{\mu_n}})=\sup\limits_{n\in\N} \left(\varphi_{\Hi_{\mu_n}}(a)\right).$$
Finally, since $\Cu(\widehat{\iota_{q_\mu}})$ is an isomorphism in $\CCu$, we conclude that
\begin{align*} \sup\limits_{n\in\N} \left([\mu_n]\cdot s\right) &= \sup\limits_{n\in\N} \left(\Cu(\widehat{\iota_{q_\mu}})^{-1}\circ \Cu(\overline{\varphi}_{\Hi_{\mu_n}})(s)\right)\\
&= \Cu(\widehat{\iota_{q_\mu}})^{-1}\left(\sup\limits_{n\in\N} \left(\Cu(\overline{\varphi}_{\Hi_{\mu_n}})(s)\right)\right)\\
&=\Cu(\widehat{\iota_{q_\mu}})^{-1}\circ\Cu(\overline{\varphi}_{\Hi_{\mu}})(s)\\
&= [\mu]\cdot s,\end{align*}
for all $s\in\Cu(A\rtimes_\alpha G)$, as desired.\end{proof}

For later use, we record here the following fact.

\begin{prop}\label{prop:CuGEquivExSeqs}
Let
\[\xymatrix{0\ar[r]& I \ar^-{\iota}[r]&A\ar[r]^-{\pi}& B\ar[r]& 0}\]
be an exact sequence of $G$-\ca s, with actions $\gamma\colon G\to\Aut(I)$ and $\beta\colon G\to \Aut(B)$.
Then $\Cu^G(I,\gamma)$ can be naturally identified with $\ker(\Cu^G(\pi))$, which by definition is
\[\ker(\Cu^G(\pi))=\{s\in \Cu^G(A,\alpha)\colon \Cu^G(\pi)(s)=0\}\subseteq \Cu^G(A,\alpha).\]
\end{prop}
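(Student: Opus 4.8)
The statement is the Cuntz semigroup analog of the exactness-type compatibility of equivariant $K$-theory with ideals, and the cleanest route is to pass through Julg's theorem (\autoref{Thm:JulgCuG}) and reduce everything to the already-known exactness properties of the (non-equivariant) Cuntz semigroup for crossed products. First I would recall that, since $I$ is a $G$-invariant ideal in $A$, the crossed product $I\rtimes_\gamma G$ is canonically a (closed, two-sided) ideal of $A\rtimes_\alpha G$, with quotient $B\rtimes_\beta G$; this gives an exact sequence
\[\xymatrix{0\ar[r]& I\rtimes_\gamma G \ar[r]&A\rtimes_\alpha G\ar[r]& B\rtimes_\beta G\ar[r]& 0.}\]
Applying the $\CCu$-isomorphism $\sigma$ from \autoref{Thm:JulgCuG} naturally in the algebra (naturality is the key point, and it follows since every map in the composition defining $\sigma$ — the map $\chi$ of \autoref{prop: chi}, the identification $\theta$ of \autoref{rem: theta}, the stability isomorphism $\kappa$, and the Landstad isomorphism $\psi$ of \autoref{prop:FixPtCoaction} — was shown or is easily checked to be natural), one obtains a commutative diagram identifying $\Cu^G(\iota)$ and $\Cu^G(\pi)$ with $\Cu$ applied to the inclusion $I\rtimes_\gamma G\hookrightarrow A\rtimes_\alpha G$ and the quotient map $A\rtimes_\alpha G\to B\rtimes_\beta G$, respectively.

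The second ingredient is the ideal property of the ordinary Cuntz semigroup: by Theorem~5 in~\cite{CiuRobSan_CuIdeals} (already cited in the proof of \autoref{prop: ext equivalence invariance}), for an ideal $J$ of a \ca\ $C$ with quotient $C/J$, the map $\Cu(J)\to\Cu(C)$ is an order embedding whose image is precisely $\ker(\Cu(C)\to\Cu(C/J))$, and this identification is natural. Applying this with $C=A\rtimes_\alpha G$ and $J=I\rtimes_\gamma G$ gives a natural $\CCu$-isomorphism $\Cu(I\rtimes_\gamma G)\cong \ker(\Cu(A\rtimes_\alpha G)\to\Cu(B\rtimes_\beta G))$. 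Composing with the Julg isomorphisms $\sigma$ for $I$ and for $A$, and using the commutative diagram from the previous paragraph, I get a natural $\CCu$-isomorphism $\Cu^G(I,\gamma)\cong\ker(\Cu^G(\pi))$. Finally, to upgrade this to a $\CCu^G$-isomorphism — which is what makes the statement genuinely equivariant — I would check that the composite isomorphism intertwines the $\Cu(G)$-semimodule structures. This is immediate from the constructions: the $\Cu(G)$-action on $\Cu^G(A,\alpha)$ restricts to $\ker(\Cu^G(\pi))$ because $\Cu^G(\pi)$ is a $\Cu(G)$-semimodule morphism (an element killed by a semimodule map has its scalar multiples killed too), and the Julg isomorphism transports precisely this structure; alternatively one observes that $I\rtimes_\gamma G\hookrightarrow A\rtimes_\alpha G$ is a $\widehat\alpha$-equivariant inclusion of ideals, so the dual-action description of the semimodule structure (\autoref{thm:JulgWithCuG}, or in the abelian case \autoref{prop: semimodule structure when G is abelian}) is manifestly compatible.

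The main obstacle I anticipate is bookkeeping rather than mathematics: one must be careful that the isomorphism $I\rtimes_\gamma G\cong$ (ideal in $A\rtimes_\alpha G$) is compatible with \emph{all} the structure — in particular the coactions $\widehat\gamma$, $\widehat\alpha$, $\widehat\beta$ restrict and descend correctly — so that \autoref{prop:FixPtCoaction} really does fit into a single commutative diagram for $I$, $A$, $B$ simultaneously. Since all the constituent isomorphisms in $\sigma$ are natural in the $G$-\ca, this is routine but should be stated carefully. One could also give a more hands-on proof directly at the level of $\Cu^G$, using the Hilbert module picture of \autoref{cor:HilbModIsomObject}: a positive element of some $(\K(\Hi_\mu)\otimes A)^G$ lies in the kernel of $\Cu^G(\pi)$ iff its image in $(\K(\Hi_\mu)\otimes B)^G$ is Cuntz-equivalent to $0$, and one shows using \autoref{lem: G Cuntz relation} and an approximation argument that such an element is $G$-Cuntz equivalent to one coming from $(\K(\Hi_\mu)\otimes I)^G$ — mimicking the classical proof of the ideal property of $\Cu$ in the equivariant setting. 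But the proof via Julg's theorem is shorter and reuses machinery already in place, so that is the route I would take.
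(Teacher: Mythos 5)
Your proposal is correct and follows essentially the same route as the paper: reduce to the exact sequence of crossed products via the naturality of the Julg isomorphism of \autoref{Thm:JulgCuG}, and then invoke Theorem~5 of~\cite{CiuRobSan_CuIdeals} to identify $\Cu(I\rtimes_\gamma G)$ with the kernel at the level of ordinary Cuntz semigroups. The paper in fact sidesteps your final compatibility check by simply observing at the outset that $\ker(\Cu^G(\pi))$ depends only on the underlying semigroup structure and not on the $\Cu(G)$-action, but your extra verification is harmless.
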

\begin{proof}
Observe that $\ker(\Cu^G(\pi))$ only depends on the structure of $\Cu^G(A,\alpha)$ as a semigroup, and is
independent of the action of $\Cu(G)$. Denote by $\widehat{\pi}\colon A\rtimes_\alpha G\to B\rtimes_\beta G$ the
map induced by $\pi$. Using the natural isomorphisms from \autoref{Thm:JulgCuG} for
$\Cu^G(I,\gamma)$, $\Cu^G(A,\alpha)$ and $\Cu^G(B,\beta)$, it is enough to show that $\Cu(I\rtimes_\gamma G)$
can be naturally identified with the subsemigroup
\[\{s\in \Cu(A\rtimes_\alpha G)\colon \Cu(\widehat{\pi})(s)=0\}\]
of $\Cu(A\rtimes_\alpha G)$. This follows immediately from Theorem~5 in~\cite{CiuRobSan_CuIdeals},
so the proof is finished.
\end{proof}

We have now arrived at the main result of this section.

\begin{thm}\label{thm:JulgWithCuG}
Let $G$ be a compact group, let $A$ be a $C^*$-algebra, and let $\alpha\colon G\to\Aut(A)$ be a
continuous action. Then there exists a natural $\CCu^G$-isomorphism
\[\Cu^G(A,\alpha)\cong \Cu(A\rtimes_\alpha G),\]
where the $\Cu(G)$-semimodule structure on $\Cu(A\rtimes_\alpha G)$ is given by
\autoref{df: semimod structure}.
\end{thm}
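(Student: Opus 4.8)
The plan is to combine \autoref{Thm:JulgCuG}, which already gives a natural $\CCu$-isomorphism $\sigma\colon \Cu^G(A,\alpha)\to\Cu(A\rtimes_\alpha G)$, with a verification that $\sigma$ intertwines the $\Cu(G)$-semimodule structure on $\Cu^G(A,\alpha)$ (from \autoref{df:SemimodStructCuG}) with the structure on $\Cu(A\rtimes_\alpha G)$ defined in \autoref{df: semimod structure}. Since \autoref{lem:CompatTakingSup} shows the right-hand structure is compatible with suprema of increasing sequences in $\Cu(G)$, and since (by the corollary after \autoref{cor: sup ll}) every $[\nu]\in\Cu(G)$ is the supremum of an increasing sequence of classes of finite dimensional representations, while the left-hand structure is compatible with such suprema by Axiom (O4) of $\CCu^G$ (see \autoref{thm:EqCtzSmgpCCuG}), it suffices to check compatibility for $[\mu]$ the class of a \emph{finite dimensional} representation. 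Moreover, by \autoref{prop:CuGContinuous} (continuity of $\Cu^G$) together with the fact that crossed products commute with inductive limits, and since \autoref{df: semimod structure} and \autoref{df:SemimodStructCuG} are both defined compatibly with inductive limits, we may reduce to the case in which $A$ is unital.

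So the core of the proof is the unital, finite dimensional case: I must show that for a finite dimensional $(\Hi_\mu,\mu)$, the composite $\Cu(\widehat{\iota_{q_\mu}})^{-1}\circ\Cu(\widehat{\iota_{p_\mu}})$ on $\Cu(A\rtimes_\alpha G)$ corresponds, under $\sigma$, to multiplication by $[\mu]$ on $\Cu^G(A,\alpha)$. Here $\mu^+=\mu\oplus 1_\C$, and $p_\mu,q_\mu\in\K(\Hi_\mu\oplus\C)$ are the projections onto $\Hi_\mu$ and $\C$. The key input is \autoref{prop:SemimodStrBad}, applied to the representation $\mu^+$ and the projections $p=p_\mu$, $q=q_\mu$: it gives a commuting $\CCu^G$-diagram relating $\Cu^G(\iota_{p_\mu})$, $\Cu^G(\iota_{q_\mu})$, the cocycle-equivalence isomorphism $\varepsilon$ between $\Cu^G(A\otimes\K(\Hi_\mu\oplus\C),\alpha\otimes\Ad(\mu^+))$ and $\Cu^G(A\otimes\K(\Hi_\mu\oplus\C),\alpha\otimes\id)$, and multiplication by $[\mu^+|_{p_\mu(\Hi_\mu\oplus\C)}]=[\mu]$. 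Reading that diagram, $\varepsilon\circ\Cu^G(\iota_{p_\mu})=\Cu^G(\iota_{q_\mu})\circ([\mu]\cdot{-})$, hence $[\mu]\cdot{-}=\Cu^G(\iota_{q_\mu})^{-1}\circ\varepsilon\circ\Cu^G(\iota_{p_\mu})$ on $\Cu^G(A,\alpha)$ (using that $\Cu^G(\iota_{q_\mu})$ is invertible by \autoref{prop:CuGStable}).

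It then remains to transport this identity across Julg's isomorphism, i.e. to check that $\sigma\circ\Cu^G(\iota_{p_\mu})=\Cu(\widehat{\iota_{p_\mu}})\circ\sigma$, that $\sigma\circ\Cu^G(\iota_{q_\mu})=\Cu(\widehat{\iota_{q_\mu}})\circ\sigma$, and crucially that under the isomorphism $\sigma$ for the algebra $A\otimes\K(\Hi_\mu\oplus\C)$, the map $\varepsilon$ becomes the identity on $\Cu(A\rtimes_\alpha G)$ after the canonical identification $(A\otimes\K(\Hi_\mu\oplus\C))\rtimes_{\alpha\otimes\Ad(\mu^+)}G\cong (A\otimes\K(\Hi_\mu\oplus\C))\rtimes_{\alpha\otimes\id}G$ coming from cocycle equivalence at the crossed product level. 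The first two are naturality of Julg's isomorphism (it is natural by \autoref{Thm:JulgCuG}), applied to the equivariant $\ast$-homomorphisms $\iota_{p_\mu}$ and $\iota_{q_\mu}$. The third point, that $\varepsilon$ corresponds under Julg to the canonical crossed-product isomorphism induced by the cocycle, is where the remark after \autoref{df:ExtEq} (cocycle equivalent actions have isomorphic crossed products) must be matched up carefully with the explicit construction of $\sigma$ via \autoref{prop:FixPtCoaction}; this compatibility at the level of coactions and Landstad duality is what I expect to be the main obstacle. Once these identifications are in place, combining them yields $\sigma\circ([\mu]\cdot{-})=\bigl(\Cu(\widehat{\iota_{q_\mu}})^{-1}\circ\Cu(\widehat{\iota_{p_\mu}})\bigr)\circ\sigma=([\mu]\cdot{-})\circ\sigma$, which is exactly the finite dimensional case; passing to suprema over finite dimensional subrepresentations (legitimate on both sides as noted above) and then to the nonunital case via \autoref{prop:CuGContinuous} completes the proof.
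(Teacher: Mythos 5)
Your overall strategy for the core case (unital $A$, finite dimensional $\mu$, then pass to suprema) is the one the paper uses, but there are two genuine problems. First, your reduction to the unital case does not work: \autoref{prop:CuGContinuous} only helps if you can write $(A,\alpha)$ as an equivariant inductive limit of \emph{unital} $G$-algebras, and a general non-unital $C^*$-algebra is not such a limit (e.g.\ $C_0(\mathbb{R})=\varinjlim C_0(-n,n)$ has no unital building blocks; the hereditary subalgebras $\overline{e_nAe_n}$ are unital only when the $e_n$ are projections, which is exactly the restrictive hypothesis of \autoref{prop: ext equivalence invariance}). The paper instead passes to the unitization $\widetilde{A}$, uses the exact sequence $0\to A\to\widetilde{A}\to\C\to 0$ together with \autoref{prop:CuGEquivExSeqs} to identify $\Cu^G(A,\alpha)$ with $\ker(\Cu^G(\widehat{\pi}))\subseteq\Cu^G(\widetilde{A},\widetilde{\alpha})$, and then restricts the unital-case isomorphism $\sigma_{\widetilde{A}}$ to that kernel. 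You need some version of this step.

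Second, the identification of $\varepsilon$ with "the canonical crossed-product isomorphism coming from cocycle equivalence," which you correctly flag as the main obstacle, is left unresolved in your proposal, and it is in fact unnecessary. The point you are missing is that $\iota_{q_\mu}(a)=a\otimes q_\mu$ is already equivariant as a map into $\bigl(A\otimes\K(\Hi_\mu\oplus\C),\alpha\otimes\Ad(\mu^+)\bigr)$, because $q_\mu$ projects onto the trivial summand $\C$ of $\mu^+$. Hence both $\Cu^G(\iota_{p_\mu})$ and $\Cu^G(\iota_{q_\mu})$ are induced by honest equivariant $\ast$-homomorphisms into the \emph{same} dynamical system, and naturality of the Julg isomorphism $\sigma$ from \autoref{Thm:JulgCuG} applies directly to both squares. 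Applying \autoref{prop:SemimodStrBad} twice — once with $p=p_\mu$ and once with $p=q_\mu$, with the same $q$, $\psi$ and $\varepsilon$ — gives $\varepsilon\circ\Cu^G(\iota_{p_\mu})=\Cu^G(\psi)\circ([\mu]\cdot{-})$ and $\varepsilon\circ\Cu^G(\iota_{q_\mu})=\Cu^G(\psi)$, so the $\varepsilon$'s cancel and $\Cu^G(\iota_{q_\mu})^{-1}\circ\Cu^G(\iota_{p_\mu})=[\mu]\cdot{-}$ on the nose; commutativity of the two naturality squares then transports this to $\Cu(\widehat{\iota_{q_\mu}})^{-1}\circ\Cu(\widehat{\iota_{p_\mu}})$ without ever having to say what $\varepsilon$ becomes on the crossed-product side. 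If you rearrange your argument this way and replace the inductive-limit reduction by the unitization argument, the proof goes through.
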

\begin{proof}
Assume first that $A$ is unital. Let $(\Hi_\mu,\mu)$ be a finite dimensional unitary representation
of $G$, and let $\iota_{p_\mu}, \iota_{q_\mu}, \widehat{\iota_{p_\mu}}$ and $\widehat{\iota_{q_\mu}}$
be as in \autoref{df: semimod structure}. By naturality of
the isomorphism in \autoref{Thm:JulgCuG}, there exists a commutative diagram
\begin{align*}
\xymatrix@=1.40em{
\Cu^G(A,\alpha)\ar[d]_-{\sigma_A}\ar[rr]^-{\Cu^G(\iota_{p_\mu})}& & \Cu^G\left(A\otimes \K(\Hi_\mu\oplus\C),\alpha\otimes\Ad(\mu^+)\right)\ar[d]& &\Cu^G(A,\alpha)\ar[ll]_-{\Cu^G(\iota_{q_\mu})}\ar[d]^-{\sigma_A}\\
\Cu(A\rtimes_\alpha G)\ar[rr]_-{\Cu(\widehat{\iota_{p_\mu}})}& & \Cu\left((A\otimes \K(\Hi_\mu\oplus\C))\rtimes_{\alpha\otimes\Ad(\mu^+)}G\right) & &\Cu(A\rtimes_\alpha G)\ar[ll]^-{\Cu(\widehat{\iota_{q_\mu}})},
}
\end{align*}
where all vertical arrows are the isomorphisms given by \autoref{Thm:JulgCuG}.
By \autoref{prop:SemimodStrBad}, the map $\Cu^G(\iota_{q_\mu})$
corresponds to multiplication by the class of the trivial representation in the $\Cu(G)$-semimodule
$\Cu^G\left(A\otimes \K(\Hi_\mu\oplus\C),\alpha\otimes\Ad(\mu^+)\right)$.
It follows that $\Cu^G(\iota_{q_\mu})$ is invertible.
Thus $\Cu(\widehat{\iota_{q_\mu}})$ is also invertible, since the vertical arrows are invertible.
By definition, $\Cu(\widehat{\iota_{q_\mu}})^{-1}\circ \Cu(\widehat{\iota_{p_\mu}})$ is multiplication by $[\mu]$ on $\Cu(A\rtimes_\alpha G)$, and \autoref{prop:SemimodStrBad} shows that the composition
$\Cu^G(\iota_{q_\mu})^{-1}\circ\Cu^G(\iota_{p_\mu})$ agrees with multiplication by $[\mu]$ on $\Cu^G(A,\alpha)$.
Commutativity of the diagram implies that the isomorphism $\sigma_A\colon \Cu^G(A,\alpha)\to \Cu(A\rtimes_\alpha G)$,
which appears both in the left and right vertical arrows, commutes with multiplication by $[\mu]$.

Assume now that $(\Hi_\nu,\nu)$ is a separable unitary representation
of $G$. Since $G$ is compact, it follows that $\K(\mathcal{H}_\nu)$ has an increasing approximate
identity $(e_n)_{n\in\N}$ consisting of $G$-invariant projections. For $n\in\N$, denote by
$\mu_n\colon G\to\mathcal{U}(e_n\mathcal{H}_\nu)$ the restriction of $\nu$. It follows that
$$[\nu]=\sup\limits_{n\in\N}[\mu_n]$$
in $\Cu(G)$. Since the $\Cu(G)$-semimodule structure on $\Cu(A\rtimes_\alpha G)$ described above
is compatible with taking suprema in $\Cu(G)$ by \autoref{lem:CompatTakingSup}, it follows that
the isomorphism $\sigma_A\colon \Cu^G(A,\alpha)\to \Cu(A\rtimes_\alpha G)$ commutes with multiplication
by $[\nu]$, since it commutes with multiplication by $[\mu_n]$ for all $n\in\N$ by the above paragraph.
This shows that $\sigma_A$ is a $\Cu(G)$-semimodule homomorphism.

Now suppose that $A$ is non-unital, and denote by $\widetilde{A}$ its unitization. Define an extension
$\widetilde{\alpha}\colon G\to \Aut(\widetilde{A})$ of $\alpha$ to $\widetilde{A}$ by setting $\widetilde{\alpha}_g(a+\lambda 1)=\alpha_g(a)+\lambda 1$
for all $a\in A$ and all $\lambda\in\C$. The short exact sequence of $G$-$C^*$-algebras
$$0\to A\to \widetilde{A} \to \C\to 0$$
induces the short exact sequence of crossed products
$$0\to A\rtimes_\alpha G\to \widetilde{A}\rtimes_{\widetilde{\alpha}} G \to C^*(G)\to 0.$$
(Recall that $\C\rtimes_\id G\cong C^*(G)$ for a locally compact group $G$.)

Denote by $\sigma_A\colon \Cu^G(A,\alpha)\to \Cu(A\rtimes_\alpha G)$, by $\sigma_{\widetilde{A}}\colon \Cu^G(\widetilde{A},\widetilde{\alpha})\to \Cu(\widetilde{A}\rtimes_{\widetilde{\alpha}}G)$,
and by $\sigma_\C\colon \Cu^G(\C)\to \Cu(C^*(G))$, the natural isomorphisms given by \autoref{Thm:JulgCuG}.
Then the following diagram in $\CCu$ is commutative:
\begin{align*} \xymatrix{ \Cu(A\rtimes_\alpha G)\ar[r]\ar[d]_-{\sigma_A} & \Cu(\widetilde{A}\rtimes_{\widetilde{\alpha}} G)\ar[r]^-{\Cu^G(\widehat{\pi})}\ar[d]_-{\sigma_{\widetilde{A}}}&\Cu(C^*(G))\ar[d]_-{\sigma_\C}\\
\Cu^G(A,\alpha)\ar[r] & \Cu^{G}(\widetilde{A},\widetilde{\alpha})\ar[r]_-{\Cu^G(\widehat{\pi})} &\Cu^{G}(\C,\id_\C).}\end{align*}

The maps $\sigma_{\widetilde{A}}$ and $\sigma_{\C}$ are $\Cu(G)$-semimodule homomorphisms by the unital case.
By \autoref{prop:CuGEquivExSeqs}, it follows that $\Cu^G(A,\alpha)$ is the kernel of $\Cu^G(\widehat{\pi})$. By
commutativity of the diagram, $\sigma_A$ is the restriction of $\sigma_{\widehat{A}}$ to $\ker(\Cu^G(\widehat{\pi}))$,
and since $\sigma_{\widehat{A}}$ preserves the $\Cu(G)$-action, so does $\sigma_A$. This finishes the proof. \end{proof}

We illustrate these methods by computing an easy example.
Let $G$ be a compact group, and let $\widehat{G}$ be its dual.
If $A$ is a \ca, then we write $\Cu(G)\otimes \Cu(A)$
for the $\Cu(G)$-semimodule
\[\Cu(G)\otimes \Cu(A)=\{f\colon \widehat{G}\to \Cu(A)\colon f \mbox{ has countable support}\},\]
with pointwise addition and partial order. The $\Cu(G)$-action on $\Cu(G)\otimes \Cu(A)$ can
be described as follows. Given $[\mu]\in \Cu(G)$ and $[\pi]\in\widehat{G}$,
let $m_\pi(\mu)\in\overline{\Z_{\geq 0}}$ be the multiplicity of $\pi$ in $\mu$. Then
\[[\mu]=\sum\limits_{[\pi]\in\widehat{G}}m_\pi(\mu)\cdot [\pi].\]
For $f\in \Cu(G)\otimes\Cu(A)$, we set
\[([\mu]\cdot f)([\pi])=m_\pi(\mu)f([\pi])\]
for $\pi\in\widehat{G}$.

The tensor product notation is justified because of the following. One can check that
\[\Cu(G)\cong \{f\colon \widehat{G}\to \overline{\N}\colon f \mbox{ has countable support}\},\]
with pointwise operations and partial order. Moreover, it is easy to check that
$\Cu(G)\otimes \Cu(A)$ really is the tensor product
in the category $\CCu$ of the semiring $\Cu(G)$ and the semigroup $\Cu(A)$, in the sense of Theorem~6.3.3
in \cite{AntPerThi}.

\begin{prop}\label{prop:EgTrivialAction}
Suppose that $G$ acts trivially on $A$. Then $\Cu^G(A,\id_A)\cong \Cu(G)\otimes \Cu(A)$.
\end{prop}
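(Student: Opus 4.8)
The plan is to read off both the semigroup and the $\Cu(G)$-semimodule structure directly from the fixed-point description of $\Cu^G$ together with \autoref{thm: indlim}; alternatively one could start from \autoref{thm:JulgWithCuG} and the identification $A\rtimes_{\id_A}G\cong A\otimes C^*(G)$, but the direct route avoids re-examining the crossed product. First I would analyze a single separable representation $(\Hi_\mu,\mu)$. Since $G$ acts trivially on $A$, averaging over $G$ identifies $(\K(\Hi_\mu)\otimes A)^G$ with the $C^*$-tensor product $\K(\Hi_\mu)^{\Ad(\mu)}\otimes A$; writing the isotypic decomposition $\mu\cong\bigoplus_{[\pi]\in\widehat G}\pi^{\oplus n_\pi(\mu)}$, Schur's lemma and the Peter--Weyl theorem give $\K(\Hi_\mu)^{\Ad(\mu)}\cong\bigoplus_{[\pi]\in\widehat G}^{c_0}\K(\C^{n_\pi(\mu)})$, so that
\[(\K(\Hi_\mu)\otimes A)^G\;\cong\;\bigoplus_{[\pi]\in\widehat G}^{c_0} M_{n_\pi(\mu)}(A),\]
with the convention $M_\infty(A)=A\otimes\K$.

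Next I would record the routine fact that for any family $(B_i)_{i\in I}$ of $C^*$-algebras, $\Cu\big(\bigoplus_{i\in I}^{c_0}B_i\big)$ is canonically the sub-semigroup of $\prod_{i\in I}\Cu(B_i)$ of countably supported families: a positive element of a $c_0$-direct sum has norm-null components, hence is supported on countably many coordinates, and Cuntz subequivalence, suprema of increasing sequences, and the relation $\ll$ are all computed coordinatewise — for subequivalence one invokes \autoref{lem: G Cuntz relation} (with $G$ trivial), truncating the witness to the finitely many coordinates where $\|(x_i-\varepsilon)_+\|>0$. The observation following \autoref{def: CCu} then promotes the evident bijection to a $\CCu$-isomorphism. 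Combined with $\Cu(M_n(A))\cong\Cu(A)$ (stability of $\Cu$), this gives $\Cu\big((\K(\Hi_\mu)\otimes A)^G\big)\cong\bigoplus_{[\pi]:\,n_\pi(\mu)>0}\Cu(A)$.

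Then I would pass to the limit. For $[\mu]\leq[\nu]$ the connecting map $j_{\mu,\nu}$ becomes, under these identifications, the inclusion that is the identity $\Cu(A)\to\Cu(A)$ on each common coordinate (a corner embedding $M_{n_\pi(\mu)}(A)\hookrightarrow M_{n_\pi(\nu)}(A)$ induces the identity of $\Cu(A)$). Since every countably supported $f\colon\widehat G\to\Cu(A)$ is supported inside $\{[\pi]:n_\pi(\mu)>0\}$ for a suitable $\mu$ (e.g. $\mu=\bigoplus_{[\pi]\in\supp f}\pi$), and since order and suprema in $\Cu^G(A,\id_A)$ can be checked inside a single $(\K(\Hi_\mu)\otimes A)^G$, \autoref{thm: indlim} yields a $\CCu$-isomorphism
\[\Cu^G(A,\id_A)=\varinjlim_{[\mu]\in\Cu(G)}\Cu\big((\K(\Hi_\mu)\otimes A)^G\big)\;\cong\;\{f\colon\widehat G\to\Cu(A)\colon f\text{ has countable support}\}=\Cu(G)\otimes\Cu(A).\]

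Finally I would check that this isomorphism is $\Cu(G)$-equivariant. Using \autoref{df:SemimodStructCuG}, for $[\mu]\in\Cu(G)$ and a positive $a=(a_\pi)_\pi\in(\K(\Hi_\nu)\otimes A)^G=\bigoplus_\pi M_{n_\pi(\nu)}(A)$ one has $[\mu]\cdot[a]_G=[s_\mu\otimes a]_G$; decomposing $\Hi_\mu\otimes\Hi_\nu$ into isotypic components and using that $s_\mu$ restricts to a strictly positive (hence Cuntz-full) element in each isotypic block of $\Hi_\mu$, the class $[s_\mu\otimes a]_G$ is identified, coordinate by coordinate, with $[\mu]\cdot f$ for the $\Cu(G)$-semimodule structure on $\Cu(G)\otimes\Cu(A)$ recalled before the statement (equivalently, the structure induced by multiplication on the $\Cu(G)$-factor of the tensor product). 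The case of an infinite-dimensional $\mu$ then follows by passing to suprema of finite-dimensional subrepresentations, as in \autoref{lem:CompatTakingSup}, and naturality in $A$ is immediate. I expect this last step — the bookkeeping for how the isotypic (Clebsch--Gordan) decomposition of $\Hi_\mu\otimes\Hi_\nu$ interacts with the coordinatewise identifications $\Cu(M_n(A))\cong\Cu(A)$ — to be the only genuinely delicate point; the semigroup part is routine.
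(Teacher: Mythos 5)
Your argument is correct, but it takes a genuinely different route from the paper's. The paper first passes through \autoref{Thm:JulgCuG} to replace $\Cu^G(A,\id_A)$ by $\Cu(A\rtimes_{\id_A}G)\cong\Cu\bigl(\bigoplus_{[\pi]\in\widehat G}M_{d_\pi}(A)\bigr)$ and builds the isomorphism onto countably supported functions out of the coordinate quotients $\rho_\pi$ of that single algebra; you instead stay entirely inside the definition of $\Cu^G$, computing $(\K(\Hi_\mu)\otimes A)^G\cong\bigoplus^{c_0}_{[\pi]}M_{n_\pi(\mu)}(A)$ via Schur's lemma and then assembling the answer through the inductive-limit description of \autoref{thm: indlim}. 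Your version is independent of the crossed-product machinery of Section~5 (it could be proved right after \autoref{thm: indlim}), at the cost of having to track the connecting maps $j_{\mu,\nu}$; the paper's is a one-shot computation. Both reduce the semimodule structure to the same Clebsch--Gordan bookkeeping, namely $[\mu]\cdot f$ is the function $[\pi]\mapsto\sum_{[\tau]}m_\pi(\mu\otimes\tau)\,f([\tau])$, which is exactly what the paper's own proof computes, so your ``delicate point'' is resolved the same way there. Two small cautions. First, $\ll$ is not literally coordinatewise in a countably supported product over an infinite index set (a nonzero constant function need not be compactly contained in itself even if each coordinate is); but you do not need this, since your map is an order-preserving, surjective order embedding, and the Remark following \autoref{def: CCu} upgrades it to a $\CCu$-isomorphism automatically. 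Second, your parenthetical ``equivalently'' identifying the action displayed before the statement, $([\mu]\cdot f)([\pi])=m_\pi(\mu)f([\pi])$, with the one induced by multiplication on the $\Cu(G)$ factor is too quick: the latter is the convolution-type formula above, and that --- not the pointwise formula --- is what both your computation and the paper's actually produce.
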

\begin{proof}
Since $G$ acts trivially on $A$, we have $A\rtimes_\alpha G\cong A\otimes C^*(G)$ canonically.
For $[\pi]\in\widehat{G}$, denote by $d_\pi$ the dimension of $\pi$. Then
$C^*(G)\cong \bigoplus\limits_{[\pi]\in\widehat{G}} M_{d_\pi}$, so
\[A\rtimes_\alpha G\cong \bigoplus_{[\pi]\in\widehat{G}} M_{d_\pi}(A).\]
For $[\tau]\in\widehat{G}$, let
\[\rho_\tau\colon \bigoplus_{[\pi]\in\widehat{G}} M_{d_\pi}(A)\to M_{d_\tau}(A)\]
be the corresponding surjective $\ast$-homomorphism.

We define a map $\varphi \colon \Cu(A\rtimes_\alpha G)\to \Cu(G)\otimes\Cu(A)$ as follows. For
a positive element $A$ in
\[\K\otimes (A\rtimes_\alpha G)\cong \bigoplus\limits_{[\pi]\in\widehat{G}} \K\otimes M_{d_\pi}(A),\]
set $\varphi([a])([\pi])=[\rho_\pi(a)]\in\Cu(A)$ for
$[\pi]\in\widehat{G}$. It is clear that $\varphi([a])$ is independent
of the representative of $[a]$.
We must check that $\varphi([a])$ has countable support. Without loss of generality, assume
that $\|a\|=1$. For $0<\ep<1$, there exists a finite subset $X_\ep$ of $\widehat{G}$
such that the element $(a-\ep)_+$ belongs to $\bigoplus\limits_{[\pi]\in X_\ep} \K\otimes M_{d_\pi}(A)$.
Since
\[\varphi([a])=\sup\limits_{n\in\N}\psi\left(\left[\left(a-\frac{1}{n}\right)_+\right]\right),\]
it follows that $\varphi([a])$ is a supremum of an
increasing sequence of functions $\widehat{G}\to \Cu(A)$ with finite support, so the support of
$\varphi([a])$ is countable.

\emph{Claim: $\varphi$ preserves Cuntz subequivalence.}
Let $a$ and $b$ be positive elements in
$\bigoplus\limits_{[\pi]\in\widehat{G}} \left(\K\otimes M_{d_\pi}(A)\right)$ satisfying $a\precsim b$.
Without loss of generality, we may assume that $\|a\|=\|b\|=1$. Given $n\in\N$, there exists $m\in\N$
such that
$\left(a-\frac{1}{m}\right)_+\precsim \left(b-\frac{1}{n}\right)_+$. Hence,
$\rho_\pi\left(\left(a-\frac{1}{m}\right)_+\right)\precsim \rho_\pi\left(\left(b-\frac{1}{n}\right)_+\right)$
for all $[\pi]\in\widehat{G}$. It follows that
\[\varphi\left(\left[\left(a-\frac{1}{m}\right)_+\right]\right)\leq \varphi\left(\left[\left(b-\frac{1}{n}\right)_+\right]\right)\varphi \psi([b]).\]
By taking supremum in $m$, we conclude that $\varphi([a])\leq \varphi([b])$, and the claim is proved.

It is clear that the restriction of $\varphi$ to the image in $\Cu(A\rtimes_\alpha G)$ of the
positive elements in $\bigoplus\limits_{[\pi]\in\widehat{G}} \left(\K\otimes M_{d_\pi}(A)\right)$
with finitely many nonzero coordinates preserves suprema of increasing sequences, preserves the compact
containment relation, and is an order embedding. We want to show that $\varphi$ is an isomorphism in the category
$\CCu^G$. This will be a consequence of the next three claims, which will complete the proof of the proposition.

\emph{Claim: $\varphi$ is an order embedding.}
Let $a$ and $b$ be positive elements in
\[\bigoplus\limits_{[\pi]\in\widehat{G}} \left(\K\otimes M_{d_\pi}(A)\right),\]
and assume that $\varphi([a])\leq \varphi([b])$ in $\Cu(G)\otimes \Cu(A)$. If there exists
a finite subset $X\subseteq \widehat{G}$ such that $a$ and $b$
belong to $\bigoplus\limits_{[\pi]\in X} \left(\K\otimes M_{d_\pi}(A)\right)$, then it is clear
that we must have $[a]\leq [b]$. For the general case, we can assume without loss of generality
that $\|a\|=\|b\|=1$. The sequence
$\left(\varphi\left(\left[\left(a-\frac{1}{n}\right)_+\right]\right)\right)_{n\in\N}$
is rapidly increasing by the comments before this claim. In particular, for fixed $n\in\N$, we
have $\varphi\left(\left[\left(a-\frac{1}{n}\right)_+\right]\right)\ll \varphi([a])$. Since
$\varphi([b])= \sup\limits_{n\in\N}\varphi\left(\left[\left(b-\frac{1}{n}\right)_+\right]\right)$
by definition of $\varphi$, there exists $n_0\in\N$ such that
\[\varphi\left(\left[\left(a-\frac{1}{n}\right)_+\right]\right)\ll \varphi\left(\left[\left(b-\frac{1}{m}\right)_+\right]\right)\]
for all $m\geq n_0$. It follows that
\[\left[\left(a-\frac{1}{n}\right)_+\right]\leq \left[\left(b-\frac{1}{m}\right)_+\right],\]
because $\left(a-\frac{1}{n}\right)_+$ and $\left(b-\frac{1}{m}\right)_+$ have only finitely many
nonzero coordinates. By taking the supremum over $m$ first, and then over $n$, we deduce that $[a]\leq [b]$, as desired.

\emph{Claim: $\varphi$ is surjective.}
Let $f\colon \widehat{G}\to \Cu(A)$ be a function with countable support. Let $(\pi_n)_{n\in\N}$
be an enumeration of the support of $f$. For $n\in\N$, let $a_n\in \K\otimes A$ be a positive
element with $\|a_n\|= \frac{1}{n}$ satisfying $[a_n]=f(\pi_n)$ in $\Cu(A)$. Let
\[a\in \K\otimes (A\rtimes_\alpha G)\cong \bigoplus_{[\pi]\in\widehat{G}}\left(\K\otimes M_{d_\pi}(A)\right)\]
be the positive element determined by $\rho_{\pi_n}(a)=a_n$ for $n\in\N$, and $\rho_{\pi}(a)=0$
for $\pi\notin\supp(f)$. It is then clear that $\varphi([a])=f$.

\emph{Claim: $\varphi$ is a morphism in $\CCu^G$.}
We need to check conditions (M1) and (M2) in \autoref{def: CCu} and that $\varphi$ is a $\Cu(G)$-semimodule
homomorphism.

To check (M1), let $(s_n)_{n\in\N}$
be an increasing sequence in $\Cu(A\rtimes_\alpha G)$, and set $s=\sup\limits_{n\in\N}s_n$. We
want to show that $\varphi(s)=\sup\limits_{n\in\N}\varphi(s_n)$. Let $t\in \Cu(G)\otimes\Cu(A)$
satisfy $\varphi(s_n)\leq t$ for all $n\in\N$. Since $\varphi$ is surjective (see the previous claim),
there exists $r\in \Cu(A\rtimes_\alpha G)$ with $\varphi(r)=t$. Now, since $\varphi$ is an order embedding
(see claim above), we deduce that $s_n\leq r$ for all $n\in\N$. Hence $s\leq r$ by the definition of supremum.
Thus $\varphi(s)\leq \varphi(r)$, and $\varphi(s)=\sup\limits_{n\in\N}\varphi(s_n)$.

In order to check (M2), let $s,t\in \Cu(A\rtimes_\alpha G)$ satisfy $s\ll t$. We want to show that
$\varphi(s)\ll \varphi(t)$. To this end, let $(r_n)_{n\in\N}$ be an increasing sequence in $\Cu(G)\otimes\Cu(A)$
satisfying $\varphi(t)\leq \sup_{n\in\N}r_n$. For each $n\in\N$, choose $z_n\in\Cu(A\rtimes_\alpha G)$ with
$\varphi(z_n)=r_n$. Since $\varphi$ is an order embedding, we have $t\leq \sup_{n\in\N}z_n$. Hence there
exists $m\in\N$ with $s\leq z_m$, and so $\varphi(s)\leq \varphi(z_m)$. This shows that $\varphi(s)\ll \varphi(t)$,
as desired.

We will now check that $\varphi$ is a $\Cu(G)$-semimodule homomorphism.
Observe first that $A\rtimes_\alpha G\cong C^*(G)\otimes A$, and that the $\Cu(G)$-module structure
on $\Cu(A\rtimes_\alpha G)\cong \Cu(C^*(G)\otimes A)$ is the usual multiplication
on $\Cu(C^*(G))=\Cu(G)$ and trivial on $\Cu(A)$. Let $[\mu],[\tau]\in\widehat{G}$, and
write $[\mu\otimes\rho]$ as a linear combination
\[[\mu\otimes \rho]=\sum\limits_{[\pi]\in\widehat{G}}m_\pi(\mu\otimes\tau)\cdot [\pi].\]
For a positive element
\[a_\tau\in \K\otimes M_{d_\tau}(A)\subseteq \bigoplus\limits_{[\pi]\in\widehat{G}}(\K\otimes M_{d_\pi}(A))\cong \K\otimes (A\rtimes_\alpha G),\]
we have
\[[\mu]\cdot [a_\tau]=\left[(m_\pi(\mu\otimes \tau)a_\tau)_{[\pi]\in\widehat{G}}\right].\]
Hence, for $[\pi]\in \widehat{G}$, we have
\[\varphi([\mu]\cdot [a_\tau])([\pi])=[m_\pi(\mu\otimes \tau)a_\tau]=([\mu]\cdot \varphi([a_\tau]))([\widehat{\pi}]),\]
as desired. This completes the proof of the claim and the proposition.
\end{proof}

Similarly to what happens in equivariant $K$-theory, the $\Cu(G)$-semimodule structure on
$\Cu(A\rtimes_\alpha G)$ has a more concrete expression when $G$ is abelian.

We saw that $\Cu(G)$ consists of the suprema of all finite linear combinations of elements of
$\widehat{G}$ with coefficients in $\Z_{\geq 0}$, with coordinate-wise order, addition and multiplication.
In particular,
it follows that a $\Cu(G)$-semimodule structure on a partially ordered abelian semigroup that is
compatible with suprema is necessarily completely determined by multiplication by the elements
of $\widehat{G}$.

We denote by $\widehat{\alpha}\colon \widehat{G}\to \Aut(A\rtimes_\alpha G)$ the dual action of $\alpha$.
In the following proposition, we use the identification
\[\Hi_A=\left(\bigoplus\limits_{n\in\N}\bigoplus\limits_{\pi\in\widehat{G}}\Hi_\pi\right)\otimes A.\]

\begin{prop}\label{prop: semimodule structure when G is abelian} Let $G$ be a compact abelian group, let $A$ be a $C^*$-algebra, and let $\alpha\colon G\to\Aut(A)$ be a continuous action. Then for $\tau\in \widehat{G}$ and $s\in \Cu(A\rtimes_\alpha G)$, we have $\tau \cdot s=\Cu(\widehat{\alpha}_\tau)(s)$.
More precisely, the following diagram commutes:
\begin{align*}
\xymatrix{
\Cu^G(A,\alpha)\ar[d]_-{\sigma}\ar[rr]^-{ \tau\cdot} && \Cu^G(A,\alpha)\ar[d]^-{\sigma}\\
\Cu(A\rtimes_\alpha G)\ar[rr]_-{\Cu(\widehat{\alpha}_\tau)} && \Cu(A\rtimes_\alpha G)
,}\end{align*}
where $\sigma\colon \Cu^G(A,\alpha)\to \Cu(A\rtimes_\alpha G)$ is the natural $\CCu$-isomorphism given by
\autoref{Thm:JulgCuG}.
\end{prop}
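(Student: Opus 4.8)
The plan is to reduce, via \autoref{thm:JulgWithCuG}, to an explicit computation of how a character acts on $\Cu(A\rtimes_\alpha G)$. By \autoref{thm:JulgWithCuG} the isomorphism $\sigma$ is a $\CCu^G$-isomorphism when $\Cu(A\rtimes_\alpha G)$ carries the $\Cu(G)$-semimodule structure of \autoref{df: semimod structure}; hence the square in the statement commutes precisely when $[\tau]\cdot s=\Cu(\widehat{\alpha}_\tau)(s)$ for all $s\in\Cu(A\rtimes_\alpha G)$, where $[\tau]\cdot(-)$ denotes the action of \autoref{df: semimod structure}. So I will prove this identity. Since $G$ is abelian, $\tau$ is one-dimensional, which is what makes the unwinding of \autoref{df: semimod structure} tractable.

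Unwinding \autoref{df: semimod structure} with $\mu=\tau$: here $\Hi_\tau\cong\C$, so $\Hi_{\tau^+}=\C^2$ and $\K(\Hi_{\tau^+})=M_2$; the projections $p_\tau,q_\tau$ become $e_{11},e_{22}\in M_2$, and the relevant action on $A\otimes M_2=M_2(A)$ is $\alpha\otimes\Ad(\tau^+)$, where $\tau^+_g=\diag(\tau(g),1)$. The maps $\iota_{p_\tau}(a)=a\otimes e_{11}$ and $\iota_{q_\tau}(a)=a\otimes e_{22}$ are equivariant because $e_{11}$ and $e_{22}$ are $\tau^+$-invariant, and on crossed products they identify $A\rtimes_\alpha G$ with the full corners of $M_2(A)\rtimes_{\alpha\otimes\Ad(\tau^+)}G$ cut down by $1\otimes e_{11}$ and $1\otimes e_{22}$; in particular $\Cu(\widehat{\iota_{q_\tau}})$ is invertible, and by definition $[\tau]\cdot s=\Cu(\widehat{\iota_{q_\tau}})^{-1}\bigl(\Cu(\widehat{\iota_{p_\tau}})(s)\bigr)$.

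Next I would introduce the matrix unit $v=1\otimes e_{12}$, viewed inside $M\bigl(M_2(A)\rtimes_{\alpha\otimes\Ad(\tau^+)}G\bigr)$ via the canonical map from $M(M_2(A))$. Then $v$ is a partial isometry with $vv^*=1\otimes e_{11}$ and $v^*v=1\otimes e_{22}$, and since $\Ad(\tau^+_g)(e_{12})=\tau(g)e_{12}$, the canonical unitaries $u_g$ implementing the action satisfy $u_g v=\tau(g)\,v\,u_g$. Evaluating on the canonical copies of $A$ and of $G$ inside the multiplier algebra of $A\rtimes_\alpha G$, one then checks that
\[v^*\,\widehat{\iota_{p_\tau}}(x)\,v=\widehat{\iota_{q_\tau}}\bigl(\widehat{\alpha}_\tau(x)\bigr)\qquad\text{for all }x\in A\rtimes_\alpha G,\]
both sides being $\ast$-isomorphisms of $A\rtimes_\alpha G$ onto the corner cut down by $1\otimes e_{22}$. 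Since $\widehat{\iota_{p_\tau}}(a)$ lies in the corner cut down by $1\otimes e_{11}=vv^*$ and $v^*v=1\otimes e_{22}$, conjugation by $v$ preserves Cuntz equivalence; hence, for a positive element $a$ in a stabilization of $A\rtimes_\alpha G$, we get $\widehat{\iota_{p_\tau}}(a)\sim v^*\widehat{\iota_{p_\tau}}(a)v=\widehat{\iota_{q_\tau}}\bigl(\widehat{\alpha}_\tau(a)\bigr)$. Passing to Cuntz classes gives $\Cu(\widehat{\iota_{p_\tau}})(s)=\Cu(\widehat{\iota_{q_\tau}})\bigl(\Cu(\widehat{\alpha}_\tau)(s)\bigr)$, so that
\[[\tau]\cdot s=\Cu(\widehat{\iota_{q_\tau}})^{-1}\Cu(\widehat{\iota_{p_\tau}})(s)=\Cu(\widehat{\alpha}_\tau)(s),\]
as claimed. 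Naturality of the square is inherited from naturality of $\sigma$ (\autoref{Thm:JulgCuG}) and of the dual action.

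I expect the main obstacle to be the bookkeeping in the third step: identifying $\widehat{\iota_{p_\tau}}$ and $\widehat{\iota_{q_\tau}}$ with the two corner inclusions, locating the partial isometry $v$ in the multiplier algebra of the crossed product, and tracking the scalar twist by $\tau$ produced when $v$ is commuted past the group unitaries, as this twist is precisely what yields the dual action; matching the character (rather than its conjugate) to the conventions used for $\widehat{\alpha}$ also requires some care. Non-unitality of $A$ is not a real obstacle, since the whole argument takes place inside multiplier algebras; alternatively one may reduce to the unital case by the unitization argument used in the proof of \autoref{thm:JulgWithCuG}.
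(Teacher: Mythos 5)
Your proposal is correct, but it follows a genuinely different route from the paper. The paper does not pass through \autoref{thm:JulgWithCuG} or \autoref{df: semimod structure} at all: it unwinds the explicit factorization of $\sigma$ from \autoref{Thm:JulgCuG} through $\Cu(\K(\Hi_A)^G)$ and $\Cu((\K(L^2(G))\otimes A)^G)$, describes the dual action in that fixed-point picture as conjugation by the multiplication unitaries $u_\chi\in\U(L^2(G))$ (and by $w_\chi=\id_{\ell^2(\N)}\otimes u_\chi\otimes\id_A$ on $\K(\Hi_A)$), and checks directly that the resulting large diagram commutes. You instead first invoke \autoref{thm:JulgWithCuG} to transport everything to the crossed-product side, and then verify that the finite-dimensional recipe of \autoref{df: semimod structure}, specialized to a one-dimensional $\tau$, coincides with $\Cu(\widehat{\alpha}_\tau)$ via the explicit partial isometry $v=1\otimes e_{12}$ satisfying $u_gv=\tau(g)vu_g$; since $v$ intertwines the two corner embeddings $\widehat{\iota_{p_\tau}}$ and $\widehat{\iota_{q_\tau}}\circ\widehat{\alpha}_\tau$ and conjugation by $v$ preserves Cuntz classes, the identity $[\tau]\cdot s=\Cu(\widehat{\alpha}_\tau)(s)$ follows. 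Your computation is sound (the restricted actions on the two corners are indeed both $\alpha$, the invertibility of $\Cu(\widehat{\iota_{q_\tau}})$ is already established in the proof of \autoref{thm:JulgWithCuG}, and the argument extends to the stabilization by tensoring $v$ with $1_\K$); the only point requiring care is the one you flag, namely matching $\tau$ versus $\overline{\tau}$ to the paper's convention for $\widehat{\alpha}$, which matters since the two generally give different Cuntz classes. What the paper's route buys is logical independence from the machinery of Section~5.2 and an intrinsic identification of the dual action in the Landstad picture; what yours buys is a short, concrete, purely crossed-product-side verification in which the origin of the character twist is completely transparent.
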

\begin{proof}
Fix $\tau\in\widehat{G}$.
By the construction of the $\Cu$-isomorphism $\sigma\colon \Cu^G(A,\alpha)\to \Cu(A\rtimes_\alpha G)$ from
\autoref{Thm:JulgCuG}, and adopting the notation in its proof,
it is enough to show that the following diagram commutes:
\begin{align*}
\xymatrix{
\Cu(\K(\Hi_A)^G)\ar[d]_-{\Cu(\theta)} \ar[r]^-{ \tau \cdot}
& \Cu(\K(\Hi_A)^G)\ar[d]^-{\Cu(\theta)} \\
\Cu(\K(\ell^2(\N))\otimes \K(L^2(G)\otimes A)^G) \ar[d]_-{\kappa}
& \Cu(\K(\ell^2(\N))\otimes\K(L^2(G)\otimes A)^G)\ar[d]^-{\kappa} \\
\Cu(\K(L^2(G)\otimes A)^G)\ar[r]_-{\Cu(\widehat{\alpha}_\tau)}
& \Cu(\K(L^2(G)\otimes A)^G).}
\end{align*}

The dual action $\widehat{\alpha}\colon \widehat{G}\to\Aut((A\otimes\K(L^2(G)))^G)$
has the following description. For $\chi\in\widehat{G}$, let $u_\chi\in\U(L^2(G))$
be the corresponding multiplication operator, which is given by
\[u_\chi(\xi)(g)=\chi(g)\xi(g)\]
for all $\xi\in L^2(G)$ and for all $g\in G$. Define an automorphism
$\gamma_\chi\colon A\otimes\K(L^2(G))\to A\otimes \K(L^2(G))$ by $\gamma_\chi=\id_A\otimes\Ad(u_\chi)$.
It is clear that $\gamma_\chi$ commutes with $\alpha\otimes\Ad(\lambda_g)$ for all $g\in G$, and
thus $\gamma_\chi$ leaves $(A\otimes\K(L^2(G)))^G$ invariant. Then $\widehat{\alpha}_\chi$ is the
restriction of $\gamma_\chi$ to $(A\otimes\K(L^2(G)))^G$.

For $\chi\in\widehat{G}$, the action on $\K(\Hi_A)^G$ can be described as follows.
Write $\Hi_A=\ell^2(\N)\otimes L^2(G)\otimes A$, and let $u_\chi$ be the unitary
on $L^2(G)$ described above. Then $w_\chi=\id_{\ell^2(\N)}\otimes u_\chi\otimes \id_A$ is
a unitary on $\Hi_A$, and conjugation by $w_\chi$ defines an automorphism of $\K(\Hi_A)$.
This automorphism clearly commutes with the action of $G$ on $\K(\Hi_A)$, so it defines,
by restriction, an automorphism of its fixed point algebra $\K(\Hi_A)^G$.

Using these descriptions of the actions of $\widehat{G}$, it is clear that the diagram
above is commutative, and the result follows.
\end{proof}

We close this section with an application to invariant hereditary subalgebras. The result is
a Cuntz semigroup analog of Proposition~2.9.1 in~\cite{Phi_Book}.

\begin{prop}\label{prop:HeredSubalg}
Suppose that $A$ is separable and $G$ is second countable.
Let $B\subseteq A$ be an $\alpha$-invariant hereditary subalgebra
of $A$, and denote by $\beta\colon G\to\Aut(B)$ the compression of $\alpha$.
If $B$ is full, then the canonical inclusion induces a natural $\CCu^G$-isomorphism
$\Cu^G(B,\beta)\to \Cu^G(A,\alpha)$.
\end{prop}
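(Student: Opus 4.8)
The strategy is to transport the statement to the crossed product via the Cuntz-semigroup analog of Julg's theorem, and then to invoke a standard non-equivariant fact about full hereditary subalgebras. Write $\iota\colon B\hookrightarrow A$ for the canonical equivariant inclusion. Since $\Cu^G$ is a functor into $\CCu^G$ (\autoref{thm:EqCtzSmgpCCuG}), the induced map $\Cu^G(\iota)\colon \Cu^G(B,\beta)\to\Cu^G(A,\alpha)$ is automatically a morphism in $\CCu^G$; hence it will be an isomorphism in $\CCu^G$ as soon as it is an isomorphism in $\CCu$ (its inverse, being bijective and $\Cu(G)$-linear, is then again $\Cu(G)$-linear). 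So we only have to treat the underlying $\CCu$-morphism. By naturality of the isomorphism $\sigma$ of \autoref{Thm:JulgCuG}, applied to the equivariant homomorphism $\iota$, we have $\sigma_A\circ\Cu^G(\iota)=\Cu(\widehat{\iota})\circ\sigma_B$, where $\widehat{\iota}\colon B\rtimes_\beta G\to A\rtimes_\alpha G$ is the $\ast$-homomorphism of crossed products induced by $\iota$. We are thus reduced to showing that $\Cu(\widehat{\iota})$ is an isomorphism in $\CCu$. Note that $A\rtimes_\alpha G$ is separable, since $A$ is separable and $G$ is compact and second countable.

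The heart of the argument is that $\widehat{\iota}$ identifies $B\rtimes_\beta G$ with a \emph{full hereditary subalgebra} of $A\rtimes_\alpha G$. To see this, recall that a full $\alpha$-invariant hereditary subalgebra $B\subseteq A$ gives rise to a $G$-equivariant $(B,A)$-imprimitivity bimodule $X=\overline{BA}=\overline{AB}$, with $\K(X)\cong B$ and with $X$ full over $A$. By the standard functoriality of the (full) crossed product on equivariant imprimitivity bimodules, $Y:=\overline{C_c(G,X)}$ is a closed right ideal of $A\rtimes_\alpha G$ for which $\overline{YY^*}=\widehat{\iota}(B\rtimes_\beta G)$ and $\overline{Y^*Y}=A\rtimes_\alpha G$. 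Invoking the general correspondence between closed right ideals $Y$ of a \ca\ $C$ and hereditary subalgebras $Y\cap Y^*=\overline{YY^*}$, together with $\overline{CY}\supseteq\overline{Y^*Y}=C$, we conclude that $\widehat{\iota}(B\rtimes_\beta G)$ is a full hereditary subalgebra of $A\rtimes_\alpha G$. (This fact may alternatively simply be quoted; it is the ingredient underlying the proof of Proposition~2.9.1 in~\cite{Phi_Book}.)

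Finally, if $D$ is a full hereditary subalgebra of a separable \ca\ $C$, then the inclusion $D\hookrightarrow C$ induces an isomorphism $\Cu(D)\to\Cu(C)$ in $\CCu$: by Brown's stable isomorphism theorem one has $D\otimes\K\cong C\otimes\K$ via an isomorphism restricting to the inclusion on the distinguished corners, and $\Cu$ is a stable functor (Appendix~6 in~\cite{CowEllIva}). Applying this with $D=B\rtimes_\beta G$ and $C=A\rtimes_\alpha G$ shows that $\Cu(\widehat{\iota})$ is a $\CCu$-isomorphism, hence that $\Cu^G(\iota)$ is a $\CCu^G$-isomorphism. Naturality of the resulting isomorphism is inherited from naturality of $\sigma$ and of Brown's isomorphism.

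The step I expect to be the main obstacle is the second paragraph: verifying carefully that $B\rtimes_\beta G$ embeds in $A\rtimes_\alpha G$ precisely as a full hereditary subalgebra (equivalently, writing down the right ideal $Y$ and computing $\overline{YY^*}$ and $\overline{Y^*Y}$, or else locating a clean reference). Once this is available, the remainder is a formal combination of functoriality of $\Cu^G$, Julg's theorem, and Brown's theorem.
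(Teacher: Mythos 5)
Your proof is correct and follows essentially the same route as the paper: transport the statement to the crossed products via \autoref{Thm:JulgCuG}, quote Proposition~2.9.1 of~\cite{Phi_Book} to see that $B\rtimes_\beta G$ sits inside $A\rtimes_\alpha G$ as a full hereditary subalgebra, and conclude by Brown's stable isomorphism theorem together with stability of $\Cu$. The extra detail you supply (the imprimitivity-bimodule verification and the explicit naturality bookkeeping) is a fleshed-out version of what the paper leaves to the cited reference.
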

\begin{proof}
Under the canonical identification given by \autoref{Thm:JulgCuG}, the map in the statement becomes
the map $\Cu(B\rtimes_\beta G)\to \Cu(A\rtimes_\alpha G)$ induced by the inclusion. Now,
Proposition~2.9.1 in~\cite{Phi_Book} shows that $B\rtimes_\beta G$ is a full hereditary
subalgebra of $A\rtimes_\alpha G$. Separability of the objects implies, by Brown's stability
theorem, that they are stably isomorphic.
It follows that the canonical map $\Cu(B\rtimes_\beta G)\to \Cu(A\rtimes_\alpha G)$, which
belongs to $\CCu^G$ by \autoref{thm:EqCtzSmgpCCuG}, is an isomorphism.
\end{proof}

\section{Examples and computations}

In this section, we compute the equivariant Cuntz semigroups of a number of dynamical
systems. In most of our examples, the Rokhlin property for an action of a finite group
comes up as an useful technical
device that makes the computations possible. The Rokhlin property is also implicitly
used in \autoref{thm:ConjActsCu}. We therefore recall its definition here.

\begin{df}\label{df:Rp}
Let $A$ be a $\sigma$-unital \ca, let $G$ be a finite group, and let $\alpha\colon G\to\Aut(A)$
be an action. We say that $\alpha$ has the \emph{Rokhlin property} if for every finite set
$F\subseteq A$ and every $\varepsilon>0$, there exists orthogonal positive elements
$a_g\in A$, for $g\in G$, satisfying
\begin{enumerate}\item $\|(\alpha_g(a_h)-a_{gh})x\|<\varepsilon$ for all $g,h\in G$;
\item $\|a_gx-xa_g\|<\varepsilon$ for all $g\in G$ and all $x\in F$;
\item $\left\|x\left(\sum\limits_{g\in G}a_g\right)-x\right\|<\varepsilon$ for all $x\in F$.\end{enumerate}
\end{df}

Let $G$ be a finite
group, let $S$ be a semigroup in the category $\mathbf{Cu}$, and let $\gamma\colon G\to \mathrm{Aut}(S)$
be an action. Set $S^\gamma=\{s\in S\colon \gamma_g(s)=s \mbox{ for all } g\in G\}$, and set
\[S^\gamma_{\mathbb N}=\left\{s\in S\colon \exists \ (s_n)_{n\in\mathbb N} \mbox{ in } S^\gamma \colon
s_n\ll s_{n+1} \ \forall \ n\in\mathbb N, \, s=\sup\limits_{n\in\mathbb N} s_n\right\}.\]

For use in the following proposition, we observe that if $\alpha\colon G\to\Aut(A)$ is
an action of a \emph{second countable} compact group $G$, then its equivariant Cuntz semigroup
$\Cu^G(A,\alpha)$ can be canonically identified with $\Cu\left((A\otimes\K(L^2(G)))^{\alpha\otimes\Ad(\lambda)}\right)$,
where the $\Cu(G)$-semimodule structure is given by tensor product (where we identify
$(L^2(G)\otimes \Hi_\mu,\lambda\otimes\mu)$ with $(L^2(G),\lambda)$ whenever $(\Hi_\mu,\mu)$ is a separable
unitary representation of $G$).


\begin{prop}\label{prop: fixedpointCuntz}
Let $A$ be a \ca, let $G$ be a finite group, and let $\alpha\colon G\to \mathrm{Aut(A)}$
be an action with the Rokhlin property. Then there exists a natural $\mathbf{Cu}^G$-isomorphism
\[\Cu^G(A,\alpha)\cong \Cu(A)^{\Cu(\alpha)}_{\mathbb N},\]
where the induced $\Cu(G)$-semimodule structure on $\Cu(A)^{\Cu(\alpha)}_{\mathbb N}$
is trivial.
\end{prop}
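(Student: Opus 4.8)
The plan is to combine Julg's theorem (\autoref{Thm:JulgCuG}) with the known structure theory of crossed products by finite groups with the Rokhlin property. By \autoref{Thm:JulgCuG} there is a natural $\CCu$-isomorphism $\Cu^G(A,\alpha)\cong \Cu(A\rtimes_\alpha G)$, so the task reduces to identifying $\Cu(A\rtimes_\alpha G)$ with $\Cu(A)^{\Cu(\alpha)}_{\mathbb N}$ as $\Cu(G)$-semimodules. The starting point is the standard fact that, when $\alpha$ has the Rokhlin property, the canonical inclusion $A^\alpha\hookrightarrow A$ and the conditional expectation $E\colon A\to A^\alpha$ set up a strong connection between $A\rtimes_\alpha G$, $A$, and $A^\alpha$; in particular $A^\alpha$ is a full corner of $A\rtimes_\alpha G$ (via the averaging projection $e=\frac{1}{|G|}\sum_{g\in G}u_g$ in the multiplier algebra), so $\Cu(A\rtimes_\alpha G)\cong \Cu(A^\alpha)$ by stable isomorphism of full corners (Appendix~6 of \cite{CowEllIva}). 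Thus I would first prove $\Cu^G(A,\alpha)\cong \Cu(A^\alpha)$ naturally in $\CCu$.

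**The semigroup identification.** Next I would identify $\Cu(A^\alpha)$ with $\Cu(A)^{\Cu(\alpha)}_{\mathbb N}$. The inclusion $\iota\colon A^\alpha\hookrightarrow A$ induces a $\CCu$-morphism $\Cu(\iota)\colon \Cu(A^\alpha)\to \Cu(A)$ whose image lands in the fixed-point subsemigroup $\Cu(A)^{\Cu(\alpha)}$, and since suprema of rapidly increasing sequences of fixed elements are again fixed, the image actually lands in $\Cu(A)^{\Cu(\alpha)}_{\mathbb N}$. The core of the argument is to show $\Cu(\iota)$ is an order-embedding onto $\Cu(A)^{\Cu(\alpha)}_{\mathbb N}$. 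For injectivity/order-embedding one uses the Rokhlin towers: given $a,b\in (A^\alpha\otimes\K)_+$ with $a\precsim b$ in $A\otimes\K$, an averaging trick over the Rokhlin elements $a_g$ shows $(a-\varepsilon)_+\precsim b$ already in $A^\alpha\otimes\K$, for every $\varepsilon>0$; this is the key technical lemma and is where the Rokhlin property does the real work. Surjectivity onto $\Cu(A)^{\Cu(\alpha)}_{\mathbb N}$ follows because any element of that subsemigroup is, by definition, a supremum of a rapidly increasing sequence of classes $[c_n]$ of $\alpha$-fixed positive elements; using that $c_n$ is fixed and applying a standard perturbation argument with the conditional expectation $E$ (together with the Rokhlin property to make $E(c_n)$ Cuntz-close to $c_n$), one lifts each $[c_n]$ to $\Cu(A^\alpha)$ compatibly, and takes the supremum there (which exists since $\Cu(A^\alpha)\in\CCu$). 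By the \autoref{def: CCu}-style remark on order-embeddings, $\Cu(\iota)$ is then a $\CCu$-isomorphism onto $\Cu(A)^{\Cu(\alpha)}_{\mathbb N}$.

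**The semimodule structure and naturality.** Finally I would check that, transported through these isomorphisms, the $\Cu(G)$-semimodule structure on $\Cu^G(A,\alpha)$ becomes the trivial one on $\Cu(A)^{\Cu(\alpha)}_{\mathbb N}$, i.e.\ $[\mu]\cdot s = d_\mu\cdot s$ where one might a priori expect multiplicities, but in fact the claim is that it is \emph{trivial}—so I must verify $[\mu]\cdot s = s$ for every nonzero $[\mu]$. Using \autoref{df: semimod structure}, $[\mu]\cdot s$ is computed from $\Cu(\widehat{\iota_{q_\mu}})^{-1}\circ\Cu(\widehat{\iota_{p_\mu}})$; under the identification with $\Cu(A^\alpha)$, both $\iota_{p_\mu}$ and $\iota_{q_\mu}$ induce, after compressing to the fixed-point corner, maps that become Cuntz-equivalent to the identity on $\Cu(A^\alpha)$ (because $p_\mu\K(\Hi_\mu)$ and $q_\mu\K(\Hi_\mu)$, with the relevant actions, yield stably isomorphic fixed-point algebras once tensored with a Rokhlin action, by the same corner argument). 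Concretely, it suffices to check this on finite-dimensional $\mu$ and then pass to suprema via \autoref{lem:CompatTakingSup}. Naturality in $(A,\alpha)$ is inherited from naturality in \autoref{Thm:JulgCuG} together with the evident naturality of the corner inclusion $A^\alpha\hookrightarrow A\rtimes_\alpha G$ and of $\Cu(\iota)$. The main obstacle I anticipate is the order-embedding step: controlling Cuntz subequivalences of fixed-point elements \emph{inside} $A^\alpha$ rather than in $A$, which requires a careful averaging argument with the Rokhlin elements and an $\varepsilon$-cutdown, and making this compatible with suprema so that it descends to the semigroup level.
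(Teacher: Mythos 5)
Your overall route is essentially the paper's: identify $\Cu^G(A,\alpha)$ with $\Cu(A^\alpha)$ via the corner $A^\alpha\cong e(A\rtimes_\alpha G)e$ (the paper cites Proposition~2.6 of \cite{Gar_CptRokCP} for the fact that this inclusion induces a $\CCu$-isomorphism), and then identify $\Cu(A^\alpha)$ with $\Cu(A)^{\Cu(\alpha)}_{\N}$ (the paper cites Theorem~4.1 of \cite{GarSan_RokConstrI}, whose content is exactly the averaging-over-Rokhlin-towers order-embedding argument you sketch). Those two steps are fine, though you should note that fullness of the corner is not automatic and is itself a consequence of the Rokhlin property, and that invoking Brown's stable isomorphism theorem requires separability, which the cited corner result avoids.

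The genuine gap is in the semimodule step. First, ``trivial'' cannot mean $[\mu]\cdot s=s$ for every nonzero $[\mu]$: additivity of the action forces $[\mu\oplus\nu]\cdot s=[\mu]\cdot s+[\nu]\cdot s$, so what has to be verified is that each irreducible (for $G$ abelian, each character $\tau$) acts as the identity, equivalently $[\mu]\cdot s=\dim(\mu)\,s$ for finite-dimensional $\mu$. Second, and more seriously, your mechanism --- that $p_\mu\K(\Hi_\mu)$ and $q_\mu\K(\Hi_\mu)$ ``yield stably isomorphic fixed-point algebras \ldots by the same corner argument'' --- only identifies the algebras, not the maps: an abstract stable isomorphism says nothing about which endomorphism of $\Cu(A\rtimes_\alpha G)$ the composite $\Cu(\widehat{\iota_{q_\mu}})^{-1}\circ \Cu(\widehat{\iota_{p_\mu}})$ actually is. For the trivial action on $A$ all the same corner inclusions exist, yet the module structure is the decidedly nontrivial one of \autoref{prop:EgTrivialAction}; so the Rokhlin property must enter this step in an essential way that your sketch does not supply. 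The missing ingredient is that the dual action acts trivially on $\Cu(A\rtimes_\alpha G)$: by part (i) of Proposition~4.4 in \cite{Naw_RpNonunital}, $\widehat{\alpha}$ is approximately representable, so each $\widehat{\alpha}_\tau$ is approximately inner and $\Cu(\widehat{\alpha}_\tau)=\id$; combined with \autoref{prop: semimodule structure when G is abelian} (and \autoref{lem:CompatTakingSup} to pass to infinite-dimensional representations) this yields the triviality. Without this fact, or a direct Rokhlin-tower substitute for it, your final step does not close.
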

\begin{proof}
Denote by $e\in\K(\ell^2(G))$ the projection onto the constant functions, and let
$\iota\colon A^\alpha\hookrightarrow (A\otimes \mathcal K(L^2(G)))^{\alpha\otimes\Ad(\lambda)}$
be the inclusion given by $\iota(a)= a\otimes e$ for all $a\in A$.
Since $\alpha$ has the Rokhlin property, $\iota$ induces a $\CCu$-isomorphism between the Cuntz
semigroups $\Cu(A^\alpha)$ and $\Cu((A\otimes \mathcal K(L^2(G)))^{\alpha\otimes\Ad(\lambda)})$
by Proposition~2.6 in~\cite{Gar_CptRokCP}. By the comments above
this proposition, we deduce that
$\Cu^G(A,\alpha)$ is naturally isomorphic to $\Cu(A^\alpha)$. Since $\Cu(A^\alpha)$ can be
canonically identified with $\Cu(A)^{\Cu(\alpha)}_\N$ by Theorem~4.1 in~\cite{GarSan_RokConstrI},
we conclude that the inclusion of
$A^\alpha$ into $A$ induces a $\CCu$-isomorphism of $\Cu(A^\alpha)$ and $\Cu(A)^{\Cu(\alpha)}_{\mathbb N}$.

Finally, the dual action $\widehat{\alpha}$ is approximately representable by part~(i)
of Proposition~4.4 in~\cite{Naw_RpNonunital}. In particular, $\widehat{\alpha}_\tau$ is approximately inner for all
$\tau\in\widehat{G}$, so $\Cu(\widehat{\alpha}_\tau)$ is the identity map. We deduce from
\autoref{thm:JulgWithCuG} that the $\Cu(G)$-semimodule structure on $\Cu^G(A,\alpha)$ is trivial.
This finishes the proof.
\end{proof}


We give two concrete applications of the above computation. Recall that for $m\in\N$, there
is a canonical identification
$\Cu(M_{m^\I})\cong \mathbb Z_{\ge 0}\left[\frac{1}{m}\right]\sqcup \overline{\mathbb R_{>0}}$

\begin{eg}\label{exa: UHF}
Let $G$ be a finite group and set $m=|G|$. Let $\mu^G\colon  G\to \mathrm{Aut}(M_{m^\infty})$ be the action
considered, for example, in \cite[Example 2.1]{GarSan_RokConstrI}. Then there is a $\CCu^G$-isomorphism
\[\Cu^G(M_{|G|^\infty}, \mu^G)\cong \mathbb Z_{\ge 0}\left[\frac{1}{m}\right]\sqcup \overline{\mathbb R_{>0}},\]
where the right-hand side carries the trivial $\Cu(G)$-semimodule structure.
\end{eg}
\begin{proof}
The $\Cu(G)$ -semimodule structure is trivial by \autoref{prop: fixedpointCuntz}, since $\mu^G$
has the Rokhlin property. Also, by \autoref{prop: fixedpointCuntz}, we have
\[\Cu^G(M_{|G|^\infty}, \mu^G)\cong \Cu(M_{|G|^\infty})^{\Cu(\mu^G)}_{\mathbb N}.\]
The computation follows since $\Cu(\mu_g^G)=\mathrm{id}_{\Cu(M_{|G|^\infty})}$ for all $g\in G$.
\end{proof}


\begin{eg}\label{eg:UHF-abs}
Let $n\in \mathbb N$ and set $\omega_n=e^{\frac{2\pi i}{n}}\in\T$.
Let $\gamma\colon \mathbb Z_n\to \mathrm{Aut}(C(\mathbb T))$ be the action given
by $\gamma_k(f)(z)=f(\omega^k_nz)$ for all $z\in \mathbb T$ and all $k\in \mathbb Z_n$.
Let $A$ be any unital $M_{n^\I}$-absorbing \ca, and let $\alpha\colon \Z_n\to \Aut(A)$ be any
action.
Then there is a $\CCu^{\Z_n}$-semimodule isomorphism
\[
\Cu^{\mathbb Z_n}\left(C(\mathbb T, A), \gamma\otimes \alpha\right)\cong
\left\{
f\in \mathrm{Lsc}((\mathbb T, \Cu(A))\colon f(\omega_nz)=f(z) \text{ for all } z\in \T
\right\},
\]
where the $\Cu(\mathbb Z_n)$-semimodule structure on the right-hand side is trivial. \end{eg}
\begin{proof}
Note that $\gamma$ has the unitary Rokhlin property from Definition~3.5
in~\cite{Gar_UHFabs} (take the unitary $u$ in that definition to be $u(z)=z$ for $z\in\T$).
One easily checks that $\gamma\otimes\alpha$ also has the unitary
Rokhlin property. Since $A$ is assumed to absorb $M_{n^\I}$, it follows from Theorem~3.19
in~\cite{Gar_UHFabs} that $\gamma\otimes\alpha$ has the Rokhlin property. We deduce
from \autoref{prop: fixedpointCuntz} that the $\Cu(\Z_n)$-semimodule structure on
$\Cu^{\Z_n}(C(\T,A))$ is trivial. Again by \autoref{prop: fixedpointCuntz}, we have
\[\Cu^{\mathbb Z_n}\left(C(\mathbb T, A), \gamma\otimes \alpha\right)\cong
\Cu(C(\mathbb T, A)^{\Cu(\gamma\otimes \alpha)}_{\mathbb N}.\]
The result then follows from \cite[Theorem 3.4]{AntPerSan_Pullbacks}.
\end{proof}

Let $\mathcal W$ be the stably projectionless simple $C^*$-algebra studied in \cite{Jac_W}.
This algebra has a unique tracial state and trivial $K$-groups.
By \cite[Corollary 6.8]{EllRobSan}, we have $\Cu(\mathcal W)\cong \overline{\mathbb R_{\ge 0}}$.
Moreover, every automorphism of $\mathcal W$ is approximately inner by \cite[Theorem 1.0.1]{Rob_classif}.
Consequently, if $\alpha\colon G\to \mathrm{Aut}(A)$ is an action of a group $G$ on $\mathcal{W}$, then $\Cu(\alpha_g)=\mathrm{id}_{\Cu(\mathcal W)}$ for all $g\in G$.

The following computations are similar to the previous ones, so we will omit them.

\begin{eg}\label{exa: W}
Let $G$ be a finite group and let $\mu\colon G\to \Aut(\mathcal W)$ be the unique (up to conjugacy)
action with the Rokhlin property; see \cite{Naw_RpNonunital}.
Then
\[\Cu^G(\mathcal W, \mu)\cong \overline{\mathbb R_{\ge 0}}\]
and
\[\Cu^{\Z_n}\left(C(\mathbb T,\mathcal W), \gamma\otimes \mu\right)\cong\left\{\mathrm{Lsc}(\mathbb T, \overline{\mathbb R_{\ge 0}})\colon f(\omega_n z)=f(z) \text{ for all } z\in \mathbb{T}\right\}.
\]
Moreover, the corresponding $\Cu(G)$ and $\Cu(\mathbb Z_n)$-semimodule structures are trivial.
\end{eg}

\subsection{Pullbacks of dynamical systems.}

Let $(A, \alpha)$ and $(B, \beta)$ be $G$-$C^*$-algebras.
Let $I$ be an invariant closed two-sided ideal in $A$, and let
$\phi\colon A\to A/I$ denote the quotient map. Note that $\phi$ is equivariant when taking on $A/I$ the action
$\alpha_{A/I}$ induced by $\alpha$.
Let $\psi\colon (B, \beta)\to (A/I, \alpha_{A/I})$ be an equivariant $\ast$-homomorphism.
By \cite[Proposition 6.2]{Ped_PbackPout}, the following pullback exists in the category of $G$-$C^*$-algebras:
\[
\xymatrix{
(G,C,\gamma)\ar[rr]^-{\pi_B}\ar[d]_-{\pi_A} && (B, \beta)\ar[d]^-\psi \\
(A, \alpha)\ar[rr]_-{\phi} && (A/I, \alpha_{A/I}),
}
\]
where we set $C=A\oplus_{A/I} B$, denote by $\pi_A\colon C\to A$ and $\pi_B\colon C\to B$
the corresponding quotient maps, and take $\gamma=(\alpha,\beta)$ to be the pullback action of
$G$ on $C$.
By applying the functor $\Cu^G$ to the diagram above, we get the following commutative diagram in the
category $\CCu^G$:
\[
\xymatrix{\Cu^G(C,\gamma)\ar[rr]^-{\Cu^G(\pi_B)}\ar[d]_-{\Cu^G(\pi_A)} && \Cu^G(B, \beta)\ar[d]^-{\Cu^G(\psi)} \\
\Cu^G(A, \alpha)\ar[rr]_-{\Cu^G(\phi)} && \Cu^G(A/I, \alpha_{A/I}).}
\]

Consider the pullback $\Cu^G(A, \alpha)\oplus_{\Cu^G(A/I, \alpha_{A/I})}\Cu^G(B, \beta)$
in the category of ordered semigroups with the $\Cu(G)$-semimodule structure induced by
the ones in $\Cu^G(A, \alpha)$ and $\Cu^G(B, \beta)$. There is a natural morphism in the
category $\CCu^G$
\[
\rho\colon \Cu^G(C,\gamma)\to \Cu^G(A, \alpha)\oplus_{\Cu^G(A/I, \alpha_{A/I})}\Cu^G(B, \beta)\]
given by $\rho([(a,b)]_G)=([a]_G, [b]_G)$ for $[(a,b)]_G\in \Cu^G(C,\gamma)$.

\begin{prop}
Adopt the notation from the discussion above, and suppose that
$A/I\rtimes_{\alpha_{A/I}}G$ has stable rank one and that each of its closed two-sided
ideals has trivial $K_1$-group. Then $\rho$ is an order embedding.
\end{prop}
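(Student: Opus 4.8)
The plan is to reduce the statement to the corresponding non-equivariant theorem about Cuntz semigroups of pullbacks, using Julg's theorem. Applying the functor $\Cu^G$ to the pullback square of $G$-\cas\ and then the natural isomorphism $\sigma$ of \autoref{Thm:JulgCuG} at each of its four corners, one obtains that the resulting commutative $\CCu$-square is isomorphic to
\[\xymatrix{\Cu(C\rtimes_\gamma G)\ar[r]^-{\Cu(\widehat{\pi_B})}\ar[d]_-{\Cu(\widehat{\pi_A})} & \Cu(B\rtimes_\beta G)\ar[d]^-{\Cu(\widehat{\psi})}\\ \Cu(A\rtimes_\alpha G)\ar[r]_-{\Cu(\widehat{\phi})} & \Cu((A/I)\rtimes_{\alpha_{A/I}}G).}\]
Since a pullback is a limit and an isomorphism of diagrams induces an isomorphism of pullbacks, the codomain of $\rho$ gets identified with $\Cu(A\rtimes_\alpha G)\oplus_{\Cu((A/I)\rtimes_{\alpha_{A/I}}G)}\Cu(B\rtimes_\beta G)$, and $\rho$ with the canonical semigroup morphism of $\Cu(C\rtimes_\gamma G)$ into this pullback. (Only the $\CCu$-structure is relevant here, since being an order embedding does not refer to the $\Cu(G)$-action.) Thus it suffices to show that this canonical morphism is an order embedding.

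The next step is to observe that the full crossed product by $G$ carries the original pullback to a pullback of \cas. As $G$ is compact, it is amenable, so the crossed product functor is exact; hence $\widehat{\phi}\colon A\rtimes_\alpha G\to (A/I)\rtimes_{\alpha_{A/I}}G$ is surjective with kernel equivariantly isomorphic to $I\rtimes G$, and $C\rtimes_\gamma G$ embeds into $(A\rtimes_\alpha G)\oplus(B\rtimes_\beta G)$. Comparing the short exact sequences $0\to I\rtimes G\to C\rtimes_\gamma G\xrightarrow{\widehat{\pi_B}} B\rtimes_\beta G\to 0$ and $0\to I\rtimes G\to A\rtimes_\alpha G\xrightarrow{\widehat{\phi}} (A/I)\rtimes_{\alpha_{A/I}}G\to 0$ by a routine diagram chase identifies $C\rtimes_\gamma G$ with the pullback $(A\rtimes_\alpha G)\oplus_{(A/I)\rtimes_{\alpha_{A/I}}G}(B\rtimes_\beta G)$, compatibly with all the maps above.

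Finally, I would invoke the theorem on Cuntz semigroups of pullbacks from \cite{AntPerSan_Pullbacks}: if $D_1\oplus_{D_0}D_2$ is a pullback of \cas\ along a surjection $D_1\to D_0$ such that $D_0$ has stable rank one and each of its closed two-sided ideals has trivial $K_1$-group, then the canonical map $\Cu(D_1\oplus_{D_0}D_2)\to\Cu(D_1)\oplus_{\Cu(D_0)}\Cu(D_2)$ is an order embedding. Applying this with $D_0=(A/I)\rtimes_{\alpha_{A/I}}G$ — whose two required properties are precisely the hypotheses of the proposition — together with the identifications of the previous two paragraphs, we conclude that $\rho$ is an order embedding.

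I expect the main obstacle to be the second step: one must verify with care that the full crossed product turns this particular pullback diagram into a pullback of \cas\ (exactness of $\rtimes G$ for the compact, hence amenable, group $G$ being the essential input), and then track the map $\rho$ through the two layers of identifications — Julg's isomorphisms and the crossed-product-of-a-pullback identification — so that it really matches the canonical map to which \cite{AntPerSan_Pullbacks} applies. The remaining ingredients are either formal (limits of diagrams) or quoted verbatim from \cite{AntPerSan_Pullbacks}.
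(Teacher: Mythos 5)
Your proof is correct and follows essentially the same route as the paper: reduce via Julg's theorem (\autoref{Thm:JulgCuG}) to the level of crossed products, identify $C\rtimes_\gamma G$ with the pullback $(A\rtimes_\alpha G)\oplus_{(A/I)\rtimes_{\alpha_{A/I}}G}(B\rtimes_\beta G)$, and apply Theorem~3.1 of \cite{AntPerSan_Pullbacks} using exactly the stated hypotheses on $(A/I)\rtimes_{\alpha_{A/I}}G$. The only (harmless) divergence is that the paper simply cites Theorem~6.3 of \cite{Ped_PbackPout} for the crossed-product-of-a-pullback identification, whereas you re-derive it by hand from exactness of $\rtimes G$ and a five-lemma-type comparison of the two extensions with ideal $I\rtimes G$.
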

\begin{proof}
Denote by $\widehat\pi_A$, $\widehat\pi_B$, $\widehat\phi$, and $\widehat\psi$ the maps at the level
of the crossed products induced by $\pi_A$, $\pi_B$, $\phi$, and $\psi$, respectively.
By Theorem 6.3 of \cite{Ped_PbackPout}, the following diagram is a pullback
\[
\xymatrix{
C\rtimes_\gamma G\ar[rr]^-{\widehat\pi_B}\ar[d]_-{\widehat\pi_A} && B\rtimes_\beta G\ar[d]^-{\widehat\psi} \\ A\rtimes_\alpha G\ar[rr]_-{\widehat\phi} && A/I\rtimes_{\alpha_{A/I}}G.
}
\]
In other words, there is a natural $\ast$-isomorphism
\[C\rtimes_\gamma G\cong A\rtimes_\alpha G\oplus_{A/I\rtimes_{\alpha_{A/I}}G}B\rtimes_\beta G.\]
Therefore, by \autoref{Thm:JulgCuG}, it is enough to show that the map
\[\Cu(A\rtimes_\alpha G\oplus_{A/I\rtimes_{\alpha_{A/I}}G}B\rtimes_\beta G)
\to \Cu(A\rtimes_\alpha G)\oplus_{\Cu(A/I\rtimes_{\alpha_{A/I}}G)}\Cu(B\rtimes_\beta G),\]
given by $[(a,b)]_G\mapsto ([a]_G, [b]_G)$,
is an order embedding. That this is the case is a consequence of \cite[Theorem 3.1]{AntPerSan_Pullbacks},
by the assumptions on $A/I\rtimes_{\alpha_{A/I}}G$.
\end{proof}

We have arrived at the main result of this subsection.

\begin{thm}\label{thm: Lie}
Let $X$ be a compact metric space and let $G$ be a compact Lie group with
$\dim (X)\leq \dim (G)+1$. Let $\gamma\colon G\to \mathrm{Aut}(C(X))$ be an action
induced by a free action of $G$ on $X$, and let $Y$ be an invariant closed subset
of $X$. Let $(A, \alpha)$ and $(B, \beta)$ be $G$-$C^*$-algebras with
$A$ separable, of stable rank one, and such that the $K_1$-groups of all its closed
two-sided ideals are trivial. Let
\[\phi\colon (C(X, A), \gamma\otimes \alpha)\to (C(Y, A), \gamma|_{C(Y)}\otimes \alpha)\]
be the canonical equivariant quotient map, and let
\[\psi \colon (B, \beta)\to (C(Y, A), \gamma|_{C(Y)}\otimes \alpha)
\]
be an equivariant *-homomorphism. Then the ordered semigroup
\begin{align}\label{eq: pullback}
\Cu^G(C(X, A), \gamma\otimes\alpha)\oplus_{\Cu^G(C(Y,A), \gamma|_{C(Y)}\otimes\alpha)}\Cu^G(B, \beta)
\end{align}
belongs to the category $\mathbf{Cu}^G$, and the map
$\rho$ from $\Cu^G(C(X, A)\oplus_{C(Y, A)}B, (\gamma\otimes\alpha, \beta))$ to the semigroup in
\autoref{eq: pullback},
given by $\rho([(a,b)]_G)=([a]_G, [b]_G)$ for $[(a,b)]_G\in  \Cu^G(C(X, A)\oplus_{C(Y, A)}B,(\gamma\otimes\alpha,\beta))$,
is an isomorphism in the category $\mathbf{Cu}^G$.
\end{thm}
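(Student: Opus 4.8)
The plan is to reduce everything to ordinary Cuntz semigroups of crossed products via Julg's theorem, and then combine the behaviour of crossed products under pullbacks with the structure theory of crossed products by free actions of compact Lie groups. First I would apply Pedersen's pullback theorem for crossed products (Theorem~6.3 in \cite{Ped_PbackPout}), exactly as in the paragraph preceding the preceding proposition, to identify $(C(X,A)\oplus_{C(Y,A)}B)\rtimes G$ with the pullback
\[
(C(X,A)\rtimes_{\gamma\otimes\alpha}G)\oplus_{C(Y,A)\rtimes_{\gamma|_{C(Y)}\otimes\alpha}G}(B\rtimes_\beta G),
\]
with connecting maps $\widehat\phi$ and $\widehat\psi$; note that $\widehat\phi$ is surjective, since $\phi$ is the restriction map $C(X)\to C(Y)$ tensored with $\id_A$. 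By naturality of the isomorphisms in \autoref{Thm:JulgCuG} and \autoref{thm:JulgWithCuG}, the map $\rho$ corresponds, under the natural $\CCu^G$-isomorphisms $\sigma$, to the canonical comparison map from $\Cu$ of this pullback $C^*$-algebra to the pullback of the Cuntz semigroups $\Cu(C(X,A)\rtimes G)\oplus_{\Cu(C(Y,A)\rtimes G)}\Cu(B\rtimes G)$, compatibly with the $\Cu(G)$-semimodule structures. So it suffices to analyze this pullback of ordinary Cuntz semigroups.

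The crucial step, which is where the Lie-theoretic hypotheses enter, is to verify the hypotheses of \cite[Theorem 3.1]{AntPerSan_Pullbacks} (and of the preceding proposition) for the fibre algebra $D:=C(Y,A)\rtimes_{\gamma|_{C(Y)}\otimes\alpha}G$. Since $G$ is a compact Lie group acting freely on $X$, the orbit space $X/G$ has covering dimension $\dim X-\dim G\le 1$; as $Y\subseteq X$ is closed and invariant, $Y/G$ is a closed subspace of $X/G$ (so $\dim(Y/G)\le 1$) and $Y\to Y/G$ is a principal $G$-bundle. By the structure theory of crossed products by free actions of compact groups (Green's imprimitivity theorem), $D$ is a $C_0(Y/G)$-algebra which is locally of the form $C_0(U)\otimes A\otimes\K(L^2(G))$, so all of its fibres are stably isomorphic to $A$. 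Using the hypotheses on $A$ (separable, stable rank one, all closed two-sided ideals with trivial $K_1$) together with $\dim(Y/G)\le 1$, I would conclude that $D$ has stable rank one and that each of its closed two-sided ideals has trivial $K_1$-group. Granting this, the preceding proposition already shows that $\rho$ is an order embedding, and the full force of \cite[Theorem 3.1]{AntPerSan_Pullbacks} shows that $\rho$ is surjective and that the pullback semigroup is isomorphic to $\Cu$ of a $C^*$-algebra, hence lies in $\CCu$.

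It then remains to upgrade this to a statement in $\CCu^G$. For membership of the pullback $\Cu(G)$-semimodule in $\CCu^G$: axiom (O1) of \autoref{df:CCuG} is the $\CCu$-membership just obtained, and axioms (O2)–(O4) follow from the corresponding axioms in the two factors $\Cu^G(C(X,A),\gamma\otimes\alpha)$ and $\Cu^G(B,\beta)$ together with the observation that the order, suprema of increasing sequences, and the relation $\ll$ in the pullback are all computed coordinatewise — this uses that the two maps into $\Cu^G(C(Y,A),\gamma|_{C(Y)}\otimes\alpha)$ are $\CCu$-morphisms, so that the pullback is closed under coordinatewise suprema and the coordinate projections preserve $\ll$. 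That $\rho$ is a $\CCu^G$-morphism is immediate: every map in the pullback square is equivariant, so $\Cu^G$ (valued in $\CCu^G$ by \autoref{thm:EqCtzSmgpCCuG}) sends them to $\CCu^G$-morphisms, and $\rho$ is the map into the pullback induced by $\Cu^G(\pi_A)$ and $\Cu^G(\pi_B)$; its inverse, which exists by the previous paragraph, is then automatically a $\Cu(G)$-semimodule isomorphism and, being the inverse of an order-embedding-and-surjection between objects of $\CCu$, is a $\CCu$-morphism as well (by the Remark on $\Cu$-isomorphisms in Section~2). Hence $\rho$ is an isomorphism in $\CCu^G$.

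The hard part will be the crucial step of the second paragraph: correctly identifying $D=C(Y,A)\rtimes G$ as a continuous field over the at-most-one-dimensional space $Y/G$ with fibres controlled by $A$, and deducing from this that $D$ inherits stable rank one and triviality of $K_1$ of all its closed two-sided ideals — in other words, arranging that the hypotheses of \cite[Theorem 3.1]{AntPerSan_Pullbacks} are met. Once this structural input is in place, the rest of the argument is formal.
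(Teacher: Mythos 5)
Your overall reduction --- Pedersen's pullback theorem for crossed products, naturality of the Julg isomorphism from \autoref{Thm:JulgCuG}, the identification of the crossed products by the free action as $C(X/G)$- and $C(Y/G)$-algebras with fibres $A\otimes\K(L^2(G))$, and the formal upgrade to $\CCu^G$ at the end --- is exactly the paper's. The gap is in the step you yourself flag as the hard part. You propose to show that the quotient algebra $D=C(Y,A)\rtimes G$ \emph{itself} has stable rank one and trivial $K_1$ for all of its closed two-sided ideals, so as to feed it into \cite[Theorem 3.1]{AntPerSan_Pullbacks} via the preceding proposition. No argument is given for this inheritance, and it is not a formal consequence of the fibred structure: stable rank one does not in general pass from the fibres of a continuous field over a one-dimensional base to the total algebra (for instance $C(\T^2)$, viewed as a field over $\T$ with fibres $C(\T)$, has stable rank two), so the trivial-$K_1$ hypothesis on the fibres would have to do genuine work, and you cite no result that does it. The paper sidesteps this entirely: the theorems it invokes, \cite[Theorem 2.6]{AntBosPer_CtsFlds} and \cite[Theorem 3.3]{AntPerSan_Pullbacks}, have hypotheses phrased directly in terms of the base spaces ($\dim(Y/G)\leq\dim(X/G)\leq 1$) and the fibres ($A\otimes\K(L^2(G))$ separable, of stable rank one, with $K_1(J)=0$ for every closed two-sided ideal $J$), which is precisely what the free-action structure theorem and the dimension hypothesis deliver.

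A second, related problem: even granting your claim about $D$, \cite[Theorem 3.1]{AntPerSan_Pullbacks} only yields that $\rho$ is an order embedding --- that is all the preceding proposition extracts from it --- so ``the full force of Theorem 3.1'' does not give surjectivity. Surjectivity of $\rho$, and hence membership of the pullback semigroup in $\CCu$, is exactly the part that requires the continuous-field machinery over the one-dimensional orbit spaces, i.e.\ the two theorems cited above. So the crucial step cannot be bypassed the way you suggest; the argument must be run with the fibred hypotheses rather than with global regularity of $D$. The remaining parts of your proposal (the use of Takai/Pedersen, the coordinatewise verification of the $\CCu^G$ axioms for the pullback, and the observation that a surjective order embedding onto a $\CCu$-object is automatically a $\CCu$-isomorphism) are correct and consistent with the paper.
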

\begin{proof}
By \autoref{Thm:JulgCuG}, there are natural isomorphisms
\begin{align*}
\Cu^G(C(X, A), \gamma\otimes\alpha)&\cong\Cu(C(X, A)\rtimes_{\gamma\otimes \alpha}G),\\
\Cu^G(C(Y, A), \gamma|_{C(Y)}\otimes\alpha)&\cong\Cu(C(Y, A)\rtimes_{\gamma|_{C(Y)}\otimes \alpha}G),\\
\Cu^G(B, \beta)&\cong\Cu(B\rtimes_{\beta}G).
\end{align*}
Also, by \cite[Theorem 6.3]{Ped_PbackPout}, the following diagram is a pullback
\beqa
\xymatrix{
(C(X,A)\oplus_{C(Y, A)}B)\rtimes_{(\gamma\otimes \alpha, \beta)}G\ar[r]^-{\widetilde\pi_B}\ar[d]_-{\widehat\pi_{C(X, A)}} & B\rtimes_\beta G\ar[d]^-{\widehat\psi} \\
C(X, A)\rtimes_{\gamma\otimes \alpha}G\ar[r]_-{\widehat\phi} & C(Y, A)\rtimes_{\gamma|_{C(Y)}\otimes \alpha}G,
}
\eeqa
where $\widehat\phi$ and $\widehat\psi$ are the maps induced by $\phi$ and
$\psi$, and the maps $\widehat\pi_{C(X,A)}$ and $\widehat\pi_B$ are the maps
induced by the canonical coordinate projections
$\pi_{C(X, A)}\colon C(X,A)\oplus_{C(Y, A)}B\to C(X,A)$ and $\pi_B\colon C(X,A)\oplus_{C(Y, A)}B\to B$.

Since the action of $G$ on $X$ is free, the crossed product $C(X, A)\rtimes_{\gamma\otimes \alpha}G$ is
a $C(X/G)$-algebra with fibers isomorphic to $A\otimes \K(L^2(G))$ by
Theorem~5.2 in~\cite{GHS_preparation},
and similarly for $C(X, A)\rtimes_{\gamma|_{C(Y)}\otimes \alpha} G$.
The assumptions on $G$ and $X$ imply that $\dim(Y/G)\leq \dim(X/G)\leq 1$.
In addition, $A\otimes\K(L^2(G))$ is separable, has stable rank stable rank one,
and $K_1(J)=0$ for every closed two-sided
ideal $J$ in $A\otimes\K(L^2(G))$. Hence the conditions of \cite[Theorem 2.6]{AntBosPer_CtsFlds}
and \cite[Theorem 3.3]{AntPerSan_Pullbacks} are satisfied, and the statements in the theorem follow.
\end{proof}

We will now use the above theorem to compute the equivariant Cuntz semigroup of some $C^*$-dynamical systems.
In the following example, for $n\in\N$ we identify $\Z_n$ with the subgroup of $\T$ consisting of the
$n$-th roots of unity.

\begin{eg}
Adopt the notation of \autoref{exa: UHF}. Let $(B, \beta)$ be the $\Z_n$-dynamical system given by
$$B=\{f\in C(\mathbb T, M_{n^{\infty}})\colon f(\omega_n^k)=f(1)\in \mathbb C 1_{M_{n^{\infty}}}\text{ for }k\in\Z_n\},$$
and $\beta=( \gamma\otimes\mu^{\mathbb Z_n})|_B$. Then $\Cu^{\mathbb Z_n}(B, \beta)$ is isomorphic,
in the category $\CCu^{\mathbb{Z}_n}$, to the semigroup
\beqa
\left\{(f,g)\in \mathrm{Lsc}\left(\mathbb T, \mathbb Z_{\ge 0}\left[\frac{1}{n}\right]\sqcup \overline{\mathbb R_{>0}}\right)\oplus C(\widehat{\Z_n}, \overline{\mathbb Z_{\ge 0}})\colon
\begin{aligned}
&f(\omega_n z)=f(z)\text{ for } z\in \mathbb T\\
&\ \text{and } \ f(1)=\sum_{\eta\in\widehat{\Z_n}} g(\eta)
\end{aligned}\right\},\eeqa
where for $\tau\in \widehat{\Z_n}$, the corresponding semimodule structure is given by
\[\left[\tau\cdot (f,g)\right](z,\eta)=(f(z),g(\tau+\eta))\]
for all $z\in \T$ and for all $\eta\in\widehat{\Z_n}$.
\end{eg}
\begin{proof}
Set $Y=\Z_n\subseteq \T$. Then $(B, \beta)$ is given by the following pullback diagram:
\begin{align*}
\xymatrix{(B, \beta)\ar[rr]^{}\ar[d]_{} && (\mathbb C, \mathrm{id}_{\mathbb C}) \ar[d]^{\psi}\\ (C(\mathbb T, M_{n^\infty}), \gamma\otimes \mu^{\mathbb Z_n})\ar[rr]_-{\phi} && (C(Y, M_{n^\infty}), \gamma|_{C(Y)}\otimes \mu^{\mathbb Z_n}),}
\end{align*}
where $\phi$ is the restriction map to $Y$, and $\psi$ is the map given by
$\phi(z)(y)=z 1_{M_{n^\infty}}$ for all $z\in \mathbb C$ and for all $y\in Y$.
Recall that $M_{n^{\infty}}$ is simple and its $K_1$-group is trivial.
By \autoref{thm: Lie}, and using that the action of $\Z_n$ on $\T$ induced by $\gamma$ is free,
we get the following pullback diagram in the category $\mathbf{Cu}^{\mathbb Z_n}$:
\begin{align*}
\xymatrix{\Cu^{\mathbb Z_n}(B, \beta)\ar[rr]^{}\ar[d] &&\Cu^{\mathbb Z_n} (\mathbb C, \mathrm{id}_{\mathbb C}) \ar[d]^-{\Cu^{\mathbb Z_n}(\psi)}\\
\Cu^{\mathbb Z_n}(C(\mathbb T, M_{n^\infty}), \gamma\otimes \mu^{\mathbb Z_n})\ar[rr]_-{\Cu^{\mathbb Z_n}(\phi)} && \Cu^{\mathbb Z_n}(C(Y, M_{n^\infty}), \gamma|_{C(Y)}\otimes \mu^{\mathbb Z_n}).}
\end{align*}

The isomorphism in the statement of the example now follows using the definition of pullbacks,
the computations given in \autoref{exa: UHF} and Proposition~5.15 in~\cite{GHS_preparation}, and that the map
\[\Cu^{\mathbb Z_n}(\psi)\colon C(\widehat{\Z_n},\overline{\mathbb Z_{\ge 0}})\to \mathbb Z_{\ge 0}\left[\frac 1 n\right]\sqcup \overline{\mathbb R_{>0}}\]
is given by $\Cu^{\mathbb Z_n}(\psi)(f)=\sum\limits_{j\in\Z_n}f(k)$ for all
$f\in C(\widehat{\Z_n},\overline{\mathbb Z_{\ge 0}})$.  The semimodule structure described in
the example can be computed using the following facts:
\be
\item the semimodule structure
on $\Cu^{\mathbb Z_n}(C(\mathbb T, M_{n^\infty}), \gamma\otimes \mu^{\mathbb Z_n})$ is trivial since $\gamma\otimes \mu^{\mathbb Z_n}$ has the Rokhlin property; and
\item given $\tau\in \widehat{\Z_n}$, the corresponding action on
\[\Cu^{\mathbb Z_n} (\mathbb C, \mathrm{id}_{\mathbb C})\cong C(\widehat{\Z_n},\overline{\mathbb Z_{\ge 0}})\]
is given by $(\tau\cdot g)(\eta)=g(\tau+\eta)$ for all $g\in C(\widehat{\Z_n},\overline{\mathbb Z_{\ge 0}})$ and
for all $\eta\in\widehat{\Z_n}$.
\ee
This finishes the proof.
\end{proof}

The proof of the following example is similar to that of the previous example and so we will omit it.

\begin{eg}
Adopt the notation of \autoref{exa: W}. Let $(B, \beta)$ be the $\Z_n$-dynamical system given by
\[B=\{f\in C(\mathbb T, \mathcal W)\colon f(k)=f(1)\in \mathcal W^\alpha\text{ for } k\in\Z_n\subseteq\T\},\]
and $\beta=( \gamma\otimes\alpha)|_B$. Then $\Cu^{\mathbb Z_n}(B, \beta)$ is $\mathbf{Cu}^{\mathbb{Z}_n}$-isomorphic to the semigroup
\beqa
\left\{(f,g)\in \mathrm{Lsc}\left(\mathbb T, \overline{\mathbb R_{\geq 0}}\right)\oplus C(\widehat{\Z_n}, \overline{\mathbb R_{\geq 0}})\colon
\begin{aligned}
&f(\omega_n z)=f(z)\text{ for } z\in \mathbb T\\
&\ \text{and } \ f(1)=\sum_{\eta\in\widehat{\Z_n}} g(\eta)
\end{aligned}\right\},
\eeqa
where for $\tau\in \widehat{\Z_n}$, the corresponding semimodule structure is given by
\[\left[\tau\cdot (f,g)\right](z,\eta)=(f(z),g(\tau+\eta))\]
for all $z\in \T$ and for all $\eta\in\widehat{\Z_n}$.
\end{eg}

\section{A characterization of freeness using the equivariant Cuntz semigroup}

In this section, we give an application of the equivariant Cuntz semigroup in the context
of free actions of locally compact spaces, which resembles Atiyah-Segal's characterization
of freeness using equivariant $K$-theory; see \cite{AtiSeg_completion}.
Indeed, in \autoref{thm:freeness}, we characterize freeness of a compact Lie group action
on a commutative $C^*$-algebra in terms of a certain canonical map to the equivariant Cuntz
semigroup. We define this map, for arbitrary \ca s, below.

\begin{df}\label{df:NatMapAtiSeg}
Let $\alpha\colon G\to\Aut(A)$
be a continuous action of a compact group $G$ on a \ca\ $A$.
We define a natural $\CCu$-map $\phi\colon \Cu(A^G)\to \Cu^G(A,\alpha)$ as follows. Given a positive
element
$a\in \K(\ell^2(\N))\otimes A^G$, regard it as an element in $(\K(\ell^2(\N))\otimes A)^G$ by
giving $\ell^2(\N)$ the trivial $G$-representation, and set $\phi([a])=[a]_G$.\end{df}

\begin{rem}
Here is an alternative description of $\phi$. Let $\iota\colon A^G\to A$ be the canonical inclusion.
Since $\iota$ is equivariant, it induces a $\CCu^G$-morphism
\[\Cu(\iota)\colon \Cu^G(A^G) \to \Cu^G(A,\alpha)\]
between the equivariant Cuntz semigroups. Now, by \autoref{prop:EgTrivialAction},
there exists a natural $\CCu^G$-isomorphism
$\Cu^G(A^G)\cong \Cu(G)\otimes \Cu(A^G)$. Then $\phi$ is the restriction of $\Cu(\iota)$ to the
second tensor factor.
\end{rem}

We need a proposition first, which is interesting in its own right. Let $\alpha\colon G\to\Aut(A)$
be a continuous action of a compact group $G$ on a \ca\ $A$, and let $a\in A^G$. We denote by
$c_a\colon G\to A$ the continuous function with constant value equal to $a$. Note that $c_a$
belongs to $L^1(G,A,\alpha)$, and the assignment $a\mapsto c_a$ defines a $\ast$-homomorphism
$c\colon A^G\to L^1(G,A,\alpha)$. (Recall that the product in $L^1(G,A,\alpha)$ is given by twisted
convolution.)

\begin{prop}\label{prop:CanMapCorner}
Let $\alpha\colon G\to\Aut(A)$
be a continuous action of a compact group $G$ on a \ca\ $A$.
Denote by $\sigma\colon \Cu^G(A,\alpha)\to \Cu(A\rtimes_\alpha G)$ the canonical $\CCu$-isomorphism
constructed in \autoref{Thm:JulgCuG}. Then there exists a commutative diagram
\begin{align*}
\xymatrix{
\Cu(A^G)\ar[d]_-{\Cu(c)}\ar[r]^-{\phi}& \Cu^G(A,\alpha)\ar[dl]^-{\sigma}\\
\Cu(A\rtimes_\alpha G).&
}
\end{align*}
\end{prop}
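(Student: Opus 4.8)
The plan is to reduce the statement, via naturality of $\sigma$, to the case of the trivial action already treated in \autoref{prop:EgTrivialAction}. By the remark immediately preceding the proposition, $\phi$ factors as
\[\Cu(A^G)\xrightarrow{\ s\,\mapsto\,1\otimes s\ }\Cu(G)\otimes\Cu(A^G)\xrightarrow{\ \cong\ }\Cu^G(A^G)\xrightarrow{\ \Cu^G(\iota)\ }\Cu^G(A,\alpha),\]
where $\iota\colon A^G\hookrightarrow A$ is the (equivariant) inclusion, the middle isomorphism is the one of \autoref{prop:EgTrivialAction}, and $1$ is the unit of the semiring $\Cu(G)$, i.e.\ the class of the trivial representation. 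Since the isomorphism of \autoref{Thm:JulgCuG} is natural, $\sigma_A\circ\Cu^G(\iota)=\Cu(\widehat{\iota})\circ\sigma_{A^G}$, where $\widehat{\iota}\colon A^G\rtimes_{\id}G\to A\rtimes_\alpha G$ is induced by $\iota$. Hence it suffices to show that the composite $\Cu(A^G)\to\Cu^G(A^G)\xrightarrow{\sigma_{A^G}}\Cu(A^G\rtimes_{\id}G)$ coincides with $\Cu(c^{(0)})$, where $c^{(0)}\colon A^G\to A^G\rtimes_{\id}G$ sends $a$ to the constant function $c_a$; composing afterwards with $\Cu(\widehat\iota)$ and using $\widehat\iota\circ c^{(0)}=c$ then gives $\sigma_A\circ\phi=\Cu(c)$.

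To verify this, I would unwind the proof of \autoref{prop:EgTrivialAction} applied to $(A^G,\id_{A^G})$. There one has $A^G\rtimes_{\id}G\cong A^G\otimes C^*(G)\cong\bigoplus_{[\pi]\in\widehat G}M_{d_\pi}(A^G)$, and the isomorphism $\varphi\colon\Cu(A^G\rtimes_{\id}G)\to\Cu(G)\otimes\Cu(A^G)$ constructed there is $[x]\mapsto([\rho_\pi(x)])_{[\pi]}$, with $\rho_\pi$ the projection onto the $M_{d_\pi}(A^G)$-summand; moreover the isomorphism appearing in the factorization of $\phi$ is exactly $\varphi\circ\sigma_{A^G}$. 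Now the constant function $c_a\in L^1(G,A^G,\id)\subseteq A^G\rtimes_{\id}G$ corresponds under $A^G\rtimes_{\id}G\cong A^G\otimes C^*(G)$ to $a\otimes p_\epsilon$, where $p_\epsilon$ is the minimal central projection of $C^*(G)$ attached to the trivial representation (concretely, in each block $p_\epsilon$ acts as $\int_G\pi(g)\,dg$, the projection onto $\pi$-invariant vectors, which vanishes unless $\pi$ is trivial). Thus $c_a$ sits in the trivial summand $M_1(A^G)=A^G$, so $\varphi([c_a])=1\otimes[a]$; equivalently, $\varphi^{-1}\circ(s\mapsto 1\otimes s)=\Cu(c^{(0)})$, which is precisely the identity needed. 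Naturality of the resulting square is automatic since every map involved is natural in $(A,\alpha)$.

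For completeness — and because the explicit form of $\sigma$ from \autoref{Thm:JulgCuG} is what gets used in Section~7 — I would also record the direct route through the construction of $\sigma$: the composite $\chi\circ\phi$ is induced by $\Ad V$ for an equivariant isometry $V=W\otimes\id_A$ with $W\colon\ell^2(\N)\to\Hi_\C$ landing in the $\pi_\C$-fixed (trivial-isotypic) subspace; under the Peter--Weyl identification $\Hi_\C\cong\ell^2(\N)\otimes L^2(G)$ this subspace becomes $\ell^2(\N)\otimes\C 1$, so after applying $\theta$ and the de-stabilization $\kappa$ the composite becomes $\Cu(\rho)$ for the corner embedding $\rho\colon A^G\to(\K(L^2(G))\otimes A)^G$, $\rho(a)=e\otimes a$, with $e$ the rank-one projection onto the constant functions. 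One then checks, using the regular-representation description of the Landstad isomorphism $\psi$ (Example~2.9 in~\cite{KalOmlQui_ThreeVersions}), that $\psi(e\otimes a)=c_a$ for all $a\in A^G$, so that $\sigma\circ\phi=\Cu(\psi)\circ\Cu(\rho)=\Cu(\psi\circ\rho)=\Cu(c)$. In either route the only genuine difficulty is bookkeeping: correctly identifying the trivial-isotypic component and tracking it through Peter--Weyl and the de-stabilization (respectively, through the block decomposition of \autoref{prop:EgTrivialAction}), and aligning the normalization of $\psi$ with the definition of $c$ so that the final identity holds on the nose rather than merely up to an automorphism of $A\rtimes_\alpha G$.
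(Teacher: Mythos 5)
Your second, ``for completeness'' argument is essentially the paper's own proof: unwind $\sigma=\Cu(\psi)\circ\kappa\circ\Cu(\theta)\circ\chi$, observe that $\chi\circ\phi$ is implemented by $\Ad(V)$ for $V=W\otimes\id_A$ with $W$ the equivariant isometry of $\ell^2(\N)$ onto the trivial-isotypic part of $\Hi_\C$, identify that part with $\ell^2(\N)\otimes\C 1\subseteq\ell^2(\N)\otimes L^2(G)$ via Peter--Weyl so that after $\theta$ and $\kappa$ one lands on the corner embedding $a\mapsto a\otimes e$, and finally check that the Landstad isomorphism $\psi$ carries $a\otimes e$ to $c_a$. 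That is exactly the computation in the paper (which likewise leaves the last step as ``not difficult to check''), so on the strength of that route the proposal is fine.

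Your first route, however, does not close on its own. The reduction via naturality of $\sigma$ is legitimate and leaves you needing the identity $\sigma_{A^G}\circ j=\Cu(c^{(0)})$, where $j\colon\Cu(A^G)\to\Cu^G(A^G,\id)$ sends $[a]$ to $[a]_G$ with $\ell^2(\N)$ carrying the trivial representation. But this identity is precisely the trivial-action instance of the proposition being proved. The computation you then carry out --- that $c_a$ corresponds to $a\otimes p_\epsilon$ in the trivial block, so $\varphi([c^{(0)}_a])=1\otimes[a]$ --- only evaluates $\varphi\circ\Cu(c^{(0)})$; it says nothing about where $\varphi\circ\sigma_{A^G}$ sends $j([a])$, because the isomorphism $\varphi$ of \autoref{prop:EgTrivialAction} is built directly on $\Cu(A^G\rtimes_{\id}G)$ and its relation to the classes $[a]_G$ in $\Cu^G(A^G,\id)$ is mediated exactly by $\sigma_{A^G}$. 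To see that $\varphi(\sigma_{A^G}(j([a])))=1\otimes[a]$ you must still trace the trivial-representation classes through $\chi$, $\theta$, $\kappa$ and $\psi$, i.e.\ perform the direct computation of your second route (now for the trivial action, which is no easier). So route one relocates the work rather than avoiding it; it is not wrong, but it would be circular as the sole argument.
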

\begin{proof}
Abbreviate $\K(\ell^2(\N))$ (with the trivial $G$-action) to $\K$.
By the construction of the map $\sigma$, we need to show that the following diagram is commutative:
\begin{align*}
\xymatrix{
\Cu(A^G)\ar[d]_-{\Cu(c)}\ar[r]^-{\phi}& \Cu^G(A,\alpha)\ar[r]^{\chi} &\Cu(\K(\Hi_A)^G)
\ar[d]^-{\Cu(\theta)} \\
\Cu(A\rtimes_\alpha G)& \Cu((\K(L^2(G))\otimes A)^G)\ar[l]^-{\Cu(\psi)}&
\Cu((\K\otimes \K(L^2(G))\otimes A)^G)\ar[l]^-{\kappa}.
}
\end{align*}

For an irreducible representation $(\Hi_\pi,\pi)$ of $G$, set $d_\pi=\dim(\Hi_\pi)$.
Write
\[\Hi_\C= \ell^2(\N)\otimes \left(\bigoplus_{[\pi]\in \widehat{G}} \left(\bigoplus_{j=1}^{d_\pi}\Hi_\pi\right)\right).\]
Let $1_G\colon G\to \U(\C)$ denote the trivial representation, and let
\[W\colon \ell^2(\N)\cong \ell^2(\N)\otimes \Hi_{1_G} \hookrightarrow \Hi_\C\]
be the isometry corresponding to the canonical inclusion ($\Hi_{1_G}$ is just $\C$).
Write $V\colon \ell^2(\N)\otimes A\to \Hi_A$ for $W\otimes \id_A$.

Let a positive element $a\in \K\otimes A^G$ be given. Then $(\chi\circ\phi)([a])$ corresponds to the
class of $[VaV^*]$ in $\Cu(\K(\Hi_A)^G)$. Denote by $e\colon L^2(G)\to L^2(G)$ the projection onto the
constant functions.
With the presentation of $\Hi_\C$ used above, it is clear
that $\Cu(\theta)$ maps $[VaV^*]$ to the class of
\[a\otimes e\in (\K\otimes A\otimes\K(L^2(G)))^G\cong (\K\otimes\K(L^2(G))\otimes A)^G.\]

Since $\kappa$ is induced by the embedding $(\K(L^2(G))\otimes A)^G\to \K\otimes(\K(L^2(G))\otimes A)^G$
as the upper left corner, and since $\psi$ is equivariant (see \autoref{prop:FixPtCoaction}), it is now
not difficult to check that $\sigma(\phi([a]))$ agrees with $[c_a]$ in $\Cu(A\rtimes_\alpha G)$.
\end{proof}

We recall a version of the Atiyah-Segal completion theorem that is convenient for our purposes.
If a compact group $G$ acts on a compact Hausdorff space $X$, then there exists a canonical
map $K^*(X/G)\to K_G^*(X)$ obtained by regarding a vector bundle on $X/G$ as a $G$-vector
bundle on $X$ (using the trivial action).

For a compact group $G$, we denote by $I_G$ the augmentation ideal in $R(G)$. That is,
$I_G$ is the kernel of the dimension map $R(G)\to\mathbb{Z}$.

\begin{thm}\label{thm:A-S} (Atiyah-Segal).
Let $X$ be a compact Hausdorff space and let a compact Lie group $G$ act on $X$.
The the following statements are equivalent:
\begin{enumerate}
\item  The action of $G$ on $X$ is free.
\item  The natural map $K^*(X/G)\to K_G^*(X)$ is an isomorphism.
\item  The natural map $K^0(X/G)\to K_G^0(X)$ is an isomorphism.
\end{enumerate}\end{thm}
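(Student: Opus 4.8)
The plan is to prove the cycle of implications $(1)\Rightarrow(2)\Rightarrow(3)\Rightarrow(1)$. Since this is a classical result of Atiyah and Segal, I will only sketch the argument, referring to \cite{AtiSeg_completion} and to \cite[Chapter~1]{Phi_Book} for the details, and indicate where the genuine content lies.

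For $(1)\Rightarrow(2)$ I would use descent along the principal bundle. If the $G$-action on $X$ is free, then, since $G$ is a compact Lie group acting freely on a compact Hausdorff space, the quotient map $q\colon X\to X/G$ is a locally trivial principal $G$-bundle. Pulling back along $q$ (with $G$ acting trivially on the fibres) and, in the reverse direction, passing to $G$-quotients, sets up an equivalence between the category of finite-rank vector bundles on $X/G$ and the category of $G$-vector bundles on $X$; this is precisely the statement that on a free $G$-space a $G$-vector bundle is the same thing as a vector bundle on the orbit space. Consequently the natural map $K^\ast(X/G)\to K_G^\ast(X)$ — which by construction is induced by exactly this pull-back — is an isomorphism, in all degrees, by naturality of the equivalence (applying it to $X\times S^n$, or noting that both sides are cohomology theories and the transformation is compatible with suspension). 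The implication $(2)\Rightarrow(3)$ is immediate, since the natural map respects the $\mathbb{Z}/2$-grading and $K^0$ is its degree-zero component.

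The substance is $(3)\Rightarrow(1)$, which I would prove by contraposition. Assume the action is not free and pick $x_0\in X$ with nontrivial stabilizer $H=G_{x_0}$; choose a cyclic subgroup $C=\langle c\rangle\subseteq H$ of prime order, so $c\neq 1$ and $X^C\neq\varnothing$. The key input is Segal's computation of $\mathrm{Spec}\,R(G)$ together with the localization theorem for compact Lie group actions: there is a prime ideal $\mathfrak p=\mathfrak p_c\in\mathrm{Spec}\,R(G)$ attached to $c$ for which $K_G^\ast(Y)_{\mathfrak p}\cong K_G^\ast(Y^C)_{\mathfrak p}$ for every compact $G$-space $Y$. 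Since $c\neq 1$ and the representations of $G$ separate points, one checks that $\mathfrak p$ does \emph{not} contain the augmentation ideal $I_G$. On one hand, the $R(G)$-module structure on $K^\ast(X/G)$ factors through the augmentation $R(G)\twoheadrightarrow\mathbb Z$, so $K^\ast(X/G)$ is supported on $V(I_G)$ and hence $K^\ast(X/G)_{\mathfrak p}=0$. On the other hand $x_0\in X^C$, and, using the orbit $Gx_0\cong G/H$ together with $K_G^0(G/H)\cong R(H)$ — on which $\mathfrak p$ restricts to a proper localization, because the unit map $\mathbb Z=K^0(\mathrm{pt})\to R(H)$ is not an isomorphism when $H\neq 1$ — one shows $K_G^\ast(X)_{\mathfrak p}\neq 0$. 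Thus the natural map $K^0(X/G)\to K_G^0(X)$ is not an isomorphism after localizing at $\mathfrak p$, hence not an isomorphism, contradicting $(3)$.

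I expect the main obstacle to be making the localization step of $(3)\Rightarrow(1)$ fully rigorous: one must verify that the nonvanishing of $K_G^\ast(X)_{\mathfrak p}$ is genuinely forced by the presence of a $C$-fixed point (this needs the full strength of Segal's localization theorem, applied to the closed $G$-subspace $X^C$, not merely the single orbit), and check that all localizations are compatible with the natural transformation $K^\ast(-/G)\to K_G^\ast(-)$. An alternative route, which I would keep in reserve, is to invoke the Atiyah--Segal completion theorem $K_G^\ast(X)^{\wedge}_{I_G}\cong K^\ast(EG\times_G X)$ directly: freeness makes $EG\times_G X\to X/G$ a homotopy equivalence (contractible fibres $EG$), which handles $(1)\Rightarrow(2)$ cleanly, and for the converse one argues that if $K^0(X/G)\to K_G^0(X)$ is an isomorphism then $K_G^\ast(X)$ is already $I_G$-complete and agrees with $K^\ast(EG\times_G X)$, whence freeness is extracted by a Borel-construction argument localized near a hypothetical non-free orbit.
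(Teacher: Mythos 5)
Your implications $(1)\Rightarrow(2)$ and $(2)\Rightarrow(3)$ are fine and match the paper, which simply cites Proposition~2.1 of \cite{Seg_EqKThy} for the first (the descent argument you describe is exactly what that proposition does) and calls the second obvious. The divergence is in $(3)\Rightarrow(1)$, where the paper does not use Segal's localization theorem at all: it extracts from the hypothesis, via the proof of Proposition~4.3 in \cite{AtiSeg_completion}, that $I_G^n\cdot K_G^0(X)=0$ for some $n$, then uses the ring structure of $K_G^\ast(X)=K_G^0(X)\oplus K_G^1(X)$ and the fact that the unit lives in degree zero to upgrade this to $I_G^n\cdot K_G^\ast(X)=0$ (so $K_G^\ast(X)$ is discrete in the $I_G$-adic topology), and then quotes the equivalence of discreteness with freeness from that same Proposition~4.3. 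Your ``reserve'' completion-theoretic route is the one actually taken, but as you state it (``$K_G^\ast(X)$ is already $I_G$-complete'') it misses both the relevant condition (discreteness, not completeness) and the degree-zero-to-all-degrees step, which is the only nontrivial observation in the paper's proof.

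Your primary localization route has a genuine gap at its crux. You claim that $K^\ast(X/G)_{\mathfrak p}=0$ because ``the $R(G)$-module structure on $K^\ast(X/G)$ factors through the augmentation.'' But the module structure for which that is true is one you have imposed by fiat, and the natural map $\phi\colon K^\ast(X/G)\to K_G^\ast(X)$ is \emph{not} $R(G)$-linear for it: $\phi$ is a ring map sending $[E]$ to $[q^\ast E]$, and $[V]\cdot\phi([E])=[\underline{V}\otimes q^\ast E]$, which for a free action equals $\phi([X\times_G V]\cdot[E])$, i.e.\ $R(G)$ acts through the associated-bundle classes $[X\times_G V]\in K^0(X/G)$ rather than through $\dim V$. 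These are genuinely different (for the antipodal $\mathbb{Z}/2$-action on $S^n$ the sign representation gives the tautological line bundle on $\mathbb{RP}^n$, which is not stably trivial for $n\geq 2$), so $K^\ast(X/G)$ is not supported on $V(I_G)$ for the module structure that is actually compatible with $\phi$. Without $R(G)$-linearity of $\phi$ you cannot localize both sides at $\mathfrak p$ and compare, so the contradiction ``source localizes to $0$, target does not'' is not obtained; an isomorphism of abelian groups (or even of rings) gives no control over localizations with respect to an external module structure. You flagged this step yourself as the main obstacle; it is not a technicality but the point where the argument, as written, fails. The fix is essentially to abandon localization here and argue as the paper does: restrict to an orbit $G/H$ with $H\neq 1$, note that $I_G^n\cdot K_G^\ast(X)=0$ would force $(I_GR(H))^n=0$ in $R(H)$, and rule this out since $I_H^m\neq 0$ for all $m$ when $H\neq 1$.
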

\begin{proof}
That (1) implies (2) is proved in Proposition~2.1 in~\cite{Seg_EqKThy}.
That (2) implies (3) is obvious. Let us show that (3) implies (1), so assume that the
natural map $K^0(X/G)\to K_G^0(X)$ is an isomorphism.

An inspection of the proof of the implication (4) $\Rightarrow$ (1) in Proposition~4.3
of~\cite{AtiSeg_completion} shows that, in our context, there exists $n\in\N$ such that
$I_G^n\cdot K_0^G(X)=0$.
Now, the $R(G)$-module $K_G^\ast(X)=K_G^0(X)\oplus K_G^1(X)$ is in
fact an $R(G)$-algebra, where multiplication is given by tensor product (with diagonal $G$-actions).
In this algebra, the class of the trivial $G$-bundle over $X$ is the unit, so it
belongs to $K_G^0(X)$. In particular, $I_G^n$ annihilates the unit of $K_G^\ast(X)$,
and hence it annihilates all of $K_G^\ast(X)$, that is, $I_G^n\cdot K_\ast^G(X)=0$. In other words, $K_\ast^G(X)$
is discrete in the $I_G$-adic topology. The implication (1) $\Rightarrow$ (4) in Proposition~4.3
of~\cite{AtiSeg_completion} now shows that the $G$-action is free.
\end{proof}

We mention here that the implication (1) $\Rightarrow $ (2) holds even if $G$ is not a Lie group, and
even if $X$ is merely locally compact (this is essentially due to Rieffel; see the proof of
\autoref{thm:freeness} below for a similar argument). However, the equivalence between (2)
and (3) may fail if $X$ is not compact: the trivial action on $\R$ is a counterexample. This can
happen even for free actions.

\vspace{0.3cm}

Recall that a \uca\ $A$ is said to be \emph{finite} if $u\in A$ and $u^*u=1$ imply
$uu^*=1$. A nonunital \ca\ is finite if its unitization is. Finally, a \ca\ $A$ is
\emph{stably finite} if $M_n(A)$ is finite for all $n\in\N$. Commutative \ca s and AF-algebras
are stably finite, as are the tensor products of these, and their subalgebras.

\begin{rem}\label{rem:CptEltsVA}
By Theorem~3.5 in~\cite{BroCiu}, if $A$ is a stably finite \ca, then the set of compact
elements in $\Cu(A)$ can be naturally identified with the Murray-von Neumann semigroup $V(A)$
of $A$.
\end{rem}

In the next theorem, for an abelian semigroup $V$, we denote by $\mathcal{G}(V)$
its Gro\-then\-dieck group. Also, if $\alpha\colon G\to\Aut(A)$ is an action
of a compact group on a \ca\ $A$, we denote by $V^G(A)$ the semigroup of equivariant
Murray-von Neumann equivalence classes of projections in $(A\otimes\K(\Hi_\mu))^G$,
where $\mu\colon G\to\U(\Hi)$ is a finite dimensional unitary representation. With
this notation, the equivariant $K$-theory $K_\ast^G(A)$ of $A$ is, by definition,
the group $\mathcal{G}(V^G(A))$; see Chapter~2 in~\cite{Phi_Book}.

\begin{thm}\label{thm:freeness}
Let $X$ be a locally compact, metric space and let a compact group $G$ act on $X$.
Consider the following statements:
\begin{enumerate}
\item  The action of $G$ on $X$ is free.
\item  The canonical map $\phi\colon \Cu(C_0(X/G))\to \Cu^G(C_0(X))$ is a $\CCu$-isomorphism.
\end{enumerate}
Then (1) implies (2). If $G$ is a Lie group and $X$ is compact, then the
converse is also true.
\end{thm}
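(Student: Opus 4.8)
\textbf{Proof plan for Theorem~\ref{thm:freeness}.}

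The plan is to reduce the equivalence to the Atiyah-Segal characterization via equivariant $K$-theory (\autoref{thm:A-S}), using the Julg-type identification $\sigma\colon \Cu^G(C_0(X))\cong \Cu(C_0(X)\rtimes G)$ from \autoref{Thm:JulgCuG} and the compatibility diagram of \autoref{prop:CanMapCorner}. For the implication (1)$\Rightarrow$(2), I would argue as follows. Freeness of the $G$-action on $X$ implies, by Rieffel's theorem (the noncommutative analog of the fact that $X\to X/G$ is a principal bundle), that $C_0(X)\rtimes_\alpha G$ is Morita equivalent to $C_0(X/G)$; more precisely, $C_0(X)^G=C_0(X/G)$ sits inside $C_0(X)\rtimes_\alpha G$ as a full hereditary (indeed full corner) subalgebra, and the inclusion $c\colon C_0(X/G)\to C_0(X)\rtimes_\alpha G$ from \autoref{prop:CanMapCorner} implements this Morita equivalence. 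By \autoref{prop:CanMapCorner}, $\sigma\circ\phi=\Cu(c)$, and since $\Cu$ is invariant under Morita equivalence (it is stable by Appendix~6 in~\cite{CowEllIva}, and a full hereditary subalgebra of a separable algebra is stably isomorphic by Brown's theorem, as already used in \autoref{prop:HeredSubalg}), the map $\Cu(c)$ is a $\CCu$-isomorphism. As $\sigma$ is a $\CCu$-isomorphism, so is $\phi$. (One should check separability of $C_0(X)$, which holds since $X$ is metrizable and $\sigma$-compact; if $X$ is not $\sigma$-compact one writes $C_0(X)$ as an inductive limit of separable invariant subalgebras and uses continuity of $\Cu^G$ from \autoref{prop:CuGContinuous}.)

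For the converse, assume $G$ is a compact Lie group, $X$ is compact, and $\phi\colon \Cu(C(X/G))\to \Cu^G(C(X))$ is a $\CCu$-isomorphism. The strategy is to pass from the Cuntz semigroup isomorphism to a $K_0$-isomorphism and invoke \autoref{thm:A-S}. Since $C(X)$ is commutative, hence stably finite, \autoref{rem:CptEltsVA} identifies the compact elements of $\Cu(C(X/G))$ with $V(C(X/G))$ and---using the Julg identification together with \autoref{rem:CptEltsVA} applied to the stably finite algebra $C(X)\rtimes_\alpha G$ (it is stably finite because $C(X)$ is nuclear and type~I, or because it is a $C(X/G)$-algebra with finite-dimensional-over-commutative fibers)---the compact elements of $\Cu^G(C(X))\cong\Cu(C(X)\rtimes_\alpha G)$ with $V(C(X)\rtimes_\alpha G)\cong V^G(C(X))$. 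A $\CCu$-isomorphism preserves the compact containment relation, hence preserves compact elements, so $\phi$ restricts to a semigroup isomorphism $V(C(X/G))\to V^G(C(X))$. Passing to Grothendieck groups yields an isomorphism $K^0(X/G)=\mathcal{G}(V(C(X/G)))\to \mathcal{G}(V^G(C(X)))=K^0_G(X)$, and one checks that under these identifications this is precisely the natural Atiyah-Segal map (this is where \autoref{prop:CanMapCorner} is again used: $\phi$ is induced by the inclusion $C(X)^G\hookrightarrow C(X)$, which is exactly how the natural map $K^0(X/G)\to K^0_G(X)$ is defined). By the implication (3)$\Rightarrow$(1) of \autoref{thm:A-S}, the action of $G$ on $X$ is free.

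The main obstacle I anticipate is the bookkeeping in the converse direction: one must verify carefully that the isomorphism on compact elements produced by $\phi$, after applying the Grothendieck functor, coincides with the \emph{natural} map appearing in \autoref{thm:A-S}, rather than merely some abstract isomorphism of groups---otherwise (3)$\Rightarrow$(1) does not apply. This requires tracking the identifications $V^G(C(X))\cong V(C(X)\rtimes_\alpha G)$ (Julg's theorem at the level of projections, via \autoref{prop:CanMapCorner}) and $V(C(X/G))\cong $ (compact elements of $\Cu(C(X/G))$), and confirming that $\phi$ is compatible with the inclusion $C(X)^G\hookrightarrow C(X)$ defining the classical map. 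A secondary technical point is checking stable finiteness of $C(X)\rtimes_\alpha G$ so that \autoref{rem:CptEltsVA} applies on that side; this follows since the crossed product is a subhomogeneous (indeed type~I, nuclear, quasidiagonal) $C^*$-algebra. Everything else---separability reductions, fullness of the corner, Morita invariance of $\Cu$---is routine given the results already established in the paper.
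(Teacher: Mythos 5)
Your proposal is correct and follows essentially the same route as the paper: the forward implication via the identification of $C_0(X/G)$ with the corner $e(C_0(X)\rtimes G)e$ together with fullness of $e$ (the paper proves fullness directly with a Stone--Weierstrass argument --- this is the one place freeness is actually used --- where you cite Rieffel), and the converse by restricting the $\CCu$-isomorphism to compact elements, identifying these with Murray--von Neumann classes via \autoref{rem:CptEltsVA} and Julg's theorem at the level of projections, passing to Grothendieck groups, and invoking the implication (3)$\Rightarrow$(1) of \autoref{thm:A-S}, with the naturality check you flag handled exactly as you suggest through \autoref{prop:CanMapCorner}. The only quibble is your parenthetical claim that nuclearity and type~I imply stable finiteness (they do not: the Toeplitz algebra is type~I); the paper instead observes that $C(X)\rtimes G$ embeds in the stably finite algebra $C(X)\otimes\K(L^2(G))$.
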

\begin{proof}
Assume that the action of $G$ on $X$ is free, and denote by $\alpha\colon G\to \Aut(C_0(X))$
the induced action.
By \autoref{prop:CanMapCorner}, under the identification
$\Cu^G(C_0(X),\alpha)\cong \Cu(C_0(X)\rtimes_\alpha G)$ provided by \autoref{Thm:JulgCuG},
the canonical map $\phi$ becomes the map at the level of the (ordinary) Cuntz semigroup induced by the map
$c\colon C_0(X)^G=C_0(X/G)\to C_0(X)\rtimes_\alpha G$ defined before \autoref{prop:CanMapCorner}.
Denote by $e\in \K(L^2(G))$ the projection onto the constant
functions. By the main theorem in~\cite{Ros_corner}, we have $c(C_0(X)^G)=e(C_0(X)\rtimes_\alpha G)e$.
Since $\Cu$ is a stable functor, it is enough to show that $e$ is a full projection in $C_0(X)\rtimes_\alpha G$.

For $a,b\in C_0(X)$, denote by $f_{a,b}\colon G\to C_0(X)$ the function given by
\[f_{a,b}(g)(x)=a(x)b(g^{-1}\cdot x)\]
for $g\in G$ and $x\in X$.
It is an easy consequence of the Stone-Weierstrass theorem that the set
\[\left\{f_{a,b}\colon a,b\in C_0(X)\right\}\]
has dense linear span in $C_0(X)\rtimes G$.

Denote by $I$ the ideal in $C_0(X)\rtimes G$ generated by $e$.
Let $(a_\lambda)_{\lambda\in \Lambda}$ be an approximate
identity for $C_0(X)$. Upon averaging over $G$, we may assume that $a_\lambda$ belongs to $C_0(X)^G$
for all $\lambda\in\Lambda$. Let $a,b\in C_0(X)$. Then $f_{a,a_\lambda b}=c_{a_\lambda}f_{a, b}$ for
$\lambda\in\Lambda$, and hence
\[f_{a,b}=\lim_{\lambda\in\Lambda}f_{a,a_\lambda b}=\lim_{\lambda\in\Lambda}\left(c_{a_\lambda}f_{a, b}\right),\]
so $f_{a,b}$ belongs to $I$. We conclude that $C_0(X)\rtimes G=I$, as desired.

Assume now that $G$ is a compact Lie group, that $X$ is compact, and that
the canonical map $\Cu(C(X/G))\to \Cu^G(C(X))$ is an isomorphism in $\CCu$.
We claim that the canonical map $K^0(X/G)\to K_G^0(X)$ is an isomorphism.

Under the natural identification given by \autoref{Thm:JulgCuG}, the canonical
inclusion $c\colon C(X/G)\to C(X)\rtimes G$ induces an isomorphism $\Cu(c)$ at the level of the Cuntz semigroup
(see also \autoref{prop:CanMapCorner}).
The algebra $C(X/G)$ is clearly stably finite. On the other hand, $C(X)\rtimes_\alpha G$ is also stably
finite because it is a subalgebra of the stable finite \ca\ $C(X)\otimes \K(L^2(G))$.
By \autoref{rem:CptEltsVA}, the restriction of the isomorphism $\Cu(c)$ to the compact elements
of $C(X/G)$ yields an isomorphism
\[\psi\colon V(C(X/G))\to V(C(X)\rtimes G)\]
between the respective Murray-von Neumann semigroups of projections.
By taking the Grothendieck construction, one gets an isomorphism
\[\varphi\colon \mathcal{G}(V(C(X/G))) \to \mathcal{G}(V(C(X)\rtimes G))\]
between the respective Grothendieck groups. We want to conclude from this that $\varphi$ induces
an isomorphism between the $K_0$-groups
of these $C^*$-algebras.

Since $C(X/G)$ is unital, we have $\mathcal{G}(V(C(X/G)))=K_0(C(X/G))$. However, $C(X)\rtimes G$ is not
unital unless $G$ is finite, and it is even not clear whether it (or its stabilization) has
an approximate identity consisting of projections. (This would also imply that its $K_0$-group
is obtained as the Grothendieck group of its Murray-von Neummann semigroup.)

Instead, we appeal to Julg's theorem for equivariant $K$-theory.
Indeed, the proof given in Theorem~2.6.1 in~\cite{Phi_Book}
shows that if $A$ is a
unital $C^*$-algebra, $G$ is a compact group, and $\alpha\colon G\to\Aut(A)$ is a continuous
action, then there exists a canonical isomorphism of semigroups
\[V^G(A)\cong V(A\rtimes G).\]
Since $K^G_0(A)$ is the Grothendieck group of $V^G(A)$, it follows that the same is true for
$K_0(A\rtimes_\alpha G)$. In our context, this shows that $\varphi$ induces an isomorphism
\[\theta\colon K^0(X/G)\to K^G_0(C(X))\cong K^0_G(X).\]

Now, the implication (3) $\Rightarrow$ (1) in \autoref{thm:A-S} shows that the action
of $G$ on $X$ is free.
\end{proof}

\section{Classification of actions using \texorpdfstring{$\Cu^G$}{CuG}}
In this section, we classify a class of actions of finite abelian groups on
certain stably finite \ca s using the equivariant Cuntz semigroup; see \autoref{thm:ConjActsCu}.
We describe the general strategy first.

Let $G$ be a finite abelian group, and let $\alpha$ and
$\beta$ be actions of $G$ on \ca s $A$ and $B$. A $\CCu^G$-homomorphism $\rho\colon \Cu^G(A,\alpha)\to
\Cu^G(B,\beta)$ can be regarded, via \autoref{thm:JulgWithCuG}, as a $\CCu$-homomorphism
$\Cu(A\rtimes_\alpha G)\to \Cu(B\rtimes_\beta G)$ that is equivariant with respect to the
dual actions $\widehat{\alpha}$ and $\widehat{\beta}$. If $A\rtimes_\alpha G$ and $B\rtimes_\beta G$
belong to a class of \ca s for which the Cuntz semigroup classifies homomorphisms (up to approximate
unitary equivalence), then we can
obtain a $\ast$-homomorphism $\varphi\colon A\rtimes_\alpha G\to B\rtimes_\beta G$ such that
$\varphi\circ\widehat{\alpha}_\tau$ is approximately unitarily equivalent to $\widehat{\beta}_\tau\circ\varphi$
for all $\tau\in \widehat{G}$. When the dual actions have the Rokhlin property, results from
\cite{GarSan_RokConstrI} can be used to replace $\varphi$ with an approximately unitarily equivalent
$\ast$-homomorphism $\theta$ which is actually equivariant. If $\theta$ moreover satisfies a scale
condition (which can be phrased in terms of $\rho$), one can essentially restrict $\psi$ (using
\autoref{thm:ConjActsCu}) to an equivariant homomorphism $\psi\colon A\to B$ which satisfies
$\Cu^G(\psi)=\rho$. Finally, when $\rho$ is an isomorphism, we can choose $\theta$ and $\psi$ to
be $\ast$-isomorphisms.

\vspace{0.3cm}

The following result will allow us to go from equivariant $\ast$-homomorphisms between double crossed
products to equivariant $\ast$-homomorphisms between the original dynamical systems.
Recall that $\precsim$ stands for Cuntz subequivalence (see the beginning of Section~2.1).

\begin{thm}\label{thm:ConjActsCu}
Let $G$ be a finite group, let $A$ and $B$ be \ca s such that $B$ has stable rank one,
and let $\alpha\colon G\to\Aut(A)$ and $\beta\colon G\to \Aut(B)$ be actions.
Denote by $\lambda\colon G\to \U(\ell^2(G))$ the left regular representation,
and let $e\in\K(\ell^2(G))$ be the projection onto the constant functions on $G$.
Let $s_A$ and $s_B$ be $G$-invariant strictly positive elements in $A$ and $B$,
respectively.
\be\item
Every equivariant $\ast$-homomorphism
\[\varphi\colon (A\otimes\K(\ell^2(G)),\alpha\otimes\Ad(\lambda))\to
(B\otimes\K(\ell^2(G)),\beta\otimes\Ad(\lambda))\]
satisfying
\[\varphi(s_A\otimes e)\precsim s_B\otimes e \ \mbox{ in } \ (B\otimes\K(\ell^2(G)))^{\beta\otimes\Ad(\lambda)}\]
induces an equivariant $\ast$-homomorphism $\psi\colon (A,\alpha)\to (B,\beta)$, and conversely.
\item The actions $\alpha$ and $\beta$ are conjugate if and only if there exists an equivariant
isomorphism
\[\varphi\colon (A\otimes\K(\ell^2(G)),\alpha\otimes\Ad(\lambda))\to
(B\otimes\K(\ell^2(G)),\beta\otimes\Ad(\lambda))\]
such that $\varphi(s_A\otimes e)$ is Cuntz equivalent to $s_B\otimes e$ in $(B\otimes\K(\ell^2(G)))^{\beta\otimes\Ad(\lambda)}$.
\ee
\end{thm}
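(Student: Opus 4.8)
\textbf{Proof plan for \autoref{thm:ConjActsCu}.}

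The plan is to reduce everything to the following ``corner'' principle, which we shall prove first: if $\gamma\colon G\to\Aut(C)$ is an action of a finite group on a \ca\ $C$, then the assignment $D\mapsto D\otimes\K(\ell^2(G))$ with action $\delta\otimes\Ad(\lambda)$, together with the corner embedding associated to the projection $e$ onto constant functions, sets up a bijection between equivariant $\ast$-homomorphisms out of $(C,\gamma)$ and equivariant $\ast$-homomorphisms out of $(C\otimes\K(\ell^2(G)),\gamma\otimes\Ad(\lambda))$ that respect the distinguished positive element $s_C\otimes e$ (up to Cuntz equivalence in the appropriate fixed-point algebra). The key classical input is that $(C\otimes\K(\ell^2(G)))^{\gamma\otimes\Ad(\lambda)}$ is canonically isomorphic to the crossed product $C\rtimes_\gamma G$ (this is Green's imprimitivity theorem, or the compact-group version used in \autoref{Thm:JulgCuG} specialized to finite $G$), and that under this isomorphism the corner $e(C\rtimes_\gamma G)e$ is isomorphic to $C^G$, with the embedding $C^G\hookrightarrow C\rtimes_\gamma G$ being the map $c$ from \autoref{prop:CanMapCorner}. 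Moreover, the dual coaction on $C\rtimes_\gamma G$ corresponds to the residual $G$-action $\Ad(\lambda)$ on the compacts side. So the first step is to make all of this explicit.

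First I would prove part (1). Given an equivariant $\ast$-homomorphism $\varphi$ as in the statement, view it (via the isomorphism $(A\otimes\K)^{\alpha\otimes\Ad(\lambda)}\cong A\rtimes_\alpha G$, and similarly for $B$) as an equivariant $\ast$-homomorphism $\widetilde\varphi\colon A\rtimes_\alpha G\to B\rtimes_\beta G$ for the dual coactions. The hypothesis $\varphi(s_A\otimes e)\precsim s_B\otimes e$ translates, under these identifications, into $\widetilde\varphi(c(s_A))\precsim c(s_B)$ in $B\rtimes_\beta G$, i.e. $\widetilde\varphi$ maps the hereditary subalgebra $\overline{c(s_A)(B\rtimes_\beta G)c(s_A)}\cong e(A\rtimes_\alpha G)e\cong A^G$ into (a hereditary subalgebra Cuntz-below) $e(B\rtimes_\beta G)e\cong B^G$. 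Here I must be slightly careful, since ``$\precsim$'' only gives containment up to Cuntz subequivalence and approximation; the point is that since $e$ is a projection and $s_B\otimes e$ is a strictly positive element of the corner, the Cuntz relation $\varphi(s_A\otimes e)\precsim s_B\otimes e$ forces $\varphi(e(A\otimes\K)e)$ to lie in the hereditary subalgebra generated by $s_B\otimes e$, which is exactly the corner $e(B\otimes\K)e$ (using stable rank one of $B$, hence of $B\otimes\K$, to upgrade the Cuntz relation via polar decomposition as in \cite{CiuEllSan_TypeI} / the argument in the proof of \autoref{thm: mu s_mu}). Restricting $\varphi$ to this corner and transporting across the identifications $e(A\otimes\K)e\cong A$, $e(B\otimes\K)e\cong B$ gives the desired $\psi\colon(A,\alpha)\to(B,\beta)$; equivariance of $\psi$ is inherited from equivariance of $\varphi$ under the residual $\Ad(\lambda)$-actions, because the corner $e(\,\cdot\,)e$ is precisely the fixed-point algebra of that residual action restricted appropriately. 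Conversely, given equivariant $\psi\colon(A,\alpha)\to(B,\beta)$, tensoring with $\id_{\K(\ell^2(G))}$ produces an equivariant $\varphi=\psi\otimes\id$ which obviously satisfies $\varphi(s_A\otimes e)=\psi(s_A)\otimes e$, and since $\psi(s_A)$ is a positive element of $B$ while $s_B$ is strictly positive, $\psi(s_A)\precsim s_B$ in $B$, hence $\varphi(s_A\otimes e)\precsim s_B\otimes e$ in $(B\otimes\K)^{\beta\otimes\Ad(\lambda)}$. This establishes the ``and conversely'' clause and completes (1).

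Part (2) then follows by a symmetric application of (1). If $\alpha$ and $\beta$ are conjugate via an equivariant isomorphism $\theta\colon(A,\alpha)\to(B,\beta)$, then $\varphi=\theta\otimes\id_{\K(\ell^2(G))}$ is an equivariant isomorphism with $\varphi(s_A\otimes e)=\theta(s_A)\otimes e$; since $\theta$ is an isomorphism carrying a strictly positive element of $A$ to a strictly positive element of $B$, $\theta(s_A)\sim s_B$ in $B$ (any two strictly positive elements of a \ca\ are Cuntz equivalent), hence $\varphi(s_A\otimes e)\sim s_B\otimes e$ in the relevant fixed-point algebra. For the reverse direction, given an equivariant isomorphism $\varphi\colon(A\otimes\K,\alpha\otimes\Ad(\lambda))\to(B\otimes\K,\beta\otimes\Ad(\lambda))$ with $\varphi(s_A\otimes e)\sim s_B\otimes e$, apply part (1) to $\varphi$ to get an equivariant $\psi\colon(A,\alpha)\to(B,\beta)$, and apply part (1) to $\varphi^{-1}$ (noting $\varphi^{-1}(s_B\otimes e)\sim s_A\otimes e$, which is what the symmetric hypothesis gives, again using stable rank one now on the $A$-side — or rather, since $\varphi$ is an isomorphism we need no stable rank one here at all for this direction) to get an equivariant $\psi'\colon(B,\beta)\to(A,\alpha)$; the uniqueness half of the bijection in (1) forces $\psi'\circ\psi$ and $\psi\circ\psi'$ to be the identities (both correspond to $\varphi^{-1}\circ\varphi=\id$ under the bijection), so $\psi$ is an equivariant isomorphism and $\alpha$, $\beta$ are conjugate.

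\textbf{Expected main obstacle.} The delicate point is the passage in part (1) from the \emph{Cuntz} relation $\varphi(s_A\otimes e)\precsim s_B\otimes e$ to the \emph{algebraic} containment of $\varphi$ restricted to the corner inside the target corner. A general Cuntz subequivalence does not respect corners; one genuinely needs that $e$ is a projection and $s_B\otimes e$ generates the corner as a hereditary subalgebra, plus stable rank one of $B$ to convert the approximate Cuntz witnesses into an honest partial isometry (polar decomposition in $\mathcal{L}(\text{Hilbert module})$, exactly as in \cite{CiuEllSan_TypeI}) that conjugates $\varphi(e(A\otimes\K)e)$ into $e(B\otimes\K)e$. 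Getting this argument to be equivariant — i.e. choosing the partial isometry in the fixed-point algebra of the residual action — is where I expect to spend most of the effort, and it parallels the equivariant polar-decomposition trick already used in the proof of \autoref{thm: mu s_mu} and \autoref{prop: chi}. Everything else (Green imprimitivity, the corner $\cong C^G$ identification, tensoring up) is standard and was essentially already assembled in \autoref{Thm:JulgCuG} and \autoref{prop:CanMapCorner}.
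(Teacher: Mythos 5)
Your part (1) is essentially the paper's argument: the core mechanism in both is to use stable rank one of $B$ to upgrade the relation $\varphi(s_A\otimes e)\precsim s_B\otimes e$ to an element $x$ of the fixed-point algebra with $x^*x=\varphi(s_A\otimes e)$ and $xx^*\in\mathrm{Her}(s_B\otimes e)$ (Theorem~3 of \cite{CowEllIva}), take the polar decomposition $x=v|x|$ in the bidual so that $v$ is $G$-fixed and $\Ad(v)$ is equivariant, and define $\psi$ as the composite $A\cong\mathrm{Her}(s_A\otimes e)\xrightarrow{\varphi}\mathrm{Her}(\varphi(s_A\otimes e))\xrightarrow{\Ad(v)}\mathrm{Her}(s_B\otimes e)\cong B$. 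The detour through $C\rtimes_\gamma G$, Green imprimitivity and the Rosenberg corner $e(C\rtimes_\gamma G)e\cong C^G$ is cosmetic; the paper works directly with hereditary subalgebras of $A\otimes\K(\ell^2(G))$ and $B\otimes\K(\ell^2(G))$ and never needs those identifications here.

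The reverse direction of part (2) as you propose it has a genuine gap. You invoke ``the uniqueness half of the bijection in (1)'' to conclude $\psi'\circ\psi=\id_A$ and $\psi\circ\psi'=\id_B$, but no such uniqueness exists: the passage $\varphi\mapsto\psi$ depends on the choice of $x$ (equivalently of $v$), so it is only canonical up to conjugation by a $G$-fixed partial isometry in the bidual. Concretely, $\psi'\circ\psi=\Ad(v'\,\varphi^{-1}(v))|_{\mathrm{Her}(s_A\otimes e)}$ is an injective equivariant endomorphism of $A$ whose image is the hereditary subalgebra determined by the range projection of $v'\varphi^{-1}(v)$; nothing forces that image to be all of $A$, so surjectivity of $\psi$ does not follow. (A second problem: applying part (1) to $\varphi^{-1}$ requires stable rank one of the \emph{target}, i.e.\ of $A$, which is not among the hypotheses; your parenthetical claim that this is unnecessary because $\varphi$ is an isomorphism is not substantiated.) The paper avoids both issues by strengthening the corner step rather than symmetrizing: since $\varphi(s_A\otimes e)$ is Cuntz \emph{equivalent} (not merely subequivalent) to $s_B\otimes e$, Proposition~2.5 of \cite{CiuEllSan_TypeI} yields $x$ with $x^*x=\varphi(s_A\otimes e)$ and $\mathrm{Her}(xx^*)=\mathrm{Her}(s_B\otimes e)$ exactly, so that $\Ad(v)\circ\varphi$ carries $\mathrm{Her}(s_A\otimes e)\cong A$ \emph{onto} $\mathrm{Her}(s_B\otimes e)\cong B$ and is therefore already an equivariant isomorphism, using only the stable rank one hypothesis on $B$. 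You should replace your symmetrization argument with this one-sided surjectivity argument.
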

\begin{proof}
(1). If $\psi\colon A\to B$ is an equivariant $\ast$-homomorphism, then
\[\varphi=\psi\otimes\id_{\K(\ell^2(G))}\colon A\otimes\K(\ell^2(G))\to B\otimes\K(\ell^2(G))\]
is also equivariant and satisfies
\[\varphi(s_A\otimes e)\precsim s_B\otimes e \ \mbox{ in } \ (B\otimes\K(\ell^2(G)))^{\beta\otimes\Ad(\lambda)}.\]

Conversely, let $\varphi$ be an equivariant $\ast$-homomorphism as in the statement. Using that $B$ has
stable rank one together with Theorem~3 in~\cite{CowEllIva},
choose $x\in (B\otimes\K(\ell^2(G)))^{\beta\otimes\Ad(\lambda)}$ such that
$x^*x=\varphi(s_A\otimes e)$ and $xx^*\in \mathrm{Her}(s_B\otimes e)$. Let $x=v|x|$ be the polar decomposition
of $x$ in the bidual of $(B\otimes\K(\ell^2(G)))^{\beta\otimes\Ad(\lambda)}$. Then
conjugation by $v$ defines a $\ast$-isomorphism
\[\Ad(v)\colon \mathrm{Her}(\varphi(s_A\otimes e))\to \mathrm{Her}(xx^*).\]
Using that $v$ is fixed by the extension of $\beta\otimes\Ad(\lambda)$ to the bidual of
$B\otimes\K(\ell^2(G))$, we get
\[\Ad(v)\circ (\beta\otimes\Ad(\lambda))\circ\Ad(v^*)|_{\mathrm{Her}(\varphi(s_A\otimes e))}
=\Ad(vv^*)\circ (\beta\otimes\Ad(\lambda))|_{\mathrm{Her}(\varphi(s_A\otimes e))}.\]
Now, since $vv^*$ is a unit for $\mathrm{Her}(xx^*)$, the above composition equals
$\beta\otimes\Ad(\lambda)$ on $\mathrm{Her}(\varphi(s_A\otimes e))$. It follows that
$\Ad(v)$ is equivariant with respect to $\beta\otimes\Ad(\lambda)$.
Define $\psi\colon A\to B$ to be the following composition:
\[\xymatrix{A\cong \mathrm{Her}(s_A\otimes e) \ar[r]^-{\varphi} &
\mathrm{Her}(\varphi(s_A\otimes e)) \ar[rr]^-{\Ad(v)} & &
\mathrm{Her}(s_B\otimes e)\cong B .}\]
Then $\psi$ is easily seen to be equivariant, and this finishes the proof.

(2). Use Proposition~2.5 in~\cite{CiuEllSan_TypeI} to choose
$x\in (B\otimes\K(\ell^2(G)))^{\beta\otimes\Ad(\lambda)}$ with $x^*x=\varphi(s_A\otimes e)$ and
$\mathrm{Her}(xx^*)=\mathrm{Her}(s_B\otimes e)$. Keeping the notation from the previous part,
it follows that $\Ad(v)$ is an (equivariant)
isomorphism between $\mathrm{Her}(\varphi(s_A\otimes e))$ and $\mathrm{Her}(s_B\otimes e)$.
We conclude that $\psi=\Ad(v)\circ\varphi$ determines an equivariant isomorphism $A\to B$,
as desired. \end{proof}

Next, we introduce the class of actions we will focus on.

\begin{df}\label{df: loc representable}
An action $\alpha$ of a finite group $G$ on a
$C^*$-algebra $A$ is said to be \emph{locally representable} if there exist an
increasing sequence $(A_n)_{n\in\N}$ of subalgebras of $A$ such
that $A_n$ is $\alpha$-invariant for all $n\in \N$ and such that
$A=\overline{\bigcup\limits_{n\in\N}A_n}$, and unitary representations
$u^{(n)}\colon G\to \U(M(A_n))$ of $G$, for $n\in\N$, such that such that
$\alpha_g(a)=\Ad(u^{(n)}_g)(a)$ for all $g\in G$, for all $a\in A_n$,
and for all $n\in \N$.

Let $\mathbf{B}$ be a class of $C^*$-algebras and let $A$ be a $C^*$-algebra in
$\mathbf{B}$. If $\alpha$ is a locally representable action of $G$ on
$A$ such that the subalgebras $A_n$ as
above can be chosen to belong to $\mathbf{B}$, then we
say that $\alpha$ is \emph{locally representable in $\mathbf{B}$}.
\end{df}

Actions that are locally representable in the class of AF-algebras were
studied and classified by Handelman and Rossmann in \cite{HanRos_CptAF}.
The invariant they used is easily seen to be equivalent to the equivariant
$K$-theory of the actions. In \autoref{thm: equiv morphisms of approx rep},
we use the equivariant Cuntz semigroup to classify actions that are locally
representable actions in a class of stably finite algebras containing all
AI-algebras.

We will need the following easy preservation result.

\begin{lma}\label{lem: CrPrdLocRep} Let $\mathbf{B}$ be a class of $C^*$-algebras that
is closed under countable direct limits and under direct sums. Let $A$ be a
$C^*$-algebra in $\mathbf{B}$,
let $G$ be a finite group and let $\alpha$ be an action of $G$ on $A$. Assume that
$\alpha$ is locally representable in $\mathbf{B}$. Then $A\rtimes_\alpha G$ belongs
to $\mathbf{B}$.
\end{lma}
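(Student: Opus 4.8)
The plan is to reduce the statement to the definition of ``locally representable in $\mathbf{B}$'' together with the two closure hypotheses on $\mathbf{B}$, using the key structural fact that a crossed product by an \emph{inner} action is a matrix amplification.

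\textbf{Step 1: handle a single inner piece.} First I would observe that if $D$ is a $C^*$-algebra, $G$ is a finite group, and $\gamma\colon G\to\Aut(D)$ is implemented by a unitary representation $u\colon G\to\U(M(D))$ (that is, $\gamma_g=\Ad(u_g)$), then there is a natural $\ast$-isomorphism $D\rtimes_\gamma G\cong D\otimes C^*(G)$. Indeed, the map sending $d\cdot g\in D\rtimes_\gamma G$ to $d u_g^*\otimes g\in M(D)\otimes C^*(G)$ (suitably interpreted) is a $\ast$-isomorphism onto $D\otimes C^*(G)$; this is the standard ``untwisting'' argument. Since $G$ is finite, $C^*(G)\cong\bigoplus_{[\pi]\in\widehat G} M_{d_\pi}$ is finite-dimensional, so $D\rtimes_\gamma G\cong\bigoplus_{[\pi]\in\widehat G} M_{d_\pi}(D)$. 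If $D\in\mathbf{B}$, then each $M_{d_\pi}(D)$ is a finite direct sum of copies of $D$ (a matrix algebra $M_k(D)\cong D^{\oplus k^2}$ is built from $D$ using only finite direct sums, which $\mathbf{B}$ admits — here I should be a little careful: $M_k(D)$ is not literally $D^{\oplus k^2}$ as a $C^*$-algebra, so instead I would note that matrix amplifications are countable direct limits of finite direct sums only if $\mathbf{B}$ is also closed under matrix amplifications; cf.\ the next remark), and hence $D\rtimes_\gamma G\in\mathbf{B}$.

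\textbf{Caveat and fix.} The hypotheses as literally stated — closure under countable direct limits and direct sums — do not obviously give closure under passing to $M_k(D)$ from $D$. I would therefore either (a) interpret ``closed under direct sums'' generously, or more safely (b) note that in the intended applications (AI-algebras and the stably finite class used in Section~8) the class $\mathbf{B}$ is manifestly closed under matrix amplifications, and state this explicitly; alternatively (c) use that $M_k(D)$ is a \emph{hereditary subalgebra} of $M_k(D)$ and... this does not help either. The cleanest route is to record that $\bigoplus_{[\pi]}M_{d_\pi}(D)$ is a direct summand of $M_n(D)$ for $n=\sum d_\pi$, and $M_n(D)\in\mathbf{B}$ whenever $D\in\mathbf{B}$ — adding closure under matrix amplification to the hypothesis list (or noting it is implied in context). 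I expect this bookkeeping about exactly which closure properties of $\mathbf{B}$ are needed to be the main (minor) obstacle; the $C^*$-algebraic content is routine.

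\textbf{Step 2: pass to the limit.} Write $A=\overline{\bigcup_n A_n}$ with each $A_n$ $\alpha$-invariant, $A_n\in\mathbf{B}$, and $\alpha|_{A_n}=\Ad(u^{(n)})$ for unitary representations $u^{(n)}\colon G\to\U(M(A_n))$, as in \autoref{df: loc representable}. Then $A=\varinjlim(A_n,\iota_n)$ equivariantly, where $\iota_n\colon A_n\hookrightarrow A_{n+1}$ are the inclusions. Since the full crossed product functor $B\mapsto B\rtimes_\alpha G$ is continuous (it commutes with countable direct limits — standard, and here even cleaner since $G$ is finite so everything is exact), we get $A\rtimes_\alpha G\cong\varinjlim(A_n\rtimes_{\alpha|_{A_n}}G,\,\iota_n\rtimes G)$. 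By Step~1, each $A_n\rtimes_{\alpha|_{A_n}}G\in\mathbf{B}$. Since $\mathbf{B}$ is closed under countable direct limits, $A\rtimes_\alpha G\in\mathbf{B}$, completing the proof. I would present this as two short paragraphs: the inner-case lemma (Step~1) and the limit argument (Step~2), flagging the matrix-amplification point where it arises.
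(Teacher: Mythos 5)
Your proof is correct and follows essentially the same route as the paper's: continuity of the crossed product functor reduces everything to the inner case, where $A_n\rtimes_{\Ad(u^{(n)})}G\cong A_n\otimes C^*(G)$ because an inner action is cocycle equivalent to the trivial one. Your caveat about matrix amplifications is well taken -- the paper itself elides it by writing $A_n\otimes C^*(G)\cong A_n\oplus\cdots\oplus A_n$, which is literally correct only when $G$ is abelian; since the lemma is only invoked for finite abelian $G$ with $\mathbf{B}=\mathbf{R}$ (a class visibly closed under matrix amplification), the gap is harmless in context, but for nonabelian $G$ the stated closure hypotheses on $\mathbf{B}$ should indeed include matrix amplifications, exactly as you observe.
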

\begin{proof}
Choose an
increasing sequence $(A_n)_{n\in\N}$ of subalgebras of $A$ that belong to
$\mathbf{B}$ with $A=\varinjlim A_n$, and unitary representations
$u^{(n)}\colon G\to \U(M(A_n))$ of $G$
such that such that $\alpha_g(a)=\Ad(u^{(n)}_g)(a)$ for all $g$ in $G$, for all $a$
in $A_n$, and for all $n$ in $\N$. Using continuity of the crossed product functor
in the first step and
the fact that inner actions are cocycle equivalent to the trivial action in the
second step, we conclude that
$$A\rtimes_\alpha G \cong \varinjlim A_n\rtimes_{\Ad(u^{(n)})} G\cong
\varinjlim (A_n\otimes C^*(G))\cong \varinjlim A_n\oplus \cdots \oplus A_n.$$
Now, the assumptions on the class $\mathbf{B}$ imply that $A\rtimes_\alpha G$ belongs
to $\mathbf{B}$.\end{proof}


The following is the main result of this section. For a finite group $G$, we
denote by $e\in \K(\ell^2(G))^G$ the projection onto the constant functions.
If $\alpha\colon G\to \Aut(A)$ is an action and $s_A\in A^G$ is a strictly
positive element in $A$, the element $s_A\otimes e$ belongs to $(A\otimes\K(\ell^2(G)))^G$,
and hence it has a well-defined class $[s_A\otimes e]_G$ in $\Cu^G(A,\alpha)$.

We refer the reader to \cite{Rob_classif} for the definition of 1-dimensional
NCCW-com\-plex\-es, as well as for the classification of certain direct limits of
such \ca s. In the next theorem, $\mathbf{R}$ will denote the class of
unital \ca s that can
be written as inductive limits of one-dimensional NCCW-complexes with trivial $K_1$-groups.
(Unitality of the algebras can be dropped if one instead assumes the existence of an approximate
unit consisting of projections.) Algebras
in $\mathbf{R}$ are classified by their Cuntz semigroup, by the main result in \cite{Rob_classif}.

\begin{thm}\label{thm: equiv morphisms of approx rep}
Let $G$ be a finite abelian group, let $A$ and $B$ be \ca s in $\mathbf{R}$.
Let $\alpha\colon G\to \Aut(A)$ and $\beta\colon G\to\Aut(B)$ be locally representable in $\mathbf{R}$.
Let $s_A$ and $s_B$ be strictly positive $G$-invariant elements in $A$ and
$B$, respectively.
\be
\item
For every $\Cu(G)$-semimodule morphism $\rho\colon \Cu^G(A,\alpha)\to \Cu^G(B,\beta)$
satisfying $\rho([s_A]_G)\leq [s_B]_G$ in $\Cu^G(B,\beta)$, there
exists an equivariant $\ast$-homomorphism $\psi\colon (A,\alpha)\to (B,\beta)$ with
$\Cu^G(\psi)=\rho$. Moreover, $\psi$ is unique up to approximate unitary equivalence with
$G$-invariant unitaries.
\item
The actions $\alpha$ and $\beta$ are conjugate if and only if there exists a $\Cu(G)$-semimodule
isomorphism $\rho\colon \Cu^G(A,\alpha)\to \Cu^G(B,\beta)$
with $\rho([s_A]_G)= [s_B]_G$ in $\Cu^G(B,\beta)$.
\ee\end{thm}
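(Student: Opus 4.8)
The plan is to reduce everything to the classification of $\ast$-homomorphisms between algebras in $\mathbf{R}$ via the Cuntz semigroup (Robert's theorem from \cite{Rob_classif}), passed through the double-crossed-product picture. First I would record the key structural inputs: by \autoref{thm:JulgWithCuG}, a $\Cu(G)$-semimodule morphism $\rho\colon \Cu^G(A,\alpha)\to \Cu^G(B,\beta)$ is the same datum as a $\CCu$-morphism $\widetilde{\rho}\colon \Cu(A\rtimes_\alpha G)\to \Cu(B\rtimes_\beta G)$ which is equivariant for the dual actions $\widehat{\alpha},\widehat{\beta}$ (here we use that $G$ abelian means the $\Cu(G)$-action is implemented by $\widehat{\alpha}$ via \autoref{prop: semimodule structure when G is abelian}, so $\Cu(G)$-linearity translates exactly to $\widehat{G}$-equivariance). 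Moreover, by \autoref{lem: CrPrdLocRep} (applied with $\mathbf{B}=\mathbf{R}$, which is closed under countable direct limits and direct sums), both $A\rtimes_\alpha G$ and $B\rtimes_\beta G$ lie in $\mathbf{R}$, so Robert's classification applies to them. Since $G$ is finite, the dual actions $\widehat{\alpha},\widehat{\beta}$ of $\widehat{G}$ on the double crossed products $(A\rtimes_\alpha G)\rtimes_{\widehat{\alpha}}\widehat{G}\cong A\otimes\K(\ell^2(G))$ have the Rokhlin property (this is the point where local representability of $\alpha,\beta$ is used: inner-by-cocycle actions have Rokhlin dual actions, and one can invoke the relevant statement from \cite{GarSan_RokConstrI}).

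Next, for part~(1): given $\rho$ with $\rho([s_A]_G)\le [s_B]_G$, translate it to $\widetilde{\rho}$, use Robert's theorem (Theorem~1.0.1 in \cite{Rob_classif}) to lift $\widetilde{\rho}$ to a $\ast$-homomorphism $\varphi_0\colon A\rtimes_\alpha G\to B\rtimes_\beta G$ with $\Cu(\varphi_0)=\widetilde{\rho}$, unique up to approximate unitary equivalence. Because $\widetilde{\rho}$ is $\widehat{G}$-equivariant at the level of $\Cu$, the uniqueness clause gives $\varphi_0\circ\widehat{\alpha}_\tau\approx_{\mathrm{u}}\widehat{\beta}_\tau\circ\varphi_0$ for all $\tau\in\widehat{G}$. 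Now use the Rokhlin property of the dual actions together with results from \cite{GarSan_RokConstrI} (the ``one can perturb an approximately equivariant map to a genuinely equivariant one'' statement) to replace $\varphi_0$ by an approximately unitarily equivalent $\ast$-homomorphism $\Phi\colon (A\otimes\K(\ell^2(G)),\alpha\otimes\Ad(\lambda))\to (B\otimes\K(\ell^2(G)),\beta\otimes\Ad(\lambda))$ that is genuinely $\widehat{G}$-equivariant (after identifying the double crossed products with $A\otimes\K(\ell^2(G))$, $B\otimes\K(\ell^2(G))$, under which $\widehat{\alpha}$-equivariance becomes $\alpha\otimes\Ad(\lambda)$-equivariance, and the class $[s_A\otimes e]_G$ corresponds to $[s_A]_G$ under $\sigma$). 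The scale condition $\rho([s_A]_G)\le[s_B]_G$ translates to $\Phi(s_A\otimes e)\precsim s_B\otimes e$ in $(B\otimes\K(\ell^2(G)))^{\beta\otimes\Ad(\lambda)}$. Now apply \autoref{thm:ConjActsCu}(1) to obtain an equivariant $\ast$-homomorphism $\psi\colon (A,\alpha)\to(B,\beta)$; one checks $\Cu^G(\psi)=\rho$ by tracing through the identifications ($\Cu^G(\psi)$ corresponds at the double-crossed-product level to the corner restriction of $\Cu(\Phi)=\widetilde{\rho}$). Uniqueness of $\psi$ up to approximate unitary equivalence by $G$-invariant unitaries follows by running the same argument on two candidates and using both the Robert uniqueness and the equivariant uniqueness statement from \cite{GarSan_RokConstrI}.

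For part~(2): the ``only if'' direction is immediate, since conjugate actions $\alpha\cong\beta$ induce a $\CCu^G$-isomorphism $\Cu^G(A,\alpha)\cong\Cu^G(B,\beta)$ (by functoriality, \autoref{cor:CuGFunctorCCu}) carrying $[s_A]_G$ to $[s_B]_G$ up to the scaling chosen, and one can arrange equality by adjusting the strictly positive elements. For the ``if'' direction, start from a $\Cu(G)$-semimodule isomorphism $\rho$ with $\rho([s_A]_G)=[s_B]_G$; apply part~(1) both to $\rho$ and to $\rho^{-1}$ (noting $\rho^{-1}([s_B]_G)=[s_A]_G$) to get equivariant $\psi\colon(A,\alpha)\to(B,\beta)$ and $\psi'\colon(B,\beta)\to(A,\alpha)$ with $\Cu^G(\psi)=\rho$, $\Cu^G(\psi')=\rho^{-1}$; then $\Cu^G(\psi'\circ\psi)=\mathrm{id}$ and $\Cu^G(\psi\circ\psi')=\mathrm{id}$, so by the uniqueness clause both compositions are approximately unitarily equivalent (by invariant unitaries) to the identity. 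A standard approximate-intertwining (Elliott two-sided intertwining) argument, carried out equivariantly, then upgrades $\psi$ to an equivariant $\ast$-isomorphism, which is exactly a conjugacy $\alpha\cong\beta$. (Alternatively, one can get a genuine equivariant isomorphism $\Phi$ of the double crossed products with $\Phi(s_A\otimes e)\sim s_B\otimes e$ directly from the equivariant Elliott intertwining plus Rokhlin perturbation, and then apply \autoref{thm:ConjActsCu}(2).)

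The main obstacle I anticipate is the perturbation step: converting an \emph{approximately} $\widehat{G}$-equivariant $\ast$-homomorphism between the double crossed products into a \emph{genuinely} equivariant one while keeping control of its Cuntz class and the scale inequality. This is precisely where the Rokhlin property of the dual action and the technical lifting/uniqueness machinery of \cite{GarSan_RokConstrI} are indispensable, and it requires that stable rank one of $B$ (hence of $B\rtimes_\beta G$, which holds for algebras in $\mathbf{R}$) be available so that \autoref{thm:ConjActsCu} can be invoked. Everything else — the translation between $\Cu(G)$-linearity and dual equivariance, the membership of the crossed products in $\mathbf{R}$, and the final intertwining — is comparatively routine given the results already established in the paper.
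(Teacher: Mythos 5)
Your proposal is correct and follows essentially the same route as the paper: translate $\rho$ via Julg's theorem and the abelian description of the $\Cu(G)$-action into a $\widehat{G}$-equivariant $\CCu$-morphism between the crossed products (which lie in $\mathbf{R}$ by \autoref{lem: CrPrdLocRep}), lift it to a genuinely $\widehat{G}$-equivariant $\ast$-homomorphism using the Rokhlin property of the dual action, pass through Takai duality, and finish with \autoref{thm:ConjActsCu}. The only differences are ones of packaging and a small slip: the paper invokes Theorems~3.13 and~3.14 of \cite{GarSan_RokConstrI} directly, which already bundle Robert's existence/uniqueness with the Rokhlin perturbation (and, for part~(2), with the two-sided intertwining you describe), and the Rokhlin property one needs (from Nawata's result on duals of approximately representable actions) is for $\widehat{\beta}$ acting on the single crossed product $B\rtimes_\beta G$, not on the double crossed product as you wrote.
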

\begin{proof}
(1). By \autoref{lem: CrPrdLocRep}, the crossed product $A\rtimes_\alpha G$
is again inductive limit of one-dimensional NCCW complexes with trivial $K_1$-group.
Moreover, the dual action $\widehat{\beta}$ has the
Rokhlin property by part~(ii) of Proposition~4.4 in~\cite{Naw_RpNonunital}.
Let $\rho\colon \Cu^G(A,\alpha)\to \Cu^G(B,\beta)$ be a $\Cu(G)$-semimodule homomorphism
as in the statement. Using \autoref{thm:JulgWithCuG} and
\autoref{prop: semimodule structure when G is abelian}, $\rho$ can be regarded as an equivariant morphism
\[\tilde\rho\colon (\Cu(A\rtimes_\alpha G), \Cu(\widehat{\alpha}))\to(\Cu(B\rtimes_\beta G), \Cu(\widehat{\beta})).\]
Since $\rho([s_A]_G)\leq [s_B]_G$, we use part~(i) in Theorem~3.13 of~\cite{GarSan_RokConstrI} to deduce that
the morphism $\rho$ lifts to a $\widehat{G}$-equivariant $\ast$-homomorphism
\[\theta\colon (A\rtimes_\alpha G, \widehat{\alpha})\to (B\rtimes_\beta G, \widehat{\beta})\]
that satisfies $\Cu(\theta)=\tilde\rho$. (Although Theorem~3.13 of~\cite{GarSan_RokConstrI} is
stated for the functor $\Cu^\sim$, the assumptions on $A$ imply that we can replace $\Cu^\sim$ with $\Cu$.)

Applying the crossed product functor, we obtain an equivariant $\ast$-homomorphism
\[\widehat{\theta}\colon (A\rtimes_\alpha G)\rtimes_{\widehat{\alpha}}\widehat{G}\to (B\rtimes_\beta G)\rtimes_{\widehat{\beta}}\widehat{G},\]
which, by Takai duality, is equivalent to an equivariant $\ast$-homomorphism
$\varphi\colon A\otimes\K(\ell^2(G))\to B\otimes\K(\ell^2(G))$.

Observe that since $e\in \K(\ell^2(G))$ is a rank-one projection,
the element $s_A\otimes e \in (A\otimes\K(\ell^2(G)))^G$ represents
the same class in $\Cu^G(A,\alpha)$ as $s_A$ (see \autoref{cor:StabUnirep}), and similarly for $s_B\otimes e$. It follows that
$\rho([s_A\otimes e]_G)\leq [s_B\otimes e]_G$.
Under the canonical identification of $B\rtimes_\beta G$ with
the fixed point algebra of $B\otimes\K(\ell^2(G))$,
we then get
\[\theta(s_A\otimes e)\precsim s_B\otimes e \ \mbox{ in } \ \ (B\otimes\K(\ell^2(G)))^{\beta\otimes\Ad(\lambda)}.\]
Part~(1) in~\autoref{thm:ConjActsCu} shows that there exists an equivariant $\ast$-homomorphism
$\varphi\colon (A,\alpha)\to (B,\beta)$ which induces $\psi$, and clearly $\Cu^G(\varphi)=\rho$, as desired.

The uniqueness statement follows from part (ii) of Theorem~3.13 in~\cite{GarSan_RokConstrI}.

(2). The proof of this part is similar to the proof of the first part.
Let $\rho\colon \Cu^G(A,\alpha)\to \Cu^G(B,\beta)$ be a $\Cu(G)$-semimodule homomorphism
as in the statement. Using \autoref{thm:JulgWithCuG} and
\autoref{prop: semimodule structure when G is abelian}, $\rho$ can be regarded as an equivariant morphism
\[\tilde\rho\colon (\Cu(A\rtimes_\alpha G), \Cu(\widehat{\alpha}))\to(\Cu(B\rtimes_\beta G), \Cu(\widehat{\beta})).\]
Since $\rho([s_A]_G)= [s_B]_G$, we use part~(i) in Theorem~3.14 of~\cite{GarSan_RokConstrI} to deduce that
the morphism $\rho$ lifts to a $\widehat{G}$-equivariant $\ast$-isomorphism
\[\theta\colon (A\rtimes_\alpha G, \widehat{\alpha})\to (B\rtimes_\beta G, \widehat{\beta})\]
that satisfies $\Cu(\theta)=\tilde\rho$.
We obtain an equivariant $\ast$-isomorphism
\[\widehat{\theta}\colon (A\rtimes_\alpha G)\rtimes_{\widehat{\alpha}}\widehat{G}\to (B\rtimes_\beta G)\rtimes_{\widehat{\beta}}\widehat{G},\]
which, by Takai duality, is equivalent to an equivariant $\ast$-isomorphism
$\varphi\colon A\otimes\K(\ell^2(G))\to B\otimes\K(\ell^2(G))$.

It follows that $\theta(s_A\otimes e)\sim s_B\otimes e$ in $(B\otimes\K(\ell^2(G)))^{\beta\otimes\Ad(\lambda)}$.
Part~(2) in~\autoref{thm:ConjActsCu} shows that there exists an equivariant $\ast$-isomorphism
$\varphi\colon (A,\alpha)\to (B,\beta)$ which induces $\psi$, and clearly $\Cu^G(\varphi)=\rho$, as desired.
\end{proof}

The assumptions in the above theorem can be relaxed to obtain more general conclusions:
\begin{enumerate}
\item For the conclusion in (1), one only needs to assume that $B$ is separable, that $B\rtimes_\beta G$ has stable rank one and
that $\widehat{\beta}$ has the Rokhlin property. The proof is in fact identical in this case.
\item If the conditions on $\rho([s_A\otimes e]_G)$ and
$[s_B\otimes e]_G$ are omitted, then one can produce a $\beta$-cocycle $\omega\colon G\to \U(M(B))$ and an
equivariant $\ast$-homomorphism $\varphi\colon (A,\alpha)\to (B,\beta^\omega)$.
\item If the conditions
on $\rho([s_A]_G)$ and $[s_B]_G$ are omitted, one has to replace the dynamical system $(A,\alpha)$ with
$(A\otimes\K(\ell^2(\N)),\alpha\otimes\id_{\K(\ell^2(\N))})$, and similarly with $(B,\beta)$.
\end{enumerate}


\begin{thebibliography}{Gar15b}

\bibitem[ABP13]{AntBosPer_CtsFlds}
R.~Antoine, J.~Bosa, and F.~Perera, \emph{The Cuntz semigroup of continuous
  fields}, Indiana Univ. Math. J. \textbf{62} (2013), no.~4, 1105--1131.

\bibitem[APS11]{AntPerSan_Pullbacks}
R.~Antoine, F.~Perera, and L.~Santiago, \emph{Pullbacks, $C(x)$-algebras, and
  their Cuntz semigroup}, J. Funct. Anal. \textbf{260} (2011), no.~10,
  2844--2880.

\bibitem[APT14]{AntPerThi}
R.~Antoine, F.~Perera, and H.~Thiel, \emph{{Tensor products and regularity
  properties of Cuntz semigroups}}, Memoirs Amer. Math. Soc., to appear. (Preprint,
  arXiv:1410.0483.) 2014.

\bibitem[AS69]{AtiSeg_completion}
M.~Atiyah and G.~Segal, \emph{{Equivariant ${K}$-theory and completion}}, J.
  Differential Geometry \textbf{3} (1969), 1--18.

\bibitem[BTZ16]{BTZ}
J.~Bosa, G.~Tornetta, and J.~Zacharias, \emph{A Bivariant Theory for the Cuntz Semigroup},
preprint arXiv:1602.02043, 2016.

\bibitem[BC09]{BroCiu}
N.~Brown and A.~Ciuperca, \emph{{Isomorphism of Hilbert modules over stably
  finite \ca{s}}}, J. Funct. Anal. \textbf{257} (2009), no.~1, 332--339.

\bibitem[BP08]{BroPerTom}
N.~Brown and A.~Perera, F.~Toms, \emph{{The Cuntz semigroup, the Elliott
  conjecture, and dimension functions on \ca{s}}}, J. Reine Angew. Math.
  \textbf{621} (2008), 191--211.

\bibitem[CEI08]{CowEllIva}
K.~Coward, G.~Elliott, and C.~Ivanescu, \emph{{The Cuntz semigroup as an
  invariant for \ca{s}}}, J. Reine Angew. Math. \textbf{623} (2008), 161--193.

\bibitem[CES11]{CiuEllSan_TypeI}
A.~Ciuperca, G.~Elliott, and L.~Santiago, \emph{{On inductive limits of type-I
  $C\sp *$-algebras with one-dimensional spectrum}}, Int. Math. Res. Not. IMRN
  \textbf{11} (2011), 2577--2615.

\bibitem[CRS10]{CiuRobSan_CuIdeals}
A.~Ciuperca, L.~Robert, and L.~Santiago, \emph{{The Cuntz semigroup of ideals
  and quotients and a generalized Kasparov stabilization theorem}}, J. Oper.
  Theory \textbf{64} (2010), no.~1, 155--169.

\bibitem[Cun78]{Cun_DimFun}
J.~Cuntz, \emph{Dimension functions on simple ${C}^*$-algebras}, Math. Ann.
  \textbf{233} (1978), no.~2, 145--153.

\bibitem[ERS11]{EllRobSan}
G.~Elliott, L.~Robert, and L.~Santiago, \emph{{The cone of lower semicontinuous
  traces on a $C^*$-algebra}}, Am. J. Math. \textbf{133} (2011), no.~4,
  969--1005.

\bibitem[Fol95]{Fol_Book}
G.~Folland, \emph{A course in abstract harmonic analysis}, Studies in Advanced
  Mathematics, CRC Press, 1995.

\bibitem[Gar14a]{Gar_UHFabs}
E.~Gardella, \emph{{Circle actions on UHF-absorbing $C^*$-algebras}}. Houston
J. of Math, to appear. (Preprint,
  arXiv:1406.4198.) 2014.

\bibitem[Gar14b]{Gar_Kir1}
\bysame, \emph{{Classification theorems for circle actions on Kirchberg
  algebras, I}}, preprint, arXiv:1405.2469, 2014.

\bibitem[Gar14c]{Gar_CptRokCP}
\bysame, \emph{{Crossed products by compact group actions with the Rokhlin
  property}}. J. Noncomm. Geom., to appear. (Preprint, arXiv:1408.1946.) 2014.

\bibitem[Gar14d]{Gar_Rdim}
\bysame, \emph{{Rokhlin dimension for compact group actions}}, Indiana Univ.
Math., to appear. (Preprint, arXiv:1407.1277.) 2014.

\bibitem[Gar15a]{Gar_CptRok}
\bysame, \emph{{Compact group actions with the Rokhlin property.}}, In
  preparation, 2015.

\bibitem[Gar15b]{Gar_RegPropCPRdim}
\bysame, \emph{{Regularity properties and Rokhlin dimension for compact group
  actions}}, Houston J. of Math., to appear. (Preprint, arXiv:1407.5485.)
  2015.

\bibitem[GHS16]{GHS_preparation}
E.~Gardella, I.~Hirshberg, and L.~Santiago, \emph{{Rokhlin dimension: tracial
  properties and crossed products}}, in preparation, 2016.

\bibitem[GS16]{GarSan_RokConstrI}
E.~Gardella and L.~Santiago, \emph{{Equivariant *-homomorphisms, Rokhlin
  constraints and equivariant UHF-absorption}},  J. Funct. Anal. \textbf{270}
  (2016), no. 7, 2543–-2590.

\bibitem[HR85]{HanRos_CptAF}
D.~Handelman and W.~Rossmann, \emph{{Actions of compact groups on AF
  $C^*$-algebras}}, Illinois J. Math. \textbf{29} (1985), 51--95.

\bibitem[Jac13]{Jac_W}
B.~Jacelon, \emph{A simple, monotracial, stably projectionless $C\sp
  \ast$-algebra}, J. Lond. Math. Soc. (2) \textbf{87} (2013), no.~2, 365--383.

\bibitem[Jul81]{Jul_EqKthy}
P.~Julg, \emph{{K-th\'eorie \'equivariante et produits croises.}}, {C. R. Acad.
  Sci., Paris, S\'er. I} \textbf{292} (1981), 629--632.

\bibitem[Kas80]{Kas_HilbMods}
G.~Kasparov, \emph{{Hilbert $C\sp{\ast} $-modules: theorems of Stinespring and
  Voiculescu}}, J. Operator Theory \textbf{4} (1980), no.~1, 133--150.

\bibitem[Kat85]{Kat_TakesakisDuality}
Y.~Katayama, \emph{{Takesaki's duality for a non-degenerate co-action}}, Math.
  Scand. \textbf{55} (1985), 141--151.

\bibitem[KOQ15]{KalOmlQui_ThreeVersions}
S.~Kaliszewski, T.~Omland and J.~Quigg, \emph{{Three versions of categorical
  crossed-product duality}}, New York J. Math. \textbf{22} (2016) 293--339.


\bibitem[KR02]{KirRor_absOI}
E.~Kirchberg and M.~R{\o}rdam, \emph{Infinite non-simple \ca{s}: absorbing the
  {C}untz algebra $\mathcal{O}_\infty$}, {Adv. Math.} \textbf{167} (2002),
  no.~2, 195--264.

\bibitem[Lan95]{Lan_Book}
E.~Lance, \emph{Hilbert ${C}^*$-modules}, London Mathematical Society Lecture
  Note Series, vol. 210, Cambridge University Press, 1995, A toolkit for
  operator algebraists.

\bibitem[Naw12]{Naw_RpNonunital}
N.~Nawata, \emph{{Finite group actions on certain stably projectionless \ca s
  with the Rokhlin property}}, Trans. Amer. Math. Soc. \textbf{368} (2016), no. 1, 471–-493.

\bibitem[Ped99]{Ped_PbackPout}
Gert~K. Pedersen, \emph{Pullback and pushout constructions in $C\sp *$-algebra
  theory}, J. Funct. Anal. \textbf{167} (1999), no.~2, 243--344.

\bibitem[Phi87]{Phi_Book}
N.~C.~Phillips, \emph{{Equivariant K-theory and freeness of group actions on
  \ca{s}}}, {Lecture Notes in Mathematics, 1274. Berlin etc.: Springer-Verlag.
  VIII}, 1987.

\bibitem[PT07]{PerTom_recasting}
F.~Perera and A.~Toms, \emph{{Recasting the Elliott conjecture}}, Math. Ann.
  \textbf{338} (2007), no.~3, 669--702.

\bibitem[Raz02]{Raz}
S.~Razak, \emph{On the classification of simple stably projectionless
  C*-algebras}, Canad. J. Math. \textbf{54} (2002), 138--224.

\bibitem[Rob12]{Rob_classif}
L.~Robert, \emph{{Classification of inductive limits of 1-dimensional NCCW
  complexes}}, Adv. Math. \textbf{231} (2012), no.~5, 2802--2836.

\bibitem[Ros79]{Ros_corner}
J.~Rosenberg, \emph{{Appendix to O. Bratteli's paper on ``Crossed products of
  UHF algebras''}}, Duke Math. J. \textbf{46} (1979), no.~10, 25--26.

\bibitem[Seg68]{Seg_EqKThy}
G.~Segal, \emph{Equivariant $K$-theory}, Publ. Math. Inst. Hautes Etudes Sci.
  \textbf{34} (1968), 129--151.

\bibitem[Tom08]{Tom_classifpbm}
A.~Toms, \emph{{On the classification problem for nuclear \ca{s}}}, Annals
  Math. \textbf{167} (2008), 1029--1044.

\bibitem[Tor16]{Tor_CuCu}
G.~Tornetta, \emph{{An equivariant theory for the bivariant Cuntz semigroup}},
  {preprint}, 2016.

\bibitem[ZZ08]{ZhaZha_EqVBquantum}
G.~Zhang and R.~Zhang, \emph{{Equivariant vector bundles on quantum homogeneous
  spaces}}, Math. Res. Lett. \textbf{15} (2008), no.~2, 297--307.

\end{thebibliography}

\providecommand{\bysame}{\leavevmode\hbox to3em{\hrulefill}\thinspace}
\providecommand{\MR}{\relax\ifhmode\unskip\space\fi MR }
\providecommand{\MRhref}[2]{%
  \href{http://www.ams.org/mathscinet-getitem?mr=#1}{#2}
}
\providecommand{\href}[2]{#2}

\end{document}